\documentclass[11pt]{amsart}

\usepackage[dvipsnames]{xcolor}
\usepackage{enumitem}
\usepackage{amssymb,mathrsfs,graphicx,mathtools}
\usepackage{tikz}
\usetikzlibrary{calc,decorations.pathreplacing}
\usepackage[margin=1in]{geometry}
\usepackage{etoolbox}

\usepackage[square]{natbib}

\usepackage[citecolor=PineGreen,colorlinks=true,linkcolor=RoyalBlue,urlcolor=BrickRed]{hyperref}

\theoremstyle{plain}
\newtheorem{theorem}{Theorem}[section]
\newtheorem{proposition}[theorem]{Proposition}
\newtheorem{corollary}[theorem]{Corollary}
\newtheorem{lemma}[theorem]{Lemma}

\theoremstyle{definition}
\newtheorem{definition}[theorem]{Definition}
\newtheorem{example}[theorem]{Example}
\newtheorem{remark}[theorem]{Remark}

\newcommand{\Z}{\mathbb{Z}}
\newcommand{\R}{\mathbb{R}}
\newcommand{\N}{\mathbb{N}}
\newcommand{\E}{\mathbb{E}}
\renewcommand{\P}{\mathbb{P}}
\newcommand{\eps}{\varepsilon}
\newcommand{\one}{\mathbf{1}}
\DeclareMathOperator{\var}{Var}

\makeatletter
\renewcommand{\l@section}{\@tocline{1}{0pt}{1.5em}{}{}}
\renewcommand{\l@subsection}{\@tocline{2}{0pt}{3.0em}{}{}}
\makeatother
\makeatletter
\patchcmd{\@settitle}{\uppercasenonmath\@title}{\Large}{}{}
\patchcmd{\@setauthors}{\MakeUppercase}{\large}{}{}
\makeatother

\title{Divisible sandpiles via random walks in random scenery }

\author{Ahmed Bou-Rabee} \address{Department of Mathematics, University of Pennsylvania, USA} \email{ahmedmb@sas.upenn.edu}
\author{Yuval Peres} \address{Beijing Institute of Mathematical Sciences and Applications, China} \email{yperes@gmail.com}
\author{Ecaterina Sava-Huss}  \address{Department of Mathematics, University of Innsbruck, Austria} \email{Ecaterina.Sava-Huss@uibk.ac.at}

\date{April 15, 2026}
\keywords{divisible sandpile, random scenery, stabilization, explosion, odometer function, optimal stopping.}
\subjclass[2020]{60J45, 60J10, 60G40, 31C20.}

\begin{document}
	
	\begin{abstract}
		We analyze an optimal stopping problem for random walk in random scenery on general graphs, and determine when it has a finite optimum. We use this to extend a theorem of \citet*{div-sand-crit}. They  proved that on a vertex-transitive graph, the divisible sandpile with i.i.d.\ initial masses of mean~$\mu$ stabilizes almost surely if $\mu < 1$, explodes if $\mu > 1$, and explodes if $\mu = 1$ with positive finite variance. Their proofs rely on conservation of mean mass under toppling. This conservation extends to unimodular random graphs, but fails on  general graphs. We prove explosion for all infinite bounded-degree graphs whenever~$\mu \geq 1$, and stabilization for~$\mu<1$ provided the initial masses have finite $p$-th moment for some $p>3$. Our conditions are nearly sharp: we exhibit unbounded-degree graphs on which sandpiles with $\mu > 1$ stabilize, and for every $p < 3$ we construct bounded-degree graphs on which sandpiles with~$\mu < 1$ and finite $p$-th moment explode.
	\end{abstract}
	\maketitle
	
	{\small \tableofcontents}

	\section{Introduction}
	
	\begin{figure}[t]
		\centering
		\includegraphics[width=0.5\textwidth]{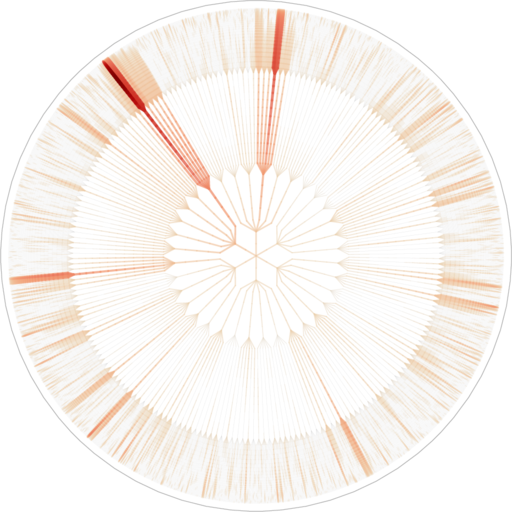}
		\caption{Approximation of the odometer on a tree-of-pipes graph (Section~\ref{sec:transient-nonstab}), embedded in the Poincar\'e disk with the root at the center and deeper pipes near the boundary. Edge color and width are proportional to the odometer. The i.i.d.\ initial masses have mean~$\mu = 0.05 \ll 1$, yet a single heavy-tailed fluctuation near the boundary cascades inward through the tree hierarchy and drives the odometer at the root far above~$1$. Theorem~\ref{thm:transient-nonstab} shows that this explosion occurs almost surely when only a finite $p$-th moment with $p<3$ is assumed.}
		\label{fig:subcrit-explosion}
	\end{figure}
	
	In the abelian sandpile model of \citet*{BTW}, each vertex of a graph carries an integer number of grains, and a vertex with at least as many grains as its degree topples by sending one grain to each neighbor. The \textbf{divisible sandpile}, introduced by \citet{LP09, LP10}, is the continuous-mass version, where vertices that have mass greater than 1 distribute the excess equally among their neighbors (see the precise definition below). \citet*{div-sand-crit} established a phase transition for  the divisible sandpile with i.i.d.\ initial masses on transitive graphs depending on the mean of the initial masses, and our main goal is to extend this to general graphs. 
	In this general setting, we deduce the sandpile phase transition from an optimal stopping result for random walk in random scenery, which is of independent interest. Recall that a graph~$G = (V,E)$ is called \textbf{doubly transient} if its Green function $g(o,v) \coloneqq \E_o[\sum_{n \geq 0} \one_{\{X_n = v\}}] / \deg(v)$ satisfies $\sum_{v \in V}  g(o,v)^2 < \infty$ for all~$o \in V$, where~$(X_n)_{n\in\N}$ is the simple random walk on $G$; for example, $\Z^d$ is doubly transient if and only if $d \geq 5$.
	\begin{theorem}[Explosion and stabilization for random walk]\label{thm:OS}
		Let~$G=(V,E)$ be an infinite, locally finite, connected graph with degree bounded by~$d$, let~$(X_n)_{n\geq 0}$ be simple random walk on~$G$, let $(\xi(v))_{v\in V}$ be i.i.d.\ and independent of the walk, and $S_n\coloneqq\sum_{k=0}^{n-1}\xi(X_k)/\deg(X_k)$.
		\begin{enumerate}[label=\textup{(\roman*)}]
			\item \textup{(Explosion)}\; If $\E[\xi]\in(0,\infty]$, then $\sup_n\E_x[S_n\mid\xi]=\infty$ almost surely for every $x\in V$.
			\item \textup{(Explosion)}\; Suppose that $\E[\xi]=0$ and either $\var(\xi)\in(0,\infty)$, or $\xi\not\equiv 0$ and symmetric.   Then $\sup_{\tau} \E_x[S_\tau \mid\xi]=\infty$ almost surely for every $x\in V$, where the supremum is over bounded stopping times; if $G$ is not doubly transient, then $\sup_n\E_x[S_n\mid\xi]=\infty$ almost surely for every $x\in V$.
			\item \textup{(Stabilization)}\; If $\E[\xi]\in[-\infty,0)$ and $\E[(\xi^+)^p]<\infty$ for some $p>3$, then $\E_x[ (\sup_n S_n)^q]<\infty$ for every $q\in[1,(p-1)/2)$ and~$x \in V$\,.
		\end{enumerate}
	\end{theorem}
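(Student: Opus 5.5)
For part (i), write $\E_x[S_n\mid\xi]=\sum_v \xi(v)\,G_n(x,v)/\deg(v)$, where $G_n(x,v)$ is the expected number of visits to $v$ before time $n$. Since $\sum_v G_n(x,v)=n$ and $\deg\le d$, the weights $w_n(v)=G_n(x,v)/\deg(v)$ have total mass at least $n/d$. I would reduce to a bounded nonnegative part by truncation: $\xi\ge \xi\wedge M$, and subtract the mean. The claim then is that $\sum_v (\xi(v)-m)w_n(v)$ is $o(n)$ along a subsequence, almost surely. For bounded centered variables, $\var\bigl(\sum_v(\xi(v)-m)w_n(v)\bigr)\le C\sum_v w_n(v)^2\le C n\max_v w_n(v)$, and $\max_v G_n(x,v)\le n$ is too crude. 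I would use instead the standard bound $G_n(x,v)\le C\sqrt n$ for bounded-degree infinite graphs, which follows from the on-diagonal heat kernel bound $p_k(v,v)\le Ck^{-1/2}$ for infinite connected graphs. This gives variance $O(n^{3/2})=o(n^2)$. Chebyshev along $n=2^k$, or just in probability together with monotonicity, gives $\E_x[S_n\mid\xi]\ge (m/d)n-o(n)\to\infty$ a.s. The case $\E\xi=\infty$ follows by truncating at a level $M$ with $\E[\xi\wedge M]>0$.

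For part (ii), the natural object is the optimal stopping value $u(x)=\sup_\tau\E_x[S_\tau\mid\xi]$. By Kolmogorov's 0-1 law, or an invariance argument, finiteness of $u$ is a tail event, so it suffices to show that finiteness leads to a contradiction. If $u<\infty$, then $u$ is the least function with $u\ge0$ and $\Delta u\le -\xi$ in the appropriate sense. Concretely, $u(x)\ge \xi(x)/\deg(x)+\frac1{\deg x}\sum_{y\sim x}u(y)$, that is, $u$ is an odometer stabilizing the mass $\xi$. I would then derive a contradiction from a second-moment / Green function computation. On finite balls $B_r$, $u\ge\sum_v g_{B_r}(x,v)\xi(v)$ whenever we stop at the exit of $B_r$, which is a bounded stopping time after truncation in time. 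The variance is $\sigma^2\sum_v g_{B_r}(x,v)^2$. If $G$ is not doubly transient this diverges, so the Gaussian-type fluctuations make $\sup_n$ infinite: I would use an anticoncentration (Paley–Zygmund / Lindeberg CLT for weighted sums) plus the 0-1 law. In the doubly transient case a fixed-time strategy fails, so I would use adapted stopping: stop the walk on entering a region where the scenery is locally positive. I would construct far-away vertices $v$ with many independent chances (i.i.d. blocks) to find a block with local sum exceeding any threshold $K$, walk there with positive probability, and stop adaptively. In the symmetric case, the sign-flip symmetry replaces the variance argument.

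For part (iii), write $\xi=\xi^+-\xi^-$ and compare with a scenery $\eta=\xi\vee(-a)$ of negative mean, bounded below. The aim is to bound the tail of $\sup_n S_n$. On the event that $\sup S_n>t$, either the walk accumulated a large mean deficit that was overcome by a few large values $\xi^+(v)\ge t^{\alpha}$, or moderate deviations of sums of i.i.d. variables along the range occur. A key observation is that the number of distinct sites visited by time $n$ is at least of order $\sqrt n$-ish, with local times bounded by $C\sqrt n$. So $S_n\le \sum_v \xi(v)\ell_n(v)$ behaves like a weighted i.i.d. sum with weights summing to $n/d$ and max weight $\sqrt n$. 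Fuk–Nagaev inequalities give $\P(S_n>-cn/2)\lesssim n\cdot n^{p/2}\cdot n^{-p}+e^{-cn}$, controlled via conditioning on the walk and using that the scenery is independent of the walk. Summing over $n$ gives $\P(\sup S_n>t)\lesssim t^{-(p-1)/2}$ roughly, which yields the moments $q<(p-1)/2$ and requires $p>3$ for summability. The main obstacle is this step: controlling the dependence between the scenery and the local time profile, so that the union bound over $n$ stays summable. I would try to handle it through a dyadic decomposition in $n$ and maximal Fuk–Nagaev.
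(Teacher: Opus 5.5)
\textbf{Doubly transient case of (ii): the mechanism is reversed.} Your skeleton of many independent chances plus adaptive stopping is right, but you stop at the wrong places. When $\sum_v g(o,v)^2<\infty$, the never-stop value $h(x)\coloneqq\sum_v g(x,v)\xi(v)$ is in $L^2$ and satisfies $h=\zeta+Ph$. So $h(X_n)+S_n$ is a martingale, and since $\E[h(o)]=0$, any suitably integrable stopping rule has annealed payoff $\E[S_\tau]=-\E[h(X_\tau)]$. You gain exactly by stopping where the scenery \emph{still ahead} of the walker is very negative.

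Stopping on (or right after) entering a locally positive region does the opposite. $X_\tau$ then sits next to scenery known to be positive, so $h(X_\tau)$ is biased upward. Whatever you collect in the good block is paid for, on average, by the downward bias your search imposes on the blocks crossed before it. You also cannot ``walk there with positive probability'': the walker cannot steer, and here $\P_o(T_v<\infty)\le d\,g(o,v)\to 0$.

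The paper (Proposition~\ref{prop:doubly-transient-really-general}) does the reverse:
\begin{itemize}
\item it takes trap sets $C_m(y)$ with $\Theta_{C_m(y)}(y)\ge 8m/\eps$;
\item it selects centres $Y_i$ along the walk's own path, far from earlier traps and with $\sum_{v\in F_k}g(v,Y_{k+1})\le 1$, so traps are disjoint, conditionally independent given the walk, and earlier failures bias $h$ by only $O(1)$;
\item it stops at the first $Y_i$ whose trap has $\xi\le-\eps$ throughout, where the trap alone contributes at most $-8m$ to $h(Y_i)$.
\end{itemize}

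Even this yields only $\E[u_\infty(o)]=\infty$, and the $0$--$1$ law cannot turn that into $u_\infty(o)=\infty$ a.s. The step you are missing is Efron--Stein with one-site sensitivity $g(o,v)|\xi(v)-\xi'(v)|$. It gives $\var(u_\infty(o)\wedge K)\le\var(\xi)\sum_v g(o,v)^2$ uniformly in $K$, which is incompatible with $\E[u_\infty(o)]=\infty$ unless $\P(u_\infty(o)=\infty)>0$. This is precisely where double transience is used, and your plan has no counterpart.

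\textbf{Symmetric case: the key idea is missing.} ``Sign-flip symmetry replaces the variance argument'' is not yet an argument: without $\var(\xi)<\infty$, neither anticoncentration nor a variance upgrade is available. The missing idea is convexity. By Theorem~\ref{thm:nested-vol}, $u_\infty(o;\xi)=\max\{0,\sup_C\sum_{v\in C}g_C(v)\xi(v)\}$ is convex in $\xi$. Set $\xi^\dagger\coloneqq\xi\one_{\{|\xi|\le M\}}-\xi\one_{\{|\xi|>M\}}$, which satisfies $\xi^\dagger\stackrel{d}{=}\xi$ by symmetry. The midpoint $(\xi+\xi^\dagger)/2=\xi\one_{\{|\xi|\le M\}}$ is bounded, centred and non-degenerate. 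So the finite-variance case plus convexity force $\max(u_\infty(o;\xi),u_\infty(o;\xi^\dagger))=\infty$, hence $\P(u_\infty(o;\xi)=\infty)\ge 1/2$, and the $0$--$1$ law finishes. The same works for $\sup_n$.

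\textbf{Smaller repairs.}
\begin{itemize}
\item In (i), truncating only from above leaves a lower tail that is merely integrable, so it needs a Markov/$L^1$ step rather than Chebyshev. Otherwise your bound $G_n\le C\sqrt n$ is a fine quantitative substitute for the paper's degree-free no-dominance lemma, and convergence in probability already suffices.
\item In the non-doubly-transient case, your Paley--Zygmund/Lindeberg step needs $\max_v g_n(o,v)^2=o(\Sigma_n)$. On recurrent graphs this must be proved; the paper does it via $g_n(o,v)/g_n(o,o)\to 1$.
\item On recurrent graphs, finiteness is not a tail event (Example~\ref{ex:01-fails}), so the $0$--$1$ law needs Hewitt--Savage.
\item In (iii), summing single-time Fuk--Nagaev bounds over all $n$ loses a factor. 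Even with the correct bound $\sum_v\E[L_n(v)^p]\le Cn^{(p+1)/2}$, it only gives a tail of order $t^{-(p-3)/2}$. The dyadic-increment chaining you mention at the end is genuinely needed to reach $q<(p-1)/2$. It should be run on a good event $\{\max_v L_N(v)\le N^{1/2+\delta}\}$, with a crude bound off that event; with it, your outline coincides with the paper's proof.
\end{itemize}
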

	The division by the degree in the definition of~$S_n$ is useful in our application but it can be removed.  
	We now recall the definition of the divisible sandpile. Given a connected, locally finite graph $G=(V,E)$, a configuration $\sigma\colon V \to \R$ assigns a real-valued mass to each vertex. A vertex~$v$ with $\sigma(v)>1$ is \textbf{unstable}; it topples by keeping mass~$1$ and distributing the excess $\sigma(v) - 1$ equally among its $\deg(v)$ neighbors. In the \textbf{parallel toppling procedure} all unstable vertices topple simultaneously: writing $a^+\coloneqq\max(a,0)$ and setting $\sigma_0 \coloneqq \sigma$ and $u_0 \coloneqq 0$, for $n \geq 0$ define
	\begin{align}
		u_{n+1}(v) &= u_n(v) + \frac{(\sigma_n(v) - 1)^+}{\deg(v)}, \label{eq:u-update} \\
		\sigma_{n+1}(v) &= \min\bigl(\sigma_n(v),\,1\bigr) + \sum_{w \sim v} \frac{(\sigma_n(w)-1)^+}{\deg(w)}\,. \label{eq:sigma-update}
	\end{align}
	The quantity $\deg(v) u_n(v)$ is the total mass emitted by~$v$ up to time~$n$; we call $u_n$ the \textbf{odometer}. Since $u_n(v)$ is nondecreasing in~$n$, the limit $u_\infty(v) \coloneqq \lim_{n \to \infty} u_n(v) \in [0, \infty]$ exists for each~$v$.
	If $u_\infty(v) < \infty$ for every~$v \in V$, then the configuration \textbf{stabilizes}; otherwise it \textbf{explodes}. A configuration with finite total mass on an infinite connected graph always stabilizes. When the total mass is infinite---for instance when the initial masses are i.i.d.---stabilization depends on the interplay between the law of the initial mass and the geometry of the underlying graph. 
	
	On vertex-transitive graphs such as~$\Z^d$, \citet*{div-sand-crit} showed that for i.i.d.\ initial masses with common mean~$\mu$, if $\mu<1$, then the sandpile stabilizes almost surely, and if $\mu>1$, then it explodes almost surely. At $\mu=1$, explosion holds when the variance is positive and finite. Their proofs exploit a key symmetry: on vertex-transitive graphs, the expected mass is conserved under toppling.
	
	On a general graph, toppling redistributes mass unevenly across vertices of different degree, and conservation fails (Example~\ref{ex:comb-nonconservation}). This leads to a surprising phenomenon: on a bounded-degree graph with $\mu < 1$, the sandpile can explode if the initial masses have a sufficiently heavy tail. The geometry of the graph can amplify the effect of a single heavy-tailed outlier, even when the average mass is subcritical (Figure~\ref{fig:subcrit-explosion}).
	
	In this paper we extend the supercritical and finite-variance critical explosion results to all infinite bounded-degree graphs, with no assumptions on the graph beyond bounded degree. When the centered law~$\sigma-1$ is symmetric, a convexity argument removes the finite-variance hypothesis entirely (Proposition~\ref{prop:convexity-reduction}), a result which is new even on~$\Z^d$. In the subcritical regime, we prove stabilization under a finite $p$-th moment for some $p > 3$, and show that this condition is nearly sharp.
	
	\begin{theorem}[Explosion]\label{thm:explosion}
		Let~$G = (V,E)$ be an infinite, locally finite, connected graph with
		degree bounded by~$d$, and let $(\sigma(v))_{v\in V}$ be i.i.d.\ random variables
		with $\E[\sigma(v)] = \mu$.
		\begin{enumerate}[label=\textup{(\roman*)}]
			\item If $\mu \in (1,\infty]$, then
			$\P(\sigma \text{ stabilizes}) = 0$.
			\item If $\mu = 1$ and either $\var(\sigma(v))\in(0,\infty)$, or $\sigma(v)\not\equiv 1$ and $\sigma(v)-1$ is symmetric, then
			$\P(\sigma \text{ stabilizes}) = 0$.
		\end{enumerate}
	\end{theorem}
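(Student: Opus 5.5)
We derive Theorem~\ref{thm:explosion} from Theorem~\ref{thm:OS} by linking the odometer to random walk in random scenery with scenery $\xi(v)\coloneqq \sigma(v)-1$. Parts (i) and (ii) of Theorem~\ref{thm:explosion} then correspond to parts (i) and (ii) of Theorem~\ref{thm:OS}, since $\E[\xi]=\mu-1$, $\var(\xi)=\var(\sigma)$, and symmetry of $\sigma-1$ is exactly symmetry of $\xi$.

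The key identity is a least action / optional stopping relation. Let $\Delta f(v)=\sum_{w\sim v} f(w)-\deg(v)f(v)$ be the unnormalized Laplacian. Summing \eqref{eq:u-update}--\eqref{eq:sigma-update} gives
\[
\sigma_n=\sigma+\Delta u_n .
\]
Suppose $u_\infty<\infty$ everywhere. Then $\sigma_n\to\sigma_\infty$ pointwise with $\sigma_\infty\le 1$; this uses local finiteness, and the fact that any vertex with $\sigma_n(v)>1$ eventually emits its excess, so $\sigma_n(v)\le 1+o(1)$. Hence
\[
\Delta u_\infty=\sigma_\infty-\sigma\le 1-\sigma=-\xi .
\]
Write $P$ for the simple random walk transition operator. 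The relation becomes $\deg(v)\,(Pu_\infty(v)-u_\infty(v))\le -\xi(v)$, that is,
\[
u_\infty(v)\ \ge\ Pu_\infty(v)+\xi(v)/\deg(v).
\]
So $M_n\coloneqq u_\infty(X_n)+S_n$ is a supermartingale under $\P_x(\cdot\mid\xi)$. It is nonnegative up to the $S_n$ term, and $S_n$ is integrable for fixed $n$ by bounded degree, provided $\E|\xi|<\infty$. For any bounded stopping time $\tau$, optional stopping gives
\[
\E_x[S_\tau\mid\xi]\le u_\infty(x)-\E_x[u_\infty(X_\tau)]\le u_\infty(x)<\infty,
\]
using $u_\infty\ge 0$. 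Thus stabilization implies $\sup_\tau \E_x[S_\tau\mid\xi]<\infty$. The supremum runs over bounded stopping times and includes deterministic times $n$.

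Parts (i) and (ii) of Theorem~\ref{thm:OS} say this supremum is infinite almost surely. Therefore $\P(\sigma\text{ stabilizes})=0$, since the event of stabilization is contained in a null event.

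Three technical points must be handled.

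First, $\mu=\infty$, or more generally $\E|\xi|$ infinite. Here $S_n$ may fail to be integrable. I would truncate: replace $\sigma$ by $\min(\sigma,K)$. By monotonicity of the divisible sandpile (abelian/least action property: more initial mass gives a larger odometer), explosion of the truncated configuration implies explosion of the original. Choose $K$ so that the truncated mean exceeds $1$. Similarly, $\xi^-$ may be non-integrable while $\mu>1$ is finite; this is impossible since $\mu$ finite forces integrability. If $\mu=\infty$ with negative part infinite, the mean is undefined, so that case does not arise.

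Second, the passage from $\sigma_n$ to $\sigma_\infty\le1$ when $u_\infty<\infty$ needs care. I would argue directly. The excess $(\sigma_n(v)-1)^+ = \deg(v)\,(u_{n+1}(v)-u_n(v))$ tends to $0$ because $u_n(v)$ converges. Alternatively, I would skip limits entirely: show by induction that $u_n$ satisfies $\Delta u_n\le 1-\sigma+\deg\cdot(u_{n+1}-u_n)$, then run the supermartingale argument with $u_\infty$.

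Third, and I expect this to be the main obstacle, is measurability and conditioning. The stabilization event depends on $\xi$, and the bound must hold for $\P$-a.e.\ scenery simultaneously. The argument is pointwise in $\xi$, so this should be routine. The real work is already contained in Theorem~\ref{thm:OS}.
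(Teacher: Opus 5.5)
Your proposal is correct. Like the paper, you reduce to Theorem~\ref{thm:OS} with $\xi=\sigma-1$, but you link the odometer to the stopping problem by a different argument.

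The paper proves the exact identity $u_n(x)=\sup_{\tau\le n}\E_x[S_\tau]$ for every $n$. It does so by showing that both sides satisfy the Bellman recursion of Lemma~\ref{lem:recursion} (Theorem~\ref{thm:RW}). This gives $u_\infty(x)=\sup_{\tau\text{ bounded}}\E_x[S_\tau\mid\sigma]$ with no assumption on $\sigma$ (Corollary~\ref{cor:RW-infinite}).

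You prove only the inequality that explosion needs. On the stabilization event, $u_\infty$ is a finite nonnegative supersolution, $u_\infty\ge Pu_\infty+\zeta$. Hence $u_\infty(X_n)+S_n$ is a supermartingale, and optional stopping gives $\E_x[S_\tau\mid\sigma]\le u_\infty(x)$ for every bounded $\tau$. This is the verification argument the paper uses in Lemma~\ref{ex:coboundary}, applied with $f=u_\infty$.

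What each route buys:
\begin{itemize}
\item Your route is shorter and avoids the induction.
\item It is one-sided and conditional on stabilization: it shows an infinite stopping value rules out stabilization.
\item The paper's identity pins down $u_\infty(x)$ itself.
\item The paper's identity also supplies the reverse bound $u_\infty\le\sup_\tau\E_x[S_\tau\mid\sigma]$, needed for Theorem~\ref{thm:stab}, and the nested-volume formula behind the convexity and counterexample arguments.
\end{itemize}

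Two of your technical points are not actually issues.
\begin{itemize}
\item Integrability: the supermartingale lives under the walk measure for a fixed configuration. For fixed $n$ the walk stays in the finite ball $B(x,n)$, so $M_n$ takes finitely many finite values. No moment assumption on $\xi$ is needed. Your truncation for $\mu=\infty$ is valid but unnecessary, since Theorem~\ref{thm:OS}(i) already allows $\E[\xi]=\infty$.
\item Measurability: the inclusion $\{\sigma\text{ stabilizes}\}\subseteq\{\sup_\tau\E_x[S_\tau\mid\sigma]<\infty\}$ holds configuration by configuration, so there is nothing to check there.
\end{itemize}
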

	
	Part~(i) follows from Proposition~\ref{prop:supercritical} and Corollary~\ref{cor:RW-infinite}. The finite-variance case of part~(ii) follows from Propositions~\ref{prop:critical} and~\ref{prop:doubly-transient-really-general}, together with Corollary~\ref{cor:RW-infinite}; the symmetric case follows from Proposition~\ref{prop:convexity-reduction} and Corollary~\ref{cor:RW-infinite}.
	
	\begin{theorem}[Stabilization]\label{thm:stab}
		Let~$G = (V,E)$ be an infinite, locally finite, connected graph with
		degree bounded by~$d$, and let $(\sigma(v))_{v\in V}$ be i.i.d.\ random variables
		with $\E[\sigma(v)] = \mu \in [-\infty,1)$.
		\begin{enumerate}[label=\textup{(\roman*)}]
			\item If $\E[(\sigma(v)^+)^p] < \infty$ for some $p > 3$, then
			$\sup_{v \in V}\E[u_\infty(v)^q] < \infty$ for every $q\in[1,(p-1)/2)$; in particular $\P(\sigma \text{ stabilizes}) = 1$.
			\item If $|B(o,r)|\leq C r^{d_f}$ for some $o\in V$, some $C>0$, some $d_f\geq 1$, and all $r\geq 1$, and if $\E[(\sigma(v)^+)^p] < \infty$ for some $p > d_f$, then $\P(\sigma \text{ stabilizes}) = 1$.
		\end{enumerate}
	\end{theorem}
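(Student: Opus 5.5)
We reduce Theorem~\ref{thm:stab} to an optimal stopping problem for random walk in random scenery, via the standard least action (obstacle) representation of the odometer. Set $\xi(v)\coloneqq\sigma(v)-1$, so that $\E[\xi]=\mu-1<0$. The odometer limit $u_\infty$ is the smallest nonnegative function $u$ with
\[ \sigma + \Delta u \le 1, \qquad \Delta u(v)\coloneqq\sum_{w\sim v} u(w)-\deg(v)u(v). \]
Equivalently, $u_\infty$ is the smallest nonnegative $u$ satisfying $u(v)\ge P u(v) + \xi(v)/\deg(v)$, where $P$ is the simple random walk transition operator. Such a function is a nonnegative supersolution of the dynamic programming equation for the stopping problem with reward $S_n=\sum_{k<n}\xi(X_k)/\deg(X_k)$. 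The first step is to verify, directly from \eqref{eq:u-update}--\eqref{eq:sigma-update}, that the candidate
\[ w(x)\coloneqq \E_x\bigl[\sup_n S_n\mid\xi\bigr] \]
satisfies this inequality whenever it is finite. Conditioning on the first step gives $\sup_n S_n=\max\bigl(0,\ \xi(X_0)/\deg(X_0)+\sup_n S_n\circ\theta\bigr)$, where $\theta$ is the shift. Taking conditional expectations yields $w\ge \xi/\deg + Pw$ and $w\ge0$.

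The second step is to show that $u_n\le w$ for all $n$, by induction on the parallel toppling. The induction rests on the fact that a nonnegative function satisfying the inequality dominates every odometer reachable by legal topplings; this is the discrete comparison (least action) principle for the divisible sandpile. Alternatively, one can take the monotone limit of finite-volume odometers, each bounded by $w$, and note that finite-volume stabilization agrees with the parallel procedure in the limit. In either case $u_\infty(v)\le w(v)$.

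Part (i) then follows from Theorem~\ref{thm:OS}(iii) applied with $\xi=\sigma-1$. The hypothesis $\E[(\xi^+)^p]\le \E[(\sigma^+)^p]<\infty$ holds, so $\E_x[(\sup_n S_n)^q]<\infty$ for every $q\in[1,(p-1)/2)$. Jensen's inequality for conditional expectation gives
\[ \E[u_\infty(v)^q]\le \E[w(v)^q]\le \E_v\bigl[(\sup_n S_n)^q\bigr]. \]
The remaining issue is uniformity in $v$. Theorem~\ref{thm:OS}(iii) is stated pointwise, so I would need to check that its proof yields a bound depending only on $d$, $p$, $q$ and the law of $\xi$. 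This is plausible, because the natural argument is a union bound over the times at which the walk sees large values of $\xi$, controlled by the degree bound alone. Finiteness of $u_\infty$ almost surely is then immediate.

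Part (ii) needs a different tail bound, obtained by spatially truncating large masses. Since $\mu<1$, choose $M$ large and write $\xi=\xi\one_{\{\xi\le M\}}+\xi\one_{\{\xi>M\}}$ with $\E[\xi\one_{\{\xi\le M\}}]<0$.
\begin{itemize}[leftmargin=*]
\item The truncated scenery has all moments, so part (i) bounds its contribution.
\item The large part is controlled by Borel--Cantelli on balls. Since $|B(o,r)|\le Cr^{d_f}$ and $\E[(\xi^+)^p]<\infty$ with $p>d_f$, we have $\max_{B(o,r)}\xi^+\le r^{d_f/p+\eps}=o(r)$ eventually, almost surely.
\end{itemize}
Large values at distance $r$ are thus sublinear in $r$. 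A walk started at $o$ that reaches distance $r$ has spent order at least $r$ steps, accumulating negative drift of order $r$ from the truncated part, which I expect to beat the sublinear gains. Making this precise requires a large deviation bound for the truncated random walk in random scenery that is uniform over paths, and this is the main obstacle. I would first try a Chernoff bound for the truncated sum at the hitting time of $\partial B(o,r)$, combined with summation over $r$. An alternative is a monotone comparison: stabilize the truncated configuration first, and then show that each large mass at distance $r$ adds only $o(1)$ to the odometer at $o$, with these contributions summable since $p>d_f$.
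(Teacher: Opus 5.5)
Your comparison step and part~(i) follow the paper. The inequality $w\ge(\zeta+Pw)^+$ plus induction on Lemma~\ref{lem:recursion} gives $u_\infty\le\E_x[\sup_nS_n\mid\xi]$; the paper gets this in one line from Corollary~\ref{cor:RW-infinite}, since $S_\tau\le\sup_nS_n$. The subcritical moment bound and Jensen then finish. The uniformity in~$v$ that you leave open is supplied by Proposition~\ref{prop:subcritical}, which is stated with $\sup_{x\in V}$. Its only graph input is $\sup_x\P_x(X_n=x)\le C(d)\,n^{-1/2}$, which makes the local-time moment bounds of Lemma~\ref{lem:local-time} uniform in the starting point. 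A union bound over large values is not the whole mechanism there. It accounts only for the polynomial term of Fuk--Nagaev. The bulk of $\sum_vL_I(v)(\xi(v)-\E[\xi])/\deg(v)$ is handled by the Gaussian term, on the good-walk event $\{\max_vL_N(v)\le N^{1/2+\delta}\}$.

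Part~(ii) is where your proposal has a genuine gap: you name the ``main obstacle'' but do not resolve it, and the outline around it would not work as written.

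First, a fixed-level truncation buys nothing. The heavy part $S_n^{>}\coloneqq\sum_{k<n}\xi(X_k)\one_{\{\xi(X_k)>M\}}/\deg(X_k)$ is nondecreasing with $\sup_nS_n^{>}=\infty$ a.s.\ whenever $\P(\xi>M)>0$, so the supremum cannot be split. Lending it part of the truncated drift just produces $\sup_n\bigl(S_n^{>}-\eta\sum_{k<n}1/\deg(X_k)\bigr)$, a subcritical problem with the original heavy tail.

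Second, the heuristic ``reaching distance~$r$ costs $\gtrsim r$ steps while gains are $o(r)$'' undercounts the gains. A site of height~$y$ pays $y$ per visit, and the walk can revisit it about $\sqrt T$ times in time~$T$. Dwelling near one site with $y=r^\beta$ therefore nets about $y^2=r^{2\beta}$, which exceeds~$r$ once $\beta>1/2$. So a bound at the hitting time of $\partial B(o,r)$ against a drift of order~$r$ cannot close the argument. The correct balance is diffusive: reaching distance~$r$ typically takes $r^2$ steps, and $r^{2\beta}\ll r^2$ iff $\beta<1$. The rigorous version is Proposition~\ref{prop:poly-growth} with $d_s=1$ and $d_w=2$. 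Outside an event of probability $O(N^{-r})$, two things hold up to time~$N$:
\begin{itemize}
\item the walk stays in $B(o,C\sqrt{N\log N})$ (Carne--Varopoulos);
\item every local time is at most $N^{1/2+\delta}$.
\end{itemize}
Bernstein's inequality, applied conditionally on the walk over dyadic blocks, then beats the drift~$N$ provided $N^{1/2+\delta+\beta/2}=o(N)$. Together with your Borel--Cantelli requirement $\beta>d_f/p$, this is exactly where $p>d_f$ comes from.

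Third, Bernstein needs independent masses with a deterministic bound at \emph{every} vertex. An ``eventually, almost surely'' bound gives neither. The paper instead conditions on the product event $\mathcal{G}=\{\xi(v)\le M_0\vee\mathrm{dist}(o,v)^\beta\ \forall v\}$, which truncates at a spatially growing level; your Borel--Cantelli sum is what shows $\P(\mathcal{G})>0$. Conditioning on $\mathcal{G}$ keeps the masses independent and lowers their means. Hence $W_n\le\bar W_n$, where $\bar W_n$ has centered summands bounded by $C(1+\mathrm{dist}(o,v)^\beta)$. The paper then bounds the bad-walk event crudely. Finally it upgrades $\P(\mathcal{G})>0$ to almost sure stabilization by the $0$--$1$ law, Proposition~\ref{prop:01-law}; your sketch would need this step too.

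Your ``monotone comparison'' alternative has a related problem. The odometer is a nonlinear (convex) function of the masses, and nearby spikes can be collected on one excursion. So per-spike $o(1)$ bounds do not add up to a bound on the total.
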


	Part~(i) follows from Proposition~\ref{prop:subcritical} and part~(ii) from Proposition~\ref{prop:poly-growth}, both combined with the universal heat kernel bound $\sup_x\P_x(X_n=x)\leq C n^{-1/2}$ on bounded-degree graphs~\citep[Example~5.14]{Grigoryan} and the Carne--Varopoulos sub-Gaussian displacement bound~\citep[Section~13.2]{LP16}, which supplies hypothesis~\ref{H3} with~$d_w=2$. On graphs with better spectral properties both thresholds can be sharpened; see those propositions for precise statements involving the spectral and walk dimensions.
	
	Theorems~\ref{thm:explosion} and~\ref{thm:stab} follow from Theorem~\ref{thm:OS} via the random walk representation of the odometer established in Section~\ref{sec:rw-rep} (Theorem~\ref{thm:RW} and Corollary~\ref{cor:RW-infinite}). This representation---expressing the sandpile odometer as an optimal stopping value---is classical in the free boundary literature \citep{PS06} but appears to be new in the sandpile setting; it replaces the conservation-of-mass machinery of~\citet{div-sand-crit}. The walk payoff decomposes into a scenery-independent drift of rate~$\mu-1$ and a random walk in random scenery weighted by local times; the three regimes of Theorem~\ref{thm:OS} correspond to the sign of this drift.

	Our conditions are nearly sharp in the following senses.
	\begin{enumerate}[label=(\alph*)]
		\item \textit{The threshold $p > 3$ is sharp on general bounded-degree graphs.} For every $p \in (0,3)$, we construct a transient bounded-degree graph (Theorem~\ref{thm:transient-nonstab}) and, for every growth exponent $d_f \in (\max(p,2),3)$, a recurrent bounded-degree graph of growth~$d_f$ (Theorem~\ref{thm:recurrent-nonstab}) on which, for every $\mu < 1$, there exist i.i.d.\ masses bounded from below with mean~$\mu$ and finite $p$-th moment that explode almost surely.
		\item \textit{On graphs with heat kernel bounds, the refined threshold of Proposition~\ref{prop:subcritical} is optimal.} Lemma~\ref{lem:moment-sharpness} shows that violating the moment condition forces the \emph{expected} odometer to diverge. On~$\Z^d$ this gives the sharp exponent $p > 1+2/d$: violating it forces $\E[u_\infty(o)]=\infty$ even though stabilization holds for every $\mu<1$ by conservation of mass on vertex-transitive graphs (Section~\ref{sec:stationary}).
		\item \textit{Bounded degree cannot be removed from either part of Theorem~\ref{thm:explosion}.} We construct a locally finite tree on which every i.i.d.\ law with $\E[|\sigma|]<\infty$ stabilizes almost surely (Lemma~\ref{ex:counterexample}); in particular, this includes $\mu>1$ and the critical case $\mu=1$ with positive finite variance. The tree has $\sum_v g(x,v)<\infty$, so stabilization follows from Proposition~\ref{prop:short-clock}.
	\end{enumerate}
	
	The assumption of identical distribution can be weakened in some of the arguments to a uniform integrability assumption, but for clarity of exposition we do not pursue this.

	\subsection{Proof ideas}
	On stationary random graphs (Section~\ref{sec:stationary}), the phase transition follows from conservation of the degree-weighted mass $\E[\sigma_k(\rho)/\deg(\rho)]$ under toppling, with a nearly identical proof as in the vertex-transitive case treated by~\citet{div-sand-crit}.
	
	On general bounded-degree graphs, conservation fails. Our replacement is the random walk representation. Let~$\{X_k\}$ denote simple random walk on the graph and $S_n\coloneqq\sum_{k=0}^{n-1}\xi(X_k)/\deg(X_k)$ for~$\xi(v)\coloneqq\sigma(v)-1$; the random walk representation (Corollary~\ref{cor:RW-infinite}) is that
	\[
	u_\infty(x)=\sup_{\tau\ \mathrm{bounded}}\E_x[S_\tau\mid\sigma]\, , 
	\]
	where the supremum is taken over all bounded stopping times~$\tau$. The walk payoff~$S_n$ decomposes as
	\begin{equation}
		S_n = D_n + W_n,\qquad
		D_n \coloneqq (\mu - 1)\sum_{k=0}^{n-1} \frac{1}{\deg(X_k)},\qquad
		W_n \coloneqq \sum_{v\in V}\frac{L_n(v)}{\deg(v)}\bigl(\sigma(v)-\mu\bigr)\,.
		\label{eq:dn-wn-stuff}
	\end{equation}
	The drift $D_n$ is of order $(\mu-1)n$ on a bounded-degree graph. The fluctuation $W_n$, where $L_n(v)=\#\{k<n:X_k=v\}$ is the local time, weights the i.i.d.\ centered masses by the local times. The three regimes correspond to the sign of the drift, in increasing order of difficulty:
	\begin{itemize}
		\item \textit{Supercritical} ($\mu>1$; Section~\ref{sec:supercritical}). The positive drift overwhelms the fluctuation: $S_n$ grows linearly, so the walker gains by running forever and $u_\infty=\infty$ almost surely.
		\item \textit{Critical} ($\mu=1$; Section~\ref{sec:critical}). The drift vanishes and explosion must come from the fluctuation alone. The argument splits into two sub-cases. If $\sum_v g(o,v)^2 = \infty$ (examples: $\Z$ through~$\Z^4$), then the variance of~$W_n$ grows without bound and a second-moment argument produces explosion. If $\sum_v g(o,v)^2<\infty$ (examples: $\Z^d$ for $d\geq 5$), then the variance saturates and a more delicate construction is needed: we stop the walk in rare regions where the initial masses are unusually small, producing a large positive expected payoff; an Efron--Stein variance bound then upgrades to quenched explosion.  When~$\sigma-1$ is symmetric, a convexity argument removes the finite-variance hypothesis entirely: the nested-volume formula makes the odometer convex in the initial masses, and a truncation that preserves the law reduces the problem to the finite-variance case (Proposition~\ref{prop:convexity-reduction}).
		\item \textit{Subcritical} ($\mu<1$; Section~\ref{sec:subcritical}). The negative drift penalizes long runs, so the optimal strategy is to stop early. We give two proofs with complementary results.
		The first (Proposition~\ref{prop:subcritical}) proves finite moments $\E[u_\infty^q]<\infty$ under the condition $p>3$. The difficulty is that $W_n$ is not a sum of independent increments, since the walk revisits vertices. Conditionally on the walk trajectory, however, the masses $\sigma(v)-\mu$ at distinct vertices are independent, so concentration inequalities apply to~$W_n$ as a conditional sum. We decompose the supremum dyadically and on each scale apply the Fuk--Nagaev inequality. The moment condition $p>3$ enters through local time estimates that control the conditional variance.
		The second (Proposition~\ref{prop:poly-growth}) gives almost sure stabilization under the weaker condition $p>d_f$ on graphs with polynomial volume growth $|B(o,r)|\leq Cr^{d_f}$, but without moment bounds on~$u_\infty$. Here sub-Gaussian bounds on the walk displacement allow us to truncate the scenery outside a ball whose radius grows with the walk. On graphs satisfying the Einstein relation, this gives the sharp threshold $p>1$.
	\end{itemize}
	Section~\ref{sec:counterexamples} establishes the sharpness of our conditions. We show that bounded degree cannot be removed from the supercritical result by constructing a locally finite tree on which $\mu>1$ sandpiles stabilize. We show that the moment condition in the subcritical regime is necessary by exhibiting bounded-degree graphs---both transient and recurrent---on which for every $p<3$, sandpiles with $\mu<1$ and finite $p$-th moment explode almost surely. We also show that the refined moment threshold of Proposition~\ref{prop:subcritical} is optimal on graphs with two-sided heat kernel bounds. There we also show that the i.i.d.\ assumption is essential: $2$-dependent critical masses on~$\Z^d$ can stabilize even though i.i.d.\ critical masses explode.
	
	\subsection{Related work}
	For general surveys of sandpile models see \citet{Jarai18} and \citet{LP17}; for the divisible sandpile specifically see~\citet{survey-ruszel-div-sandpile}, who asked whether the stabilization results of~\citet{div-sand-crit} extend beyond vertex-transitive graphs. The present paper answers this in the affirmative for a large class of graphs including all those with bounded degree.
	
	On vertex-transitive graphs, the divisible sandpile is by now well understood: \citet*{MR3877548} identified the scaling limit of the odometer on the torus as a bi-Laplacian Gaussian field, with extensions to heavy-tailed masses~\citep{MR3834853}, dependent weights~\citep{CdGR20}, long-range toppling~\citep{MR3796366, MR4282689}, and macroscopic limits~\citep{BouRabee-abelian-sandpile}. Progress on non-transitive graphs has so far been confined to specific families: the Sierpi\'nski gasket~\citep{MR3957187, KaiserSH}, the comb lattice~\citep{HussSava12}, and mated-CRT maps~\citep{BouRabeeGwynne-IDLA, BouRabeeGwynne-harmonic}. An analogous stabilization--explosion phase transition occurs for activated random walks~\citep{RollaSidoravicius12, SidoraviciusTeixeira17, johnson2025odometer} on~$\Z^d$.
	
	The parallel toppling recursion (Lemma~\ref{lem:recursion}), without the positive-part truncation, is the Jacobi relaxation for the Poisson equation $\Delta u = 1 - \sigma$. The truncation makes it a projected Jacobi iteration for the obstacle problem $u \geq 0$ and $\sigma + \Delta u \leq 1$, connecting the divisible sandpile to diffusive load balancing~\citep{Cybenko89, RSW}. Our random walk representation (Theorem~\ref{thm:RW}) replaces this iterative viewpoint with a probabilistic one, expressing the odometer as the value function of an optimal stopping problem. Conditionally on the walk, the fluctuation $W_n = \sum_v (L_n(v)/\deg(v))(\sigma(v)-\mu)$ is a sum of independent random variables weighted by local times, an instance of a random walk in random scenery~\citep{KestenSpitzer79}.
	
	\subsection{Notation and setup}
	Throughout, $G=(V,E)$ denotes an infinite, locally finite, connected graph. We write $\deg(v)$ for the degree of~$v$, $\mathrm{dist}(\cdot,\cdot)$ for the graph distance, and $B(x,R)\coloneqq\{v:\mathrm{dist}(v,x)\leq R\}$ for the closed ball. For $a\in\R$, write $a^+\coloneqq\max(a,0)$ and $a^-\coloneqq\max(-a,0)$.
	
	Let $(X_n)_{n\geq 0}$ be the simple random walk on~$G$, with $\P_x(X_1=y)=1/\deg(x)$ for $y\sim x$. We write $\P_x$ and $\E_x$ for probability and expectation of the walk started at~$x$. The hitting time of vertex~$v \in V$ is $T_v\coloneqq\inf\{n\geq 0:X_n=v\}$ and the first return time is $T_v^+\coloneqq\inf\{n\geq 1:X_n=v\}$.
	
	The \textbf{Green function} is the degree-normalized expected occupation kernel
	\begin{equation}\label{eq:green-def}
		g(x,v)\coloneqq\frac{1}{\deg(v)}\,\E_x\biggl[\sum_{n\geq 0}\one_{\{X_n=v\}}\biggr]\in(0,\infty]\,.
	\end{equation}
	By reversibility, $g(x,v)=g(v,x)$. 
	For background on Green functions and their connection to electrical networks, see \citet[Chapter~2]{LP16}.
	
	For $f\colon V\to \R$, the discrete Laplacian is $\Delta f(x) \coloneqq \sum_{y \sim x}\bigl(f(y) - f(x)\bigr)$.
	We fix a vertex $o\in V$ throughout; since the graph is connected, stabilization does not depend on the choice of~$o$. For a finite connected set $C\subseteq V$ with $o\in C$ and $V\setminus C\neq\varnothing$, the \textbf{killed Green function} $g_C(o,v)$ is the unique function satisfying
	\begin{equation}\label{eq:gC-PDE}
		g_C=0\ \text{on }V\setminus C,\qquad \Delta g_C=0\ \text{on }C\setminus\{o\},\qquad -\Delta g_C(o)=1\,;
	\end{equation}
	equivalently, $g_C(o,v)=\E_o[L_{\tau_C}(v)]/\deg(v)$ where $\tau_C\coloneqq\inf\{k\geq 0:X_k\notin C\}$. For a general source $y\in C$, define $g_C(y,v)\coloneqq\E_y[L_{\tau_C}(v)]/\deg(v)$.
	
	The \textbf{finite-time Green function} and the associated \textbf{inverse-degree clock} and \textbf{fluctuation scale} are
	\begin{equation}\label{eq:gn-def}
		g_n(x,v)\coloneqq\frac{\E_x[L_n(v)]}{\deg(v)},\qquad
		A_n(x)\coloneqq\sum_{v\in V}g_n(x,v)=\E_x\biggl[\sum_{k=0}^{n-1}\frac{1}{\deg(X_k)}\biggr],\qquad
		\Sigma_n(x)\coloneqq\sum_{v\in V}g_n(x,v)^2\,.
	\end{equation}
	On a bounded-degree graph, $g_n(x,v)\asymp\E_x[L_n(v)]$ and $A_n(x)\asymp n$ and $\Sigma_n(x)\asymp\sum_{v\in V}\E_x[L_n(v)]^2$.
	
	Given an initial configuration $\sigma\colon V\to\R$, the walk is taken independent of~$\sigma$. We write $\P$ and $\E$ for probability and expectation under the joint law of~$\sigma$ and the walk. When the dependence on~$\sigma$ needs to be made explicit, we write $u_\infty(v;\sigma)$ for the odometer. When we write $\E[\sigma(v)]=\mu\in[-\infty,\infty]$, we mean the extended expectation: $\mu=+\infty$ means $\E[\sigma(v)^+]=\infty$ and $\E[\sigma(v)^-]<\infty$, and $\mu=-\infty$ means $\E[\sigma(v)^-]=\infty$ and $\E[\sigma(v)^+]<\infty$. In particular, the indeterminate case $\E[\sigma(v)^+]=\E[\sigma(v)^-]=\infty$ is excluded throughout. When the masses are i.i.d., we sometimes write~$\sigma$ for a generic copy of~$\sigma(v)$; thus $\var(\sigma)$ and $\E[\sigma^2]$ refer to the common marginal.

	\section{Stationary random graphs}\label{sec:stationary}
	On the stationary random rooted graphs of \citet{BenjaminiCurien12}, the conservation-of-mass techniques of \citet{div-sand-crit} extend readily from the vertex-transitive setting and give a clean phase transition (Theorem~\ref{thm:stationary-phase}). We include this self-contained section because stationary random rootings cover many non-vertex-transitive examples, including the Sierpi\'nski gasket~\citep{MR3957187, KaiserSH} and supercritical percolation clusters~\citep{survey-ruszel-div-sandpile}. The more involved random-walk techniques needed for general bounded-degree graphs begin in Section~\ref{sec:rw-rep}.
	
	\begin{figure}[ht]
		\centering
		\includegraphics[width=0.46\textwidth]{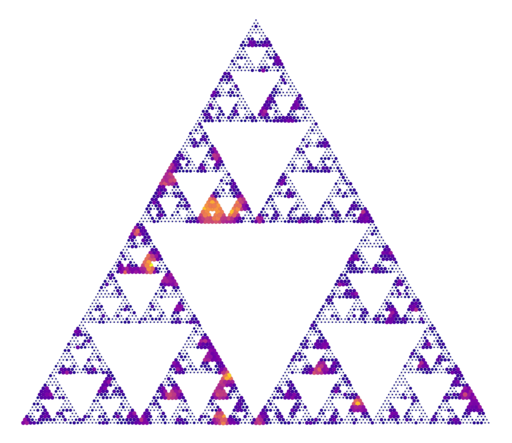}%
		\hfill
		\includegraphics[width=0.48\textwidth]{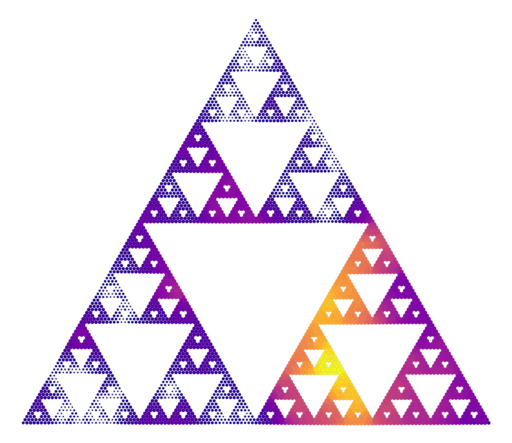}
		\caption{Odometer~$u_\infty(v)$ on the Sierpi\'nski gasket (level~$6$) with i.i.d.\ exponential initial masses of mean~$\mu$; vertex color and size are proportional to~$u_\infty(v)$. Left: subcritical ($\mu=0.6$); right: critical ($\mu=1.0$). The infinite Sierpi\'nski gasket admits a stationary random rooting, so Theorem~\ref{thm:stationary-phase} gives stabilization for~$\mu < 1$ and explosion for~$\mu > 1$.}
		\label{fig:sierpinski}
	\end{figure}
	
	A \textbf{rooted graph} is a pair $(G,\rho)$ where $G=(V,E)$ is a locally finite connected graph and $\rho \in V$ is a distinguished vertex. Two rooted graphs are \textbf{isomorphic} if there is a graph isomorphism mapping the root to the root.
	
	\begin{definition}\label{def:stationary-graph}
		If $(G,\rho)$ has the same law as $(G,X_1)$, where, conditionally on $(G,\rho)$, the vertex $X_1$ is chosen among the neighbors of~$\rho$ with probability $1/\deg(\rho)$, then $(G,\rho)$ is called \textbf{stationary}. Equivalently, for every non-negative Borel function $h$ on the space of isomorphism classes of rooted graphs (equipped with the local topology),
		\[
		\E[h(G,\rho)] = \E\biggl[\frac{1}{\deg(\rho)}\sum_{y \sim \rho} h(G,y)\biggr]\,.
		\]
	\end{definition}
	
	By~\citet[Proposition~2.1]{BenjaminiCurien12}, stationarity of $(G,\rho)$ is equivalent to shift-invariance of the path-space law of the simple random walk on~$G$ started at~$\rho$. If $\one_A(G,o)=\one_A(G,o')$ for every rooted graph~$(G,o)$ and every neighbor $o'\sim o$, then a Borel set~$A$ of rooted graphs is called \textbf{rerooting-invariant}. We write $\mathcal{I}_G$ for the $\sigma$-algebra of all rerooting-invariant events in~$\sigma(G,\rho)$.
	
	\begin{definition}\label{def:ergodic-graph}
		If every rerooting-invariant event has probability $0$ or~$1$ (that is, $\mathcal{I}_G$ is trivial), then a stationary random rooted graph $(G,\rho)$ is called \textbf{ergodic}.
	\end{definition}
	
	By the ergodic decomposition theorem, every stationary law decomposes into ergodic components: conditionally on~$\mathcal{I}_G$, the law of $(G,\rho)$ is stationary and ergodic. Every vertex-transitive graph is stationary, since rooting at each vertex produces isomorphic rooted graphs. Stationary random graphs provide a random-graph analogue of this invariance; they are discussed in~\citet{BenjaminiCurien12} and include reversible random rooted graphs, which are related to the unimodular random networks of~\citet{AldousLyons07} by degree biasing. The infinite Sierpi\'nski gasket admits a stationary random rooting.
	
	A \textbf{rooted network} $(G,\rho,\sigma)$ adds vertex marks $\sigma\colon V \to \R^m$ to a rooted graph; isomorphisms must preserve all marks. The notions of stationarity, rerooting-invariance, and ergodicity extend verbatim, with $h(G,v,\sigma)$ replacing $h(G,v)$:
	\[
	\E[h(G,\rho,\sigma)] = \E\biggl[\frac{1}{\deg(\rho)}\sum_{y \sim \rho} h(G,y,\sigma)\biggr]\,.
	\]
	
	\begin{proposition}\label{prop:iid-stationary}
		Let $(G,\rho)$ be a stationary random rooted graph, and given~$G$, let $(\sigma(v))_{v\in V}$ be i.i.d.\ with a fixed common law~$\nu$, sampled independently of~$(G,\rho)$. Then $(G,\rho,\sigma)$ is a stationary random rooted network.
	\end{proposition}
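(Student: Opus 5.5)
We must show that for every nonnegative Borel $h$ on the space of rooted networks,
\[
\E[h(G,\rho,\sigma)] = \E\biggl[\frac{1}{\deg(\rho)}\sum_{y\sim\rho} h(G,y,\sigma)\biggr].
\]
The plan is to condition on the graph and reduce to the stationarity of $(G,\rho)$ itself. Given the rooted graph $(G,\rho)$, define
\[
H(G,v)\coloneqq \E_\nu\bigl[h(G,v,\sigma)\bigr],
\]
where $\sigma$ is i.i.d.\ $\nu$ on $V$. Because $\sigma$ is independent of $(G,\rho)$, Fubini gives $\E[h(G,\rho,\sigma)]=\E[H(G,\rho)]$. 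In the same way, $\E\bigl[\frac{1}{\deg(\rho)}\sum_{y\sim\rho}h(G,y,\sigma)\bigr]=\E\bigl[\frac{1}{\deg(\rho)}\sum_{y\sim\rho}H(G,y)\bigr]$; here the sum is finite because $G$ is locally finite, so linearity applies. Once $H$ is known to be a nonnegative Borel function on isomorphism classes of rooted graphs, the stationarity identity in Definition~\ref{def:stationary-graph} applied to $H$ closes the argument.

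The main obstacle is checking that $H$ is well defined and measurable on isomorphism classes. Well-definedness is straightforward. If $\phi$ is an isomorphism from $(G,v)$ to $(G',v')$, then $\sigma\circ\phi^{-1}$ is again i.i.d.\ $\nu$ on $V'$, since the product law is invariant under bijections of the index set. Moreover $h(G,v,\sigma)=h(G',v',\sigma\circ\phi^{-1})$, because $h$ is a function of isomorphism classes of marked networks. Hence $H(G,v)=H(G',v')$.

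For measurability, I would work with a canonical representative. Fix a Borel choice of labelling of each rooted graph by $\N$, for example by breadth-first search from the root with ties broken by a fixed measurable rule, as is standard for the local topology. The marks then become a sequence $(\sigma_i)_{i\in\N}$ that is i.i.d.\ $\nu$ regardless of the labelling. The map $(G,v,(\sigma_i))\mapsto h$ is jointly Borel, since it is the composition of $h$ with the Borel map that builds the marked rooted network from the labelled data. Tonelli's theorem then gives that $H$ is Borel. If one prefers to avoid canonical labellings, the same conclusion follows from a monotone class argument: verify the claim for $h$ depending only on the marked ball of radius $r$ around the root, where everything reduces to finitely many variables, and then extend to all nonnegative Borel $h$.
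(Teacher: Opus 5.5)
Your proposal is correct and follows the paper's proof. The paper also defines $\bar h(G,v)=\E[h(G,v,\sigma)\mid G]$ (your $H$), observes that it depends only on the isomorphism class of $(G,v)$ because the marks are i.i.d., and concludes by iterated expectation and the stationarity identity for $(G,\rho)$; your well-definedness and measurability discussion makes explicit points the paper leaves implicit.
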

	
	\begin{proof}
		Let $h$ be a non-negative Borel function on rooted networks and define the function $\bar{h}(G,v) \coloneqq \E[h(G,v,\sigma) \mid G]$. Since the marks are i.i.d., $\bar{h}$ depends only on the isomorphism class of~$(G,v)$. Iterated expectation and the stationarity of~$(G,\rho)$ give 
		\[
		\E[h(G,\rho,\sigma)]=\E[\bar{h}(G,\rho)]=\E[\frac{1}{\deg(\rho)}\sum_{y\sim\rho}\bar{h}(G,y)]=\E[\frac{1}{\deg(\rho)}\sum_{y\sim\rho}h(G,y,\sigma)]\,.
		\]
	\end{proof}
	
	\begin{theorem}[Phase transition on stationary graphs]\label{thm:stationary-phase}
		Let $(G,\rho)$ be a stationary random rooted graph that is almost surely infinite, and let $(\sigma(v))_{v\in V}$ be i.i.d.\ random variables sampled independently of~$(G,\rho)$, with $\E[\sigma] = \mu$ and $\E[|\sigma(\rho)|/\deg(\rho)] < \infty$.
		\begin{enumerate}[label=\textup{(\roman*)}]
			\item If $\mu > 1$, then $\P(\sigma \textup{ stabilizes}) = 0$.
			\item If $\mu < 1$, then $\P(\sigma \textup{ stabilizes}) = 1$.
		\end{enumerate}
	\end{theorem}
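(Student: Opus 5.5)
We will follow the conservation-of-mass strategy of \citet{div-sand-crit}, with the degree-weighted mass $\E[\sigma_n(\rho)/\deg(\rho)]$ playing the role of the mean mass. Throughout we use the parallel toppling procedure \eqref{eq:u-update}--\eqref{eq:sigma-update}. Each $(G,\rho,\sigma_n,u_n)$ is a deterministic, isomorphism-equivariant function of the stationary network $(G,\rho,\sigma)$ given by Proposition~\ref{prop:iid-stationary}. Hence it is again a stationary rooted network.

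\emph{Step 1 (conservation).} Fix $n$ and put $e(v)\coloneqq(\sigma_n(v)-1)^+/\deg(v)$. By \eqref{eq:sigma-update},
\[
\frac{\sigma_{n+1}(\rho)}{\deg(\rho)}=\frac{\min(\sigma_n(\rho),1)}{\deg(\rho)}+\frac{1}{\deg(\rho)}\sum_{y\sim\rho}e(y).
\]
Stationarity applied to $h=e$ gives $\E[\deg(\rho)^{-1}\sum_{y\sim\rho}e(y)]=\E[e(\rho)]$. Therefore
\[
\E\bigl[\sigma_{n+1}(\rho)/\deg(\rho)\bigr]=\E\bigl[\sigma_n(\rho)/\deg(\rho)\bigr]=\E[\mu/\deg(\rho)],
\]
where the last equality uses independence of $\sigma$ and $G$. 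Integrability is needed for these manipulations. Split $\sigma$ into positive and negative parts. The negative part only grows under $\min(\cdot,1)$ and is bounded by $\sigma^-$. The positive parts are nonnegative, so stationarity applies to them without any integrability condition. By induction, $\E[\sigma_n(\rho)^+/\deg(\rho)]\le\E[|\sigma(\rho)|/\deg(\rho)]+C$.

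\emph{Step 2 ($\mu>1$).} Suppose stabilization has positive probability. First we show that the event of stabilization is rerooting-invariant. If $u_\infty(\rho)<\infty$, then every neighbor $y$ has finite total emission, because $\rho$ receives at least $e(y)$ summed over time. Conditioning on this invariant event preserves stationarity, so we may assume stabilization holds a.s. Then $\sigma_n\to\sigma_\infty\le1$ pointwise. Indeed, $\sigma_{n}(v)-1\le$ the increment of $u$ at $v$ times $\deg(v)$, plus the incoming increments, and all of these tend to $0$. We would then like
\[
\E\bigl[\sigma_\infty(\rho)/\deg(\rho)\bigr]\le\E[1/\deg(\rho)]<\E[\mu/\deg(\rho)].
\]
This contradicts conservation, provided we can pass the limit through the expectation. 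This is the main obstacle: Fatou gives the wrong inequality. Following \citet{div-sand-crit}, I would instead use the odometer identity $\sigma_n=\sigma+\Delta u_n$, i.e.\ $\sigma_n(\rho)=\sigma(\rho)+\sum_{y\sim\rho}(u_n(y)-u_n(\rho))$. Stationarity gives
\[
\E\biggl[\frac{1}{\deg(\rho)}\sum_{y\sim\rho}\bigl(u(y)\wedge M-u(\rho)\wedge M\bigr)\biggr]=0
\]
for truncated $u=u_\infty$. Truncating the odometer at level $M$ makes $\sum_{y\sim\rho}(u(y)\wedge M-u(\rho)\wedge M)$ bounded. On the event where all nearby $u\le M$, this quantity equals $\Delta u_\infty(\rho)=\sigma_\infty(\rho)-\sigma(\rho)\le 1-\sigma(\rho)$. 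Letting $M\to\infty$ by dominated convergence, with the $1/\deg$ weight and bounded truncation, yields $0\le\E[(1-\sigma(\rho))/\deg(\rho)]<0$, a contradiction.

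\emph{Step 3 ($\mu<1$).} Suppose $u_\infty(\rho)=\infty$ with positive probability. By the same invariance argument, and working on an ergodic component, $u_\infty=\infty$ at every vertex a.s. A vertex with infinite odometer topples infinitely often. We follow the argument of \citet{div-sand-crit} in this setting: every vertex eventually ends at mass $\ge1$ infinitely often. Moreover, $\liminf_n\sigma_n(v)\ge1$ for all $v$, since after each toppling $v$ holds exactly $1$ and afterwards only receives mass while holding $\le1$. Fatou applied to $\sigma_n^-$, together with the uniform integrability from Step 1, then gives
\[
\liminf_n\E\bigl[\sigma_n(\rho)/\deg(\rho)\bigr]\ge\E[1/\deg(\rho)]>\E[\mu/\deg(\rho)],
\]
which contradicts conservation. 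The key check here is that $\sigma_n^-\le\sigma^-$ is dominated, so only the lower bound on the positive part is needed. Fatou for nonnegative variables applied to $\sigma_n+\sigma^-$ gives exactly that lower bound.
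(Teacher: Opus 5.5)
Your Step~2 reduction, ``conditioning on this invariant event preserves stationarity, so we may assume stabilization holds a.s.'', is where the proof breaks. Conditioning on a rerooting-invariant event does preserve stationarity. But the stabilization event $A$ depends on the masses: by monotonicity of the odometer it is a decreasing event in $\sigma$. So under $\P(\cdot\mid A)$ the masses are a priori no longer i.i.d.\ with mean~$\mu$ and independent of $(G,\rho)$, and your final contradiction $\E[(1-\sigma(\rho))/\deg(\rho)]<0$ relies on exactly that structure.

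The missing ingredient is a $0$--$1$ law showing that $A$ agrees a.s.\ with a graph-measurable invariant event. The paper proves $\P(A\mid\mathcal{I}_G)\in\{0,1\}$ (Lemma~\ref{lem:01-stationary}). This rests on the nontrivial fact that i.i.d.\ marks on a stationary ergodic graph give an ergodic network (Lemma~\ref{lem:ergodic-marked-stationary}, a mixing argument along the walk). The paper then conditions on $B\coloneqq\{\P(A\mid\mathcal{I}_G)=1\}$ or on $B^c$. Under these conditional laws the masses stay i.i.d.\ with mean $\mu$ and independent of the graph, so both conservation and $\E_B[\sigma(\rho)/\deg(\rho)]=\mu\,\E_B[1/\deg(\rho)]$ hold. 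The fixed-graph law, Proposition~\ref{prop:01-law}, applied conditionally on $(G,\rho)$, would also do. Your phrase ``working on an ergodic component'' in Step~3 presupposes precisely this ergodicity of the marked network without proving it.

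A second, repairable problem: the dominated convergence in Step~2 has no integrable majorant. The truncated Laplacian is bounded only by $M$, which grows, and the natural majorant $\deg(\rho)^{-1}\sum_{y\sim\rho}|u_\infty(y)-u_\infty(\rho)|$ need not be integrable. Use instead the one-sided bound, valid everywhere on the stabilizing event:
\[
\frac{1}{\deg(\rho)}\sum_{y\sim\rho}\bigl(u_\infty(y)\wedge M-u_\infty(\rho)\wedge M\bigr)\leq\frac{1-\sigma(\rho)}{\deg(\rho)}\,\one_{\{u_\infty(\rho)\leq M\}}\,.
\]
If $u_\infty(\rho)\le M$, the sum is at most $\Delta u_\infty(\rho)=\sigma_\infty(\rho)-\sigma(\rho)\le 1-\sigma(\rho)$; otherwise it is at most $0$. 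Dominated convergence with majorant $(1+|\sigma(\rho)|)/\deg(\rho)$ then gives $0\le(1-\mu)\,\E_B[1/\deg(\rho)]$.

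With both repairs, your part~(i) is a genuinely different and leaner route than the paper's. The paper truncates the initial masses to $\sigma\wedge M$, using monotonicity, so that all emissions are bounded by $M-1$. It then combines conservation with bounded convergence of the emissions $e_k(\rho)\to 0$. Your route applies stationarity to the bounded function $u_\infty\wedge M$ and needs neither conservation nor mass truncation. Your Step~3 is the paper's Fatou argument, which uses $(\sigma_k(\rho)-\sigma(\rho)\wedge 1)/\deg(\rho)\ge 0$; no uniform integrability is needed there.
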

	
	The key ingredient is conservation of the mass $\E[\sigma_k(\rho)/\deg(\rho)]$ under parallel toppling.
	
	\begin{theorem}[Conservation of degree-weighted mass]\label{thm:stationary-toppling}
		Let $(G,\rho,\sigma)$ be a stationary random rooted network with $\E[|\sigma(\rho)|/\deg(\rho)] < \infty$. Under parallel toppling, for each $k \geq 0$:
		\begin{itemize}
			\item[\em (a)] The random rooted network $(G,\rho,\sigma_k,u_k)$ is stationary.
			\item[\em (b)] The degree-weighted mass is conserved: $\E[\sigma_k(\rho)/\deg(\rho)] = \E[\sigma(\rho)/\deg(\rho)]$.
		\end{itemize}
	\end{theorem}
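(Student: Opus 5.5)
We argue by induction on $k$, proving (a) and (b) together; for $k=0$, part (a) is the hypothesis. The basic fact is that one parallel toppling step is a local, isomorphism-equivariant map. By \eqref{eq:u-update} and \eqref{eq:sigma-update}, the values $(\sigma_{k+1}(v),u_{k+1}(v))$ are determined by $(\sigma_k,u_k)$ on the closed neighborhood of $v$ together with the degrees there. Hence there is a Borel map $\Phi$ on rooted networks, commuting with rerooting, such that $(G,\rho,\sigma_{k+1},u_{k+1})=\Phi(G,\rho,\sigma_k,u_k)$.

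For (a), given a nonnegative Borel $h$, set $\tilde h\coloneqq h\circ\Phi$. Then $\tilde h(G,y,\sigma_k,u_k)=h(G,y,\sigma_{k+1},u_{k+1})$ for every vertex $y$, because applying $\Phi$ to the network rerooted at $y$ yields $\sigma_{k+1},u_{k+1}$ rerooted at $y$. Applying the stationarity identity for $(G,\rho,\sigma_k,u_k)$ to $\tilde h$ gives the identity for $(G,\rho,\sigma_{k+1},u_{k+1})$, which is (a).

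For (b), write $e_k(v)\coloneqq(\sigma_k(v)-1)^+$. By \eqref{eq:sigma-update},
\[
\frac{\sigma_{k+1}(\rho)}{\deg(\rho)}=\frac{\sigma_k(\rho)-e_k(\rho)}{\deg(\rho)}+\frac{1}{\deg(\rho)}\sum_{w\sim\rho}\frac{e_k(w)}{\deg(w)},
\]
using $\min(\sigma_k,1)=\sigma_k-e_k$. Apply stationarity at step $k$ with $h(G,y,\sigma_k)=e_k(y)/\deg(y)$. This gives
\[
\E\Bigl[\tfrac1{\deg(\rho)}\textstyle\sum_{w\sim\rho}e_k(w)/\deg(w)\Bigr]=\E[e_k(\rho)/\deg(\rho)].
\]
Taking expectations in the display above, the emitted and received terms cancel, leaving $\E[\sigma_{k+1}(\rho)/\deg(\rho)]=\E[\sigma_k(\rho)/\deg(\rho)]$.

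The cancellation is legitimate only if $\E[e_k(\rho)/\deg(\rho)]<\infty$, and this integrability is what we carry through the induction. It holds because $e_k\le|\sigma_k|$, so it suffices to show $\E[|\sigma_{k+1}(\rho)|/\deg(\rho)]<\infty$ given the same bound at step $k$. For this we bound
\[
|\sigma_{k+1}(\rho)|\le|\sigma_k(\rho)|+\sum_{w\sim\rho}\frac{|\sigma_k(w)|}{\deg(w)}.
\]
Dividing by $\deg(\rho)$, taking expectations, and using the stationarity identity with $h=|\sigma_k|/\deg$ (valid since $h\ge0$) gives
\[
\E\bigl[|\sigma_{k+1}(\rho)|/\deg(\rho)\bigr]\le 2\,\E\bigl[|\sigma_k(\rho)|/\deg(\rho)\bigr]<\infty.
\]
This closes the induction. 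No degree bound is needed, since every use of stationarity involves a nonnegative test function.

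The step I expect to need the most care is the measurability and equivariance claim used in (a): that $\Phi$ is well defined on isomorphism classes of rooted networks and is Borel for the local topology. It follows because each output coordinate is a continuous function of finitely many local marks and degrees.
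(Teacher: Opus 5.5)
Your proof is correct and follows the paper's argument. For (a) you use isomorphism-equivariance of the toppling map. For (b) you apply the stationarity identity to the nonnegative emission $(\sigma_k-1)^+/\deg$ to cancel emitted against received mass, carrying integrability through the induction. The differences are cosmetic: you iterate a one-step local map rather than composing with the $k$-step map directly, and you get integrability from the single bound $|\sigma_{k+1}(\rho)|\le|\sigma_k(\rho)|+\sum_{w\sim\rho}|\sigma_k(w)|/\deg(w)$. The paper instead treats the negative part (via $\sigma_k\ge\sigma_0\wedge 1$) and the positive part separately.
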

	
	\begin{proof}
		(a) Since the parallel toppling rule depends only on the graph structure and the initial configuration, the map $(G,v,\sigma)\mapsto(G,v,\sigma_k,u_k)$ is isomorphism-equivariant. Composing with every Borel test function $h$ and applying the stationarity of $(G,\rho,\sigma)$ gives stationarity of $(G,\rho,\sigma_k,u_k)$.
		
		\medskip
		
		(b) We prove the identity and the integrability $\E[|\sigma_k(\rho)|/\deg(\rho)]<\infty$ simultaneously by induction on~$k$. For $k=0$, integrability is the hypothesis and the identity is trivial. Suppose both hold at step~$k-1$. The parallel toppling update reads
		\begin{equation}\label{eq:stationary-update}
			\sigma_k(v) = \min\bigl(\sigma_{k-1}(v),\,1\bigr) + \sum_{w \sim v} \frac{(\sigma_{k-1}(w)-1)^+}{\deg(w)}\,.
		\end{equation}
		By part~(a), $(G,\rho,\sigma_{k-1},u_{k-1})$ is stationary, hence so is $(G,\rho,\sigma_{k-1})$ by forgetting the~$u$-mark. The function $h(G,v,\sigma_{k-1}) = (\sigma_{k-1}(v)-1)^+/\deg(v)$ is integrable since $(a-1)^+\leq|a|+1$ and $\E[|\sigma_{k-1}(\rho)|/\deg(\rho)]<\infty$. Applying the stationarity identity to~$h$ gives
		\begin{equation}\label{eq:stationary-applied}
			\E\biggl[\frac{1}{\deg(\rho)}\sum_{w \sim \rho} \frac{(\sigma_{k-1}(w)-1)^+}{\deg(w)}\biggr] = \E\biggl[\frac{(\sigma_{k-1}(\rho)-1)^+}{\deg(\rho)}\biggr]\,.
		\end{equation}
		Dividing~\eqref{eq:stationary-update} by $\deg(\rho)$, taking expectations, and substituting~\eqref{eq:stationary-applied} yields
		\[
		\E\biggl[\frac{\sigma_k(\rho)}{\deg(\rho)}\biggr] = \E\biggl[\frac{\min(\sigma_{k-1}(\rho),\,1) + (\sigma_{k-1}(\rho)-1)^+}{\deg(\rho)}\biggr] = \E\biggl[\frac{\sigma_{k-1}(\rho)}{\deg(\rho)}\biggr]\,.
		\]
		The second equality is the identity $\min(a,1) + (a-1)^+ = a$. For the integrability: since the update~\eqref{eq:stationary-update} is $\min(\sigma_{k-1}(v),1)$ plus a nonnegative inflow, we have $\sigma_k(v) \geq \sigma_0(v) \wedge 1$ by induction. In particular, $\E[\sigma_k(\rho)^-/\deg(\rho)] < \infty$. For the positive part, \eqref{eq:stationary-update} and~\eqref{eq:stationary-applied} give
		\[
		\E\biggl[\frac{\sigma_k(\rho)^+}{\deg(\rho)}\biggr] \leq \E\biggl[\frac{1}{\deg(\rho)}\biggr] + \E\biggl[\frac{(\sigma_{k-1}(\rho)-1)^+}{\deg(\rho)}\biggr] < \infty\,.
		\]
		This completes the induction.
	\end{proof}
	
	\begin{lemma}[i.i.d.\ marking preserves ergodicity]\label{lem:ergodic-marked-stationary}
		Let $(G,\rho)$ be a stationary and ergodic random rooted graph that is almost surely infinite. Given $(G,\rho)$, let $(\sigma(v))_{v\in V}$ be i.i.d.\ random variables with common law~$\nu$, sampled independently of~$(G,\rho)$. Then $(G,\rho,\sigma)$ is a stationary and ergodic random rooted network.
	\end{lemma}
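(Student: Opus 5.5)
Stationarity is already given by Proposition~\ref{prop:iid-stationary}, so only ergodicity needs proof. Let $A$ be a rerooting-invariant event of rooted networks. We must show $\P((G,\rho,\sigma)\in A)\in\{0,1\}$. The plan has two stages. First condition on the graph and show that $A$ is trivial for the i.i.d.\ marks, via a Hewitt--Savage / Kolmogorov $0$--$1$ type argument. Then use ergodicity of $(G,\rho)$ to remove the dependence on the graph.

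Define $f(G,v)\coloneqq\P\bigl((G,v,\sigma)\in A\mid G\bigr)$, computed with $\sigma$ i.i.d.\ $\nu$ on $V$. This quantity depends only on the isomorphism class of $(G,v)$. Rerooting invariance of $A$ says $\one_A(G,v,\sigma)=\one_A(G,w,\sigma)$ for neighbors $v\sim w$. By connectedness, the indicator $\one_A(G,v,\sigma)$ is therefore the same for every root $v\in V$. So $\{\sigma:(G,v,\sigma)\in A\}$ is a fixed event $E_G\subseteq\R^V$, independent of $v$. Moreover $f(G,\cdot)$ is constant, so $f$ is a rerooting-invariant function of $(G,\rho)$. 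By ergodicity of $(G,\rho)$, $f(G,\rho)$ equals a deterministic constant $c$ almost surely. It therefore suffices to show $c\in\{0,1\}$, that is, $\P(E_G\mid G)\in\{0,1\}$ for almost every $G$.

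The main step, and the main obstacle, is showing that $E_G$ is trivial under the product measure $\nu^{\otimes V}$ for almost every $G$. I would approximate $E_G$ in $L^1(\nu^{\otimes V})$ by an event $F$ depending only on marks in a finite ball $B(\rho,r)$, with error $\eps$. The event $E_G$ is the same no matter which root we use. Since $G$ is infinite, we can then run the walk from $\rho$ for a long time. The idea is that a far-away vertex $w$ is, with positive probability, such that $(G,w)$ is ``similar'' to $(G,\rho)$ in law. Then the same approximation holds with $F$ replaced by a shifted event $F_w$ depending on marks in $B(w,r)$, disjoint from $B(\rho,r)$. Independence gives $\P(E_G)\approx\P(F\cap F_w)=\P(F)\P(F_w)\approx\P(E_G)^2$.

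To make the similarity precise, I would use stationarity of the path law: $(G,X_n,\sigma)$ has the same law as $(G,\rho,\sigma)$. Consequently the approximating event can be taken as a fixed Borel function $h_r$ of the rooted network, restricted to the $r$-ball. This gives $\E|\one_A(G,X_n,\sigma)-h_r(G,X_n,\sigma)|<\eps$ uniformly in $n$. Because $\one_A(G,X_n,\sigma)=\one_A(G,\rho,\sigma)$, we obtain $\E|h_r(G,\rho,\sigma)-h_r(G,X_n,\sigma)|<2\eps$.

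It remains to get asymptotic independence of $h_r(G,\rho,\sigma)$ and $h_r(G,X_n,\sigma)$ given $G$ and the walk. This holds whenever $\mathrm{dist}(\rho,X_n)>2r$. The delicate point is to ensure that $\P(\mathrm{dist}(\rho,X_n)\le 2r)\to0$, or at least stays below $1-\delta$ along a subsequence. For an infinite graph this needs care, for example in the recurrent case. I would instead choose $n$ random and large and use the fact that the walk leaves any finite ball infinitely often. Alternatively, I would replace $X_n$ by the first exit time from $B(\rho,3r)$. This is not a fixed time, though, and stationarity applies only to fixed times. The fallback is to average over $n\le N$ and use that the fraction of time spent in $B(\rho,2r)$ tends to zero. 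This follows from the ergodic theorem for the stationary walk on an infinite graph: the walk has no invariant finite set. With that in hand, conditional independence yields $\E[\one_A]\le \E[\one_A]^2+O(\eps)$, hence $c\in\{0,1\}$.
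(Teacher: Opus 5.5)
Your proposal is correct and follows the same route as the paper. Ergodicity of $(G,\rho)$ makes $\P(A\mid G,\rho)\equiv c$; a local approximation $h_r$ is moved from $\rho$ to $X_n$ using stationarity and rerooting invariance; and conditional independence of the marks on $\{\mathrm{dist}(\rho,X_n)>2r\}$, together with the quenched mean of $h_r$ being $L^1$-close to $c$, gives $c=c^2$.

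The step you flag as delicate is easier than you fear. On every infinite connected locally finite graph, $\P_\rho(X_n=v)\to0$ for each fixed $v$, whether the walk is recurrent or not; this is what the paper cites. Hence $\P(\mathrm{dist}(\rho,X_n)\le 2r)\to0$ by dominated convergence, and no Ces\`aro averaging is needed. If you do average, justify the vanishing occupation fraction by the Markov chain ergodic theorem on each fixed graph (null recurrence or transience). Birkhoff's theorem for the environment process does not apply here, since that process's ergodicity is exactly what you are proving.
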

	
	\begin{proof}
		Stationarity is Proposition~\ref{prop:iid-stationary}. For ergodicity, let $A$ be a rerooting-invariant event in $\sigma(G,\rho,\sigma)$. The function $q(G,o)\coloneqq\P((G,o,\sigma)\in A\mid G,o)$ is rerooting-invariant, so by ergodicity of~$(G,\rho)$ there exists $c\in[0,1]$ with $q(G,\rho)=c$ a.s.\ and $c=\P(A)$.
		
		Fix $\eps>0$ and approximate~$A$ by a Borel function $f$ depending only on the marked ball of radius~$r$ around the root, with $0\leq f\leq 1$ and $\E|\one_A-f(G,\rho,\sigma)|<\eps$. Set $\bar{f}(G,o)\coloneqq\E[f(G,o,\sigma)\mid G,o]$; then $\E|\bar{f}(G,\rho)-c|<\eps$ by Jensen's inequality.
		
		Let $(X_n)_{n\geq 0}$ be simple random walk on~$G$ started at~$\rho$, independent of~$\sigma$, and write $f_n\coloneqq f(G,X_n,\sigma)$ and $\bar{f}_n\coloneqq\bar{f}(G,X_n)$. On the event $D_n\coloneqq\{\mathrm{dist}_G(X_0,X_n)>2r\}$, the variables $f_0$ and $f_n$ depend on disjoint i.i.d.\ marks conditionally on $(G,\rho,X_n)$, so $\E[f_0 f_n\one_{D_n}\mid G,\rho,X_n]=\bar{f}_0\bar{f}_n\one_{D_n}$. Since $\P_\rho(X_n=v)\to 0$ for each~$v$ on infinite graphs \citep[Exercise~2.1(f)]{LP16}, dominated convergence gives $\P(D_n^c)\to 0$. Choosing~$n$ large enough that $\P(D_n^c)<\eps$, the triangle inequality yields
		\[
		|\P(A)-c^2|
		\leq\underbrace{|\P(A)-\E[f_0 f_n]|}_{<\,2\eps}
		+\underbrace{|\E[f_0 f_n]-\E[\bar{f}_0\bar{f}_n]|}_{\leq\,\P(D_n^c)\,<\,\eps}
		+\underbrace{|\E[\bar{f}_0\bar{f}_n]-c^2|}_{<\,2\eps}<5\eps\,.
		\]
		Since $\eps>0$ was arbitrary, $c=c^2$, so $\P(A)\in\{0,1\}$.
	\end{proof}
	
	\begin{lemma}[$0$--$1$ law for stabilization]\label{lem:01-stationary}
		Let $(G,\rho)$ be a stationary random rooted graph that is almost surely infinite, and let $(\sigma(v))_{v\in V}$ be i.i.d.\ random variables sampled independently of~$(G,\rho)$. Then
		\[
		\P(\sigma\textup{ stabilizes}\mid\mathcal{I}_G)\in\{0,1\}\qquad\text{a.s.}
		\]
	\end{lemma}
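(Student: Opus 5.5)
The plan is to show that the event $\{\sigma\text{ stabilizes}\}$ is a rerooting-invariant event for the marked network $(G,\rho,\sigma)$. Then we run the argument of Lemma~\ref{lem:ergodic-marked-stationary} conditionally on $\mathcal{I}_G$, i.e.\ on each ergodic component of $(G,\rho)$.

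First, invariance. Stabilization means $u_\infty(v)<\infty$ for every $v$. We claim that on a connected graph, finiteness of $u_\infty$ at one vertex forces finiteness everywhere. To see this, note that \eqref{eq:u-update}--\eqref{eq:sigma-update} give the local mass balance
\[
\sigma_n(v)=\sigma(v)+\Delta u_n(v),
\]
and $\sigma_n(v)\ge \min(\sigma(v),1)$. Hence $\sum_{w\sim v}u_n(w)=\sigma_n(v)-\sigma(v)+\deg(v)u_n(v)$. If $u_\infty(v)<\infty$, then $\sigma_n(v)$ remains bounded: once $\sigma_n(v)$ exceeds $1$, it emits, and the emitted total is bounded by $\deg(v)u_\infty(v)$. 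Letting $n\to\infty$ then gives $u_\infty(w)<\infty$ for every $w\sim v$. Carrying out this boundedness claim cleanly is the one point needing care. If the direct bound is awkward, an easier route is the inequality $\sigma_n(v)\le \max(\sigma(v),1)+\sum_{w\sim v}\deg(w)\,(u_n(w)-u_{n-1}(w))$, combined with monotonicity of $u_n$. Either way, $\{\sigma\text{ stabilizes}\}=\{u_\infty(\rho)<\infty\}$. This event does not depend on the root, and it is a Borel function of the isomorphism class of $(G,\rho,\sigma)$, so it is rerooting-invariant.

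Second, ergodic decomposition. Conditionally on $\mathcal{I}_G$, the law of $(G,\rho)$ is stationary and ergodic, as recalled after Definition~\ref{def:ergodic-graph}, and it remains almost surely infinite. The marks are i.i.d.\ with the fixed law $\nu$ and are independent of $(G,\rho)$, so conditioning on $\mathcal{I}_G$ leaves their conditional law given $G$ unchanged. Applying Lemma~\ref{lem:ergodic-marked-stationary} to each ergodic component, the marked network $(G,\rho,\sigma)$ is ergodic under the conditional law. Therefore the rerooting-invariant event from the first step has conditional probability $0$ or $1$, which gives $\P(\sigma\text{ stabilizes}\mid\mathcal{I}_G)\in\{0,1\}$ almost surely.

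The main obstacle is the measure-theoretic passage through the ergodic decomposition: we need a regular conditional law given $\mathcal{I}_G$, and the i.i.d.\ marking must commute with this conditioning. One could avoid the decomposition altogether by repeating the proof of Lemma~\ref{lem:ergodic-marked-stationary} directly. Set $q(G,o)=\P(A\mid G,o)$; it is rerooting-invariant, hence $\mathcal{I}_G$-measurable. The mixing estimate $|\E[f_0f_n]-\E[\bar f_0\bar f_n]|\le\P(D_n^c)$ combined with $\bar f_n\approx q$ (stationarity) then yields $\E[q]=\E[q^2]$ after multiplying by arbitrary bounded $\mathcal{I}_G$-measurable weights. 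This forces $q\in\{0,1\}$ almost surely. I would present this direct version to keep the argument self-contained.
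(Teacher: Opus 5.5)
Your first argument (rerooting invariance, then the ergodic decomposition with Lemma~\ref{lem:ergodic-marked-stationary} applied on each component) is exactly the paper's proof. The direct version you say you would present is a genuinely different and more self-contained route.

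Instead of invoking the Benjamini--Curien decomposition, and the fact that conditioning on $\mathcal{I}_G$ keeps the marks i.i.d.\ and independent of $(G,\rho)$, you rerun the mixing argument of Lemma~\ref{lem:ergodic-marked-stationary}. The random quantity $q(G,\rho)=\P(A\mid G,\rho)$ takes the place of the constant $c$. This goes through:
\begin{itemize}
\item rerooting invariance gives $q(G,X_n)=q(G,\rho)$ and $\one_A(G,X_n,\sigma)=\one_A(G,\rho,\sigma)$;
\item stationarity of $(G,\rho)$ and of $(G,\rho,\sigma)$ (Proposition~\ref{prop:iid-stationary}) transports the $L^1$ approximation errors from time $0$ to time $n$;
\item combining these, you get $|\P(A)-\E[q^2]|\le 4\eps+\P(D_n^c)$.
\end{itemize}
The $\mathcal{I}_G$-measurable weights are superfluous. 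Already $\E[q]=\P(A)=\E[q^2]$ together with $0\le q\le 1$ forces $q(1-q)=0$ a.s. To finish, add the line $\P(A\mid\mathcal{I}_G)=\E[q(G,\rho)\mid\mathcal{I}_G]=q(G,\rho)$, which holds because $q(G,\rho)$ is $\mathcal{I}_G$-measurable.

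What each route buys: yours avoids regular conditional laws entirely and contains Lemma~\ref{lem:ergodic-marked-stationary} as the special case $q\equiv c$. The paper's is shorter, given the decomposition theorem.

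Your vertex-to-neighbor propagation step is unnecessary. Stabilization means $u_\infty<\infty$ at every vertex, which is visibly independent of the root. If you keep it, use the bound $\sigma_n(v)-1\le\deg(v)\,(u_{n+1}(v)-u_n(v))\le\deg(v)\,u_\infty(v)$. Your ``easier'' alternative bounds $\sigma_n(v)$ by the neighbors' increments, so it is circular for this purpose.
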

	
	\begin{proof}
		Let $A\coloneqq\{\sigma\text{ stabilizes}\}$. Since stabilization does not depend on the choice of root, the event~$A$ is rerooting-invariant. By the ergodic decomposition~\citep[Section~2.1]{BenjaminiCurien12}, conditionally on~$\mathcal{I}_G$ the law of $(G,\rho)$ is stationary and ergodic. Since $\mathcal{I}_G\subseteq\sigma(G,\rho)$ and the masses are sampled independently of~$(G,\rho)$, conditionally on~$\mathcal{I}_G$ they remain i.i.d.\ and independent of~$(G,\rho)$. Lemma~\ref{lem:ergodic-marked-stationary} applied on each ergodic component gives $\P(A\mid\mathcal{I}_G)\in\{0,1\}$ a.s.
	\end{proof}
	
	Proposition~\ref{prop:01-law} below gives a different $0$--$1$ law, valid on every fixed locally finite connected graph without stationarity, using the random walk representation.
	
	\begin{proof}[Proof of Theorem~\ref{thm:stationary-phase}]
		We prove both parts by contrapositive, following the same strategy as~\citet[Lemmas~4.1 and~4.2]{div-sand-crit}. Let $A\coloneqq\{\sigma\text{ stabilizes}\}$. By Lemma~\ref{lem:01-stationary}, $\P(A\mid\mathcal{I}_G)\in\{0,1\}$ a.s. Let $B\coloneqq\{\P(A\mid\mathcal{I}_G)=1\}\in\mathcal{I}_G$. Then
		\[
		\P(A\cap B^c)=\E[\one_{B^c}\P(A\mid\mathcal{I}_G)]=0,\qquad \P(A^c\cap B)=\E[\one_B\P(A^c\mid\mathcal{I}_G)]=0\,.
		\]
		Therefore $A=B$ a.s.\ and $\P(A)=\P(B)$. For $E\in\{B,B^c\}$ with $\P(E)>0$, write $\P_E(\cdot)\coloneqq\P(\cdot\mid E)$ and $\E_E[\cdot]\coloneqq\E[\cdot\mid E]$. Since $B\in\mathcal{I}_G$ is rerooting-invariant, conditioning on~$B$ or~$B^c$ preserves stationarity of $(G,\rho)$; since~$B$ is graph-measurable, the masses remain i.i.d.\ with common mean~$\mu$ and independent of~$(G,\rho)$ under each conditional law. By Theorem~\ref{thm:stationary-toppling}, conservation holds under each conditional law. Under $\P_B$ the sandpile stabilizes a.s.; under $\P_{B^c}$ it does not.
		
		\medskip
		\textup{(i)}\;
		Assume $\P(B)>0$. Fix $M>1$ and set $\sigma^{(M)}\coloneqq\sigma\wedge M$. Since $\sigma^{(M)}\leq\sigma$ pointwise, induction on~$n$ using~\eqref{eq:u-update} and~\eqref{eq:sigma-update} gives $u_n(\cdot;\sigma^{(M)})\leq u_n(\cdot;\sigma)$ for all~$n$, so $\sigma^{(M)}$ also stabilizes $\P_B$-a.s. On a fixed graph, the per-step emission $e_k(v)\coloneqq(\sigma_k^{(M)}(v)-1)^+/\deg(v)$ satisfies $0\leq e_k(v)\leq M-1$ for every $k$ and $v$: at $k=0$ this holds since $\sigma_0^{(M)}\leq M$, and if $e_{k-1}(w)\leq M-1$ for all~$w$, then~\eqref{eq:stationary-update} gives $(\sigma_k^{(M)}(v)-1)^+\leq\sum_{w\sim v}e_{k-1}(w)\leq\deg(v)(M-1)$. Since $\sigma^{(M)}$ stabilizes, $e_k(\rho)\to 0$ $\P_B$-a.s., and dominated convergence gives $\E_B[e_k(\rho)]\to 0$.
		
		Under $\P_B$ the truncated masses are i.i.d.\ with mean $\E[\sigma\wedge M]$ and independent of the graph. Conservation (Theorem~\ref{thm:stationary-toppling}) therefore gives
		\[
		\E_B\biggl[\frac{\sigma_k^{(M)}(\rho)}{\deg(\rho)}\biggr] = \E[\sigma\wedge M]\,\E_B\biggl[\frac{1}{\deg(\rho)}\biggr]\,.
		\]
		The identity $a=\min(a,1)+(a-1)^+$ and the bound $\min(\sigma_k^{(M)}(\rho),1)\leq 1$ yield
		\[
		\E[\sigma\wedge M]\,\E_B\biggl[\frac{1}{\deg(\rho)}\biggr] \leq \E_B\biggl[\frac{1}{\deg(\rho)}\biggr] + \E_B[e_k(\rho)]\,.
		\]
		Letting $k\to\infty$ and dividing by $\E_B[1/\deg(\rho)]>0$ gives $\E[\sigma\wedge M]\leq 1$. Since $\sigma$ is independent of $(G,\rho)$, the hypothesis $\E[|\sigma(\rho)|/\deg(\rho)]<\infty$ gives $\E[|\sigma|]\cdot\E[1/\deg(\rho)]<\infty$ and hence $\E[|\sigma|]<\infty$. Sending $M\to\infty$ by dominated convergence yields $\mu\leq 1$.
		
		\medskip
		\textup{(ii)}\;
		Assume $\P(B^c)>0$. If $u_\infty(v)=\infty$ and $u_\infty(w)<\infty$ for some neighbor $w\sim v$, then $\sigma_n(w)\geq\sigma(w)+u_n(v)-\deg(w)\,u_n(w)\to\infty$, contradicting $u_\infty(w)<\infty$. By connectivity, $u_\infty\equiv\infty$ on~$V$ under~$\P_{B^c}$. Since $\rho$ topples at least once and~\eqref{eq:stationary-update} preserves $\sigma_k(\rho)\geq 1$, we have $\liminf_k\sigma_k(\rho)\geq 1$ $\P_{B^c}$-a.s. The variables $Y_k\coloneqq(\sigma_k(\rho)-\sigma(\rho)\wedge 1)/\deg(\rho)$ are non-negative, and conservation gives $\E_{B^c}[Y_k]=(\mu-\E[\sigma\wedge 1])\,\E_{B^c}[1/\deg(\rho)]$. By Fatou's lemma and $\liminf_k\sigma_k(\rho)\geq 1$, dividing by $\E_{B^c}[1/\deg(\rho)]>0$ gives $\mu\geq 1$.
		
		\medskip
		If $\mu>1$, then part~(i) gives $\P(B)=0$, hence $\P(A)=\P(B)=0$. If $\mu<1$, then part~(ii) gives $\P(B^c)=0$, hence $\P(A)=\P(B)=1$.
	\end{proof}
	
	\section{Random walk representation}\label{sec:rw-rep}
	In this section we fix a locally finite connected graph $G=(V,E)$ and an initial configuration $\sigma\colon V\to\R$. All walk expectations $\E_x$ are with respect to the simple random walk on~$G$; when $\sigma$ is random, the statements below are to be read conditionally on~$\sigma$.
	
	The parallel toppling update~\eqref{eq:sigma-update} may be written compactly as $\sigma_{n+1} = \sigma_n + \Delta[(\sigma_n - 1)^+/\deg]$, where $(\sigma_n - 1)^+/\deg$ denotes the function $x \mapsto (\sigma_n(x) - 1)^+/\deg(x)$. It is immediate by induction that $\sigma_n = \sigma + \Delta u_n$ for every~$n \geq 0$.
	Define the \textbf{excess mass} $\xi(v)\coloneqq\sigma(v)-1$ and write $\zeta(v)\coloneqq\xi(v)/\deg(v)$ for the per-neighbor excess (so that $S_n=\sum_{k=0}^{n-1}\xi(X_k)/\deg(X_k)=\sum_{k=0}^{n-1}\zeta(X_k)$ as in Theorem~\ref{thm:OS}). Define the \textbf{local time} $L_n(v)\coloneqq\sum_{k=0}^{n-1}\one_{\{X_k=v\}}$; for a stopping time~$\tau$, write $L_\tau(v)\coloneqq\sum_{k<\tau}\one_{\{X_k=v\}}$.
	The parallel toppling odometer satisfies the following dynamic programming recursion.
	\begin{lemma}\label{lem:recursion}
		For all $n \geq 0$ and $x \in V$,
		\[
		u_{n+1}(x) = \biggl(\frac{1}{\deg(x)}\sum_{y \sim x} u_n(y) + \zeta(x)\biggr)^+\,.
		\]
	\end{lemma}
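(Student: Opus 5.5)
The plan is to argue by induction on~$n$, using the identity $\sigma_n=\sigma+\Delta u_n$ noted just before the lemma and the update rule~\eqref{eq:u-update}. For $n=0$ we have $u_0\equiv 0$ and $\sigma_0=\sigma$. Then~\eqref{eq:u-update} gives $u_1(x)=(\sigma(x)-1)^+/\deg(x)=\zeta(x)^+$, which matches the right-hand side because $\sum_{y\sim x}u_0(y)=0$. So the base case is immediate.

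For the general step, rewrite~\eqref{eq:u-update} using $\sigma_n(x)=\sigma(x)+\Delta u_n(x)=\sigma(x)+\sum_{y\sim x}u_n(y)-\deg(x)u_n(x)$. This gives
\[
u_{n+1}(x)=u_n(x)+\frac{\bigl(\xi(x)+\sum_{y\sim x}u_n(y)-\deg(x)u_n(x)\bigr)^+}{\deg(x)}
=u_n(x)+\bigl(a_n(x)-u_n(x)\bigr)^+,
\]
where $a_n(x)\coloneqq\frac{1}{\deg(x)}\sum_{y\sim x}u_n(y)+\zeta(x)$. Since $b+(a-b)^+=\max(a,b)$, this says
\[
u_{n+1}(x)=\max\bigl(a_n(x),u_n(x)\bigr).
\]
So it suffices to show $\max(a_n(x),u_n(x))=a_n(x)^+$, that is, $u_n(x)\le a_n(x)^+$ and $u_n(x)\ge 0$. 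Nonnegativity is clear, because $u_n$ is nondecreasing from $u_0=0$.

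For the upper bound, use the induction hypothesis $u_n(x)=a_{n-1}(x)^+$ for $n\ge1$; when $n=0$ we have $u_0=0\le a_0^+$ trivially. Since $u_n\ge u_{n-1}$ pointwise, we get $a_n(x)\ge a_{n-1}(x)$. Because $t\mapsto t^+$ is monotone, $u_n(x)=a_{n-1}(x)^+\le a_n(x)^+$. Therefore $\max(a_n,u_n)=\max(a_n,u_n,0)=a_n^+$, using $u_n\ge0$ and $u_n\le a_n^+$. This closes the induction.

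The only point that needs care is the monotonicity $u_n\ge u_{n-1}$. It holds directly from~\eqref{eq:u-update}, since each increment is a positive part, so nothing beyond the definitions is needed.
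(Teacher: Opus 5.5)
Your proof is correct and follows essentially the same route as the paper: you substitute $\sigma_n=\sigma+\Delta u_n$ into the update rule to get $u_{n+1}=\max(u_n,a_n)$, then use the monotonicity of $u_n$ together with induction to show $0\le u_n\le a_n^+$, so the maximum reduces to the positive part. You also spell out the inductive step $u_n=a_{n-1}^+\le a_n^+$, which the paper leaves implicit.
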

	\begin{proof}
		Substituting $\sigma_n = \sigma + \Delta u_n$ into~\eqref{eq:u-update} and using $a + (b - a)^+ = \max(a, b)$ gives
		\[
		u_{n+1}(x) = \max\biggl(u_n(x),\; \frac{1}{\deg(x)}\sum_{y \sim x} u_n(y) + \zeta(x)\biggr)\,.
		\]
		Since $u_n \geq u_{n-1}$ pointwise and $u_0 = 0$, induction gives $u_n(x) \leq \bigl(\frac{1}{\deg(x)}\sum_{y \sim x} u_n(y) + \zeta(x)\bigr)^+$, so the maximum reduces to the positive part.
	\end{proof}
	
	We next give a random walk representation of the odometer via the \textbf{walk payoff} $(S_n)_{n\geq 0}$ defined by $S_0=0$ and $S_n=\sum_{k=0}^{n-1}\zeta(X_k)$.
	For a stopping time~$\tau$, set $S_\tau\coloneqq\sum_{k<\tau}\zeta(X_k)$.
	\begin{theorem}[Random walk representation]\label{thm:RW}
		Let~$v_n(x) \coloneqq \sup_{\tau \leq n} \E_x[S_\tau]$, where the supremum is taken over stopping times (with respect to the natural filtration of $(X_k)_{k\in\N}$) bounded by $n$.
		For all $n \geq 0$, we have
		\[
		u_n(x) = v_n(x) = \E_x\bigl[S_{\tau_n^*}\bigr]\,.
		\]
		Here $\tau_n^* \coloneqq \min\bigl\{0 \leq k \leq n : v_{n-k}(X_k) = 0\bigr\}$.
	\end{theorem}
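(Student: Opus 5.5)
We argue by induction on $n$, using Lemma~\ref{lem:recursion} to identify $u_n$ with the finite-horizon dynamic programming (Snell envelope) value. The central claim is that $v_n$ satisfies the same recursion as $u_n$:
\[
v_0\equiv 0,\qquad v_{n+1}(x)=\Bigl(\zeta(x)+\frac{1}{\deg(x)}\sum_{y\sim x}v_n(y)\Bigr)^+ .
\]
Since $u_0=0=v_0$ and Lemma~\ref{lem:recursion} gives the identical recursion for $u_n$, induction yields $u_n=v_n$ for every $n$. The representation via $\tau_n^*$ then follows from the standard optimality of the first entrance into the stopping region.

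To prove the recursion for $v_{n+1}$, fix a stopping time $\tau\le n+1$. The event $\{\tau=0\}$ is $\mathcal F_0$-measurable, and under $\P_x$ the start $X_0=x$ is deterministic, so either $\tau=0$ a.s., with payoff $0$, or $\tau\ge1$ a.s. In the latter case we write $S_\tau=\zeta(x)+\sum_{1\le k<\tau}\zeta(X_k)$. Conditionally on $X_1=y$, the shifted time $\tau-1$ is a stopping time for the shifted walk $(X_{1+k})_{k\ge0}$, and it is bounded by $n$. By the Markov property its conditional expected payoff is at most $v_n(y)$, which gives $\E_x[S_\tau]\le \zeta(x)+\frac1{\deg x}\sum_{y\sim x}v_n(y)$. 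For the reverse inequality, choose for each neighbor $y$ a stopping time $\tau_y\le n$ that is optimal from $y$; such times exist because the horizon is finite and there are finitely many paths of length $n$, so the supremum is a maximum. Then define $\tau=1+\tau_{X_1}\circ\theta_1$, which is a stopping time bounded by $n+1$. Taking the maximum with the choice $\tau=0$ yields the positive part.

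For the optimal time, we show by induction on $n$ that $\E_x[S_{\tau_n^*}]=v_n(x)$. If $v_n(x)=0$, then $\tau_n^*=0$ and the payoff is $0$. Otherwise $v_n(x)>0$, so $v_n(x)=\zeta(x)+\frac1{\deg x}\sum_y v_{n-1}(y)$ and $\tau_n^*\ge1$. The shifted rule $\tau_n^*-1$, read from $X_1$, is exactly $\tau_{n-1}^*$ for the shifted walk, since $v_{(n-1)-(k-1)}=v_{n-k}$. The induction hypothesis then gives the claim. Note that $\tau_n^*\le n$ is well defined because $v_0\equiv0$.

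The only delicate point is the measurability and shifting of stopping times in the Markov-property step. This amounts to the standard fact that $\tau\ge1$ implies $\tau-1$ is a stopping time for the shifted filtration, conditionally on $X_1$. Because everything takes place at finite horizon on a locally finite graph with countably many paths, no integrability issues arise, apart from requiring that $\zeta$ be finite; this holds since $\sigma$ is real-valued.
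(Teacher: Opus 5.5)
Your proof is correct and follows the paper's argument: induction on $n$, showing that $v_n$ satisfies the recursion of Lemma~\ref{lem:recursion} via the first-step shift and the Markov property, and showing $\tau_n^*$ is optimal through $\tau_n^*-1=\tau_{n-1}^*\circ\theta_1$. The only difference is in the lower bound, where you pick maximizing stopping times from each neighbor using local finiteness, while the paper plugs in $1+\tau_n^*\circ\theta_1$ from the inductive hypothesis. That merges your two inductions into one and makes the finiteness argument unnecessary.
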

	\begin{proof}
		By Lemma~\ref{lem:recursion} it suffices to show that $v_n$ satisfies the same recursion as $u_n$ and that the supremum defining $v_n$ is attained by $\tau_n^*$. We prove both claims by induction on~$n$; both hold trivially for $n = 0$.
		For the inductive step, every stopping time $\tau \leq n+1$ either satisfies $\tau = 0$, giving $\E_x[S_\tau] = 0$, or $\tau \geq 1$. In the latter case, the shifted variable $\tau' \coloneqq \tau - 1$ is a stopping time for the walk $(X_{1+k})_{k \geq 0}$ with $\tau' \leq n$. By the Markov property, conditionally on $X_1 = y$, this walk has law $\P_y$, so $\E_y[S_{\tau'}] \leq v_n(y)$ and
		\[
		\E_x[S_\tau] = \zeta(x) + \E_x\bigl[\E_{X_1}[S_{\tau'}]\bigr] \leq \zeta(x) + \frac{1}{\deg(x)}\sum_{y \sim x} v_n(y)\,.
		\]
		Since $\tau = 0$ gives value~$0$, maximizing over all $\tau \leq n+1$ yields
		\[
		v_{n+1}(x) \leq \Bigl(\frac{1}{\deg(x)}\sum_{y \sim x} v_n(y) + \zeta(x)\Bigr)^+\,.
		\]
		For the matching lower bound, if $\zeta(x) + \frac{1}{\deg(x)}\sum_{y \sim x} v_n(y) \leq 0$, then $v_{n+1}(x)=0=(\ldots)^+$. Otherwise, by the inductive hypothesis the stopping time $\tau\coloneqq 1+\tau_n^*\circ\theta_1$ (where $\theta_1$ is the time shift) satisfies $\tau\leq n+1$ and the Markov property gives $\E_x[S_\tau] = \zeta(x) + \frac{1}{\deg(x)}\sum_{y \sim x} v_n(y)$, matching the upper bound. Since $\tau_{n+1}^*$ equals~$\tau$ when $v_{n+1}(x)>0$ and equals~$0$ otherwise, the attainment claim follows.
	\end{proof}

	The random walk representation also yields an exact characterization of the odometer at $o$ in terms of voltage sums over finite connected sets. This is the starting point for the counterexample constructions in Section~\ref{sec:counterexamples}.
	
	For a finite connected $C\subseteq V$ with $o\in C$ and $V\setminus C\neq\varnothing$, the killed Green function $g_C(o,v)$ from~\eqref{eq:gC-PDE} satisfies
	\begin{equation}\label{eq:voltage-identity}
		\E_o[S_{\tau_C}]=\sum_{v\in C}g_C(o,v)\bigl(\sigma(v)-1\bigr)\,.
	\end{equation}
	When the source~$o$ is understood, we write $g_C(v)$ for $g_C(o,v)$.
	
	For a proper finite connected $K\ni o$, define
	\[
	v_K(x)\coloneqq\sup_{\tau\leq\tau_K}\E_x[S_\tau]\,.
	\]
	The supremum is over stopping times bounded by~$\tau_K$.
	
	\begin{proposition}[Finite-volume reduction]\label{prop:finite-vol}
		Let $K\subseteq V$ be finite, connected, and proper, with $o\in K$. Then $v_K(x)<\infty$ for every $x\in V$. Let $D_K\coloneqq\{x\in K:v_K(x)>0\}$ and let $D_K(o)$ denote the connected component of~$o$ in~$D_K$ (with $D_K(o)\coloneqq\varnothing$ if $o\notin D_K$). If $o\notin D_K$, then $v_K(o)=0$. If $o\in D_K$, then
		\begin{equation}
			v_K(o)=\E_o\bigl[S_{\tau_{D_K(o)}}\bigr]\,.
		\end{equation}
		In particular,
		\begin{equation}\label{eq:vK-sup}
			v_K(o)=\max\biggl\{0,\;\sup_{\substack{C\subseteq K,\; C\textup{ connected}\\ o\in C}}\sum_{v\in C}g_C(v)\bigl(\sigma(v)-1\bigr)\biggr\}\,.
		\end{equation}
	\end{proposition}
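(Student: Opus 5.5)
Since $\tau_K$ is almost surely finite ($K$ is a finite proper subset of a connected graph, so the walk exits $K$ a.s.) and $S_\tau$ only involves the finitely many values $\zeta(v)$, $v\in K$, up to time $\tau\le\tau_K$, we get $|S_\tau|\le \max_{v\in K}|\zeta(v)|\,\tau_K$. Moreover $\E_x[\tau_K]<\infty$, since exit times from finite sets have geometric tails. Hence $v_K(x)\le \max_K|\zeta|\,\E_x[\tau_K]<\infty$, and $v_K=0$ outside $K$. Note that $\sup_{\tau\le\tau_K}$ may be taken over stopping times that are a.s.\ finite because $\tau_K$ is.

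For the characterization, I will first establish the dynamic programming equation
\[
v_K(x)=\Bigl(\zeta(x)+\tfrac1{\deg x}\textstyle\sum_{y\sim x}v_K(y)\Bigr)^+ \quad (x\in K).
\]
The argument mirrors the proof of Theorem~\ref{thm:RW}, using the shift $\tau'=\tau-1$ and the Markov property. The proof of Theorem~\ref{thm:RW} already gives $v_n(x)=\E_x[S_{\tau_n^*}]$; here I will argue via the standard optimal stopping theory for finite-state chains with absorbing boundary $V\setminus K$, valid because all payoffs are bounded by $C\tau_K$, which is integrable. The conclusion is that the stopping time $\tau^*=\inf\{k:v_K(X_k)=0\}$ is optimal. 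Concretely, $v_K(X_{k\wedge\tau^*})+S_{k\wedge\tau^*}$ is a martingale, because on $\{v_K>0\}$ the recursion holds without the positive part. It is uniformly integrable by domination by $C\tau_K+\max v_K$. Optional stopping then gives $v_K(x)=\E_x[S_{\tau^*}]$, since $v_K(X_{\tau^*})=0$ and $\tau^*\le\tau_K$. Starting from $o\in D_K$, the walk remains in $D_K$ and, by nearest-neighbor moves, in the component $D_K(o)$ until $\tau^*$. Therefore $\tau^*=\tau_{D_K(o)}$, and $v_K(o)=\E_o[S_{\tau_{D_K(o)}}]$. If $o\notin D_K$, then $v_K(o)=0$ by definition.

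For \eqref{eq:vK-sup}, I will prove two inequalities. For every connected $C\subseteq K$ containing $o$, the time $\tau_C\le\tau_K$ is admissible, so \eqref{eq:voltage-identity} gives $v_K(o)\ge \sum_{v\in C}g_C(v)(\sigma(v)-1)$; also $v_K(o)\ge 0$ via $\tau=0$. Conversely, when $o\in D_K$, the set $C=D_K(o)$ is connected, contains $o$, and is a subset of $K$, so it is finite and proper. It therefore attains the supremum by the previous paragraph together with \eqref{eq:voltage-identity}. When $o\notin D_K$, the value is $0$, which matches the $\max\{0,\cdot\}$ after the first inequality.

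The main obstacle is justifying optimality of $\tau^*$ over unbounded (but $\le\tau_K$) stopping times. The plan is to handle it with the integrability of $\tau_K$ and the martingale/optional stopping argument above. Alternatively, one can approximate by bounded times $\tau\wedge n$ using dominated convergence with $|S_\tau|\le C\tau_K$, and then pass to the limit in the finite-horizon recursion of Theorem~\ref{thm:RW}, with killing outside $K$.
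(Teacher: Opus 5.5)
Your proposal is correct and follows essentially the paper's route: derive the Bellman equation for $v_K$ by first-step analysis, identify $v_K(o)$ with the exit payoff from the positivity component $D_K(o)$, and then obtain \eqref{eq:vK-sup} from the two inequalities using \eqref{eq:voltage-identity}. The only difference is in the identification step. You apply optional stopping to the stopped martingale $v_K(X_{k\wedge\tau^*})+S_{k\wedge\tau^*}$, which is uniformly integrable because $\E_x[\tau_K]<\infty$, whereas the paper applies the maximum principle to two solutions of the same Dirichlet problem on the finite set $D_K(o)$; both are ways of proving the same uniqueness statement.
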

	
	\begin{proof}
		Since $K$ is finite and proper, $\E_x[\tau_K]<\infty$ for every $x\in K$, so $v_K(x)<\infty$. If $x\notin K$, then $\tau_K=0$ and $v_K(x)=0$. The same first-step decomposition as in the proof of Theorem~\ref{thm:RW} gives
		\begin{equation}\label{eq:vK-bellman}
			v_K(x)=\biggl(\zeta(x)+\frac{1}{\deg(x)}\sum_{y\sim x}v_K(y)\biggr)^{\!+},\qquad x\in K\,.
		\end{equation}
		Let $D\coloneqq D_K(o)$. If $o\notin D_K$, then $v_K(o)=0$. If $o\in D$, then the positive part in~\eqref{eq:vK-bellman} is not active on~$D$, and since $D$ is a connected component of~$D_K$, every neighbor of~$D$ outside~$D$ has $v_K=0$. The exit payoff $x\mapsto\E_x[S_{\tau_D}]$ satisfies the same equation on~$D$ with the same zero boundary values (by conditioning on the first step), so the maximum principle gives $v_K(o)=\E_o[S_{\tau_D}]$.	For~\eqref{eq:vK-sup}, every connected $C\subseteq K$ with $o\in C$ gives $\tau_C\leq\tau_K$, so $\E_o[S_{\tau_C}]\leq v_K(o)$. The identity~\eqref{eq:voltage-identity} and the connectedness of~$D$ then yield~\eqref{eq:vK-sup}.
	\end{proof}
	
	\begin{theorem}[Nested-volume representation]\label{thm:nested-vol}
		Assume $G$ is infinite. Then
		\begin{equation}\label{eq:nested-exact}
			u_\infty(o)=\max\biggl\{0,\;\sup_{\substack{C\subseteq V,\; C\textup{ finite connected}\\ o\in C}}\sum_{v\in C}g_C(v)\bigl(\sigma(v)-1\bigr)\biggr\}\,.
		\end{equation}
	\end{theorem}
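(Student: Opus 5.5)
The plan is to pass to the limit in the finite-volume formula~\eqref{eq:vK-sup}. This requires showing that $u_\infty(o)=\sup_K v_K(o)$, where the supremum runs over an exhaustion of $V$ by finite, connected, proper sets $K\ni o$. Since $G$ is infinite, every finite $K$ is proper. Moreover, every finite connected $C\ni o$ lies inside some such $K$ (take $K=C$ itself), and then $\tau_C\le\tau_K$. Taking the supremum of~\eqref{eq:vK-sup} over $K$ therefore yields exactly the right-hand side of~\eqref{eq:nested-exact}. So everything reduces to the identity $u_\infty(o)=\sup_K v_K(o)$.

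\emph{Lower bound.} Fix $K$. We want $v_K(o)\le u_\infty(o)$. Bounded stopping times $\tau\wedge n$ with $\tau\le\tau_K$ are admissible in Theorem~\ref{thm:RW}, so $\E_o[S_{\tau\wedge n}]\le v_n(o)=u_n(o)\le u_\infty(o)$. Since $\E_o[\tau_K]<\infty$ and $|S_{\tau\wedge n}|\le \max_K|\zeta|\cdot\tau_K$, dominated convergence gives $\E_o[S_\tau]\le u_\infty(o)$. Taking the supremum over $\tau\le\tau_K$ yields $v_K(o)\le u_\infty(o)$.

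\emph{Upper bound.} We want $u_n(o)\le \sup_K v_K(o)$ for every $n$. Theorem~\ref{thm:RW} gives $u_n(o)=\E_o[S_{\tau_n^*}]$ with $\tau_n^*\le n$. Take $K=B(o,n)$, which is finite and connected, and proper because $G$ is infinite and connected. The walk started at $o$ cannot leave $B(o,n)$ before time $n+1$, so $\tau_n^*\le n<\tau_K$, and $\tau_n^*$ is admissible for $v_K$. Hence $u_n(o)\le v_{B(o,n)}(o)$. Letting $n\to\infty$, and recalling that $u_n\uparrow u_\infty$, gives the upper bound.

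The only mildly delicate step is the integrability needed to pass from bounded stopping times to $\tau\le\tau_K$ in the lower bound. This is handled by the finiteness of $\E_o[\tau_K]$, which already appears in the proof of Proposition~\ref{prop:finite-vol}, together with boundedness of $\zeta$ on the finite set $K$. With both bounds in hand, the proof closes by substituting~\eqref{eq:vK-sup}: the maximum with $0$ is preserved under the supremum over $K$.
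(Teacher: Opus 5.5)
Your proof is correct and follows the paper's argument. The upper bound comes from the ball $B(o,n)$ (the paper uses $B(o,n-1)$) containing the walk up to time $n$, combined with Proposition~\ref{prop:finite-vol}. The lower bound comes from truncating to $\tau\wedge m$ and applying dominated convergence with $\E_o[\tau_K]<\infty$. The only cosmetic difference is that you prove $v_K(o)\le u_\infty(o)$ for every $\tau\le\tau_K$ and then read off the lower bound from \eqref{eq:vK-sup}, whereas the paper truncates $\tau_C$ directly and invokes \eqref{eq:voltage-identity}.
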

	
	\begin{proof}
		For the upper bound, fix $n\geq 1$. For every stopping time $\tau\leq n$ and every $k<\tau$, the walk satisfies $\mathrm{dist}(o,X_k)\leq k\leq n-1$, so $X_k\in B(o,n-1)$ and therefore $\tau\leq\tau_{B(o,n-1)}$. Since $G$ is infinite, the ball $B(o,n-1)$ is a proper finite connected subset containing~$o$, and
		\[
		u_n(o)=\sup_{\tau\leq n}\E_o[S_\tau]\leq v_{B(o,n-1)}(o)\,.
		\]
		By Proposition~\ref{prop:finite-vol}, the right-hand side is $ \max\Bigl\{0,\,\sup_{\substack{C\subseteq B(o,n-1),\; C\textup{ connected}\\ o\in C}}\sum_{v\in C} g_C(v)(\sigma(v)-1)\Bigr\}$.     
		Taking the supremum over $n\geq 1$ and using that every finite connected $C$, with $o\in C$, lies in some ball gives the upper bound in~\eqref{eq:nested-exact}.
		
		For the lower bound, let $C\subseteq V$ be finite and connected with $o\in C$ and $V\setminus C\neq\varnothing$. Since $C$ is finite, $\E_o[\tau_C]<\infty$ and $|S_{\tau_C\wedge m}|\leq\max_{v\in C}|\zeta(v)|\cdot\tau_C$, so dominated convergence gives $\E_o[S_{\tau_C\wedge m}]\to\E_o[S_{\tau_C}]$ as $m\to\infty$. Each $\tau_C\wedge m$ is bounded, so Theorem~\ref{thm:RW} gives $\E_o[S_{\tau_C\wedge m}]\leq u_\infty(o)$. Passing to the limit and using~\eqref{eq:voltage-identity} yields
		\[
		\sum_{v\in C}g_C(v)\bigl(\sigma(v)-1\bigr)=\E_o[S_{\tau_C}]\leq u_\infty(o)\,.
		\]
		Taking the supremum over~$C$ and the maximum with~$0$ gives the matching lower bound.
	\end{proof}
	
	\begin{corollary}\label{cor:RW-infinite}
		For every configuration $\sigma$ and every $x\in V$,
		\[
		u_\infty(x)=\sup_{\tau\ \mathrm{bounded}}\E_x[S_\tau\mid\sigma]\,.
		\]
	\end{corollary}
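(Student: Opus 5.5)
The plan is to pass to the limit $n\to\infty$ in Theorem~\ref{thm:RW}. That theorem gives $u_n(x)=\sup_{\tau\leq n}\E_x[S_\tau\mid\sigma]$ for every $n$ and every $x\in V$. Both sides are nondecreasing in $n$: on the left because the odometer is monotone, and on the right because the class of stopping times bounded by $n$ grows with $n$. Hence
\[
u_\infty(x)=\lim_{n\to\infty}u_n(x)=\sup_{n\geq 0}\,\sup_{\tau\leq n}\E_x[S_\tau\mid\sigma]\,.
\]
Every bounded stopping time is bounded by some integer $n$, so the double supremum on the right is exactly $\sup_{\tau\ \mathrm{bounded}}\E_x[S_\tau\mid\sigma]$. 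This gives the identity, with both sides allowed to be $+\infty$.

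The only point needing care is that each expectation $\E_x[S_\tau\mid\sigma]$ is well defined and finite, so that the suprema make sense for arbitrary real $\sigma$. For $\tau\leq n$, the walk up to time $n-1$ stays in the finite ball $B(x,n-1)$, which gives
\[
|S_\tau|\leq n\max_{v\in B(x,n-1)}|\zeta(v)|<\infty
\]
by local finiteness. So each expectation is finite, and the monotone limit argument goes through with no integrability issues.

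There is no real obstacle here, since the corollary is just the $n\to\infty$ version of Theorem~\ref{thm:RW}. Note also that it does not require $G$ to be infinite, unlike Theorem~\ref{thm:nested-vol}. One could alternatively derive it from Theorem~\ref{thm:nested-vol} when $x=o$, but the direct monotone-limit argument is simpler and works for every $x$.
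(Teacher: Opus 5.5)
Your proposal is correct and follows essentially the same route as the paper: combine $u_n\uparrow u_\infty$ from Theorem~\ref{thm:RW} with the observation that every bounded stopping time satisfies $\tau\leq n$ for some deterministic $n$. Your additional check that each $\E_x[S_\tau\mid\sigma]$ is finite, because the walk stays in the finite ball $B(x,n-1)$ before time $n$, is a harmless extra that the paper leaves implicit.
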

	\begin{proof}
		Since $u_n(x)\uparrow u_\infty(x)$ and every bounded stopping time satisfies $\tau\leq n$ for some deterministic~$n$, it follows that $u_\infty(x)=\sup_{\tau\textup{ bounded}}\E_x[S_\tau\mid\sigma]$.
	\end{proof}
	
	\begin{remark}\label{rem:bolza}
		The recursion in Lemma~\ref{lem:recursion} is a Wald--Bellman equation. The random walk representation, Theorem~\ref{thm:RW}, is an instance of the Bolza formulation of optimal stopping for Markov chains: set $g\equiv 0$, $\alpha=1$, and $c=-\zeta$ in the notation of~\citep[Section~1.2, eqs.~(1.2.65) and~(1.2.73)]{PS06}.
	\end{remark}
	
	\section{Zero-one law for stabilization}\label{sec:01-law}
	On stationary random rooted graphs, the $0$--$1$ law for stabilization is a consequence of ergodicity (Lemma~\ref{lem:01-stationary}). On a fixed graph without symmetry, a different argument is needed. The random walk representation reduces the question to classical tail and exchangeability considerations. On a transient graph only independence is used: changing finitely many masses perturbs the odometer by a finite amount, bounded by Green function values. Stabilization is therefore a tail event, and Kolmogorov's $0$--$1$ law applies.  On a recurrent graph the Green function is infinite and the tail-event argument fails. Instead, we show that swapping the masses at two vertices perturbs the expected walk payoff by a bounded amount, using the voltage between the two vertices. Stabilization is therefore invariant under transpositions, and the Hewitt--Savage $0$--$1$ law applies and requires identical distribution. On recurrent graphs, independence is not enough: Example~\ref{ex:01-fails} below gives independent non-identically distributed masses for which $\P(\sigma\text{ stabilizes})\in(0,1)$.

	\begin{proposition}\label{prop:01-law}
		Let $G=(V,E)$ be an infinite, locally finite, connected graph and let
		$(\sigma(v))_{v\in V}$ be i.i.d.\ random variables. Then
		$\P(\sigma \text{ stabilizes})\in\{0,1\}$ and $\P(\sup_n\E_o[S_n\mid\xi]=\infty)\in\{0,1\}$.
	\end{proposition}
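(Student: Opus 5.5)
We follow the strategy sketched at the start of this section and split into the transient and recurrent cases. Write $A\coloneqq\{\sigma\text{ stabilizes}\}$. By the neighbour argument in the proof of Theorem~\ref{thm:stationary-phase}(ii), $u_\infty$ is either finite everywhere or infinite everywhere, so $A=\{u_\infty(o)<\infty\}$. By Corollary~\ref{cor:RW-infinite}, $u_\infty(o)=\sup_{\tau\text{ bounded}}\E_o[S_\tau\mid\sigma]$. The second event, $E\coloneqq\{\sup_n\E_o[S_n\mid\xi]=\infty\}$, is handled in exactly the same way, since it only involves deterministic times $\tau\equiv n$. In both cases the goal is to show that changing $\sigma$ in a controlled way changes $\E_o[S_\tau\mid\sigma]$ by an amount bounded uniformly in $\tau$, so that finiteness of the supremum is unaffected.

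\emph{Transient case.} Let $\sigma'$ agree with $\sigma$ off a finite set $F$. For every stopping time $\tau$,
\[
\bigl|\E_o[S_\tau\mid\sigma]-\E_o[S_\tau\mid\sigma']\bigr|\le\sum_{v\in F}\frac{\E_o[L_\tau(v)]}{\deg(v)}\,|\sigma(v)-\sigma'(v)|\le\sum_{v\in F}g(o,v)\,|\sigma(v)-\sigma'(v)|,
\]
and the right-hand side is finite because $g(o,v)<\infty$ on a transient graph. Hence $A$ and $E$ are unchanged by finite modifications, so they are tail events of the independent family $(\sigma(v))_{v\in V}$. Kolmogorov's $0$--$1$ law then gives the claim; this step uses only independence. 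Measurability is routine, since each $u_n(o)$ is a measurable function of finitely many masses.

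\emph{Recurrent case.} Here $g\equiv\infty$, so we instead show invariance under transpositions and apply Hewitt--Savage, using identical distribution. Let $\sigma'$ be $\sigma$ with the values at vertices $a$ and $b$ swapped. Then
\[
\E_o[S_\tau\mid\sigma']-\E_o[S_\tau\mid\sigma]=\bigl(\sigma(b)-\sigma(a)\bigr)\,\E_o\Bigl[\frac{L_\tau(a)}{\deg(a)}-\frac{L_\tau(b)}{\deg(b)}\Bigr].
\]
The key estimate is a uniform bound $\sup_\tau\bigl|\E_o[L_\tau(a)/\deg(a)-L_\tau(b)/\deg(b)]\bigr|\le C(o,a,b)<\infty$.

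\emph{Proving the key estimate.} We would use the function
\[
h(x)\coloneqq\E_x\Bigl[\sum_{k<T_b}\Bigl(\frac{\one_{\{X_k=a\}}}{\deg(a)}-\frac{\one_{\{X_k=b\}}}{\deg(b)}\Bigr)\Bigr]=g_{V\setminus\{b\}}(x,a),
\]
which is finite by recurrence and is a voltage between $a$ and $b$ (unit current in at $a$, grounded at $b$). Since $b\notin V\setminus\{b\}$, the $b$-term inside the expectation is zero before $T_b$. The function $h$ satisfies the Poisson equation $h(x)-\frac{1}{\deg(x)}\sum_{y\sim x}h(y)=f(x)$, where $f\coloneqq\one_a/\deg(a)-\one_b/\deg(b)+c\,\one_b/\deg(b)$ for a suitable constant $c$, which is in fact $0$ after normalization by choosing $h(b)$ appropriately. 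In other words, $\frac{1}{\deg}\Delta h=-f$ with $f$ having zero $\deg$-weighted sum. We would then use $h(X_n)+\sum_{k<n}f(X_k)$, which is a martingale, and optional stopping at the bounded time $\tau$ gives
\[
\E_o\Bigl[\sum_{k<\tau}f(X_k)\Bigr]=h(o)-\E_o[h(X_\tau)].
\]
The main obstacle is that $h$ need not be bounded on an infinite recurrent graph. We expect to get around this by taking the potential-theoretic version of $h$, namely the voltage of the unit current flow from $a$ to $b$. Its values lie between $h(b)$ and $h(a)$ by the maximum principle: $h$ is harmonic off $\{a,b\}$ and, on a recurrent graph, the walk hits $\{a,b\}$ almost surely, so $h(x)$ is an average of $h(a)$ and $h(b)$. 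This yields the bound $C=|h(a)-h(b)|$, the effective resistance times a degree factor.

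With this bound, for the quantity $U(\sigma)\coloneqq\sup_\tau\E_o[S_\tau\mid\sigma]$ we get $|U(\sigma')-U(\sigma)|\le C\,|\sigma(a)-\sigma(b)|<\infty$. So $A$ is invariant under every finite permutation of the i.i.d.\ masses, and the same holds for $E$ with $\tau\equiv n$. The Hewitt--Savage $0$--$1$ law then finishes the proof.
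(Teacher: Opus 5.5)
Your proposal is correct and follows the paper's proof essentially step for step: Kolmogorov's $0$--$1$ law with the bound $\sum_{v\in F}g(o,v)\,|\sigma(v)-\sigma'(v)|$ in the transient case, and Hewitt--Savage with optional stopping of $h(X_n)+\sum_{k<n}f(X_k)$ in the recurrent case, where your $h(x)=\E_x[L_{T_b}(a)]/\deg(a)=\P_x(T_a<T_b)/(\deg(a)\,\P_a(T_b<T_a^+))$ is exactly the function the paper uses. Two justifications need fixing, though neither changes the argument: your constant $c$ does not vanish by normalizing $h(b)$ (which is fixed at $0$, and changing it would break harmonicity at the neighbours of $b$); it vanishes because on a recurrent graph the whole unit current injected at $a$ exits at $b$, i.e.\ $\Delta h=\delta_b-\delta_a$, and this, not finiteness of $h$, is where recurrence is used; and the bound $0\le h\le h(a)$ follows directly from the strong Markov property at $T_a$, so boundedness is no obstacle (the bare maximum-principle reasoning you sketch would not bound an arbitrary harmonic function, e.g.\ $x\mapsto x$ on $\Z$).
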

	\begin{proof}
		By Corollary~\ref{cor:RW-infinite}, $u_\infty(x)=\sup_{\tau\textup{ bounded}}\E_x[S_\tau\mid\sigma]$.
		
		\emph{Transient case.} Since the initial masses are independent, by Kolmogorov's $0$--$1$ law it suffices to show that changing $\sigma$ on a finite set $J\subset V$ does not affect stabilization. Let $\sigma'$ agree with $\sigma$ outside~$J$. For every bounded stopping time $\tau$,
		\[
		\bigl|\E_x[S'_\tau-S_\tau]\bigr|=\biggl|\sum_{v\in J}(\sigma'(v)-\sigma(v))\frac{\E_x[L_\tau(v)]}{\deg(v)}\biggr|\leq\sum_{v\in J}|\sigma'(v)-\sigma(v)|\,g(x,v)\eqqcolon C_J(x)\,,
		\]
		and $C_J(x)<\infty$ by~\eqref{eq:green-def}. Hence $u_\infty(x;\sigma')\leq u_\infty(x;\sigma)+C_J(x)$ and $u_\infty(x;\sigma)\leq u_\infty(x;\sigma')+C_J(x)$, so $u_\infty(x;\sigma)<\infty$ if and only if $u_\infty(x;\sigma')<\infty$.
		
		\emph{Recurrent case.} By the Hewitt--Savage zero--one law, it suffices to show that swapping $\sigma(a)$ and $\sigma(b)$ for distinct vertices $a,b\in V$ does not affect stabilization.  The function
		\[
		f(x) \coloneqq \frac{\P_x(T_a<T_b)}{\deg(a)\,\P_a(T_b < T_a^+)}
		\]
		satisfies $0 \le f \le \|f\|_\infty= 1/(\deg(a)\,\P_a(T_b < T_a^+))<\infty$ and $\Delta f = \delta_b - \delta_a$ (see, e.g., \citet[Proposition~2.1 and eq.~(2.4)]{LP16}). Let $\sigma'$ denote the swapped configuration, so that $\zeta'-\zeta = -(\sigma(b)-\sigma(a))\,\Delta f/\deg$. The process
		\[
		M_n\coloneqq f(X_n)-\sum_{k=0}^{n-1}\frac{\Delta f(X_k)}{\deg(X_k)}
		\]
		is a $\P_x$-martingale, and optional stopping at every bounded stopping time $\tau$ gives
		\[
		\E_x[S'_\tau - S_\tau\mid\sigma] = \bigl(\sigma(b)-\sigma(a)\bigr)\bigl(f(x) - \E_x[f(X_\tau)]\bigr)\,.
		\]
		Since $0\leq f\leq\|f\|_\infty$, $|\E_x[S'_\tau - S_\tau\mid\sigma]|\leq|\sigma(b)-\sigma(a)|\,\|f\|_\infty$. Taking the supremum over bounded $\tau$ yields $u_\infty(x;\sigma')\leq u_\infty(x;\sigma)+|\sigma(b)-\sigma(a)|\,\|f\|_\infty$, and the reverse inequality follows by symmetry. Hence $\sigma$ stabilizes if and only if $\sigma'$ does.
		
		Since every deterministic~$n$ is a bounded stopping time, the same arguments give the second conclusion. 
	\end{proof}
	
	\section{Supercritical regime}\label{sec:supercritical}
	
	Recall $S_n$, $D_n$ and $W_n$ from equation \eqref{eq:dn-wn-stuff}.
	The drift $D_n$ does not depend on the scenery. Its expectation is $\E[\xi]\,A_n(x)$, where $A_n(x)=\E_x[\sum_{k<n}1/\deg(X_k)]$ is the inverse-degree clock from~\eqref{eq:gn-def}. On a graph with degree bounded by~$d$, $A_n(x)\geq n/d$, so the drift grows at least linearly when $\E[\xi]>0$. The fluctuation $W_n$ weights the i.i.d.\ centered variables $\xi(v)-\E[\xi]$ by the local times $L_n(v)/\deg(v)$; conditionally on the walk, it is a sum of independent centered random variables weighted by local times~\citep{KestenSpitzer79}. Proposition~\ref{prop:supercritical} shows that the walk-averaged fluctuation $R_n\coloneqq\E_x[W_n\mid\xi]$ is negligible compared to the drift.
	
	\begin{lemma}[No-dominance for the clock]\label{lem:clock-no-dom}
		Let $G$ be infinite, locally finite, and connected, and fix $x\in V$. If $A_n(x)\to\infty$, then
		\[
		\frac{\sup_v g_n(x,v)}{A_n(x)}\to 0\qquad\text{and hence}\qquad\frac{\Sigma_n(x)}{A_n(x)^2}\to 0\,.
		\]
	\end{lemma}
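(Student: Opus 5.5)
The second claim follows from the first. Indeed, $\Sigma_n(x)=\sum_v g_n(x,v)^2\le \sup_v g_n(x,v)\cdot\sum_v g_n(x,v)=\sup_v g_n(x,v)\,A_n(x)$. Dividing by $A_n(x)^2$ reduces everything to showing $\sup_v g_n(x,v)=o(A_n(x))$.

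The plan is a two-case argument.

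\emph{Transient case.} If $G$ is transient, then $g_n(x,v)\le g(x,v)$ for all $v$. Moreover $g(x,v)=\P_x(T_v<\infty)\,g(v,v)$, and $g(v,v)=1/(\deg(v)\,\P_v(T_v^+=\infty))$. I would bound this by reversibility: $g(x,v)=g(v,x)=\P_v(T_x<\infty)\,g(x,x)\le g(x,x)<\infty$. So $\sup_v g_n(x,v)\le g(x,x)$ is bounded uniformly in $n$, while $A_n(x)\to\infty$, and the ratio tends to $0$.

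\emph{Recurrent case.} Here the Green function is infinite, so I would work with finite-time quantities. First, by the same reversibility and strong Markov argument applied at the hitting time $T_v$,
\[
\E_x[L_n(v)]\le \E_v[L_n(v)]\,,
\]
and $\E_v[L_n(v)]/\deg(v)=g_n(v,v)$. The next step is to compare $g_n(v,v)$ with $A_n(v)$ and then transfer back to $x$. I would try the following. By reversibility, $\deg(v)^{-1}\P_v(X_k=v)$ is nonincreasing along even times, and the chain is positive recurrent only on finite graphs; since $G$ is infinite, $\P_v(X_k=v)\to 0$ (the fact already cited from \citet[Exercise~2.1(f)]{LP16}). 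Hence $\E_v[L_n(v)]=o(n)$ for each fixed $v$.

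The difficulty is uniformity over $v$ and the comparison with $A_n(x)$ rather than $n$; on unbounded-degree graphs $A_n$ can be much smaller than $n$. A cleaner route avoiding uniformity is to fix $\eps>0$ and look at the measure $\mu_n(v)\coloneqq g_n(x,v)/A_n(x)$, which is a probability measure on $V$. Suppose $\sup_v \mu_n(v)\ge\eps$ along a subsequence, attained at $v_n$. Then the walk spends an $\eps$ fraction of its inverse-degree clock at the single vertex $v_n$. I would decompose by excursions from $v_n$: each visit to $v_n$ contributes $1/\deg(v_n)$ to both $g_n(x,v_n)$ and $A_n(x)$, while between visits the walk accumulates inverse-degree time at other vertices. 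By the commute/voltage identity, the expected inverse-degree time of an excursion from $v_n$ equals $\sum_w g^{\{v_n\}^c\text{-killed}}$-type quantities. For infinite recurrent $G$, this expected excursion clock, measured in units of $1/\deg(v_n)$, is infinite, since the stationary measure $\deg$ has infinite total mass and $\E_{v}[\text{excursion clock}]=\sum_w \deg(w)/\deg(w)\cdot(\ldots)$ corresponds to Kac's lemma for the measure $\deg$. Combining this with the fact that the number of excursions by time $n$ is $\E_x[L_n(v_n)]\to\infty$ should force $g_n(x,v_n)/A_n(x)\to0$, a contradiction.

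Making this Kac-type argument uniform over moving vertices $v_n$ is the main obstacle. If $v_n$ stays in a bounded set, fixed-$v$ arguments apply; if $v_n\to\infty$, then $g_n(x,v_n)\le \P_x(T_{v_n}<n)\,g_n(v_n,v_n)$, and I would need a separate argument for that regime.
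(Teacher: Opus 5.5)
Your reduction of the second claim to the first and your transient case are correct and match the paper. The recurrent case, however, is not a proof. You explicitly leave the uniformity over moving maximizers $v_n$ as ``the main obstacle'', and the Kac-type excursion argument is only said to ``force'' the conclusion.

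The obstacle comes from decomposing at the wrong vertex. The bound $\E_x[L_n(v)]\le\E_v[L_n(v)]$ controls $g_n(x,v)$ by the diagonal $g_n(v,v)$ at the moving point, and that has no uniform control relative to $A_n(x)$. The missing idea is the finite-time version of what you already did in the transient case. Reversibility gives $g_n(x,v)=\E_v[L_n(x)]/\deg(x)$. The first-visit decomposition at the \emph{fixed} vertex $x$ then gives $\E_v[L_n(x)]=\sum_{j<n}\P_v(T_x=j)\,\E_x[L_{n-j}(x)]\le\E_x[L_n(x)]$. Hence $\sup_v g_n(x,v)\le g_n(x,x)$ for every $n$. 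This eliminates $v_n$ entirely and leaves only the claim $g_n(x,x)=o(A_n(x))$ for a single vertex.

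That remaining claim still needs an argument, and the paper's is short. In the recurrent case $\E_x[L_n(x)]\to\infty$. For each fixed $j$, $\E_x[L_n(x)]-\E_x[L_{n-j}(x)]=\sum_{k=n-j}^{n-1}\P_x(X_k=x)\to 0$. Dominated convergence in the decomposition above, using $\sum_j\P_v(T_x=j)=1$, therefore gives $g_n(x,v)/g_n(x,x)\to 1$ for each fixed $v$. Hence for fixed $r$ and all large $n$, $A_n(x)\ge\sum_{v\in B(x,r)}g_n(x,v)\ge|B(x,r)|\,g_n(x,x)/2$. This gives $\limsup_n g_n(x,x)/A_n(x)\le 2/|B(x,r)|$, which tends to $0$ as $r\to\infty$ because $G$ is infinite.

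Alternatively, your Kac idea becomes rigorous once it is applied at the fixed vertex $x$. By uniqueness of the invariant measure, the excursion clock $\Xi$ from $x$ satisfies $\E_x[\Xi]=|V|/\deg(x)=\infty$. Apply Wald's identity to the truncated clocks $\Xi_i\wedge M$, with stopping index $L_n(x)$; the last excursion may be incomplete, which costs at most $M$. This yields $A_n(x)\ge\E_x[L_n(x)]\,\E_x[\Xi\wedge M]-M$. Letting $n\to\infty$ and then $M\to\infty$ gives the claim. Either way, the essential step is to bound by the diagonal at $x$, not at $v$.
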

	\begin{proof}
		By reversibility,
		\[
		g_n(x,v)=\E_v[L_n(x)]/\deg(x)\,.
		\]
		The first-visit decomposition gives $\E_v[L_n(x)]\leq\E_x[L_n(x)]$, so $\sup_v g_n(x,v)\leq g_n(x,x)$.
		If the walk is transient, then $g_n(x,x)\leq g(x,x)<\infty$ while $A_n(x)\to\infty$. If the walk is recurrent, then $g_n(x,x)\to\infty$. For each fixed~$v$, the ratio $g_n(x,v)/g_n(x,x)$ tends to~$1$; to see this, note that the first-visit decomposition gives
		\[
		\E_v[L_n(x)]=\sum_{j=0}^{n-1}\P_v(T_x=j)\,\E_x[L_{n-j}(x)]\,.
		\]
		Since $\P_x(X_k=x)\to 0$ on infinite graphs, 
		also $\E_x[L_n(x)]-\E_x[L_{n-j}(x)]=\sum_{k=n-j}^{n-1}\P_x(X_k=x)$
		tends to zero, so $\E_x[L_{n-j}(x)]/\E_x[L_n(x)]$ tends to $1$. By recurrence $\sum_j\P_v(T_x=j)=1$, so dominated convergence yields the claim. For each fixed~$r$, all vertices $v\in B(x,r)$ therefore satisfy $g_n(x,v)\geq g_n(x,x)/2$ for all large~$n$, giving $A_n(x)\geq|B(x,r)|g_n(x,x)/2$ and hence $\limsup_n g_n(x,x)/A_n(x)\leq 2/|B(x,r)|$. Since $|B(x,r)|\to\infty$ as $r\to\infty$, the ratio tends to zero. The second claim follows from $\Sigma_n(x)\leq\sup_v g_n(x,v)\cdot A_n(x)$.
	\end{proof}
	
	\begin{proposition}\label{prop:supercritical}
		Let~$G = (V,E)$ be an infinite, locally finite, connected graph, let~$(X_n)_{n\geq 0}$ be simple random walk on~$G$, and let $(\xi(v))_{v\in V}$ be i.i.d.\ and independent of the walk with
		$\E[\xi] \in (0, \infty]$. If $A_n(x)\to\infty$ for some $x\in V$, then $\sup_n\E_x[S_n\mid\xi]=\infty$ almost surely.
		In particular, if the degree is bounded by~$d$, then $A_n(x)\geq n/d\to\infty$, so the conclusion holds on every infinite bounded-degree graph.
	\end{proposition}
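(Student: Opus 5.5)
Since $\E_x[S_n\mid\xi]=\sum_v g_n(x,v)\xi(v)$ by definition of $g_n$, the plan is to show this weighted sum is at least of order $\E[\xi]A_n(x)$ with high probability along a subsequence. First reduce to bounded scenery: for $M>0$, set $\xi_M\coloneqq\xi\wedge M$. Then $\E_x[S_n\mid\xi]\geq\sum_v g_n(x,v)\xi_M(v)$ pointwise, and for $M$ large $\E[\xi_M]\eqqcolon m_M>0$ (also when $\E[\xi]=\infty$, by monotone convergence, since $\E[\xi^-]<\infty$). The variables $\xi_M$ are bounded above but may have heavy lower tails, so I also truncate from below at $-K$: $\xi_{M,K}\coloneqq(\xi\wedge M)\vee(-K)\geq\xi_M$ is bounded but no longer dominated by $\xi$. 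Instead, write $\xi_M=\xi_{M,K}-(\xi+K)^-$, choose $K$ so large that $\E[(\xi+K)^-]\leq m_M/4$, and keep $\E[\xi_{M,K}]\geq m_M$.

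Decompose $\sum_v g_n(x,v)\xi_M(v)=m' A_n(x)+R_n-N_n$, where $m'\coloneqq\E[\xi_{M,K}]\geq m_M$, $R_n\coloneqq\sum_v g_n(x,v)(\xi_{M,K}(v)-m')$ and $N_n\coloneqq\sum_v g_n(x,v)(\xi(v)+K)^-\geq 0$. The term $R_n$ is a sum of independent centered bounded variables, so $\var(R_n)\leq (M+K)^2\Sigma_n(x)$, and Lemma~\ref{lem:clock-no-dom} gives $\var(R_n)/A_n(x)^2\to0$. By Chebyshev, $\P(|R_n|>m_M A_n(x)/4)\to 0$. For $N_n$, Markov's inequality gives $\P(N_n>m_M A_n(x)/2)\leq \E[(\xi+K)^-]A_n/(m_MA_n/2)\leq 1/2$. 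Hence, for all large $n$, with probability at least $1/2-o(1)$ we have $\E_x[S_n\mid\xi]\geq m_M A_n(x)/4$.

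Consequently $\P(\sup_n\E_x[S_n\mid\xi]\geq m_M A_n(x)/4 \text{ for infinitely many } n)\geq 1/3$ by reverse Fatou, and since $A_n(x)\to\infty$, $\P(\sup_n\E_x[S_n\mid\xi]=\infty)\geq1/3>0$. The $0$--$1$ law Proposition~\ref{prop:01-law} (second conclusion) upgrades this to probability one. For other $x'$, either repeat the argument or note that on a connected graph finiteness of the supremum transfers between neighbours; in fact, the statement only asks for the given $x$. For the bounded-degree remark, $1/\deg\geq1/d$ gives $A_n(x)\geq n/d$.

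The main obstacle is the heavy negative tail, where variance is unavailable. The Markov bound on the nonnegative part $N_n$ handles it, at the cost of only a constant-probability estimate, which the $0$--$1$ law then repairs.
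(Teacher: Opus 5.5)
Your proof is correct and follows essentially the paper's route: truncate, bound the bounded part by Chebyshev using $\Sigma_n(x)=o(A_n(x)^2)$ from Lemma~\ref{lem:clock-no-dom}, and bound the heavy tail by Markov. The differences are minor. Your one-sided truncation uses monotonicity to throw away the upper tail. Your appeal to Proposition~\ref{prop:01-law} is avoidable: taking $K$ larger makes $\P(N_n>m_M A_n(x)/2)\leq 2\E[(\xi+K)^-]/m_M$ arbitrarily small, and this is how the paper concludes directly that $\P(\E_x[S_n\mid\xi]\leq B)\to 0$ for every $B$.
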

	\begin{proof}
		Since $\E[\xi]>0$, we have $\E[\xi^-]<\infty$. If $\E[\xi]=+\infty$, then replacing $\xi(v)$ by $\min(\xi(v),K)$ gives $\E[\min(\xi,K)]>0$ for large~$K$ so the finite-mean case below applies. We may therefore assume $\E[\xi]<\infty$. Taking walk expectations in~\eqref{eq:dn-wn-stuff},
		\[
		\E_x[S_n \mid \xi] = \E[\xi]\,A_n(x) + R_n\,.
		\]
		Here $R_n \coloneqq \sum_{v} g_n(x,v)\bigl(\xi(v)-\E[\xi]\bigr)$ is a sum of independent mean-zero random variables weighted by the Green kernel. Since $A_n(x)\to\infty$, it suffices to show $R_n/A_n(x) \to 0$ in $\P$-probability. Indeed, for every $B > 0$ and all $n$ with $\E[\xi]\, A_n(x)/2>B$,
		\[
		\bigl\{\E_x[S_n\mid\xi] \leq B\bigr\} \subseteq \Bigl\{\frac{R_n}{A_n(x)} \leq -\frac{\E[\xi]}{2}\Bigr\}\,.
		\]
		It follows that $\sup_n \E_x[S_n\mid\xi] = \infty$ a.s.
		
		We show $R_n/A_n(x)\to 0$ in probability by splitting $R_n$ into a bounded truncation and a small tail. Fix~$\delta \in (0,1/2]$ and write $R_n = Y_n + Z_n$ with
		\[Y_n \coloneqq \sum_{v} g_n(x,v)\bigl(\xi(v)-\E[\xi]\bigr)\one_{\{|\xi(v)-\E[\xi]|\leq M\}}\,.
		\qquad
		Z_n \coloneqq R_n - Y_n\,.
		\]
		Choose $M$ large enough that $\E\bigl[|\xi - \E[\xi]| \one_{\{|\xi-\E[\xi]|>M\}}\bigr] < \delta^2$.
		Markov's inequality controls the tail: $\P(|Z_n| > 2\delta A_n(x)) \leq \delta/2$. By centering of $R_n$, $|\E[Y_n]|=|\E[Z_n]|\leq\E[|Z_n|]\leq\delta^2 A_n(x)$. For the bounded part, independence gives $\var(Y_n)\leq M^2\,\Sigma_n(x)$, and Lemma~\ref{lem:clock-no-dom} gives $\Sigma_n(x)=o(A_n(x)^2)$, so Chebyshev's inequality and a union bound yield $\P(|R_n| > 4\delta A_n(x)) \leq \delta+o(1)$. Since $\delta > 0$ was arbitrary, $R_n/A_n(x) \to 0$ in probability.
	\end{proof}
	
	Theorem~\ref{thm:explosion}(i) follows: setting $\xi(v)=\sigma(v)-1$ gives $\E[\xi]=\mu-1>0$, and $\sup_n\E_x[S_n\mid\xi]=\infty$ implies $u_\infty(x)=\infty$ a.s.\ by Corollary~\ref{cor:RW-infinite}.

	\section{Critical regime}\label{sec:critical}
	
	In the supercritical regime, the walker profits from every step: when $A_n(o)\to\infty$, the mean drift $\E_o[D_n]=\E[\xi]\,A_n(o)$ grows without bound and explosion is immediate. At criticality~$\E[\xi]=0$, the drift vanishes and the payoff $S_n = W_n$ is pure fluctuation. Whether the walker can accumulate unbounded payoff depends on how fast the variance of the fluctuation grows.
	
	Recall $\Sigma_n(o)=\sum_{v\in V} g_n(o,v)^2$ from~\eqref{eq:gn-def}; we write $\Sigma_n$ when~$o$ is clear from context. Fix a vertex~$o\in V$.
	Since~$\E_o[S_n\mid\xi]=\E[\xi]\,A_n(o)+\sum_v g_n(o,v)(\xi(v)-\E[\xi])$ by~\eqref{eq:dn-wn-stuff}, the variance over~$\xi$ equals exactly $\var(\xi)\,\Sigma_n$. Moreover $\Sigma_n$ is increasing in~$n$ and increases to $\sum_v g(o,v)^2\in(0,\infty]$.
	A transient walk with~$\sum_v g(o,v)^2<\infty$ is called \textbf{doubly transient} at~$o$ (for example, on~$\Z^d$ with $d\geq 5$). Otherwise~$\Sigma_n\to\infty$ and we say the walk is \textbf{non-doubly-transient} at~$o$.
	
	The summability of the Green kernel organizes the phase structure. Supercritical explosion corresponds to $g(o,\cdot)\notin\ell^1$ (equivalently $A_n(o)\to\infty$). Short-clock stabilization corresponds to $g(o,\cdot)\in\ell^1$ when $\E[(\sigma-1)^+]<\infty$ (Proposition~\ref{prop:short-clock}). On graphs satisfying the uniform local trap condition (Definition~\ref{def:trap-family}), the critical dichotomy is whether $g(o,\cdot)\in\ell^2$ or not; this condition is automatic for bounded-degree graphs.
	
	In the non-doubly-transient case, the growth~$\Sigma_n\to\infty$ provides enough variance for a direct second-moment argument (Proposition~\ref{prop:critical}).
	In the doubly transient case~$\Sigma_n$ stays bounded, so the fluctuation variance saturates and a different mechanism is needed.
	We build a Green-function martingale and stop the walk in rare low-scenery traps to obtain a divergent expected payoff, then upgrade to quenched explosion via a variance bound (Proposition~\ref{prop:doubly-transient-really-general}).
	Theorem~\ref{thm:OS}(ii) combines Proposition~\ref{prop:critical} and Proposition~\ref{prop:doubly-transient-really-general}; Theorem~\ref{thm:explosion}(ii) then follows by the random walk representation (Corollary~\ref{cor:RW-infinite}).
	When~$\xi$ is symmetric, a convexity argument (Proposition~\ref{prop:convexity-reduction}) reduces the problem to the finite-variance case and removes the moment hypothesis entirely.
	
	\subsection{Explosion via variance growth}
	
	\begin{lemma}[Positive-part mean bound]\label{lem:positive-part}
		Let $(Z_i)_{i\in I}$ be a finite collection of independent mean-zero random variables
		with $|Z_i|\leq b_i$ a.s.\ for deterministic $b_i>0$.
		Let $B^2\coloneqq\sum_{i\in I}\var(Z_i)$.
		If $\max_{i}b_i^2\leq B^2$, then there exists a universal constant~$c_\star>0$ such that $\E\bigl[\bigl(\sum_{i\in I}Z_i\bigr)^{\!+}\bigr]\geq c_\star B$.
	\end{lemma}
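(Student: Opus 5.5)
Write $T\coloneqq\sum_{i\in I}Z_i$. The plan is a Paley--Zygmund argument combined with a fourth-moment bound. Because $\E[T]=0$, we have $\E[T^+]=\E[T^-]=\tfrac12\E|T|$, so it is enough to prove $\E|T|\geq 2c_\star B$. For this we use the Hölder interpolation $\E[T^2]\leq (\E|T|)^{2/3}(\E[T^4])^{1/3}$, which follows from writing $T^2=|T|^{2/3}\,|T|^{4/3}$ and applying Hölder with exponents $3/2$ and $3$. Rearranged, it gives
\[
\E|T|\geq \frac{(\E[T^2])^{3/2}}{(\E[T^4])^{1/2}}\,.
\]
We already know $\E[T^2]=B^2$ from independence and centering, so everything reduces to showing $\E[T^4]\leq C B^4$ for a universal constant $C$. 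That would give $\E[T^+]\geq B/(2\sqrt{C})$.

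To bound the fourth moment, expand $\E[T^4]$ using independence and mean zero. Every term with an index appearing exactly once vanishes, which leaves
\[
\E[T^4]=\sum_i\E[Z_i^4]+3\sum_{i\neq j}\E[Z_i^2]\E[Z_j^2]\leq \sum_i\E[Z_i^4]+3B^4\,.
\]
For the diagonal part, the a.s. bound $|Z_i|\leq b_i$ gives $\E[Z_i^4]\leq b_i^2\var(Z_i)\leq \max_j b_j^2\,\var(Z_i)$. Summing over $i$ yields $\sum_i\E[Z_i^4]\leq \max_j b_j^2\cdot B^2\leq B^4$, where the last step uses the hypothesis $\max_i b_i^2\leq B^2$. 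Altogether $\E[T^4]\leq 4B^4$, and the argument closes with $c_\star=1/4$.

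There is one degenerate case to check: if $B=0$, the claim is trivial. No step here is a real obstacle. The only point needing care is where the hypothesis $\max_i b_i^2\leq B^2$ enters: it is exactly what prevents a single summand from dominating the sum and making the fourth moment large compared with $B^4$. This is the usual Lindeberg-type non-dominance condition.
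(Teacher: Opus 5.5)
Your proof is correct and follows the paper's route: the same fourth-moment bound $\E[T^4]\leq 3B^4+\max_i b_i^2\,B^2\leq 4B^4$, obtained from independence and the non-dominance hypothesis, and then a conversion of moments into a lower bound on $\E|T|$ together with $\E[T^+]=\tfrac12\E|T|$. The only difference is the conversion step. You use the H\"older interpolation $\E|T|\geq(\E[T^2])^{3/2}/(\E[T^4])^{1/2}$, while the paper applies Paley--Zygmund to $T^2$ with $\theta=1/2$; your step is slightly more direct and yields the better constant $c_\star=1/4$ instead of the paper's $1/(32\sqrt{2})$.
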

	\begin{proof}
		If $B=0$, then each $Z_i=0$ a.s.\ and the bound is trivial; assume $B>0$.
		Write $Z\coloneqq\sum_i Z_i$.
		Since $|Z_i|\leq b_i$, we have $\E[Z_i^4]\leq b_i^2\var(Z_i)$.
		For independent mean-zero summands, we have
		\[
		\E[Z^4]=3B^4+\sum_i\bigl(\E[Z_i^4]-3\var(Z_i)^2\bigr)\leq 3B^4+\sum_i\E[Z_i^4]\leq 3B^4+\max_i b_i^2\cdot B^2\leq 4B^4\,.
		\]
		The Paley--Zygmund inequality applied to~$Z^2$ with~$\theta=1/2$ gives
		\[
		\P\bigl(Z^2\geq\tfrac{1}{2}B^2\bigr)
		\geq\frac{B^4}{4\E[Z^4]}
		\geq\frac{1}{16}\,
		\quad 
		\text{and so} 
		\quad 
		\E[|Z|]\geq\frac{B}{\sqrt{2}}\P\Bigl(|Z|\geq\frac{B}{\sqrt{2}}\Bigr)\geq\frac{B}{16\sqrt{2}}\,.
		\]
		Since $\E[Z]=0$, we have $\E[Z^+]=\tfrac{1}{2}\E[|Z|]\geq\frac{B}{32\sqrt{2}}$, so~$c_\star=\frac{1}{32\sqrt{2}}$ suffices.
	\end{proof}
	
	Intuitively, $\Sigma_n\to\infty$ forces the expected local-time profile to spread over many vertices, so no single vertex can carry a positive fraction of the~$\ell^2$ mass.
	
	\begin{lemma}[No-dominance]\label{lem:no-dominance}
		Let~$G=(V,E)$ be an infinite, locally finite, connected graph, and fix~$o\in V$.
		If~$\Sigma_n(o)\to\infty$, then $\max_{v}g_n(o,v)^2/\Sigma_n(o)\to 0$ as $n\to\infty$.
	\end{lemma}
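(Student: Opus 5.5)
The plan is to mimic the proof of Lemma~\ref{lem:clock-no-dom}, with $\Sigma_n(o)$ in place of $A_n(o)$. The starting point is the uniform bound
\[
\max_v g_n(o,v)\leq g_n(o,o),
\]
which comes from reversibility, $g_n(o,v)=\E_v[L_n(o)]/\deg(o)$, together with the first-visit decomposition $\E_v[L_n(o)]\leq\E_o[L_n(o)]$. It therefore suffices to show that $g_n(o,o)^2/\Sigma_n(o)\to 0$. We split into two cases according to whether the walk is transient or recurrent.

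\emph{Transient case.} Here $g_n(o,o)\leq g(o,o)<\infty$ is bounded uniformly in $n$, while $\Sigma_n(o)\to\infty$ by hypothesis. The ratio therefore tends to zero immediately.

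\emph{Recurrent case.} Now $g_n(o,o)\to\infty$. We reuse the comparison established inside the proof of Lemma~\ref{lem:clock-no-dom}: for each fixed $v$, $g_n(o,v)/g_n(o,o)\to 1$. That argument used the first-visit decomposition $\E_v[L_n(o)]=\sum_j\P_v(T_o=j)\,\E_o[L_{n-j}(o)]$, the fact that $\P_o(X_k=o)\to 0$ on infinite graphs, and recurrence to ensure $\sum_j\P_v(T_o=j)=1$. Fix $r$. For all large $n$, every $v\in B(o,r)$ satisfies $g_n(o,v)\geq g_n(o,o)/2$, and hence
\[
\Sigma_n(o)\geq |B(o,r)|\,g_n(o,o)^2/4 .
\]
This gives $\limsup_n g_n(o,o)^2/\Sigma_n(o)\leq 4/|B(o,r)|$. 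Since $G$ is infinite and connected, $|B(o,r)|\to\infty$, and letting $r\to\infty$ concludes this case.

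The main point needing care is the recurrent-case comparison $g_n(o,v)/g_n(o,o)\to 1$. Its ingredients are the dominated convergence in $j$ (the summand ratios $\E_o[L_{n-j}(o)]/\E_o[L_n(o)]$ lie in $[0,1]$ and tend to $1$) and the divergence $\E_o[L_n(o)]\to\infty$. The latter makes the finite-shift differences $\sum_{k=n-j}^{n-1}\P_o(X_k=o)$ negligible relative to $\E_o[L_n(o)]$. Since this was already verified in Lemma~\ref{lem:clock-no-dom} without using $A_n\to\infty$, it can be cited directly. Note also that the hypothesis $\Sigma_n\to\infty$ is used only in the transient case; in the recurrent case the conclusion holds automatically.
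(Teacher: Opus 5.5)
Your proof is correct and follows the paper's argument essentially verbatim. It uses the same three steps: the bound $\max_v g_n(o,v)\leq g_n(o,o)$, the immediate transient case, and in the recurrent case the ratio $g_n(o,v)/g_n(o,o)\to 1$ from Lemma~\ref{lem:clock-no-dom} combined with $\Sigma_n(o)\geq|B(o,r)|\,g_n(o,o)^2/4$. Your side remarks are also correct: the finite-shift difference is at most $j$, so divergence of $\E_o[L_n(o)]$ suffices, and the recurrent case needs no hypothesis on $\Sigma_n$.
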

	\begin{proof}
		By reversibility and the first-visit decomposition, $\max_v g_n(o,v)\leq g_n(o,o)$. If the walk is transient, then $g_n(o,o)\leq g(o,o)<\infty$ while $\Sigma_n\to\infty$. If the walk is recurrent, the argument in the proof of Lemma~\ref{lem:clock-no-dom} gives $g_n(o,v)/g_n(o,o)\to 1$ for each~$v$. For each fixed~$r$, the bound $g_n(o,v)\geq g_n(o,o)/2$ for $v\in B(o,r)$ and large~$n$ gives $\Sigma_n\geq|B(o,r)|\,g_n(o,o)^2/4$, so $\max_v g_n(o,v)^2/\Sigma_n\leq 4/|B(o,r)|\to 0$.
	\end{proof}
	\begin{lemma}[One-site sensitivity]\label{lem:sensitivity}
		Let~$G=(V,E)$ be an infinite, locally finite, connected graph, and fix
		vertices $o, v\in V$. Let~$\xi^{(v)}$ denote the scenery obtained by replacing~$\xi(v)$ with an independent copy~$\xi'(v)$.
		Then for each~$n\geq 1$,
		\begin{equation}\label{eq:sens-finite}
			\bigl|u_n(o;\xi)-u_n(o;\xi^{(v)})\bigr|
			\leq g_n(o,v)\,|\xi(v)-\xi'(v)|\,.
		\end{equation}
		If the walk is transient, then
		\[
		u_\infty(o;\xi)\leq u_\infty(o;\xi^{(v)})+g(o,v)\,|\xi(v)-\xi'(v)|\,,
		\]
		and the same bound with $\xi$ and $\xi^{(v)}$ interchanged. In particular, if both values are finite, then
		\begin{equation}
			\bigl|u_\infty(o;\xi)-u_\infty(o;\xi^{(v)})\bigr|
			\leq g(o,v)\,|\xi(v)-\xi'(v)|\,.
		\end{equation}
	\end{lemma}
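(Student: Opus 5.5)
We argue directly from the random walk representation, Theorem~\ref{thm:RW}, which gives $u_n(o;\xi)=\sup_{\tau\leq n}\E_o[S_\tau]$, where the supremum is over stopping times bounded by~$n$ and $S_\tau=\sum_{k<\tau}\zeta(X_k)$ (the odometer depends on $\sigma$ only through $\xi=\sigma-1$).

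First, fix a stopping time $\tau\leq n$ and compare the payoffs under the two sceneries. The walk is the same, and $\tau$ depends only on the walk, so it is admissible for both problems. The two sceneries differ only at~$v$, hence
\[
\E_o[S_\tau(\xi)]-\E_o[S_\tau(\xi^{(v)})]=\frac{\xi(v)-\xi'(v)}{\deg(v)}\,\E_o[L_\tau(v)]\,.
\]
Since $\tau\leq n$ we have $L_\tau(v)\leq L_n(v)$ pointwise, so the absolute value of the difference is at most $g_n(o,v)\,|\xi(v)-\xi'(v)|$.

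Next, take suprema. For every $\tau\leq n$,
\[
\E_o[S_\tau(\xi)]\leq \E_o[S_\tau(\xi^{(v)})]+g_n(o,v)|\xi(v)-\xi'(v)|\leq u_n(o;\xi^{(v)})+g_n(o,v)|\xi(v)-\xi'(v)|\,.
\]
Taking the supremum over $\tau\leq n$ gives one half of \eqref{eq:sens-finite}, and exchanging the roles of $\xi$ and $\xi^{(v)}$ gives the other half. All quantities here are finite: a bounded stopping time keeps the walk inside $B(o,n)$, which is finite by local finiteness, so the payoffs are bounded.

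For the infinite-time statement we let $n\to\infty$. Since $g_n(o,v)\uparrow g(o,v)$, we get $u_n(o;\xi)\leq u_\infty(o;\xi^{(v)})+g(o,v)|\xi(v)-\xi'(v)|$ for every~$n$. Letting $n\to\infty$ and using $u_n\uparrow u_\infty$ yields the one-sided bound, with the convention $\infty\leq\infty$. Transience is exactly what makes the error term finite, because $g(o,v)<\infty$. The symmetric bound follows the same way, and when both odometers are finite the two inequalities combine into the absolute-value bound.

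There is no serious obstacle. The only points needing care are that the same stopping time is admissible for both sceneries (it depends only on the walk), and that one-sided inequalities must be used in the limit in case one of the odometers is infinite.
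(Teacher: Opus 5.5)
Your proof is correct and follows the paper's argument: compare payoffs under a common stopping time $\tau\le n$, bound $\E_o[L_\tau(v)]/\deg(v)$ by $g_n(o,v)$, and take suprema using Theorem~\ref{thm:RW}. The only cosmetic difference is in the infinite-horizon bound. You let $n\to\infty$ using $g_n\uparrow g$ and $u_n\uparrow u_\infty$. The paper instead applies the same comparison directly to all bounded stopping times, with $\E_o[L_\tau(v)]\le\deg(v)\,g(o,v)$, and invokes Corollary~\ref{cor:RW-infinite}.
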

	\begin{proof}
		Only~$\zeta(v)=\xi(v)/\deg(v)$ changes under resampling. For $\tau\leq n$, since $L_\tau(v)\leq L_n(v)$ pathwise,
		\[
		\left|\E_o[S_\tau(\xi)\mid\xi]-\E_o[S_\tau(\xi^{(v)})\mid\xi^{(v)}]\right|
		=\frac{\E_o[L_\tau(v)]}{\deg(v)}\,|\xi(v)-\xi'(v)|
		\leq g_n(o,v)\,|\xi(v)-\xi'(v)|\,.
		\]
		Since $|\sup\varphi-\sup\psi|\leq\sup|\varphi-\psi|$, \eqref{eq:sens-finite} follows from Theorem~\ref{thm:RW}. The infinite-horizon bounds follow by the same argument with $g(o,v)$ replacing $g_n(o,v)$, using $\E_o[L_\tau(v)]\leq\deg(v)\,g(o,v)$ for every bounded~$\tau$ and Corollary~\ref{cor:RW-infinite}.
	\end{proof}
	
	\begin{proposition}[Critical optimal stopping bounds]\label{prop:critical}
		Let~$G = (V,E)$ be an infinite, locally finite, connected graph, let~$(X_n)_{n\geq 0}$ be simple random walk on~$G$, and let $(\xi(v))_{v\in V}$ be i.i.d.\ and independent of the walk with
		$\E[\xi]=0$, $\var(\xi)>0$, and $\E[\xi^2]<\infty$.
		Let $u_n(o)\coloneqq\sup_{\tau\leq n}\E_o[S_\tau\mid\xi]$.
		
		\begin{enumerate}[label=\textup{(\alph*)}]
			\item \textup{(Variance bound)}\;
			For every~$o\in V$ and~$n\geq 1$,
			\begin{equation}\label{eq:ES-bound}
				\var(u_n(o))\leq\var(\xi)\,\Sigma_n(o)\,.
			\end{equation}
			\item \textup{(Explosion)}\;
			If~$\Sigma_n(o)\to\infty$, then there exist~$c_1,c_2>0$ depending only on~$\xi$ such that
			\begin{equation}\label{eq:PZ-bound}
				\P\bigl(u_n(o)\geq c_1\sqrt{\Sigma_n(o)}\bigr)\geq c_2\,,
			\end{equation}
			for all~$n$ sufficiently large.
			In particular, $\sup_n\E_o[S_n\mid\xi]=\infty$ almost surely.
		\end{enumerate}
	\end{proposition}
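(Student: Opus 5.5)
For part~(a) I would use the Efron--Stein inequality. View $u_n(o)$ as a function of the i.i.d.\ family $(\xi(v))_{v\in V}$. Only the finitely many $v\in B(o,n-1)$ matter, since $g_n(o,v)=0$ otherwise, so Efron--Stein applies to a function of finitely many independent variables. It gives
\[
\var(u_n(o))\leq\tfrac12\sum_v\E\bigl[(u_n(o;\xi)-u_n(o;\xi^{(v)}))^2\bigr].
\]
By the one-site sensitivity bound~\eqref{eq:sens-finite} of Lemma~\ref{lem:sensitivity}, each summand is at most $g_n(o,v)^2\,\E[(\xi(v)-\xi'(v))^2]=2\var(\xi)\,g_n(o,v)^2$. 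Summing over $v$ yields $\var(u_n(o))\leq\var(\xi)\Sigma_n(o)$ directly.

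For part~(b) I would lower bound $u_n(o)$ by the deterministic-time value, $u_n(o)\geq(\E_o[S_n\mid\xi])^+=\bigl(\sum_v g_n(o,v)\xi(v)\bigr)^+$. This holds because $\tau\equiv n$ and $\tau\equiv0$ are both admissible. Write $R_n=\sum_v g_n(o,v)\xi(v)$, which has variance $\var(\xi)\Sigma_n$. To get a lower bound on the probability that $R_n$ is large, I would truncate: choose $M$ with $\E[\xi^2\one_{\{|\xi|>M\}}]\leq\var(\xi)/100$, and split $\xi=\bar\xi+\tilde\xi$, where $\bar\xi=\xi\one_{\{|\xi|\le M\}}-\E[\xi\one_{\{|\xi|\le M\}}]$ is bounded by $2M$ and centered. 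The bounded part $Y_n=\sum_v g_n(o,v)\bar\xi(v)$ has variance $B^2\geq\tfrac12\var(\xi)\Sigma_n$. The tail part $Z_n$ has $\E[Z_n^2]\leq\tfrac{1}{100}\var(\xi)\Sigma_n$.

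Lemma~\ref{lem:no-dominance} gives $\max_v g_n(o,v)^2=o(\Sigma_n)$. Hence $\max_v (2Mg_n(o,v))^2\leq B^2$ for $n$ large, so the fourth-moment control in the proof of Lemma~\ref{lem:positive-part} applies: $\E[Y_n^4]\leq4B^4$. I would use this directly through Paley--Zygmund, rather than only the mean bound. Combined with the symmetry-free fact that a centered variable with bounded kurtosis has $\P(Y_n\geq cB)\geq c'$, this gives a probability bound of the form $\P(Y_n\geq c B)\geq c'$. To derive it, apply Paley--Zygmund to $Z^2$ to get $\P(|Y_n|\ge B/\sqrt2)\ge 1/16$. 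Then use $\E[Y_n^+]=\E[Y_n^-]$ and Cauchy--Schwarz, $\E[Y_n^+]\le \sqrt{\E[Y_n^2]\P(Y_n>0)}$, together with the mean lower bound from Lemma~\ref{lem:positive-part}, to obtain $\P(Y_n>\tfrac{c_\star}{2}B)\geq c_\star^2/4$. Chebyshev on $Z_n$ makes $|Z_n|\leq \tfrac{c_\star}{4}B$ except with probability at most $\tfrac{16\cdot 2}{100 c_\star^2}$. The constants may need adjusting; I can choose $M$ larger so that $\E[Z_n^2]$ is an arbitrarily small fraction of $\Sigma_n$. This yields~\eqref{eq:PZ-bound} with constants depending only on the law of $\xi$. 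Balancing these constants is the step I expect to be fiddly: Chebyshev for the tail must be made to beat the Paley--Zygmund probability, which is handled by choosing $M$ after $c_\star$ is fixed.

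Finally, for almost sure explosion, note that $\sup_n\E_o[S_n\mid\xi]\ge (R_n)^+$ for each $n$. The event $\{\sup_n R_n\geq K\}$ contains $\{R_n\ge c_1\sqrt{\Sigma_n}\}$ once $c_1\sqrt{\Sigma_n}\ge K$. So $\P(\sup_n\E_o[S_n\mid\xi]=\infty)\geq \lim_K\P(\sup_n R_n\ge K)\geq c_2>0$, where the events decrease in $K$. Proposition~\ref{prop:01-law} gives the $0$--$1$ law for $\{\sup_n\E_o[S_n\mid\xi]=\infty\}$, so this probability equals $1$.
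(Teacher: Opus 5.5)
Your proof is correct. Part~(a) is exactly the paper's Efron--Stein argument, but part~(b) takes a different route from the paper's.

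\textbf{The paper's route.} The paper works at the level of means. It shows
\[
\E[R_n^+]\geq\E[\hat R_n^+]-\sqrt{\var(R_n-\hat R_n)}\geq(c_\star s_M-\delta_M)\sqrt{\Sigma_n}\,,
\]
so the truncation error only has to beat $c_\star$ in $L^2$ norm. It then converts this mean bound into a probability bound by Paley--Zygmund. For $u_n(o)$ it uses the variance bound from part~(a); for the fixed-time statement it uses the exact second moment $\E[R_n^2]=\var(\xi)\Sigma_n$.

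\textbf{Your route.} You produce a probability bound for the bounded part $Y_n$ directly, using the mean bound of Lemma~\ref{lem:positive-part} plus a Paley--Zygmund-type split. You then absorb the tail $Z_n$ by Chebyshev. This bounds $R_n$ itself, so \eqref{eq:PZ-bound} and the almost sure fixed-time explosion both follow from one estimate, and part~(a) is not needed in~(b).

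\textbf{The trade-off.} In your version the truncation error enters in probability rather than in mean. You therefore need $\E[\xi^2\one_{\{|\xi|>M\}}]\lesssim c_\star^4\var(\xi)$, roughly $\eps<c_\star^4/128$, instead of the paper's $\lesssim c_\star^2\var(\xi)$. As you note, this is harmless because $M$ is chosen after $c_\star$.

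\textbf{Two small presentation points.}
\begin{enumerate}
\item As written, $\E[Y_n^+]\leq\sqrt{\E[Y_n^2]\,\P(Y_n>0)}$ only controls $\P(Y_n>0)$. Apply Cauchy--Schwarz after splitting off the level $\tfrac{c_\star}{2}B$:
\[
c_\star B\leq\E[Y_n^+]\leq\tfrac{c_\star}{2}B+B\,\P\bigl(Y_n>\tfrac{c_\star}{2}B\bigr)^{1/2}\,.
\]
This gives exactly your bound $c_\star^2/4$.
\item The bound $\P(|Y_n|\geq B/\sqrt2)\geq 1/16$ is never used once you have the mean bound, so you can drop it.
\end{enumerate}
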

	
	The proof uses the no-dominance condition $\max_v g_n(o,v)^2/\Sigma_n(o)\to 0$, which Lemma~\ref{lem:no-dominance} guarantees whenever $\Sigma_n(o)\to\infty$.
	
	\begin{proof}
		Write~$R_n\coloneqq\E_o[S_n\mid\xi]=\sum_v g_n(o,v)\,\xi(v)$.
		Since the~$\xi(v)$ are independent and mean-zero, the stopping time~$\tau=n$ gives
		\begin{equation}\label{eq:un-geq-Rn+}
			u_n(o)\geq R_n^+\,.
		\end{equation}
		
		\medskip
		\textup{(a)}\;
		Lemma~\ref{lem:sensitivity} gives~$|u_n(o;\xi)-u_n(o;\xi^{(v)})|\leq g_n(o,v)|\xi(v)-\xi'(v)|$.
		The Efron--Stein inequality~\citep{EfronStein81} yields
		\[
		\var(u_n(o))\leq\tfrac{1}{2}\sum_v\E\bigl[(u_n(o;\xi)-u_n(o;\xi^{(v)}))^2\bigr]
		\leq\var(\xi)\,\Sigma_n\,.
		\]
		
		\medskip
		\textup{(b)}\;
		Fix~$M\geq 1$ and set $s_M \coloneqq \sqrt{\var\bigl(\xi\one_{\{|\xi|\leq M\}}\bigr)}$ and $\delta_M \coloneqq \sqrt{\E\bigl[\xi^2\one_{\{|\xi|>M\}}\bigr]}$. Since~$\E[\xi^2]<\infty$, dominated convergence gives~$s_M\to\sqrt{\var(\xi)}>0$ and~$\delta_M\to 0$ as~$M\to\infty$. Choose~$M$ large enough such that~$s_M>0$ and~$c_\star s_M>\delta_M$, where~$c_\star$ is from Lemma~\ref{lem:positive-part}.
		Define the truncated scenery
		\[
		\hat\zeta(v)\coloneqq\zeta(v)\one_{\{|\xi(v)|\leq M\}}-\E[\zeta(v)\one_{\{|\xi(v)|\leq M\}}]\,,
		\]
		which is mean-zero and bounded by~$2M/\deg(v)$. Set
		\[
		\hat R_n\coloneqq\sum_v g_n(o,v)\,\hat\zeta(v)\,\deg(v)=\sum_v\E_o[L_n(v)]\,\hat\zeta(v)\,.
		\]
		Since $|x^+-y^+|\leq|x-y|$, taking expectations gives $\E[R_n^+]\geq\E[\hat R_n^+]-\E[|R_n-\hat R_n|]$.
		Each~$\zeta(v)-\hat\zeta(v)$ is mean-zero, so $R_n-\hat R_n$ is mean-zero and Cauchy--Schwarz gives
		\begin{equation}\label{eq:trunc-error}
			\E[R_n^+]\geq\E[\hat R_n^+]-\sqrt{\var(R_n-\hat R_n)}\,.
		\end{equation}
		Since $\var\bigl(\zeta(v)\one_{\{|\xi(v)|>M\}}\bigr)\leq\delta_M^2/\deg(v)^2$, we get
		\begin{equation}\label{eq:trunc-var}
			\var(R_n-\hat R_n)
			\leq\delta_M^2\,\Sigma_n\,.
		\end{equation}
		The summands $\E_o[L_n(v)]\hat\zeta(v)$ are independent and mean-zero, with
		\[
		\bigl|\E_o[L_n(v)]\hat\zeta(v)\bigr|\leq 2M\,g_n(o,v)
		\qquad\text{and}\qquad
		\var\bigl(\hat\zeta(v)\bigr)=s_M^2/\deg(v)^2\,.
		\]
		Therefore $\var(\hat R_n)=s_M^2\,\Sigma_n$.
		By Lemma~\ref{lem:no-dominance}, $\max_v g_n(o,v)^2/\Sigma_n\to 0$, so for all~$n$ large enough,
		\[
		4M^2\max_vg_n(o,v)^{\!2}\leq s_M^2\,\Sigma_n=\var(\hat R_n)\,.
		\]
		Lemma~\ref{lem:positive-part} gives~$\E[\hat R_n^+]\geq c_\star\sqrt{\var(\hat R_n)}= c_\star s_M\sqrt{\Sigma_n}$.
		Combining with~\eqref{eq:trunc-error}, \eqref{eq:trunc-var}, and~\eqref{eq:un-geq-Rn+} yields
		\begin{equation}\label{eq:mean-lb}
			\E[u_n(o)]\geq\E[R_n^+]
			\geq\bigl(c_\star s_M-\delta_M\bigr)\sqrt{\Sigma_n}\,.
		\end{equation}
		Setting~$c_M\coloneqq c_\star s_M-\delta_M>0$, we obtain~$\E[u_n(o)]\geq c_M\sqrt{\Sigma_n}$ for all large~$n$. Since $\var(u_n(o))\leq\var(\xi)\,\Sigma_n\leq(\var(\xi)/c_M^2)\,\E[u_n(o)]^2$ by part~(a),
		\[
		\E[u_n(o)^2]\leq\Bigl(\frac{\var(\xi)}{c_M^2}+1\Bigr)\E[u_n(o)]^2\,.
		\]
		The Paley--Zygmund inequality with~$\theta=1/2$ yields
		\[
		\P\Bigl(u_n(o)\geq\tfrac{1}{2}c_M\sqrt{\Sigma_n}\Bigr)
		\geq\P\Bigl(u_n(o)\geq\tfrac{1}{2}\E[u_n(o)]\Bigr)
		\geq\frac{c_M^2}{4c_M^2+4\var(\xi)}\,.
		\]
		This gives~\eqref{eq:PZ-bound} with~$c_1\coloneqq c_M/2$ and~$c_2\coloneqq c_M^2/(4c_M^2+4\var(\xi))$.
		For the fixed-time conclusion, since $\E[(R_n^+)^2]\leq\E[R_n^2]=\var(\xi)\,\Sigma_n$ and $\E[R_n^+]\geq c_M\sqrt{\Sigma_n}$ by~\eqref{eq:mean-lb}, Paley--Zygmund gives $\P(R_n\geq\tfrac{1}{2}c_M\sqrt{\Sigma_n})\geq c_M^2/(4\var(\xi))$. Since $\Sigma_n\to\infty$, this yields $\P(\sup_n\E_o[S_n\mid\xi]=\infty)>0$, and Proposition~\ref{prop:01-law} upgrades to almost sure explosion.
	\end{proof}

	\subsection{Explosion via Green-function traps}
	When~$\Sigma_n\to\infty$, Proposition~\ref{prop:critical}(b) yields explosion.
	The remaining case is when the walk is transient and~$\sum_v g(o,v)^2<\infty$; then~$\Sigma_n$ stays bounded and a different approach is needed.
	
	\begin{definition}[Uniform local trap condition]\label{def:trap-family}
		For a finite set $C\subset V$ and $y\in V$, the \textbf{inverse-degree exit time} from~$C$ starting at~$y$ is
		\[
		\Theta_C(y)\coloneqq\sum_{v\in C}g_C(y,v)=\E_y\Bigl[\sum_{k=0}^{\tau_C-1}\frac{1}{\deg(X_k)}\Bigr]\,.
		\]
		We say that $G$ admits a \textbf{uniform local trap condition} if for every $L>0$ there exist $r_L$ and $M_L$, and for each $y\in V$ a finite connected set $C_L(y)$, such that
		\[
		y\in C_L(y)\subseteq B(y,r_L),\qquad |C_L(y)|\leq M_L,\qquad\Theta_{C_L(y)}(y)\geq L\,.
		\]
	\end{definition}
	
	The inverse-degree exit time $\Theta_C(y)$ is the local analogue of the clock~$A_n(x)$ from~\eqref{eq:gn-def}: the clock measures total inverse-degree time over the first~$n$ steps, while $\Theta_C$ measures it up to the exit from a local set. On a bounded-degree graph, balls satisfy the uniform local trap condition: taking $C_L(y)=B(y,R)$ with $R=\lceil Ld\rceil$ gives $\Theta_{B(y,R)}(y)\geq(R+1)/d\geq L$ and $|B(y,R)|\leq d^{R+1}$. More generally, sub-Gaussian displacement control (hypothesis~\ref{H3} in Proposition~\ref{prop:poly-growth}) also implies the uniform local trap condition on bounded-degree graphs. This hypothesis is equivalent, under standard regularity, to exit-time scaling $\E_x[\tau_{B(x,R)}]\asymp R^{d_w}$.
	
	\begin{proposition}[Doubly transient]\label{prop:doubly-transient-really-general}
		Let~$G = (V,E)$ be an infinite, locally finite, connected graph. Assume the simple random walk on $G$ is transient, that $\sum_{v\in V}g(o,v)^2<\infty$ for every $o\in V$,
		and that $G$ admits a uniform local trap condition.
		Let $(\xi(v))_{v\in V}$ be i.i.d.\ and independent of the walk with $\E[\xi]=0$, $\var(\xi)>0$, and $\E[\xi^2]<\infty$. Then~$\sup_{\tau}\E_o[S_\tau\mid\xi]=\infty$ almost surely for every~$o\in V$, where the supremum is over bounded stopping times.
	\end{proposition}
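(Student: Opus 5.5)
The plan is to exploit the fact that, in the doubly transient case, the walk can reach far-away regions at bounded cost in variance. We look for a region where the scenery is atypically favourable, stop the walk there, and then upgrade the resulting mean bound to quenched explosion with the Efron--Stein argument. The upgrade works because the transient Green function gives the bounded sensitivity needed.

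\textbf{Step 1 (construction).} Fix a large $L$. Since $\var(\xi)>0$ and $\E[\xi]=0$, there is $a>0$ with $p_a\coloneqq\P(\xi\geq a)>0$. Call $y$ \emph{good} if $\xi(v)\geq a$ for every $v\in C_L(y)$. Because $|C_L(y)|\leq M_L$, this has probability at least $p_a^{M_L}>0$, and the events for vertices at distance more than $2r_L$ apart are independent.

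Walk-side, take $\tau$ to run the walk for $n$ steps. At time $n$, if $X_n$ is good, continue until the exit time from $C_L(X_n)$; otherwise stop (stopping at $n$ is also allowed). The payoff from the excursion inside a good trap is at least $a\,\Theta_{C_L(X_n)}(X_n)\geq aL$. By the strong Markov property,
\[
\E_o[S_\tau\mid\xi]=R_n+\E_o\bigl[\one_{\{X_n\text{ good}\}}\,\E_{X_n}[S_{\tau_{C_L}}]\bigr]\geq R_n+aL\,\P_o(X_n\text{ good}\mid\xi).
\]
This $\tau$ is not bounded, but truncating at a large time loses only an arbitrarily small amount, since the traps are finite.

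\textbf{Step 2 (averaging).} Here $R_n=\sum_v g_n(o,v)\xi(v)$ has mean zero and variance at most $\var(\xi)\sum_v g(o,v)^2<\infty$, so $R_n$ is tight. Averaging over $\xi$, $\E[\P_o(X_n\text{ good}\mid\xi)]\geq p_a^{M_L}$. One subtlety is that the walk up to time $n$ may have sampled the trap's scenery; but the payoff $R_n$ already accounts for that scenery, and we use only the expectation over the scenery, which is walk-independent. Hence $\E[\sup_\tau\E_o[S_\tau\mid\xi]]\geq aLp_a^{M_L}$.

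This bound alone does not diverge, since $p_a^{M_L}$ may shrink faster than $L$ grows. To fix this, use many steps: repeatedly run, check for a good trap, and harvest it. Better still, use the nested-volume formula (Theorem~\ref{thm:nested-vol}). Choose connected sets $C$ formed by a long path joining many disjoint good traps, with transient Green weights. Because the graph is infinite with bounded-size traps, any walk visits infinitely many candidate trap locations, and a positive density of these are good by independence. The total harvested gain is then $\sum$ over harvests of $aL\cdot(\text{probability of reaching})$. Transience means each location is reached with bounded probability, but the walk meets infinitely many independent candidates, so the expected number of harvested traps along a trajectory is infinite.

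Quantitatively, along the walk each fresh candidate is good with probability $p_a^{M_L}$ independently. The stopping rule "harvest every good trap encountered, then continue" has expected gain $aL\cdot\E[\#\text{good traps met}]=\infty$. The cost is the fluctuation $R_\tau$ along the path, whose contribution is controlled via the martingale $\sum_v g(\cdot,v)\xi(v)$ with variance bounded by $\sum_v g(o,v)^2$.

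\textbf{Step 3 (quenched upgrade).} Lemma~\ref{lem:sensitivity} shows that resampling one site changes the optimal value by at most $g(o,v)|\xi(v)-\xi'(v)|$. Efron--Stein then gives $\var(u_\infty(o))\leq\var(\xi)\sum_v g(o,v)^2<\infty$ on the event of finiteness, which is incompatible with an expectation that exceeds every level. Via Paley--Zygmund and truncation this gives $\P(u_\infty(o)=\infty)>0$, and Proposition~\ref{prop:01-law} makes it almost sure.

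\textbf{Main obstacle.} I expect the hard part to be making Step 2 rigorous. The walk's earlier path correlates with the trap scenery, and the fluctuation accumulated between harvests must be shown not to cancel the linear harvest gain. For this I would use the bounded $\ell^2$ norm of $g$: the fluctuation term has uniformly bounded variance, while the harvest gain grows linearly in the number of harvests.
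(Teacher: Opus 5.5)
\textbf{Gap in Step~2.} The annealed divergence $\E[u_\infty(o)]=\infty$ is never established. The obstruction is a cancellation in the \emph{mean}, not a variance problem, so the bounded-variance fix in your ``main obstacle'' paragraph cannot work.

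You cannot steer the walk; your only control is when to stop. The rule ``harvest every good trap encountered, then continue'' never stops before the truncation horizon~$N$. For $\tau=N$ one has $\E[S_N]=\sum_v g_N(o,v)\,\E[\xi(v)]=0$, no matter how many good traps the walk meets.

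More generally, set $h(x)\coloneqq\sum_v g(x,v)\xi(v)$, which is in $L^2$ by double transience. Then $h(X_n)+S_n$ is a martingale, so $\E[S_\tau]=-\E[h(X_\tau)]$ for every bounded~$\tau$. For a candidate $Y$ chosen by the walk alone, $\E[h(Y)\one_{\{Y\text{ not good}\}}]=-\E[h(Y)\one_{\{Y\text{ good}\}}]$. So the expected future payoff the walk collects by continuing from failed candidates is exactly the negative of what it collects from good ones. The gain $\geq aL$ inside good traps is cancelled in expectation.

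The variants fail too. Searching for a high trap and stopping after harvesting it means stopping where $h$ is positive. The thin-path version via Theorem~\ref{thm:nested-vol} uses the killed weights $g_C(o,\cdot)$, not $g(o,\cdot)$. For a path-like $C$ these are typically negligible at distant traps, because the walk leaves $C$ long before reaching them. Step~3 therefore has nothing to contradict.

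\textbf{What the paper does instead.} It uses traps of \emph{low} scenery and \emph{stops} at their centers rather than entering them. Fix $\varepsilon$ with $p_0\coloneqq\P(\xi\leq-\varepsilon)\in(0,1)$. If every $v\in C_m(Y)$ has $\xi(v)\leq-\varepsilon$ and $\Theta_{C_m(Y)}(Y)\geq 8m/\varepsilon$, the trap contributes at most $-8m$ to $h(Y)$. By the identity above, stopping at $Y$ then makes $-\E[h(X_\tau)]$ of order~$m$.

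Such traps occur with probability only $p_0^{M_m}$, so about $\ell\approx p_0^{-M_m}$ candidates must be tried. The key is making failures cheap uniformly in~$\ell$. The paper draws candidates sequentially from the admissible set $\{z:\mathrm{dist}(z,F_k)>2r_m,\ \sum_{v\in F_k}g(v,z)\leq 1\}$, where $F_k$ is the union of earlier trap sets. This set is co-finite precisely because $\sum_z g(v,z)^2<\infty$. Conditioning on an earlier failure shifts each site mean by at most $B_0\coloneqq\E[|\xi|]/(1-p_0)$. The interaction bound then caps the total effect of earlier failures on $h(Y_i)$ at $B_0$, independently of $\ell$ and~$m$.

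Because the resulting stopping time is unbounded, one also needs the continuation inequality $u_\infty(x)\geq\E_x[S_{\tau_D}+u_\infty(X_{\tau_D})\mid\xi]$ and a backward induction over stages.

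\textbf{Step~3.} This is essentially the paper's second step, with one correction. Efron--Stein must be applied to the bounded functions $u_n(o)\wedge K$, giving $\var(u_\infty(o)\wedge K)\leq\var(\xi)\sum_v g(o,v)^2$, rather than ``on the event of finiteness''. Fatou then contradicts $\E[u_\infty(o)]=\infty$ directly, without Paley--Zygmund.
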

	
	On bounded-degree graphs the uniform local trap condition is automatic (see above), so the hypotheses reduce to transience and $\sum_{v\in V} g(o,v)^2<\infty$.
	
	\begin{proof}
		
		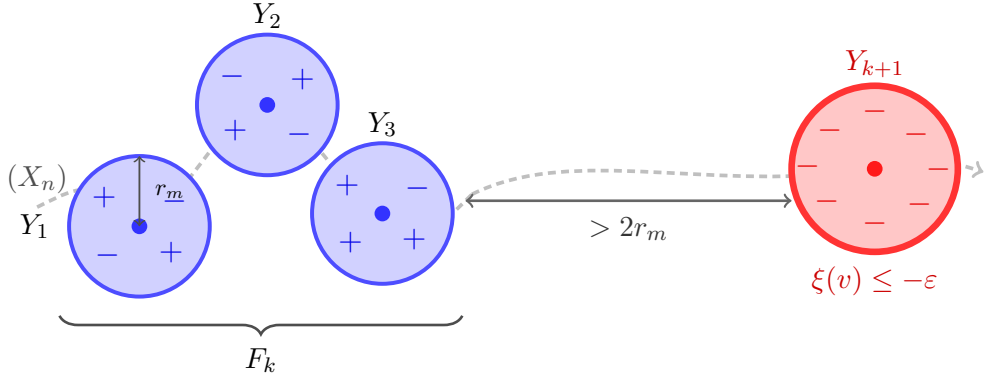
\begin{figure}[ht]
			\centering
			\begin{tikzpicture}[scale=0.85]
				\draw[gray!50, line width=1.4pt, densely dashed]
				(-0.6, -0.4) .. controls (0.2, 0.0) .. (1.0, -0.7)
				.. controls (2.0, 0.0) and (2.2, 1.0) .. (3.0, 1.2)
				.. controls (4.0, 0.6) and (4.2, -0.3) .. (4.8, -0.5)
				.. controls (5.5, -0.9) .. (6.2, -0.2);
				\draw[gray!50, line width=1.4pt, densely dashed]
				(6.2, -0.2) .. controls (7.8, 0.6) and (10.0, -0.2) .. (12.5, 0.2);
				\draw[->, gray!50, line width=1.4pt, densely dashed]
				(12.5, 0.2) .. controls (13.4, 0.4) .. (14.2, 0.15);
				\node[black!70, font=\normalsize] at (-0.6, 0.05) {$(X_n)$};
				
				\fill[blue!18] (1.0, -0.7) circle (1.1);
				\draw[blue!70, line width=1.5pt] (1.0, -0.7) circle (1.1);
				\node[blue!90!black, font=\large] at (0.45, -0.25) {$+$};
				\node[blue!90!black, font=\large] at (1.55, -0.3) {$-$};
				\node[blue!90!black, font=\large] at (0.5, -1.15) {$-$};
				\node[blue!90!black, font=\large] at (1.5, -1.1) {$+$};
				\fill[blue!80] (1.0, -0.7) circle (3.5pt);
				\node[black, font=\normalsize, anchor=east] at (-0.25, -0.7) {$Y_1$};
				
				\fill[blue!18] (3.0, 1.2) circle (1.1);
				\draw[blue!70, line width=1.5pt] (3.0, 1.2) circle (1.1);
				\node[blue!90!black, font=\large] at (2.45, 1.65) {$-$};
				\node[blue!90!black, font=\large] at (3.55, 1.6) {$+$};
				\node[blue!90!black, font=\large] at (2.5, 0.8) {$+$};
				\node[blue!90!black, font=\large] at (3.5, 0.75) {$-$};
				\fill[blue!80] (3.0, 1.2) circle (3.5pt);
				\node[black, font=\normalsize] at (3.0, 2.6) {$Y_2$};
				
				\fill[blue!18] (4.8, -0.5) circle (1.1);
				\draw[blue!70, line width=1.5pt] (4.8, -0.5) circle (1.1);
				\node[blue!90!black, font=\large] at (4.25, -0.05) {$+$};
				\node[blue!90!black, font=\large] at (5.35, -0.1) {$-$};
				\node[blue!90!black, font=\large] at (4.3, -0.95) {$+$};
				\node[blue!90!black, font=\large] at (5.3, -0.9) {$+$};
				\fill[blue!80] (4.8, -0.5) circle (3.5pt);
				\node[black, font=\normalsize] at (4.8, 0.9) {$Y_3$};
				
				\draw[<->, black!70, line width=0.7pt] (1.0, -0.7) -- (1.0, 0.4)
				node[midway, right=2pt, font=\footnotesize, black] {$r_m$};
				
				\draw[decorate, decoration={brace, amplitude=7pt, mirror},
				black!70, line width=1pt]
				(-0.25, -2.1) -- (6.05, -2.1)
				node[midway, below=8pt, font=\normalsize, black] {$F_k$};
				
				\fill[red!22] (12.5, 0.2) circle (1.3);
				\draw[red!80, line width=2.5pt] (12.5, 0.2) circle (1.3);
				\node[red!90!black, font=\large] at (11.8, 0.8) {$-$};
				\node[red!90!black, font=\large] at (13.15, 0.75) {$-$};
				\node[red!90!black, font=\large] at (11.75, -0.3) {$-$};
				\node[red!90!black, font=\large] at (13.2, -0.35) {$-$};
				\node[red!90!black, font=\large] at (12.5, 1.1) {$-$};
				\node[red!90!black, font=\large] at (12.5, -0.65) {$-$};
				\node[red!90!black, font=\large] at (11.45, 0.25) {$-$};
				\node[red!90!black, font=\large] at (13.5, 0.2) {$-$};
				\fill[red!90] (12.5, 0.2) circle (3.5pt);
				\node[red!80!black, font=\normalsize] at (12.5, 1.85) {$Y_{k+1}$};
				
				\draw[<->, black!60, line width=1pt] (6.1, -0.3) -- (11.2, -0.3)
				node[midway, below=2pt, font=\normalsize, black!80] {$> 2r_m$};
				
				\node[red!80!black, font=\normalsize] at (12.5, -1.55)
				{$\xi(v) \leq {-}\varepsilon$};
			\end{tikzpicture}
			\caption{Trap selection in the proof of Proposition~\ref{prop:doubly-transient-really-general}. The filled set~$F_k=\bigcup_{j\leq k}C_m(Y_j)$ consists of previously used trap sets (blue); the new center~$Y_{k+1}\in\mathcal{A}_k$ lies far from~$F_k$. Signs indicate~$\xi(v)$ at each vertex. In the red trap set every vertex has~$\xi(v)\leq -\varepsilon$, giving contribution~$\leq -8m$ to~$h(Y_{k+1})$.}
			\label{fig:ball-selection}
		\end{figure}
		
		The proof has two steps. First, we show $\E[u_\infty(o)]=\infty$ by constructing a.s.-finite stopping times $\tau_m$ with $\E[S_{\tau_m}]\to\infty$: the walker runs until it reaches a region where all scenery values are below~$-\varepsilon$, producing a large negative value of~$\sum_v g(X_{\tau_m},v)\,\xi(v)$ that translates into a large positive payoff. Second, we show that $u_\infty(o)<\infty$ a.s.\ would force $\var(u_\infty(o))<\infty$ by an Efron--Stein argument applied to truncations, contradicting $\E[u_\infty(o)]=\infty$.
		
		Fix $o\in V$.
		The function $h\colon V\to\R$ given by
		\[
		h(x)\coloneqq\sum_v g(x,v)\,\xi(v)\,.
		\]
		is well-defined in~$L^2$ for every~$x\in V$ by hypothesis.
		It is easy to see that
		$h(x)=\zeta(x)+\frac{1}{\deg(x)}\sum_{y\sim x}h(y)$, so~$h(X_n)+S_n$ is a~$\P_o$-martingale.
		
		\emph{Step~1: Annealed explosion.}
		Since $\var(\xi)>0$, choose $\varepsilon>0$ with $p_0\coloneqq\P(\xi(v)\leq -\varepsilon)\in(0,1)$. Fix $m\geq 1$ and set $\Lambda_m\coloneqq 8m/\varepsilon$. By Definition~\ref{def:trap-family}, choose $r_m\coloneqq r_{\Lambda_m}$ and $M_m\coloneqq M_{\Lambda_m}$, and for each $y\in V$ a set $C_m(y)\coloneqq C_{\Lambda_m}(y)$ satisfying
		\[
		y\in C_m(y)\subseteq B(y,r_m),\qquad |C_m(y)|\leq M_m,\qquad\Theta_{C_m(y)}(y)\geq 8m/\varepsilon\,.
		\]
		For $y\in V$, let $A_y$ denote the event that $\xi(v)\leq -\varepsilon$ for all $v\in C_m(y)$; on~$A_y$, every $v\in C_m(y)$ satisfies $\xi(v)\leq-\varepsilon$, and $\P(A_y)\geq p_0^{M_m}$.
		
		We select centers $Y_1,Y_2,\ldots$ along the walk path whose trap sets are pairwise disjoint and have controlled Green interaction.
		Set $T_1\coloneqq 0$ and $Y_1\coloneqq o$. Set $F_0\coloneqq\varnothing$ and $F_k\coloneqq\bigcup_{i=1}^k C_m(Y_i)$ for $k\geq 1$.
		Given~$F_k$, define the admissible set
		\[
		\mathcal{A}_k\coloneqq\Bigl\{z\in V:\mathrm{dist}(z,F_k)>2r_m\ \text{and}\ \sum_{v\in F_k}g(v,z)\leq 1\Bigr\}\,.
		\]
		Let $T_{k+1}\coloneqq\inf\{n\geq T_k:X_n\in\mathcal{A}_k\}$ and set $Y_{k+1}\coloneqq X_{T_{k+1}}$.
		The set $\mathcal{A}_k$ is co-finite: by Cauchy--Schwarz, $\bigl(\sum_{v\in F_k}g(v,z)\bigr)^2\leq|F_k|\sum_{v\in F_k}g(v,z)^2$, and summing over~$z$ gives $\sum_z\bigl(\sum_{v\in F_k} g(v,z)\bigr)^2\leq|F_k|\sum_{v\in F_k}\sum_z g(v,z)^2<\infty$ by hypothesis, so only finitely many~$z$ violate~$\sum_v g(v,z)\leq 1$. Together with the finitely many vertices within distance~$2r_m$ of~$F_k$, this gives $T_{k+1}<\infty$ a.s.
		By the distance condition the trap sets are pairwise disjoint, and by symmetry $g(Y_{k+1},v)=g(v,Y_{k+1})$, the admissibility condition gives directly
		\begin{equation}\label{eq:interaction}
			\sum_{v\in F_k}g(Y_{k+1},v)\leq 1\,.
		\end{equation}
		Since the trap sets are disjoint and the scenery i.i.d., the events $A_{Y_1},A_{Y_2},\ldots$ are conditionally independent given the walk, each with probability at least~$p_0^{M_m}$.
		Choose $\ell$ with $(1-p_0^{M_m})^{\ell-1}\leq e^{-1}$, and set
		\[
		J_m\coloneqq\min\bigl(\{1\leq i\leq\ell:A_{Y_i}\text{ occurs}\}\cup\{\ell\}\bigr),\qquad\tau_m\coloneqq T_{J_m}\,.
		\]
		Write $\mathcal{G}_i\coloneqq\sigma(\xi)\vee\mathcal{F}_{T_i}$ for the stage filtration. Since $A_{Y_i}\in\mathcal{G}_i$, the index $J_m$ is a stopping time in the stage filtration, and $\tau_m$ is an a.s.-finite stopping time for the walk. It is not deterministically bounded, so in place of Corollary~\ref{cor:RW-infinite} we use the continuation inequality
		\begin{equation}\label{eq:continuation}
			u_\infty(x)\geq\E_x\bigl[S_{\tau_D}+u_\infty(X_{\tau_D})\mid\xi\bigr]\,,
		\end{equation}
		valid for every finite connected $D\ni x$. To prove~\eqref{eq:continuation}, iterate the Bellman equation~\eqref{eq:vK-bellman} for $v_K$ up to time $\tau_D$ on every proper finite $K\supseteq D\cup\partial D$, which gives $v_K(x)\geq\E_x[S_{\tau_D}+v_K(X_{\tau_D})\mid\xi]$. Let $K\uparrow V$: for every bounded stopping time $\tau\leq N$, taking $K\supseteq B(x,N)$ gives $u_N(x)\leq v_K(x)\leq u_\infty(x)$, so $v_K\uparrow u_\infty$ pointwise, uniformly on the finite set $\partial D$; combined with $\E_x[\tau_D]<\infty$, monotone convergence yields~\eqref{eq:continuation}.

		Let $D_i$ denote the connected component of $V\setminus\mathcal{A}_i$ containing $Y_i$. Since $V\setminus\mathcal{A}_i$ is finite, so is $D_i$, and $\partial D_i\subseteq\mathcal{A}_i$ by the component definition, giving $T_{i+1}-T_i=\tau_{D_i}\circ\theta_{T_i}$. Applying~\eqref{eq:continuation} at $x=Y_i$ with $D=D_i$ and invoking the strong Markov property at $T_i$ yields, for every $1\leq i<\ell$,
		\[
		u_\infty(Y_i)\geq\E_o\bigl[(S_{T_{i+1}}-S_{T_i})+u_\infty(Y_{i+1})\,\big|\,\mathcal{F}_{T_i}\bigr]\,.
		\]
		Setting $V_i\coloneqq\E_o[S_{\tau_m}-S_{T_i}\mid\mathcal{F}_{T_i}]$ on $\{J_m\geq i\}$, backward induction from $V_\ell=0$ yields $V_i\leq u_\infty(Y_i)$ for every $1\leq i\leq\ell$: on $\{J_m=i\}$ we have $\tau_m=T_i$ and $V_i=0\leq u_\infty(Y_i)$; on $\{J_m>i\}$ the tower property and the inductive hypothesis at stage $i+1$ give $V_i=\E_o[(S_{T_{i+1}}-S_{T_i})+V_{i+1}\mid\mathcal{F}_{T_i}]\leq u_\infty(Y_i)$. In particular,
		\begin{equation}\label{eq:stage-bound}
			u_\infty(o)\geq V_1=\E_o[S_{\tau_m}\mid\xi]\,.
		\end{equation}

		To bound $\E_o[S_{\tau_m}\mid\xi]$, we pass to a martingale indexed by the stage number. Applying optional stopping to the walk martingale $h(X_n)+S_n$ at the finite-expectation exit time $\tau_{D_i}$ shows that $M_i\coloneqq h(Y_i)+S_{T_i}$ is a martingale in the stage filtration under $\P_o(\cdot\mid\xi)$. Since $J_m\leq\ell$ is bounded in the stage index, optional stopping at $J_m$ gives $\E_o[M_{J_m}\mid\xi]=M_1=h(o)$. Taking expectations and using $\E[h(o)]=\sum_v g(o,v)\,\E[\xi(v)]=0$ yields
		\begin{equation}\label{eq:payoff-identity}
			\E[S_{\tau_m}]=-\E[h(Y_{J_m})]\,.
		\end{equation}

		We bound $\E[h(Y_{J_m})]$ by conditioning on the walk. If $J_m=i<\ell$, then $A_{Y_i}$ occurs. On $A_{Y_i}$, every $v\in C_m(Y_i)$ satisfies $\xi(v)\leq-\varepsilon$, so using $g(Y_i,v)\geq g_{C_m(Y_i)}(Y_i,v)$ and the nonpositivity of the coefficients,
		\[
		\sum_{v\in C_m(Y_i)}g(Y_i,v)\,\xi(v)\leq\sum_{v\in C_m(Y_i)}g_{C_m(Y_i)}(Y_i,v)\,\xi(v)\leq -\varepsilon\,\Theta_{C_m(Y_i)}(Y_i)\leq -8m\,.
		\]
		Vertices outside $\bigcup_{j\leq i}C_m(Y_j)$ are independent of the conditioning and contribute zero in conditional expectation given~$X$. For vertices in earlier trap sets $C_m(Y_j)$ with $j<i$, conditioning on~$A_{Y_j}^c$ biases $\E[\xi(v)]$ by at most $\E[|\xi|]/(1-p_0)$, and by~\eqref{eq:interaction} the total bias from earlier trap sets is at most $B_0\coloneqq\E[|\xi|]/(1-p_0)$. This yields
		\[
		\E[h(Y_i)\mid J_m=i,X]\leq -8m+B_0\,.
		\]
		If $J_m=\ell$, then no $A_{Y_j}$ occurs for $j<\ell$; the trap set $C_m(Y_\ell)$ is unbiased under this conditioning, while the earlier trap sets contribute at most $B_0$, so
		\[
		\E[h(Y_\ell)\mid J_m=\ell,X]\leq B_0\,.
		\]
		By conditional independence of the $A_{Y_i}$ given~$X$,
		\[
		\P(J_m=\ell\mid X)=\P\!\Bigl(\bigcap_{j<\ell}A_{Y_j}^c\,\Big|\,X\Bigr)\leq(1-p_0^{M_m})^{\ell-1}\leq e^{-1}\,.
		\]
		Combining,
		\[
		\E[h(Y_{J_m})\mid X]\leq(-8m+B_0)(1-e^{-1})+B_0\,e^{-1}=-8m(1-e^{-1})+B_0\,.
		\]
		Taking walk expectations and applying~\eqref{eq:payoff-identity} gives $\E[S_{\tau_m}]\geq 8m(1-e^{-1})-B_0$, and combined with~\eqref{eq:stage-bound} this yields $\E[u_\infty(o)]\geq 8m(1-e^{-1})-B_0\to\infty$ as $m\to\infty$.
		
		\medskip
		
		\emph{Step~2: Variance bound and conclusion.}
		Suppose for contradiction that $u_\infty(o)<\infty$ a.s. Set $V_0\coloneqq\var(\xi)\sum_v g(o,v)^2$, which is finite by hypothesis.
		For~$K>0$, set~$f_K\coloneqq u_\infty(o)\wedge K$. The same Efron--Stein argument as in Proposition~\ref{prop:critical}(a)---using $|a\wedge K-b\wedge K|\leq|a-b|$ and $g_n(o,v)\leq g(o,v)$---gives $\var(u_n(o)\wedge K)\leq V_0$ for every~$n$, where $u_n(o)\coloneqq\sup_{\tau\leq n}\E_o[S_\tau\mid\xi]$. Since $u_n(o)\wedge K\uparrow f_K$, dominated convergence gives $\var(f_K)\leq V_0$ for all~$K$.
		Since~$u_\infty(o)<\infty$ a.s., $f_K\uparrow u_\infty(o)$ a.s.\ and~$\E[f_K]\uparrow\E[u_\infty(o)]=\infty$ by monotone convergence.
		Hence~$(f_K-\E[f_K])^2\to\infty$ a.s., and Fatou's lemma gives~$\liminf_{K\to\infty}\var(f_K)=\infty$, contradicting~$\var(f_K)\leq V_0$.
		It follows that~$\P(u_\infty(o)=\infty)>0$, and Proposition~\ref{prop:01-law} gives~$u_\infty(o)=\infty$ almost surely.
	\end{proof}
	
	\subsection{Explosion under symmetry}
	
	The following proposition shows that, in every qualitative explosion result of the type above, the finite-variance hypothesis can be replaced by symmetry.  The key observation is that the optimal stopping value, viewed as a function of the scenery~$\xi$, is convex.
	
	\begin{proposition}[Convexity reduction for symmetric laws]\label{prop:convexity-reduction}
		Let~$G = (V,E)$ be an infinite, locally finite, connected graph, and let $(\xi(v))_{v\in V}$ be i.i.d.\ and independent of the walk with
		$\E[\xi]=0$, $\xi\not\equiv 0$, and $\xi$ symmetric about~$0$.
		If for every i.i.d.\ scenery on~$G$ with mean~$0$, positive variance, and finite second moment, $\sup_\tau\E_o[S_\tau\mid\xi]=\infty$ a.s., then the same conclusion holds for~$\xi$. The same reduction holds with~$\sup_\tau$ replaced by~$\sup_n$.
	\end{proposition}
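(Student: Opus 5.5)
The plan is to write $\xi = \eta + \rho$, where $\eta$ is a bounded truncation of $\xi$ and $\rho$ is the remainder, in a way that keeps the law of $\xi$. Convexity of the value function in the scenery, together with Jensen's inequality conditionally on $\eta$, then gives $u(\xi)\ge u(\eta)$ in distribution. The bounded part falls under the hypothesis of the proposition.

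\emph{Convexity.} For a fixed bounded stopping time $\tau$, the map $\xi\mapsto\E_o[S_\tau\mid\xi]=\sum_v \E_o[L_\tau(v)]\,\xi(v)/\deg(v)$ is linear, and only finitely many $v$ are involved since $\tau\le N$. A supremum of linear functionals is convex, so $\Phi(\xi)\coloneqq\sup_\tau\E_o[S_\tau\mid\xi]$ is convex. It is also lower semicontinuous under coordinatewise convergence, because it is a supremum of continuous functions each depending on finitely many coordinates. The same holds for $\sup_n$.

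\emph{Law-preserving truncation.} Fix $M>0$ such that $\xi\one_{\{|\xi|\le M\}}\not\equiv 0$; this is possible because $\xi\not\equiv0$. Let $\epsilon(v)$ be i.i.d.\ uniform signs independent of $\xi$. Set $\eta(v)\coloneqq \xi(v)\one_{\{|\xi(v)|\le M\}}$ and $\rho(v)\coloneqq \epsilon(v)|\xi(v)|\one_{\{|\xi(v)|>M\}}$, and define $\tilde\xi\coloneqq\eta+\rho$. By symmetry of $\xi$, $\tilde\xi$ has the same law as $\xi$ and is i.i.d.\ across $v$. Conditionally on $\xi$, each $\rho(v)$ is independent with mean zero, so $\E[\tilde\xi\mid\eta']=\eta$ coordinatewise, where $\eta'$ denotes the $\sigma$-field generated by $\xi$ (the $\rho$-signs average out). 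The scenery $\eta$ is i.i.d.\ with mean $0$ (again by symmetry), bounded, and has positive variance, so the hypothesis gives $\Phi(\eta)=\infty$ a.s.

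\emph{Jensen step.} This is where I expect the main obstacle, because $\Phi$ depends on infinitely many coordinates and may take the value $+\infty$. I plan to work with the finite-horizon truncations $\Phi_N(\xi)\coloneqq\sup_{\tau\le N}\E_o[S_\tau\mid\xi]$, each of which is convex and depends only on the finitely many coordinates in $B(o,N)$. Conditional Jensen, using integrability of the bounded-support parts, gives $\E[\Phi_N(\tilde\xi)\mid\xi]\ge\Phi_N(\eta)$. Integrability holds because $\Phi_N(\tilde\xi)\le\sum_{v\in B(o,N)}N|\tilde\xi(v)|$. Mean zero is not needed for the upper bound, but I must ensure $\E|\xi|<\infty$; this follows from $\E[\xi]=0$ under the paper's convention that excludes the indeterminate case. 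To turn the inequality of conditional expectations into an almost sure statement, fix $K$ and apply the same convex argument to $\Phi_N\wedge$... but a minimum with $K$ is not convex, so instead I use a tail argument: $\Phi_N(\tilde\xi)\uparrow\Phi(\tilde\xi)$ and $\Phi_N(\eta)\uparrow\infty$ a.s. I expect to show $\P(\Phi(\tilde\xi)=\infty)>0$ as follows. If instead $\Phi(\tilde\xi)<\infty$ a.s., then by the zero-one law (Proposition~\ref{prop:01-law}) $\Phi(\tilde\xi)$ is a finite random variable. Now apply Jensen to the convex nondecreasing function $x\mapsto x\wedge$ ... fails; rather I would use $\E[e^{-\Phi_N(\tilde\xi)}\mid\xi]$ together with the inequality $\P(\Phi_N(\tilde\xi)\ge \Phi_N(\eta)\mid\xi)\ge c$. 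The latter holds because a convex function at a point exceeds its value at the mean with conditional probability at least $1/2$ when the perturbation is symmetric: $\rho$ and $-\rho$ have the same conditional law, and $\Phi_N(\eta+\rho)+\Phi_N(\eta-\rho)\ge2\Phi_N(\eta)$. This pairing argument is in fact cleaner and avoids integrability issues altogether, so it is what I would commit to. It gives $\P(\Phi_N(\tilde\xi)\ge\Phi_N(\eta))\ge1/2$ for each $N$. Taking $N\to\infty$, with $\Phi_N(\eta)\to\infty$, yields $\P(\Phi(\tilde\xi)\ge K)\ge1/2$ for all $K$. Hence $\P(\Phi(\xi)=\infty)\ge1/2$, and Proposition~\ref{prop:01-law} upgrades this to probability one; for $\sup_\tau$ the same 0--1 argument applies through Corollary~\ref{cor:RW-infinite}.
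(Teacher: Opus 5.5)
Your committed pairing argument is correct and is essentially the paper's proof. Both arguments write the bounded truncation $\xi\one_{\{|\xi|\le M\}}$ as the midpoint of two fields that each have the law of $\xi$. Convexity of the value, as a supremum of linear functionals, then forces one of the two values to be infinite, which gives probability at least $1/2$. Proposition~\ref{prop:01-law} upgrades this to probability one.

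The paper uses the deterministic reflection $\xi^\dagger=\xi\one_{\{|\xi|\le M\}}-\xi\one_{\{|\xi|>M\}}$. This is your construction with $\epsilon(v)=\mathrm{sgn}\,\xi(v)$, which turns your pair into $(\xi,\xi^\dagger)$, and $\xi^\dagger$ has the law of $\xi$ by symmetry alone. The paper also applies convexity directly to the $[0,\infty]$-valued functional, where no finiteness issue arises. So your auxiliary random signs, the finite-horizon $\Phi_N$ with the passage $N\to\infty$, and the abandoned Jensen attempts are unnecessary, though harmless.
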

	
	\begin{proof}
		The strategy is to construct a bounded scenery~$Y$ with mean zero and positive variance, express~$Y$ as the midpoint of~$\xi$ and a modified copy~$\xi^\dagger\stackrel{d}{=}\xi$, and exploit convexity.
		
		\medskip
		\emph{Step~1: Construction of~$\xi^\dagger$.}\;
		Since~$\xi\not\equiv 0$, there exists~$M>0$ with~$\P(0<|\xi(v)|\leq M)>0$.  Define
		\[
		\xi^\dagger(v)\coloneqq \xi(v)\one_{\{|\xi(v)|\leq M\}}-\xi(v)\one_{\{|\xi(v)|>M\}}\,.
		\]
		Because~$\xi(v)$ is symmetric and the set~$\{|x|>M\}$ is invariant under~$x\mapsto -x$, splitting on~$\{|\xi(v)|\leq M\}$ and its complement shows that~$\xi^\dagger(v)\stackrel{d}{=}\xi(v)$.  Since each~$\xi^\dagger(v)$ depends only on~$\xi(v)$ and the~$\xi(v)$ are i.i.d., the field~$(\xi^\dagger(v))_{v\in V}$ is i.i.d.\ with the same marginals.
		
		\medskip
		\emph{Step~2: Bounded truncation.}\;
		Set~$Y(v)\coloneqq(\xi(v)+\xi^\dagger(v))/2=\xi(v)\one_{\{|\xi(v)|\leq M\}}$.  Then~$|Y|\leq M$, the variable~$Y(v)$ is symmetric (hence~$\E[Y]=0$), and~$\var(Y)>0$ since~$Y$ is not almost surely zero.  By hypothesis,~$\sup_\tau\E_o[S_\tau(Y)\mid Y]=\infty$ almost surely, where $S_n(Y)\coloneqq\sum_{k<n}Y(X_k)/\deg(X_k)$.
		
		\medskip
		\emph{Step~3: Convexity.}\;
		By Theorem~\ref{thm:nested-vol} and Corollary~\ref{cor:RW-infinite}, writing $u_\infty(o;\xi)=\sup_\tau\E_o[S_\tau\mid\xi]$,
		\[
		F_o(\xi)\coloneqq u_\infty(o;\xi)=\max\biggl\{0,\;\sup_{\substack{C\ni o\\ C\textup{ finite connected}}}\sum_{v\in C}g_C(v)\,\xi(v)\biggr\}\,
		\]
		is a supremum of linear functionals of~$\xi$ (together with the zero functional), hence convex.
		Since~$Y=(\xi+\xi^\dagger)/2$ pointwise, convexity gives
		\[
		\infty=F_o(Y)\leq\frac{F_o(\xi)+F_o(\xi^\dagger)}{2}\qquad\text{a.s.}
		\]
		Therefore~$\max\bigl(F_o(\xi),\,F_o(\xi^\dagger)\bigr)=\infty$ almost surely.  Since~$\xi^\dagger\stackrel{d}{=}\xi$, the two probabilities are equal, giving~$\P(F_o(\xi)=\infty)\geq 1/2>0$.  Proposition~\ref{prop:01-law} then gives~$\sup_\tau\E_o[S_\tau\mid\xi]=\infty$ almost surely.
		
		The same argument applies to the fixed-time functional $\sup_n\sum_v g_n(o,v)\,\xi(v)$, which is likewise a supremum of linear functionals and hence convex.  The second conclusion of Proposition~\ref{prop:01-law} replaces the first in the upgrade step.
	\end{proof}
	
	Applied to Proposition~\ref{prop:critical}(b), the convexity reduction gives: if~$\Sigma_n(o)\to\infty$ and~$\xi$ is symmetric and non-degenerate, then~$\sup_n\E_o[S_n\mid\xi]=\infty$ a.s.\ with no moment condition beyond~$\E[\xi]=0$.  Applied to Proposition~\ref{prop:doubly-transient-really-general}, it gives~$\sup_\tau\E_o[S_\tau\mid\xi]=\infty$ a.s.\ in the doubly transient case on graphs admitting a uniform local trap condition.  On bounded-degree graphs, the trap condition is automatic and the two cases are exhaustive, yielding Theorem~\ref{thm:OS}(ii).  Theorem~\ref{thm:explosion}(ii) follows: setting $\xi(v)=\sigma(v)-1$ gives $\mu-1=0$, and $\sup_{\tau}\E_x[S_\tau\mid\xi]=\infty$ implies $u_\infty(x)=\infty$ a.s.\ by Corollary~\ref{cor:RW-infinite}.
	
	\section{Subcritical regime}\label{sec:subcritical}
	
	When $\E[\xi]<0$ on a bounded-degree graph, the negative drift $D_n\leq\E[\xi]\,n/d$ penalizes long runs and the optimal strategy is to stop early. (Here and below the mean $\E[\xi]$ is finite; the case $\E[\xi]=-\infty$ reduces to finite mean by lower truncation, as in the proof of Proposition~\ref{prop:subcritical}.) Finiteness of the optimal stopping value requires controlling the fluctuation $W_n$ against this linear penalty. We first isolate a degree-free criterion for graphs with finite inverse-degree time (Proposition~\ref{prop:short-clock}). We then record the local time bounds that enter both main proofs. Proposition~\ref{prop:subcritical} proves finite moments of $\sup_n S_n$ under $p>1+2/d_s$ via tail bounds. Proposition~\ref{prop:poly-growth} gives $\sup_\tau\E_o[S_\tau\mid\xi]<\infty$ a.s.\ under the weaker condition $p>\max(2d_f/(d_w\,d_s),\,1)$ on graphs with sub-Gaussian displacement, using scenery truncation.
	
	\subsection{Stabilization under a finite inverse-degree clock}
	
	Before the moment bounds, we isolate a degree-free stabilization criterion that applies when the walk has only finite effective time.
	
	\begin{proposition}[Short-clock stabilization]\label{prop:short-clock}
		Let $G=(V,E)$ be infinite, locally finite, and connected. If for some~$o \in V$ the total inverse-degree time $\sum_{v\in V}g(o,v)$ is finite, and if $(\xi(v))_{v\in V}$ are i.i.d.\ and independent of the walk with $\E[\xi^+]<\infty$, then $\E[\sup_\tau\E_o[S_\tau\mid\xi]] < \infty$, where the supremum is over bounded stopping times.
	\end{proposition}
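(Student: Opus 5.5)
The plan is to bound the payoff pathwise by dropping the negative part of the scenery, which turns the problem into a monotone one that is controlled by the total Green mass. Write $\xi=\xi^+-\xi^-$ and $\zeta^\pm(v)\coloneqq\xi^\pm(v)/\deg(v)$. For every bounded stopping time~$\tau$ we have pathwise
\[
S_\tau=\sum_{k<\tau}\zeta(X_k)\leq\sum_{k<\tau}\zeta^+(X_k)\leq\sum_{k\geq 0}\zeta^+(X_k)\,,
\]
because every summand $\zeta^+(X_k)$ is nonnegative. Taking the walk expectation given~$\xi$ and using monotone convergence with $\E_o[\sum_{k\geq0}\one_{\{X_k=v\}}]=\deg(v)\,g(o,v)$ gives
\[
\sup_{\tau\ \mathrm{bounded}}\E_o[S_\tau\mid\xi]\leq\sum_{v\in V}\E_o\Bigl[\sum_{k\geq0}\one_{\{X_k=v\}}\Bigr]\frac{\xi^+(v)}{\deg(v)}=\sum_{v\in V}g(o,v)\,\xi^+(v)\,.
\]
Here the left side is at least $0$, as one sees by taking $\tau=0$.

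Next we take the expectation over the scenery. By Tonelli, since all terms are nonnegative, and since the $\xi(v)$ are identically distributed,
\[
\E\Bigl[\sup_\tau\E_o[S_\tau\mid\xi]\Bigr]\leq\E[\xi^+]\sum_{v\in V}g(o,v)<\infty\,.
\]
This uses the hypothesis $\sum_v g(o,v)<\infty$ together with $\E[\xi^+]<\infty$. Independence of the $\xi(v)$ is not needed, only the common marginal. Finiteness of $\sum_v g(o,v)$ forces $g(o,v)<\infty$ for every~$v$, so the walk is transient and each occupation expectation is finite. This justifies the interchange of sums.

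There is no real obstacle. The only point needing care is the use of Tonelli or monotone convergence for the infinite-horizon sum $\sum_{k\geq0}\zeta^+(X_k)$. It also matters that the bound is uniform over all bounded~$\tau$, which holds because it is dominated by the full sum over $k\geq0$. As a consequence, via Corollary~\ref{cor:RW-infinite} we get $\E[u_\infty(o)]<\infty$, and hence stabilization almost surely.
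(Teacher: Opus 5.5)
Your proof is correct and follows the paper's argument: you drop the negative part of the scenery, dominate the stopped local times by the full Green function to get $\sup_\tau\E_o[S_\tau\mid\xi]\leq\sum_{v}g(o,v)\,\xi(v)^+$, and then take expectations using $\E[\xi^+]<\infty$. The only cosmetic difference is that you carry out the domination pathwise, via $\sum_{k<\tau}\zeta^+(X_k)\leq\sum_{k\geq 0}\zeta^+(X_k)$, before averaging over the walk, whereas the paper first writes $\E_o[S_\tau\mid\xi]$ in terms of the expected local times $\E_o[L_\tau(v)]\leq\deg(v)\,g(o,v)$.
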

	\begin{proof}
		For every bounded stopping time~$\tau$,
		\[
		\E_o[S_\tau\mid\xi]
		=\sum_{v\in V}\frac{\E_o[L_\tau(v)]}{\deg(v)}\,\xi(v)
		\leq\sum_{v\in V}\frac{\E_o[L_\tau(v)]}{\deg(v)}\,\xi(v)^+
		\leq\sum_{v\in V}g(o,v)\,\xi(v)^+\,.
		\]
		The first inequality drops the negative terms; the second uses $\E_o[L_\tau(v)]\leq\deg(v)\,g(o,v)$.
		Taking the supremum over bounded~$\tau$ and then expectations gives
		$\E[\sup_\tau\E_o[S_\tau\mid\xi]]\leq\E[\xi^+]\sum_v g(o,v)<\infty$.
	\end{proof}
	
	The counterexample in Lemma~\ref{ex:counterexample} has $\sum_{v\in V} g(o,v)<\infty$, so the optimal stopping value is finite regardless of~$\E[\xi]$. On the other hand, if $A_n(x)\to\infty$, then the phase transition in~$\E[\xi]$ is nontrivial and the subcritical direction requires the negative drift $\E[\xi]\,A_n$ to dominate the fluctuation.
	
	\subsection{Local time bounds}
	
	The proofs of Propositions~\ref{prop:subcritical} and~\ref{prop:poly-growth} use the following local time estimate.
	
	\begin{lemma}\label{lem:local-time}
		Let~$G = (V,E)$ be an infinite, locally finite, connected graph on which the simple random walk $(X_n)_{n\geq 0}$ satisfies the \textbf{spectral dimension bound}
		\begin{equation}\label{eq:return-bound}
			\sup_{x\in V}\P_x(X_n=x)\leq A\,n^{-d_s/2}\,,
		\end{equation}
		for all $n\geq 1$, for some $d_s>0$ and $A<\infty$.
		Then for every $p \geq 1$ and all $x \in V$ and $n \geq 1$,
		\[
		\sum_{v\in V} \E_x[L_n(v)^p] \leq C_p\begin{cases}
			n^{1+(p-1)(1-d_s/2)} & \text{if } d_s<2,\\[3pt]
			n\,(1+\log n)^{p-1} & \text{if } d_s=2,\\[3pt]
			n & \text{if } d_s>2\,.
		\end{cases}\,
		\]
		Here $C_p<\infty$ depends only on~$p$, $d_s$, and~$A$.
	\end{lemma}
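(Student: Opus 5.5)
We bound the $p$-th moments of local times through the moment formula for occupation times combined with the uniform return bound~\eqref{eq:return-bound}. The starting point is the identity
\[
\sum_{v\in V}\E_x[L_n(v)^p]=\E_x\Bigl[\sum_{v}L_n(v)^{p}\Bigr],\qquad \sum_v L_n(v)=n\,,
\]
which shows that the quantity to control is a weighted average $\E_x[\sum_v L_n(v)\cdot L_n(v)^{p-1}]$. Since $\sum_v L_n(v)=n$, it suffices to bound $\E_x[L_n(X_k)^{p-1}]$ roughly uniformly in~$k$ and then sum over $k<n$. More precisely, we write $\sum_v L_n(v)^p=\sum_{k<n}L_n(X_k)^{p-1}$ and bound $L_n(X_k)\le L^{\mathrm{fwd}}_k+L^{\mathrm{bwd}}_k$, the numbers of visits to $X_k$ during $[k,n)$ and during $[0,k]$ respectively.

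For the forward part, the strong Markov property at time~$k$ gives $\E_x[(L^{\mathrm{fwd}}_k)^{p-1}]\le\sup_y\E_y[L_n(y)^{p-1}]$. For the backward part we use reversibility. On a bounded-degree graph, or more generally with reversible measure $\deg$, the time-reversed path started from $X_k$ is again a simple random walk path, up to degree weights. We would avoid that subtlety altogether by the cruder bound $\sum_v L_n(v)^p\le 2^{p-1}\sum_{k<n}\bigl((L^{\mathrm{fwd}}_k)^{p-1}+(L^{\mathrm{bwd}}_k)^{p-1}\bigr)$ combined with a direct count. The backward count $\sum_k (L^{\mathrm{bwd}}_k)^{p-1}$ equals $\sum_v\sum_{j=1}^{L_n(v)}j^{p-1}$, which is comparable to $\sum_v L_n(v)^p$ itself. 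To avoid this circularity, we instead use the forward decomposition alone: $\sum_v L_n(v)^p\le C\sum_v\sum_{j=1}^{L_n(v)}j^{p-1}$, and $\sum_{j\le L_n(v)}j^{p-1}$ is at most $p\,\sum_{\text{visits }k}(\#\text{visits to }v\text{ in }[k,n))^{p-1}$, by reversing the order of the visits. This yields
\[
\sum_v\E_x[L_n(v)^p]\le C_p\,n\,\sup_{y}\E_y\bigl[L_n(y)^{p-1}\bigr].
\]

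It remains to bound the single-site moment $\sup_y\E_y[L_n(y)^{q}]$ for $q=p-1$. The integer moments satisfy
\[
\E_y[L_n(y)^m]\le m!\Bigl(\sum_{j<n}\sup_z\P_z(X_j=z)\Bigr)^m,
\]
using the Markov property at the ordered visit times together with the diagonal bound $\P_z(X_j=z)$ evaluated at the successive returns to $y$. Non-integer $q$ is then handled by H\"older interpolation between neighbouring integers. By~\eqref{eq:return-bound}, $\sum_{j<n}\sup_z\P_z(X_j=z)\le 1+A\sum_{1\le j<n}j^{-d_s/2}$, which is $\lesssim n^{1-d_s/2}$, $\log n$, or $O(1)$ in the three regimes $d_s<2$, $d_s=2$, $d_s>2$. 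Raising this to the power $p-1$ and multiplying by $n$ gives exactly the three stated bounds.

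The main obstacle is the reduction step to $n\sup_y\E_y[L_n(y)^{p-1}]$, specifically making rigorous the ``reverse the order of visits'' combinatorics. Concretely, if $k_1<\dots<k_L$ are the visit times to $v$, then $\sum_{i=1}^L (L-i+1)^{p-1}\ge L^p/p$. The conditional expectation of $(\#\text{visits in }[k_i,n))^{p-1}$ given $\mathcal F_{k_i}$ is then bounded by $\E_v[L_n(v)^{p-1}]$ via the strong Markov property, and summing over $i$ and $v$ gives $\sum_v\E_x[L_n(v)]\cdot\sup_y\E_y[L_n(y)^{p-1}]=n\sup_y\E_y[L_n(y)^{p-1}]$. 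For $p\in[1,2)$ the exponent $p-1<1$ requires only Jensen's inequality applied to the first moment. The constant $C_p$ depends only on $p$, $d_s$ and $A$.
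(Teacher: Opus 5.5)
Your proposal is correct and is essentially the paper's argument. Both expand the moments into ordered tuples of visit times and apply the Markov property. The first time index contributes a factor $n$: the paper gets this from $\sum_v\P_x(X_{t_1}=v)=1$, and you get it from your sum over $k<n$. Each of the remaining $p-1$ gaps is then bounded by the diagonal return estimate.

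Your bound $\sum_v\E_x[L_n(v)^p]\le p\,n\,\sup_y\E_y[L_n(y)^{p-1}]$ is a regrouping of the paper's single $p$-fold expansion. The only real difference is that it works directly for real $p$ through $L^p\le p\sum_{j\le L}j^{p-1}$ plus Lyapunov's inequality, where the paper reduces to integer $p$ by H\"older interpolation at the start. The forward/backward detour in your second paragraph can be dropped.
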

	
	\begin{proof}
		By H\"older interpolation, we may assume that~$p$ is a positive integer.
		Since $\P_x(X_0=x)=1$, we may and do take $A\geq 1$.
		Expanding the $p$-th moment,
		\[
		\sum_v \E_x[L_n(v)^p]
		= \sum_{k_1,\ldots,k_p=0}^{n-1}
		\sum_v \P_x(X_{k_1} = \cdots = X_{k_p} = v)\,.
		\]
		For every ordered tuple $0 \leq t_1 \leq \cdots \leq t_p \leq n-1$,
		the Markov property gives
		\[
		\sum_v \P_x(X_{t_1} = \cdots = X_{t_p} = v)
		= \sum_v \P_x(X_{t_1}=v)\prod_{i=2}^{p} \P_v(X_{t_i - t_{i-1}}=v)
		\leq \prod_{i=2}^{p} \frac{A}{(t_i - t_{i-1})^{d_s/2} \vee 1}\,,
		\]
		using $\sum_v \P_x(X_{t_1}=v) = 1$ and~\eqref{eq:return-bound}.
		Since at most $p!$ tuples $(k_1,\ldots,k_p)$ map to each ordered tuple,
		and $t_1$ ranges over $n$ values, setting $g_i = t_i - t_{i-1}$
		for $i \geq 2$ and enlarging the domain of summation gives
		\[
		\sum_v \E_x[L_n(v)^p]
		\leq p! A^{p-1} n
		\biggl(\sum_{g=0}^{n-1} \frac{1}{g^{d_s/2} \vee 1}\biggr)^{p-1}\,.
		\]
		For $d_s<2$ the sum is $O(n^{1-d_s/2})$ by comparison with an integral; for $d_s=2$ it is $O(\log n)$; for $d_s>2$ it is $O(1)$.
		\qedhere
	\end{proof}
	
	On every graph with degree bounded by~$d$, by~\citet[Example~5.14]{Grigoryan},
	we have
	\begin{equation}
		\label{eq:heat-kernel-upper-bound}
		\sup_{x,y} \P_x(X_n = y) \leq C(d) n^{-1/2}\,.
	\end{equation}
	In particular, Lemma~\ref{lem:local-time} gives $\sum_v\E_x[L_n(v)^p]\leq C_p\,n^{(p+1)/2}$ on such graphs.
	
	\subsection{Stabilization via tail bounds}
	
	The proof of Proposition~\ref{prop:subcritical} uses the following tail inequality for sums of independent random variables.
	
	\begin{lemma}[Tail inequality for independent sums]\label{lem:fuk-nagaev}
		Let $(Y_i)_{i\in I}$ be a finite collection of independent mean-zero random variables with $M_p\coloneqq\sum_{i\in I}\E[|Y_i|^p]<\infty$ for some $p\geq 1$.
		\begin{enumerate}[label=\textup{(\alph*)}]
			\item \textup{(Polynomial tail, $p\in[1,2]$)}\; If $p\in[1,2]$, then for all $t>0$,
			\begin{equation}\label{eq:poly-tail}
				\P\biggl(\Bigl|\sum_{i\in I}Y_i\Bigr|\geq t\biggr)\leq C_p\,\frac{M_p}{t^p}\,.
			\end{equation}
			Here $C_p<\infty$ depends only on~$p$.
			\item \textup{(Polynomial $+$ Gaussian tail)}\; If $p\geq 2$ and $B^2\coloneqq\sum_{i\in I}\E[Y_i^2]<\infty$, then for all $t>0$,
			\begin{equation}\label{eq:fuk-nagaev}
				\P\biggl(\Bigl|\sum_{i\in I}Y_i\Bigr|\geq t\biggr)\leq C_p\,\frac{M_p}{t^p}+2\exp\biggl(-c\,\frac{t^2}{B^2}\biggr)\,.
			\end{equation}
			Here $c>0$ and $C_p<\infty$ depend only on~$p$.
			\item \textup{(Bernstein)}\; If $|Y_i|\leq M$ a.s.\ for each~$i$ and $B^2\coloneqq\sum_{i\in I}\E[Y_i^2]<\infty$, then for all $t>0$,
			\begin{equation}\label{eq:bernstein}
				\P\biggl(\Bigl|\sum_{i\in I}Y_i\Bigr|\geq t\biggr)\leq 2\exp\biggl(-\frac{t^2/2}{B^2+Mt/3}\biggr)\,.
			\end{equation}
		\end{enumerate}
	\end{lemma}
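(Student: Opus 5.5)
The statement collects three classical tail inequalities, so the plan is to derive each from standard tools rather than from anything specific to the sandpile. Write $Z\coloneqq\sum_{i\in I}Y_i$ throughout.

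Part~(a) is a von Bahr--Esseen type bound. I will start from Markov's inequality in the form $\P(|Z|\geq t)\leq \E[|Z|^p]/t^p$, which reduces the claim to $\E[|Z|^p]\leq C_p M_p$ for $p\in[1,2]$. For $p=1$ this is just the triangle inequality. For $p\in(1,2]$ I will symmetrize: with an independent copy $(Y_i')$, Jensen's inequality gives $\E|Z|^p\leq \E|\sum_i(Y_i-Y_i')|^p$, and with Rademacher signs $\eps_i$ this equals $\E|\sum_i\eps_i(Y_i-Y_i')|^p$. Conditionally on the $Y$'s, Khintchine's inequality bounds the last expression by $C\,\E(\sum_i (Y_i-Y_i')^2)^{p/2}$. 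Since $p/2\leq 1$, subadditivity of $x\mapsto x^{p/2}$ bounds this in turn by $\sum_i\E|Y_i-Y_i'|^p\leq 2^p M_p$.

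Part~(b) is the Fuk--Nagaev inequality, and I would quote it, for instance from Nagaev (1979), Corollary~1.8. For a self-contained route, I would truncate at level $y=t/\beta$ for a suitable constant $\beta$. The event that some $|Y_i|>y$ has probability at most $\sum_i\P(|Y_i|>y)\leq \beta^pM_p/t^p$. On its complement, $Z$ coincides with the sum of the truncations $Y_i\one_{\{|Y_i|\leq y\}}$. After centering, these are bounded by $2y$ and have total variance at most $B^2$. The shift caused by centering satisfies $\sum_i|\E[Y_i\one_{\{|Y_i|>y\}}]|\leq M_p/y^{p-1}$, so it is at most $t/2$ unless $M_p/t^p$ exceeds a constant, in which case the bound is trivial once $C_p$ is large. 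Bernstein's inequality from part~(c) then gives $2\exp(-(t/2)^2/(2B^2+4yt/3))$. The term $yt/3$ in the denominator is the main obstacle, since it only yields $\exp(-c\beta)$ rather than a Gaussian tail. To handle it I would split into two cases. If $B^2\geq yt$, the Gaussian term dominates. Otherwise I would use the sharper Bennett/Fuk bound $\exp\bigl(\tfrac{t}{y}-(\tfrac{t}{y}+\tfrac{B^2}{y^2})\log(1+\tfrac{ty}{B^2})\bigr)$. Using $y=t/\beta$ and $ty/B^2>1$, this is at most $C_\beta (B^2/t^2)^{\beta}$. The remaining task is to convert this into $C(M_p/t^p)$ via $B^2\leq M_p^{2/p}|I|^{1-2/p}$, and that conversion is exactly where the standard proof is delicate, which is why citing Nagaev is the cleanest route.

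Part~(c) is the classical Bernstein inequality. Using the bound $\E e^{\lambda Y_i}\leq \exp\bigl(\lambda^2\E[Y_i^2]/(2(1-\lambda M/3))\bigr)$ for $0<\lambda<3/M$, which follows by expanding the exponential and bounding $\E|Y_i|^k\leq M^{k-2}\E Y_i^2$ with $k!\geq 2\cdot 3^{k-2}$, independence gives $\E e^{\lambda Z}\leq \exp\bigl(\lambda^2B^2/(2(1-\lambda M/3))\bigr)$. Choosing $\lambda=t/(B^2+Mt/3)$ in the Chernoff bound yields the one-sided estimate, and applying the same argument to $-Z$ accounts for the factor~$2$.
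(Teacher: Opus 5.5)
Your proposal is correct and essentially matches the paper, whose proof consists solely of citations (von Bahr--Esseen for (a), Fuk--Nagaev and Nagaev for (b), Bernstein via Petrov for (c)). Your symmetrization--Khintchine argument for (a), which gives the constant $2^p$ in place of $2$, and your Chernoff computation for (c) are valid self-contained substitutes.

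For (b) you are right to rest on the citation, because the optional sketch cannot be completed as described. Converting $C_\beta(B^2/t^2)^{\beta}$ through $B^2\le M_p^{2/p}|I|^{1-2/p}$ leaves a positive power of $|I|$ whenever $p>2$, and no constant depending only on $p$ can absorb it. Nagaev's argument avoids this by splitting the exponential moment of the truncated summands, so that the non-Gaussian term is controlled by $M_p$ itself rather than by $B^2$ and the sup-norm bound $2y$.
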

	\begin{proof}
		Part~(a) is the von Bahr--Esseen inequality~\citep{vonBahrEsseen65}. Part~(b) is due to \citet{FukNagaev71} and \citet{Nagaev79}. Part~(c) is Bernstein's inequality~\citep[Chapter~III, Theorem~17]{Petrov}.
	\end{proof}
	
	The next two lemmas collect the reduction steps common to the proofs of Propositions~\ref{prop:subcritical} and~\ref{prop:poly-growth}. Recall the decomposition $S_n = D_n + W_n$ from~\eqref{eq:dn-wn-stuff}. Throughout this section, $\P_x$ and $\E_x$ denote the joint probability and expectation over the walk started at~$x$ and the i.i.d.\ masses.
	
	\begin{lemma}[Dyadic reduction]\label{lem:dyadic}
		Let~$G = (V,E)$ be an infinite, locally finite, connected graph with degree bounded by~$d$, and let $(\xi(v))_{v\in V}$ be i.i.d.\ and independent of the walk with $\E[\xi]\in(-\infty,0)$ and $\E[|\xi|^p]<\infty$ for some $p\geq 1$. For $k\geq 0$ let
		\[
		Y_k\coloneqq\Bigl(\max_{2^k\leq n<2^{k+1}}W_n-\frac{|\E[\xi]|}{d}\,2^k\Bigr)^{\!+}\,.
		\]
		Then for every $q\geq 1$ and every $x\in V$,
		\begin{equation}\label{eq:dyadic-bound}
			\E_x\bigl[(\sup_n S_n)^q\bigr]\leq\sum_{k\geq 0}\E_x\bigl[Y_k^q\bigr]\,.
		\end{equation}
	\end{lemma}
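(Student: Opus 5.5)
The plan is to bound $\sup_n S_n$ pathwise by a sum over dyadic blocks and then take $q$-th moments. Write $\mu_\xi\coloneqq\E[\xi]<0$. The first observation is that the drift is pathwise negative and at least linear. Since $\deg(X_k)\leq d$, we have $D_n=\mu_\xi\sum_{k<n}1/\deg(X_k)\leq \mu_\xi\, n/d=-\frac{|\mu_\xi|}{d}n$ for every $n$ and every trajectory.

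For $n$ in the block $[2^k,2^{k+1})$ we have $n\geq 2^k$. Hence
\[
S_n=D_n+W_n\leq W_n-\frac{|\mu_\xi|}{d}\,2^k\leq \max_{2^k\leq m<2^{k+1}}W_m-\frac{|\mu_\xi|}{d}\,2^k\leq Y_k\,.
\]
The term $n=0$ gives $S_0=0$. Therefore, pathwise,
\[
0\leq \sup_{n\geq 0}S_n\leq \sup_{k\geq 0}Y_k\,.
\]
The supremum is nonnegative because $n=0$ is included, and every $Y_k\geq 0$. This is the only place where bounded degree and the negative mean enter.

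The second step converts the supremum into a sum of $q$-th powers. For nonnegative reals, $(\sup_k Y_k)^q=\sup_k Y_k^q\leq\sum_k Y_k^q$, which holds since $q\geq 1$ (indeed for any $q>0$). Taking $\E_x$ under the joint law of walk and masses, and using monotone convergence (Tonelli) to exchange expectation with the nonnegative series, gives \eqref{eq:dyadic-bound}. The right side may be infinite, in which case there is nothing to prove.

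There is no real obstacle; the only points needing care are bookkeeping. First, $W_n$ must be well defined, which requires $\E[\xi]$ finite; this is assumed, since $\E[|\xi|^p]<\infty$ with $p\geq1$. Second, the drift bound has to be pathwise rather than in expectation, so that it can be subtracted inside the maximum over each block. The hypotheses on $p$ play no role here; they will only be used later, when bounding each $\E_x[Y_k^q]$ via Lemma~\ref{lem:fuk-nagaev}.
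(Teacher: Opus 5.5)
Your proposal is correct and follows essentially the same route as the paper. Both use the pathwise drift bound $D_n\leq \E[\xi]\,n/d$ and replace $n$ by $2^k$ on the block $[2^k,2^{k+1})$ to obtain $\sup_n S_n\leq\sup_k Y_k$, then conclude with $(\sup_k Y_k)^q\leq\sum_k Y_k^q$; the paper merely routes this through the intermediate variable $Z=\sup_{n\geq1}(W_n+\E[\xi]\,n/d)^+$.
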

	\begin{proof}
		Bounded degree gives $D_n\leq \E[\xi]\,n/d$ pathwise. Set $Z\coloneqq\sup_{n\geq 1}(W_n+\E[\xi]\,n/d)^+$. Since $S_n=D_n+W_n\leq W_n+\E[\xi]\,n/d$, we have $\sup_n S_n\leq Z$. For every $n\in[2^k,2^{k+1})$, the bound $(W_n+\E[\xi]\,n/d)^+\leq(W_n+\E[\xi]\,2^k/d)^+\leq Y_k$ gives $Z\leq\sup_k Y_k$. Since $(\sup_k a_k)^q\leq\sum_k a_k^q$ for non-negative terms, the bound follows.
	\end{proof}
	
	\begin{lemma}[Good-walk event]\label{lem:good-walk}
		Let~$G = (V,E)$ be an infinite, locally finite, connected graph with degree bounded by~$d$ and satisfying the bound~\eqref{eq:return-bound} for some $d_s>0$ and $A<\infty$. Let $(\xi(v))_{v\in V}$ be i.i.d.\ and independent of the walk with $\E[\xi]\in(-\infty,0)$ and $\E[|\xi|^p]<\infty$ for some $p\geq 1$, and let $Y_k\coloneqq(\max_{2^k\leq n<2^{k+1}}W_n-|\E[\xi]|\,2^k/d)^+$. Let $\alpha\coloneqq(1-d_s/2)^+$ and fix $\delta\in(0,d_s/2\wedge 1)$. For $k\geq 0$, let
		\begin{equation}
			\mathcal{A}_k\coloneqq\bigl\{\max_v L_N(v)\leq N^{\alpha+\delta}\bigr\},\qquad N\coloneqq 2^{k+1}\,.
		\end{equation}
		Then:
		\begin{enumerate}[label=\textup{(\alph*)}]
			\item For every integer $r\geq 1$,
			\[
			\P_x(\mathcal{A}_k^c)\leq C_r\begin{cases}
				N^{d_s/2-r\delta} & \text{if } d_s<2,\\[3pt]
				N^{1-r\delta}(1{+}\log N)^{r-1} & \text{if } d_s=2,\\[3pt]
				N^{1-r\delta} & \text{if } d_s>2\,.
			\end{cases}\,
			\]
			\item $\E_x[Y_k^p]\leq C\,N^p$ uniformly in~$x$ and~$k$.
			\item For every $q\in[1,p)$ and $r$ large enough, $\E_x[Y_k^q\one_{\mathcal{A}_k^c}]=O(2^{-3k})$ uniformly in~$x$.
		\end{enumerate}
	\end{lemma}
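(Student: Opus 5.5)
For part (a), I will combine Markov's inequality with Lemma~\ref{lem:local-time}. A union bound over vertices, followed by Markov at the $r$-th moment, gives
\[
\P_x(\mathcal{A}_k^c)\leq\sum_v\P_x\bigl(L_N(v)>N^{\alpha+\delta}\bigr)\leq N^{-r(\alpha+\delta)}\sum_v\E_x[L_N(v)^r]\,.
\]
Lemma~\ref{lem:local-time} with $p=r$ then bounds the sum. For $d_s<2$ it gives $N^{1+(r-1)\alpha}$ with $\alpha=1-d_s/2$, so the exponent becomes $1+(r-1)\alpha-r\alpha-r\delta=d_s/2-r\delta$. For $d_s=2$ we have $\alpha=0$ and get $N^{1-r\delta}(1+\log N)^{r-1}$. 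For $d_s>2$ we have $\alpha=0$ and get $N^{1-r\delta}$. This is a direct computation.

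For part (b), I will use the crude bound $Y_k\leq\max_{n<N}W_n^+$. Since $W_n=\sum_{j<n}(\xi(X_j)-\E[\xi])/\deg(X_j)$, it satisfies $|W_n|\leq\sum_{j<N}|\xi(X_j)-\E[\xi]|$ for all $n<N$. By convexity (Jensen, or the power-mean inequality),
\[
\Bigl(\sum_{j<N}a_j\Bigr)^p\leq N^{p-1}\sum_{j<N}a_j^p\,.
\]
Taking expectations and using independence of the walk and the scenery, each term satisfies $\E_x[|\xi(X_j)-\E[\xi]|^p]=\E[|\xi-\E[\xi]|^p]<\infty$. This yields $\E_x[Y_k^p]\leq N^{p}\,\E[|\xi-\E\xi|^p]$, uniformly in $x$ and $k$. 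Only $\E|\xi|^p<\infty$ is needed here, and no degree bound is used beyond $\deg\geq1$.

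For part (c), I will apply H\"older with exponents $p/q$ and $p/(p-q)$:
\[
\E_x\bigl[Y_k^q\one_{\mathcal{A}_k^c}\bigr]\leq\E_x[Y_k^p]^{q/p}\,\P_x(\mathcal{A}_k^c)^{1-q/p}\leq C N^{q}\,\bigl(C_rN^{1-r\delta}(1+\log N)^{r-1}\bigr)^{1-q/p}\,.
\]
I use the worst of the three cases; note $d_s/2\leq 1$ in the first case. Since $q<p$, the factor $1-q/p$ is positive. Because $\delta>0$ is fixed, choosing $r$ large enough that $(r\delta-1)(1-q/p)>q+3$ absorbs the logarithm and gives $O(N^{-3})=O(2^{-3k})$, with constants uniform in $x$. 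The only delicate point is to ensure the choice of $r$ depends on $p$, $q$ and $\delta$ only, which it does. I expect no real obstacle: the subtle estimate is already contained in Lemma~\ref{lem:local-time}.
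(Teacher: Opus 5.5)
Your proposal is correct and follows the paper's proof essentially step for step. For (a) you use Markov's inequality at the $r$-th moment together with Lemma~\ref{lem:local-time}; your union bound over $v$ is equivalent to the paper's bound $\max_v L_N(v)^r\le\sum_v L_N(v)^r$. For (b) you use the pathwise bound $Y_k\le\sum_{j<N}|\xi(X_j)-\E[\xi]|$ with the power-mean inequality, and for (c) you apply H\"older with exponents $p/q$ and $p/(p-q)$, choosing $r$ large enough to absorb the logarithm.
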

	\begin{proof}
		For~(a), on $\mathcal{A}_k^c$ we have $\sum_v L_N(v)^r>N^{r(\alpha+\delta)}$, so Markov's inequality and the three cases of Lemma~\ref{lem:local-time} give the stated bounds.
		
		For~(b), since $Y_k\leq\sum_{j=0}^{N-1}|\xi(X_j)-\E[\xi]|/\deg(X_j)$, convexity gives $Y_k^p\leq N^{p-1}\sum_{j=0}^{N-1}|\xi(X_j)-\E[\xi]|^p/\deg(X_j)^p$. Since $\deg\geq 1$, taking expectations yields $\E_x[Y_k^p]\leq N^p\,\E[|\xi-\E[\xi]|^p]$.
		
		For~(c), H\"older's inequality with exponents~$p/q$ and $p/(p{-}q)$ combines parts~(a) and~(b). The logarithmic factor at $d_s=2$ contributes at most $\mathrm{poly}(k)$, which is absorbed by the exponential decay from choosing~$r$ large.
	\end{proof}
	
	\begin{proposition}\label{prop:subcritical}
		Let~$G = (V,E)$ be an infinite, locally finite, connected graph with
		degree bounded by~$d$ and satisfying the spectral dimension bound~\eqref{eq:return-bound} for some $d_s>0$ and $A<\infty$.
		Let $(\xi(v))_{v\in V}$ be i.i.d.\ and independent of the walk with
		$\E[\xi] \in [-\infty, 0)$ and $\E[(\xi^+)^p]<\infty$ for some $p>1+2/d_s$.
		Then for every $q\in[1,(p-1)(d_s/2\wedge 1))$,
		\[
		\sup_{x\in V}\E_x\bigl[(\sup_n S_n)^q\bigr]<\infty\,.
		\]
		In particular, if $(p-1)(d_s/2\wedge 1)> 1$---which holds whenever $d_s<2$ (under the stated moment condition) or $d_s\geq 2$ and $p> 2$---then $\sup_\tau\E_x[S_\tau\mid\xi]<\infty$ almost surely.
	\end{proposition}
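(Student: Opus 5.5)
Lemma~\ref{lem:dyadic} reduces the claim to showing $\sum_{k\geq 0}\sup_x\E_x[Y_k^q]<\infty$. Before that, two standard reductions. First, if $\E[\xi]=-\infty$, or $\xi^-$ has heavy tails, replace $\xi$ by $\xi\vee(-K)$ with $K$ large enough that the mean is still negative. This only increases $S_n$, and every moment of the truncated variable is finite on the negative side, so we may assume $\E[|\xi|^p]<\infty$ and $\E[\xi]\in(-\infty,0)$. Second, write $\eta(v)\coloneqq\xi(v)-\E[\xi]$. Conditionally on the walk, $W_n=\sum_v (L_n(v)/\deg(v))\eta(v)$ is a sum of independent centered variables.

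Split $\E_x[Y_k^q]=\E_x[Y_k^q\one_{\mathcal{A}_k}]+\E_x[Y_k^q\one_{\mathcal{A}_k^c}]$. The second term is $O(2^{-3k})$ by Lemma~\ref{lem:good-walk}(c), since $q<p$. For the first term we control $\max_{2^k\le n<2^{k+1}}W_n$ on $\mathcal{A}_k$ with $N=2^{k+1}$. The maximum over $n$ is the delicate point, because $W_n$ is not a martingale in $n$ under the annealed law. We handle it conditionally on the walk path $X$. Given $X$, and with the scenery revealed in the order of first visits, $n\mapsto W_n$ is a sum of independent centered increments indexed by the vertices. More simply, for fixed $X$ the process $W_n$ is linear in the coefficients $L_n(v)$, which are nondecreasing in $n$. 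So we use a chaining-free dyadic bisection in $n$: at level $j$ we union bound over the $2^j$ increments $W_{n+2^{k-j}}-W_n$. Each increment is a conditional sum with coefficients $\le L_N(v)$ supported on at most $2^{k-j}$ steps. Alternatively, Doob's inequality applies to the martingale $\E[W_N\mid \eta(X_0),\dots,\eta(X_{n-1})]$. A cleaner route is to note that $W_n-\E[\,\cdot\,]$ is dominated by the martingale $M_n=\sum_{j<n}\eta(X_j)\one\{X_j\text{ new}\}\cdot L_N(X_j)/\deg(X_j)$ plus a correction. I would first try the Lévy–Ottaviani / Montgomery-Smith maximal inequality for sums of independent variables arranged in a fixed order, which costs only a constant factor.

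With the maximum reduced to a fixed-$n$ tail, apply Lemma~\ref{lem:fuk-nagaev}(b) conditionally on $X$ at level $t=s+c2^k$, where $c=|\E[\xi]|/d$. On $\mathcal{A}_k$ the conditional $p$-th moment sum is
\[
M_p\le \E|\eta|^p\sum_v L_N(v)^p\le \E|\eta|^p\,N^{(\alpha+\delta)(p-1)}\sum_v L_N(v)=\E|\eta|^pN^{1+(\alpha+\delta)(p-1)}.
\]
The conditional variance is at most $B^2\le C N^{1+\alpha+\delta}$, which is $o(N^2)$ since $\alpha<1$ and $\delta$ is small, so the Gaussian term is summable in $k$ after integrating against $s^{q-1}\,ds$. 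The polynomial term gives
\[
\E_x[Y_k^q\one_{\mathcal{A}_k}]\lesssim N^{1+(\alpha+\delta)(p-1)}\int_0^\infty q s^{q-1}(s+cN)^{-p}\,ds\asymp N^{1+(\alpha+\delta)(p-1)+q-p}.
\]
This is summable in $k$ iff $q<(p-1)(1-\alpha-\delta)-\ldots$, more precisely iff $q<p-1-(\alpha+\delta)(p-1)=(p-1)(1-\alpha)-\delta(p-1)$. Since $1-\alpha=d_s/2\wedge1$, choosing $\delta$ small covers every $q<(p-1)(d_s/2\wedge1)$. The condition $p>1+2/d_s$ makes this range nonempty and contain $q=1$ for $d_s<2$. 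All constants are uniform in $x$, giving the $\sup_x$ bound. The almost sure conclusion follows from $\sup_\tau\E_x[S_\tau\mid\xi]\le\E_x[\sup_nS_n\mid\xi]$, which has finite expectation once $q\ge1$ is allowed.

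The main obstacle is the maximum over $n$ inside a dyadic block, since conditionally on $X$ the partial sums $W_n$ are not sums of independent increments in $n$ (a vertex's weight grows on revisits). If the maximal inequality is troublesome, the fallback is a union bound over a grid of $N^{\epsilon}$ times. Between grid points, $W$ moves by at most $\sum_{j}|\eta(X_j)|$ over $N^{1-\epsilon}$ steps, and that term's $q$-th moment is controlled by Lemma~\ref{lem:good-walk}(b)-type bounds. The loss of $N^\epsilon$ in the exponent is absorbed into $\delta$.
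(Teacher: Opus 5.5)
\textbf{Verdict: there is a genuine gap.} The skeleton matches the paper's proof: lower truncation, Lemma~\ref{lem:dyadic}, the split along $\mathcal{A}_k$ with Lemma~\ref{lem:good-walk}(c), conditional Fuk--Nagaev, and tail integration. Your pathwise bound $\sum_v L_N(v)^p\le N^{1+(\alpha+\delta)(p-1)}$ on $\mathcal{A}_k$ is a legitimate variant of the paper's annealed bound $\E_x[\sum_v L_I(v)^p]\le C_p\,m\Lambda_p(m)$. It loses only $\delta(p-1)$ in the exponent, and you absorb that correctly. The gap is the step you flag yourself: the maximum over $n\in[2^k,2^{k+1})$. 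Your displayed computation is carried out ``with the maximum reduced to a fixed-$n$ tail'' at constant cost, and none of the tools you name for that reduction works as stated.

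\textbf{Why the proposed maximal tools fail.} Lévy--Ottaviani and Montgomery-Smith require partial sums of independent summands in a fixed order. Given $X$, the increments $W_{n+1}-W_n=\eta(X_n)/\deg(X_n)$ repeat at revisits. Your claim that revealing the scenery in order of first visits gives $n\mapsto W_n$ independent increments is false. Partial sums in that order produce the martingale $M_n=\sum_{v:\,L_n(v)>0}L_N(v)\eta(v)/\deg(v)$, which is also what your Doob route uses. The correction $M_n-W_n=\sum_{v:\,L_n(v)>0}(L_N(v)-L_n(v))\eta(v)/\deg(v)$ has a maximum over $n$ that is as hard as the original problem.

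The grid fallback does not close as described either. Take $N^{\epsilon}$ blocks $b$ of length $N^{1-\epsilon}$ and bound the oscillation $O_b=\sum_{j\in b}|\eta(X_j)|$ by Lemma~\ref{lem:good-walk}(b)-type moments. This gives $\sum_b\E[O_b^q\one_{\{O_b>cN\}}]\lesssim N^{\epsilon}N^{(1-\epsilon)p}N^{q-p}=N^{q-\epsilon(p-1)}$, which is not summable for small $\epsilon$ and $q\ge 1$. You would need Fuk--Nagaev on the centered oscillation as well.

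\textbf{The fix.} Use the dyadic bisection you list first, carried through as in the paper.
\begin{itemize}
\item Write $[0,n)$ as a disjoint union of at most $k+1$ dyadic intervals. Then $\{\max_n W_n>u\}\subseteq\bigcup_I\{|W(I)|>u/(k+1)\}$, where $I$ ranges over the at most $2N$ dyadic subintervals of $[0,N)$, and you union bound over these.
\item In your pathwise setting the polynomial part costs only $\mathrm{poly}(k)$. The intervals at one scale are disjoint, so superadditivity of $x\mapsto x^p$ gives $\sum_I\sum_v L_I(v)^p\le\sum_v L_N(v)^p$.
\item Each conditional variance is at most $\var(\eta)N^{\alpha+\delta}|I|$. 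Hence the Gaussian terms total $O\bigl(N e^{-cN^{1-\alpha-\delta}/(k+1)^2}\bigr)$, which is superpolynomially small since $\delta<1-\alpha$.
\end{itemize}
With this step supplied, your exponent $1+(\alpha+\delta)(p-1)+q-p$ holds up to a $\mathrm{poly}(k)$ factor, and the rest of your argument goes through.
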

	
	\begin{proof}[Proof of Proposition~\textup{\ref{prop:subcritical}}]
		Replacing $\xi(v)$ by $\max(\xi(v),-M)$ increases $\zeta$ pointwise, hence increases $S_n$ pathwise and gives $\E[|\xi|^p]<\infty$. Since $\E[\max(\xi,-M)]\downarrow\E[\xi]<0$, for large~$M$ the truncated mean is still negative. We may therefore assume $\E[\xi]>-\infty$ and $\E[|\xi|^p]<\infty$.
		
		By Lemma~\ref{lem:dyadic}, it suffices to show $\sum_{k\geq 0}\E_x[Y_k^q]<\infty$ uniformly in~$x$. Lemma~\ref{lem:good-walk}(c) gives $\E_x[Y_k^q\one_{\mathcal{A}_k^c}]=O(2^{-3k})$, so it remains to bound $\E_x[Y_k^q\one_{\mathcal{A}_k}]$.
		
		\emph{Increment tail bound on~$\mathcal{A}_k$.}
		For a discrete interval $I\subset[0,N)$ of length~$m$, write $L_I(v)\coloneqq\#\{j\in I:X_j=v\}$ and $W(I)\coloneqq\sum_v L_I(v)(\xi(v)-\E[\xi])/\deg(v)$. Conditionally on the walk, the summands $L_I(v)(\xi(v)-\E[\xi])/\deg(v)$ are independent and mean-zero. Write $\alpha\coloneqq(1-d_s/2)^+$ and
		\[
		\Lambda_p(m)\coloneqq\begin{cases}
			m^{(p-1)\alpha} & \text{if } d_s<2,\\[3pt]
			(1{+}\log m)^{p-1} & \text{if } d_s=2,\\[3pt]
			1 & \text{if } d_s>2\,.
		\end{cases}\,
		\]
		By the Markov property at the start of~$I$ and Lemma~\ref{lem:local-time},
		\begin{equation}\label{eq:poly-moment}
			\E_x\biggl[\sum_v L_I(v)^p\biggr]\leq C_p\,m\,\Lambda_p(m)\,.
		\end{equation}
		On~$\mathcal{A}_k$, since $L_I(v)\leq L_N(v)\leq N^{\alpha+\delta}$ and $\sum_v L_I(v)=m$, the variance bound $\sum_v L_I(v)^2\leq N^{\alpha+\delta}\,m$ holds. Applying Lemma~\ref{lem:fuk-nagaev} conditionally and taking expectations,
		\begin{equation}\label{eq:w-tail}
			\P_x\bigl(|W(I)|>t,\,\mathcal{A}_k\bigr)
			\leq\frac{C\,m\,\Lambda_p(m)}{t^p}
			+2\exp\biggl(-\frac{c'\,t^2}{m\,N^{\alpha+\delta}}\biggr)\,.
		\end{equation}
		Here we use Lemma~\ref{lem:fuk-nagaev}(b) when $p\geq 2$ (which holds whenever $d_s\leq 2$). When $p<2$ (possible only for $d_s>2$), only part~(a) applies and the Gaussian term is absent; the polynomial term alone suffices for the chaining below.
		
		\emph{Dyadic chaining and tail integration.}
		Write $\theta\coloneqq d_s/2\wedge 1$, $\gamma\coloneqq p-(p-1)\theta$, and $c_0\coloneqq|\E[\xi]|/d$. Each $W_n$ with $n\in[2^k,2^{k+1})$ decomposes into at most $k{+}1$ increments $W(I)$ over dyadic sub-intervals of $[0,N)$. If $\max_{2^k\leq n<N}W_n>u$, then $|W(I)|>u/(k{+}1)$ for some such~$I$. At each scale $0\leq j\leq k$, there are at most $2^{k+1-j}$ dyadic intervals of length~$2^j$. A union bound with~\eqref{eq:w-tail} (setting $t=u/(k{+}1)$ and $m=2^j$) gives
		\begin{equation}\label{eq:Mk-tail-Ak}
			\P_x\bigl(\max_{2^k\leq n<N}W_n>u,\,\mathcal{A}_k\bigr)
			\leq\frac{C\,\mathrm{poly}(k)\,2^{k\gamma}}{u^p}+G_k(u)\,.
		\end{equation}
		The polynomial coefficient arises from the union bound over $O(2^k)$ dyadic intervals, each contributing $O(2^j\Lambda_p(2^j))$; the geometric sum is dominated by its largest term, giving the exponent~$\gamma$. The Gaussian remainder
		\[
		G_k(u)\coloneqq 2\sum_{j=0}^k 2^{k+1-j}\exp\biggl(-\frac{c'\,u^2}{(k{+}1)^2\,2^j\,N^{\alpha+\delta}}\biggr)
		\]
		is present only when $p\geq 2$; otherwise $G_k\equiv 0$. For $u\geq c_0\,2^k$ and $\delta<\theta$, the exponent in the $j=k$ term is at least $c\,2^{k(\theta-\delta)}/(k{+}1)^2\to\infty$, using $1-\alpha=\theta$, so $G_k(c_0\,2^k)$ decays superexponentially in~$k$.
		Tail integration gives
		\[
		\E_x[Y_k^q\one_{\mathcal{A}_k}]\leq q\int_{c_0 2^k}^\infty u^{q-1}\P_x\bigl(\max_{2^k\leq n<N}W_n>u,\,\mathcal{A}_k\bigr)\,du\,.
		\]
		The polynomial term in~\eqref{eq:Mk-tail-Ak} contributes $C\,\mathrm{poly}(k)\,2^{k\gamma}\int_{c_0 2^k}^\infty u^{q-1-p}\,du=C\,\mathrm{poly}(k)\,2^{k(q-(p-1)\theta)}$, and the Gaussian remainder contributes a term summable in~$k$. Combining with $\E_x[Y_k^q\one_{\mathcal{A}_k^c}]=O(2^{-3k})$,
		\[
		\E_x[Y_k^q]\leq C\,\mathrm{poly}(k)\,2^{k(q-(p-1)\theta)}+O(2^{-3k})\,.
		\]
		Since $q<(p-1)\theta$, the geometric decay dominates $\mathrm{poly}(k)$ and $\sum_k\E_x[Y_k^q]<\infty$. The bound is uniform in~$x$ since Lemma~\ref{lem:local-time} is uniform in the starting point.
	\end{proof}
	
	Theorem~\ref{thm:stab}(i) follows: setting $\xi(v)=\sigma(v)-1$ gives $\E[\xi]=\mu-1<0$ and $\E[(\xi^+)^p]\leq\E[(\sigma^+)^p]<\infty$, so $\sup_x\E_x[(\sup_n S_n)^q]<\infty$. By Corollary~\ref{cor:RW-infinite}, $u_\infty(x)\leq\E_x[\sup_n S_n\mid\xi]$, and Jensen's inequality gives $\E[u_\infty(x)^q]<\infty$.
	
	\begin{remark}
		By~\eqref{eq:heat-kernel-upper-bound}, on every graph with degree bounded by~$d$ the hypotheses of Proposition~\ref{prop:subcritical} hold with $d_s=1$, yielding the conditions $p>3$ and $q\in[1,(p-1)/2)$ of Theorem~\ref{thm:OS}(iii).
	\end{remark}
	
	\subsection{Stabilization on graphs with sub-Gaussian displacement}\label{sec:poly-growth}
	
	\begin{proposition}\label{prop:poly-growth}
		Let $G=(V,E)$ be an infinite, locally finite, connected graph with degree bounded by~$d$.  Assume:
		\begin{enumerate}[label=\textup{(H\arabic*)}]
			\item\label{H1} \textup{(Volume growth)}\; $|B(o,r)|\leq C_{\mathrm{vol}} r^{d_f}$ for some $o\in V$, $d_f>0$, and all $r\geq 1$.
			\item\label{H2} \textup{(Spectral dimension)}\; $\sup_{x\in V}\P_x(X_n=x)\leq A n^{-d_s/2}$ for all $n\geq 1$, for some $d_s\in(0,2)$ and $A<\infty$.
			\item\label{H3} \textup{(Walk dimension)}\; For all $x\in V$, all $n\geq 1$, and all $r\geq 1$,
			\[
			\P_x\bigl(\tau_{B(x,r)}\leq n\bigr)\leq C_{\mathrm{disp}}\exp\biggl(-c_{\mathrm{disp}}\Bigl(\frac{r^{d_w}}{n}\Bigr)^{\!1/(d_w-1)}\biggr)\,,
			\]
			for some $d_w\geq 2$ and constants $C_{\mathrm{disp}},c_{\mathrm{disp}}>0$.
		\end{enumerate}
		Let $(\xi(v))_{v\in V}$ be i.i.d.\ and independent of the walk with $\E[\xi]\in[-\infty,0)$ and $\E[(\xi^+)^p]<\infty$ for some $p>\max(2d_f/(d_w\,d_s),\,1)$.  Then $\sup_\tau\E_o[S_\tau\mid\xi]<\infty$ almost surely.
	\end{proposition}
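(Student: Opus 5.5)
We reduce as in Proposition~\ref{prop:subcritical}: replacing $\xi$ by $\max(\xi,-M)$ only increases the payoff, so we may assume $\E[\xi]\in(-\infty,0)$. Write $c_0\coloneqq|\E[\xi]|/d$ and $S_n=D_n+W_n$ with $D_n\leq -c_0 n$. The idea is to split the scenery according to a radius scale. Set $\beta\coloneqq 1/d_w+\eta$ for a small $\eta>0$, and call the site $v$ \emph{heavy} if $\xi(v)^+>\mathrm{dist}(o,v)^{\,\kappa}$, for an exponent $\kappa$ chosen below. The plan is to show three things:
\begin{enumerate}[label=\textup{(\arabic*)}]
\item almost surely only finitely many heavy sites exist;
\item the light scenery $\xi\wedge\kappa$-cap gives a finite optimal-stopping value;
\item the finitely many heavy sites change the value by a finite amount.
\end{enumerate}
Step (3) is exactly the transient estimate in Proposition~\ref{prop:01-law}, or the swap/voltage bound in the recurrent case. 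More simply, adding mass $a$ at a single site $v$ raises $\E_o[S_\tau]$ by $a\E_o[L_\tau(v)]/\deg(v)$. This does not look a priori bounded in the recurrent case. Instead, I would handle it by monotonicity: replace the finitely many heavy values by a large constant. Even so, Step (3) needs care, so I would rather fold the heavy sites into Step (2).

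For Step (1), Borel--Cantelli together with \ref{H1} gives
\[
\sum_v\P\bigl(\xi^+>\mathrm{dist}(o,v)^\kappa\bigr)\lesssim\sum_r r^{d_f-1}\,r^{-\kappa p}\,\E[(\xi^+)^p],
\]
which is finite once $\kappa p>d_f$.

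For Step (2), I work pathwise on the walk's scale. By \ref{H3} with $r=n^{\beta}$, the event $\{\tau_{B(o,n^\beta)}\leq n\}$ has stretched-exponential probability. On the complementary event, every visited site has scenery bounded by $n^{\beta\kappa}$. I then rerun the dyadic argument of Lemmas~\ref{lem:dyadic} and~\ref{lem:good-walk}, but with Bernstein's inequality (Lemma~\ref{lem:fuk-nagaev}(c)) applied to the truncated, bounded scenery. On a dyadic block $N=2^{k+1}$ and on the good-walk event $\mathcal{A}_k$, the conditional variance is at most $\var(\xi)\,N^{1+\alpha+\delta}$ with $\alpha=1-d_s/2$. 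The bound on the summands is $M=N^{\alpha+\delta}N^{\beta\kappa}$. We need deviation $t=c_02^k$. Bernstein's inequality decays superpolynomially provided two conditions hold: $1+\alpha+\delta<2$, which holds since $d_s>0$, and $\alpha+\delta+\beta\kappa<1$, that is,
\[
\kappa<\frac{d_s/2-\delta}{\beta}\approx\frac{d_w d_s}{2}.
\]
Combined with $\kappa>d_f/p$, this is possible exactly when $p>2d_f/(d_wd_s)$, which is the hypothesis. The centering error from truncation, $\E[\xi\one_{\{\xi>\text{cap}\}}]$, is negative or small, and can be absorbed into $c_0/2$; the condition $p>1$ ensures the mean stays negative after truncating. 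The chaining over dyadic subintervals proceeds as in Proposition~\ref{prop:subcritical}, with $\mathrm{poly}(k)$ losses. Summing over $k$ gives $\E_o[\sup_n S_n^{\mathrm{light}}]<\infty$. Here the bad-walk events $\mathcal{A}_k^c$ and the exit event are controlled by Lemma~\ref{lem:good-walk}(a) and \ref{H3}, using the crude bound $Y_k\leq N\max_{B}|\xi|$.

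The main obstacle is that the truncation level must depend on the walk's location, not on time. The resolution is to let the light scenery be $\xi(v)\wedge\mathrm{dist}(o,v)^\kappa$, which is fixed and i.i.d.-like: the summands are independent, though not identically distributed. Then off the exit event, visited sites within $B(o,N^\beta)$ automatically have bounded values. The heavy sites are then finitely many almost surely. Conditioning on the finite heavy set $H$ and its values, I would bound their extra contribution through $\sup_\tau\E_o[L_\tau(v)]$. In the recurrent case this is infinite, so I would instead argue that the light-scenery value $u^{\mathrm{light}}$ has negative drift. Concretely, the light stopping problem's value is finite at every $x$ with $\sup_x\E_x[\cdot]<\infty$ uniformly, by the uniformity in $x$ of the preceding estimates (with $\mathrm{dist}(o,\cdot)$ replaced accordingly). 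Then the continuation inequality~\eqref{eq:continuation} and a finite-volume argument via Proposition~\ref{prop:finite-vol} on a large finite set $K\supset H$ show that the heavy sites add only a finite amount.
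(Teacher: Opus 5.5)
Your Step~(2) matches the paper's argument. It combines a good-walk event (local times $\max_v L_N(v)\le N^{1-d_s/2+\delta}$ plus confinement to a ball of radius about $N^{1/d_w}$ via~\ref{H3}), Bernstein's inequality with summand bound $N^{1-d_s/2+\delta}$ times the radius to the power $\kappa$, and dyadic chaining. It uses the same exponent window $\kappa\in(d_f/p,\,d_wd_s/2)$. The genuine gap is Step~(3) on recurrent graphs, which include the main examples ($\Z$, the Sierpi\'nski gasket). Your repair does not work, for three reasons.
\begin{itemize}
\item The continuation inequality~\eqref{eq:continuation} bounds $u_\infty$ from \emph{below}, so it cannot show that the heavy sites add only a finite amount.
\item A uniform bound $\sup_x u^{\mathrm{light}}(x)<\infty$ can hold at best annealed, and then for a field re-truncated around each $x$, not the fixed one. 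Quenched it fails whenever $\xi$ is unbounded above, because $u^{\mathrm{light}}(x)\ge\xi^{\mathrm{light}}(x)/\deg(x)$ and the cap $\mathrm{dist}(o,x)^\kappa\to\infty$.
\item Lemma~\ref{ex:finite-perturbation} shows that on a recurrent graph a finite perturbation can turn a finite value into an infinite one. So any perturbation argument must use the negative drift quantitatively, and yours does not.
\end{itemize}

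The missing idea is to skip Step~(3) entirely using the $0$--$1$ law. By Proposition~\ref{prop:01-law} (Hewitt--Savage in the recurrent case), $\{\sup_\tau\E_o[S_\tau\mid\xi]<\infty\}$ has probability $0$ or $1$. It therefore suffices to find a positive-probability event on which the value is finite. Your Step~(1) sum is finite, and each factor satisfies $\P(\xi^+\le\mathrm{dist}(o,v)^\kappa)\ge\P(\xi\le 0)>0$. Hence the event that there are \emph{no} heavy sites has positive probability, $\xi=\xi^{\mathrm{light}}$ on it, and Step~(2) finishes the proof. This event is exactly the paper's $\mathcal{G}$; the paper conditions on it rather than truncating, which is equivalent for a product event.

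Alternatively, you can fold the heavy sites into Step~(2) as you suggested. Put $\Phi\coloneqq\sum_{v\in H}(\xi(v)-\xi^{\mathrm{light}}(v))$, so that $S_n\le S_n^{\mathrm{light}}+\Phi\max_vL_n(v)$. On $\mathcal{A}_k$ the extra term is $\Phi N^{1-d_s/2+\delta}=o(N)$, so the dyadic bound works for each fixed integer $\phi$ in place of $\Phi$. A countable union over $\phi$ then handles the random $\Phi$.

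There is also a smaller issue. Your variance bound $\var(\xi)N^{1+\alpha+\delta}$ is unavailable when $p<2$, which the hypothesis allows. You need the truncated per-site variance $\le CN^{\beta\kappa(2-p)^+}$, giving the condition $\alpha+\delta+\beta\kappa(2-p)^+<1$. This follows from your linear-term condition precisely because $p>1$. That is where $p>1$ enters, not in keeping the mean negative: truncation from above only lowers it, since $\E[\xi\wedge c]\le\E[\xi]$.
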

	
	The moment condition depends on the three geometric parameters $d_f$, $d_s$, and $d_w$. Hypothesis~\ref{H3} is equivalent, under standard regularity, to the exit-time scaling $\E_x[\tau_{B(x,R)}]\asymp R^{d_w}$; it implies the uniform local trap condition of Definition~\ref{def:trap-family} on bounded-degree graphs. On bounded-degree graphs, hypothesis~\ref{H3} holds with $d_w=2$ by the Carne--Varopoulos maximal displacement bound \citep[Section~13.2]{LP16}, recovering the condition $p>d_f/d_s$ and hence Theorem~\ref{thm:stab}(ii).
	
	\begin{remark}\label{rem:poly-growth-examples}
		On graphs satisfying the Einstein relation $d_s=2d_f/d_w$ with $d_s<2$, the threshold reduces to $p>1$. This includes~$\Z$ (where $d_f=1$, $d_s=1$, $d_w=2$) and the Sierpi\'nski gasket (where $d_f=\frac{\log 3}{\log 2}$, $d_s=\frac{2\log 3}{\log 5}$, $d_w=\frac{\log 5}{\log 2}$). On stationary such graphs---including~$\Z$ and the Sierpi\'nski gasket---the phase transition (Theorem~\ref{thm:stationary-phase}) gives stabilization for $\mu < 1$ and $\E[|\sigma|]<\infty$, so the exponent~$1$ is sharp.
	\end{remark}
	
	\begin{proof}[Proof of Proposition \ref{prop:poly-growth}]
		As in Proposition~\ref{prop:subcritical}, we may assume $\E[\xi]\in\R$ and $\E[|\xi|^p]<\infty$, and additionally $\xi\geq -M_0$ a.s.\ for a constant $M_0\geq 1$ with $\P(\xi\leq M_0)>0$. Since $S_n\leq Z\coloneqq\sup_{n\geq 1}(W_n+\E[\xi]\,n/d)^+$ pathwise (as in Lemma~\ref{lem:dyadic}), for every bounded stopping time~$\tau$ we have $\E_o[S_\tau\mid\xi]\leq\E_o[Z\mid\xi]$. By the $0$--$1$ law (Proposition~\ref{prop:01-law}), it suffices to find an event~$\mathcal{G}$ with $\P(\mathcal{G})>0$ on which $\E_o[Z\mid\xi]<\infty$ for $\P(\cdot\mid\mathcal{G})$-a.e.~$\xi$.
		
		Choose $\beta\in(d_f/p,\,d_w d_s/2)$; this interval is nonempty since $p>2d_f/(d_w d_s)$ gives $d_f/p<d_w d_s/2$. Fix $\delta\in(0, d_s/2-\beta/d_w)$.
		
		\emph{Step~1: A positive-probability scenery event.}
		Define
		\[
		\mathcal{G} \coloneqq \bigl\{\xi(v)\leq M_0 \vee \mathrm{dist}(o,v)^\beta \text{ for all } v\in V\bigr\}\,.
		\]
		By Markov's inequality, for $\mathrm{dist}(o,v)\geq 1$,
		\[
		\P\bigl(\xi(v)>M_0\vee \mathrm{dist}(o,v)^\beta\bigr)
		\leq \E[(\xi^+)^p](M_0\vee \mathrm{dist}(o,v)^\beta)^{-p}
		\leq \E[(\xi^+)^p] \mathrm{dist}(o,v)^{-\beta p}\,.
		\]
		Since $\beta p>d_f$ and $|B(o,r)|\leq C_{\mathrm{vol}} r^{d_f}$,
		\begin{align*}
			\sum_{v\in V}\P\bigl(\xi(v)>M_0\vee \mathrm{dist}(o,v)^\beta\bigr)
			\leq 1 + C\sum_{r\geq 1}|B(o,r)|\bigl(r^{-\beta p}-(r{+}1)^{-\beta p}\bigr) 
			\leq 1 + C\sum_{r\geq 1}r^{d_f-\beta p-1}
			< \infty\,.
		\end{align*}
		Each factor $\P(\xi\leq M_0\vee \mathrm{dist}(o,v)^\beta)\geq\P(\xi\leq M_0)>0$ and the complementary probabilities are summable, so $\P(\mathcal{G})>0$.
		
		\emph{Step~2: Recentering under~$\mathcal{G}$.}
		Because~$\mathcal{G}$ is an intersection of events each involving a single coordinate, conditioning on~$\mathcal{G}$ preserves independence.  Since $M_0\vee \mathrm{dist}(o,v)^\beta\geq 0>\E[\xi]$, the event $\{\xi>M_0\vee \mathrm{dist}(o,v)^\beta\}$ carries mass entirely above~$\E[\xi]$, so
		\[
		\E[\xi(v)\mid\mathcal{G}] = \E[\xi\mid\xi\leq M_0\vee \mathrm{dist}(o,v)^\beta] \leq \E[\xi]\,.
		\]
		Write $\bar\xi(v)\coloneqq(\xi(v)-\E[\xi(v)\mid\mathcal{G}])/\deg(v)$.  Under $\P(\cdot\mid\mathcal{G})$, the $\bar\xi(v)$ are independent and centered, and
		\begin{equation}\label{eq:poly-Wbar}
			W_n \leq \sum_v L_n(v)\bar\xi(v) \eqqcolon \bar W_n\,.
		\end{equation}
		
		\emph{Step~3: Good-walk event with displacement control.}
		Fix $k\geq 0$ and set $N=2^{k+1}$. Write $R_N\coloneqq C_0 N^{1/d_w}(\log N)^{(d_w-1)/d_w}$ for a constant $C_0$ to be chosen. Define
		\[
		\mathcal{A}_k \coloneqq \bigl\{\max_v L_N(v)\leq N^{1-d_s/2+\delta}\bigr\}
		\cap \bigl\{\tau_{B(o,R_N)}> N\bigr\}\,.
		\]
		We show that for every $r\geq 1$ and $C_0$ large enough,
		\begin{equation}\label{eq:poly-good-walk}
			\P_o(\mathcal{A}_k^c) \leq C_r N^{-r}\,.
		\end{equation}
		The local-time part follows from Lemma~\ref{lem:good-walk}(a) with the integer parameter chosen large enough. For the displacement, hypothesis~\ref{H3} with $n=N$ and $r=R_N$ gives
		\[
		\P_o\bigl(\tau_{B(o,R_N)}\leq N\bigr)\leq C_{\mathrm{disp}}\exp\biggl(-c_{\mathrm{disp}}\Bigl(\frac{R_N^{d_w}}{N}\Bigr)^{\!1/(d_w-1)}\biggr)\,.
		\]
		Choosing $C_0$ large makes the right-hand side $O(N^{-r})$.  Together the two bounds give~\eqref{eq:poly-good-walk}.
		
		\emph{Step~4: Bernstein bound on $\mathcal{A}_k\cap\mathcal{G}$.}
		On $\mathcal{A}_k\cap\mathcal{G}$, every visited vertex~$v$ satisfies $\mathrm{dist}(o,v)\leq R_N$, so $|\bar\xi(v)| \leq CR_N^{\beta}$.
		Under $\P(\cdot\mid\mathcal{G})$, $\xi(v)$ has the law of~$\xi\mid\{\xi\leq M_0\vee \mathrm{dist}(o,v)^\beta\}$; since
		\[
		\P(\xi\leq M_0\vee \mathrm{dist}(o,v)^\beta)\geq\P(\xi\leq M_0)>0\,,
		\]
		we have $\sup_v\E[|\bar\xi(v)|^p\mid\mathcal{G}]\leq C$. For every vertex~$v$ with $\mathrm{dist}(o,v)\leq R_N$: when $p<2$, the bound $\bar\xi(v)^2\leq |\bar\xi(v)|^{2-p}|\bar\xi(v)|^p$ gives $\var(\bar\xi(v)\mid\mathcal{G})\leq CR_N^{\beta(2-p)}$; when $p\geq 2$, Lyapunov's inequality gives $\var(\bar\xi(v)\mid\mathcal{G})\leq C$. In both cases,
		\[
		\var(\bar\xi(v)\mid\mathcal{G})\leq CR_N^{\beta(2-p)^+}\,.
		\]
		For a discrete interval $I\subset[0,N)$ of length~$m$, conditionally on the walk and on~$\mathcal{G}$, the variables $L_I(v)\bar\xi(v)$ (over~$v$) are independent and centered with
		\[
		|L_I(v)\bar\xi(v)| \leq CN^{1-d_s/2+\delta}R_N^{\beta}\,,
		\]
		and total variance
		\[
		V(I) \coloneqq \sum_v L_I(v)^2\var(\bar\xi(v)\mid\mathcal{G}) \leq CR_N^{\beta(2-p)^+} N^{1-d_s/2+\delta} m\,.
		\]
		Lemma~\ref{lem:fuk-nagaev}(c) gives
		\[
		\P\Bigl(\Bigl|\sum_v L_I(v)\bar\xi(v)\Bigr|>t\;\Big|\;X,\mathcal{G}\Bigr)
		\leq 2\exp\biggl(-\frac{t^2/2}{V(I) + C N^{1-d_s/2+\delta}R_N^{\beta} t/3}\biggr)\,.
		\]
		Since $R_N\asymp N^{1/d_w}(\log N)^{(d_w-1)/d_w}$, the $N$-exponent of the linear term exceeds that of~$V(I)$ by $\tfrac{\beta}{d_w}\min(p{-}1,1)>0$, so the linear term dominates for large~$N$.  Each $[0,n)$ with $n\in[2^k,2^{k+1})$ decomposes into at most $k{+}1$ dyadic sub-intervals.  If $\max_{2^k\leq n<N}\bar W_n>u$, then by pigeonhole one of the at most $2N$ dyadic sub-intervals of~$[0,N)$ contributes more than $u/(k{+}1)$.  Applying the above to each and union-bounding, with $u=|\E[\xi]|\,2^k/d$,
		\begin{equation}\label{eq:poly-Bernstein}
			\P_o\Bigl(\max_{2^k\leq n<2^{k+1}}\bar W_n > |\E[\xi]|\,2^k/d,\;\mathcal{A}_k\;\Big|\;\mathcal{G}\Bigr)
			\leq CN\exp\biggl(-\frac{cN^{d_s/2-\delta-\beta/d_w}}{(\log N)^{1+\beta(d_w-1)/d_w}}\biggr)\,,
		\end{equation}
		since $d_s/2-\delta-\beta/d_w>0$.
		
		\emph{Step~5: $\E_o[Z\one_\mathcal{G}]<\infty$.}
		With $Y_k$ as in Lemma~\ref{lem:dyadic}, $Z\leq\sum_{k\geq 0}Y_k$, and on~$\mathcal{G}$ the bound~\eqref{eq:poly-Wbar} gives $Y_k\leq (\max_{2^k\leq n<2^{k+1}}\bar W_n-|\E[\xi]|\,2^k/d)^+$.
		
		\emph{Good walk.}  On $\mathcal{A}_k\cap\mathcal{G}$,
		\[
		Y_k\leq\sum_{j<N}|\bar\xi(X_j)|\leq CN\,R_N^{\beta}\,.
		\]
		Since $Y_k>0$ on~$\mathcal{G}$ forces $\max\bar W_n>|\E[\xi]|\,2^k/d$ by~\eqref{eq:poly-Wbar}, applying~\eqref{eq:poly-Bernstein} gives
		\[
		\E_o[Y_k\one_{\mathcal{A}_k}\one_\mathcal{G}]
		\leq CN^2 R_N^{\beta}\,\P(\mathcal{G})\exp\biggl(-\frac{cN^{d_s/2-\delta-\beta/d_w}}{(\log N)^{1+\beta(d_w-1)/d_w}}\biggr)\,,
		\]
		which is summable in~$k$.
		
		\emph{Bad walk.}  On~$\mathcal{G}$, every vertex visited by time~$N$ satisfies $\mathrm{dist}(o,v)\leq N$, so $Y_k\leq CN^{1+\beta}$.  By~\eqref{eq:poly-good-walk} with $r>2+\beta$,
		\[
		\E_o[Y_k\one_{\mathcal{A}_k^c}\one_\mathcal{G}]
		\leq CN^{1+\beta}\P(\mathcal{G})\P_o(\mathcal{A}_k^c)
		\leq C_r N^{1+\beta-r}\,,
		\]
		which is summable.  Combining, $\E_o[Z\one_\mathcal{G}] \leq \sum_{k\geq 0}\E_o[Y_k\one_\mathcal{G}] < \infty$. Since $\one_\mathcal{G}$ is scenery-measurable, the tower property gives $\E[\E_o[Z\mid\xi]\one_\mathcal{G}]<\infty$, so $\E_o[Z\mid\xi]<\infty$ for $\P(\cdot\mid\mathcal{G})$-a.e.~$\xi$.  For every such~$\xi$, $\sup_\tau\E_o[S_\tau\mid\xi]\leq\E_o[Z\mid\xi]<\infty$, hence $\P(\sup_\tau\E_o[S_\tau\mid\xi]<\infty)\geq\P(\mathcal{G})>0$.  By Proposition~\ref{prop:01-law}, $\sup_\tau\E_o[S_\tau\mid\xi]<\infty$ almost surely.
	\end{proof}
	
	Theorem~\ref{thm:stab}(ii) follows by setting $\xi(v)=\sigma(v)-1$ in Proposition \ref{prop:poly-growth} and applying Corollary~\ref{cor:RW-infinite}.

	\section{Sharpness of the conditions}\label{sec:counterexamples}
	
	We establish the sharpness of the conditions in Theorems~\ref{thm:explosion} and~\ref{thm:stab}. We first show that bounded degree cannot be removed from the explosion result, and that the moment condition for stabilization is optimal under heat kernel, local time, and volume hypotheses. We then show that the i.i.d.\ assumption is essential, and construct bounded-degree graphs---both transient and recurrent---on which for every $p<3$, subcritical sandpiles with finite $p$-th moment explode.
	
	\subsection{Necessity of bounded degree}
	
	The bounded degree assumption cannot be removed from the supercritical explosion result as the next result shows.
	\begin{lemma}\label{ex:counterexample}
		There exists a locally finite tree~$T$ such that for every collection of i.i.d.\ random variables~$(\sigma(v))_{v \in V}$ with~$\E[|\sigma|] < \infty$, the divisible sandpile on~$T$ stabilizes almost surely.
	\end{lemma}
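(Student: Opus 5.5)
The plan is to reduce everything to Proposition~\ref{prop:short-clock}. That proposition gives $\E[\sup_\tau \E_o[S_\tau\mid\xi]]<\infty$ whenever $\sum_v g(o,v)<\infty$ and $\E[\xi^+]<\infty$. Combined with Corollary~\ref{cor:RW-infinite}, this yields $\E[u_\infty(o)]<\infty$, so $u_\infty(o)<\infty$ almost surely. Stabilization at every vertex then follows from connectivity. Finiteness at one vertex propagates: if $u_\infty(v)=\infty$ at some $v$ adjacent to a vertex $w$ with $u_\infty(w)<\infty$, the mass at $w$ would diverge, exactly as in the proof of Theorem~\ref{thm:stationary-phase}(ii). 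Alternatively, one can apply Proposition~\ref{prop:short-clock} at every vertex, since $\sum_v g(x,v)<\infty$ for one $x$ implies it for all $x$ by a first-step or Harnack comparison. The hypothesis $\E[|\sigma|]<\infty$ gives $\E[\xi^+]<\infty$ for $\xi=\sigma-1$. So the whole task is to construct a locally finite tree with $\sum_v g(o,v)=\E_o\bigl[\sum_{k\geq 0}1/\deg(X_k)\bigr]<\infty$.

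For the construction I would take a spherically symmetric tree rooted at $o$ in which every vertex at level $n$ has $b_n$ children, with $b_n$ growing quickly, say $b_n=2^{n}$ or $b_n=n^2$. A vertex at level $n\geq1$ then has degree $b_n+1$. The level process of simple random walk is a birth--death chain on $\{0,1,2,\dots\}$ that moves up with probability $b_n/(b_n+1)$ and down with probability $1/(b_n+1)$. This reduces the computation to a one-dimensional chain.

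The key estimate, which I expect to be the only real work, is bounding the expected number of visits $\E_o[\text{visits to level } n]$ by a constant independent of $n$, say at most $2$ for large $n$. Then
\[
\sum_v g(o,v)=\E_o\Bigl[\sum_k \frac{1}{\deg(X_k)}\Bigr]\leq \sum_n \frac{\E_o[\text{visits to level }n]}{b_n+1}<\infty
\]
as soon as $\sum_n 1/b_n<\infty$.

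To bound the visits, I would use the escape probability of the birth--death chain. Started at level $n$, the chance of ever returning to level $n$ after stepping up is at most the chance that the chain steps down at least once from some level $m>n$ before escaping. This is bounded by $\sum_{m>n}1/(b_m+1)$, which tends to $0$. Together with the direct down-step probability $1/(b_n+1)$, the return probability $q_n$ to level $n$ tends to $0$, so the expected number of visits $1/(1-q_n)$ is bounded. More carefully, the effective-resistance formula for the chain gives the same conclusion: the conductances between levels are $\prod_{j\leq n} b_j$, which grow superexponentially.

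I do not anticipate a real obstacle. The only points needing care are the uniform bound on level visits, which is handled by the escape-probability estimate above, and making sure degrees are counted correctly, including that the root has degree $b_0$.
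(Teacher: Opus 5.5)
Your proof is correct, and its skeleton is the same as the paper's. Both use a spherically symmetric tree with $\sum_n 1/b_n<\infty$, reduced to Proposition~\ref{prop:short-clock} via $\sum_v g(o,v)<\infty$ and $\E[\xi^+]<\infty$. Where you differ is in how you prove that key estimate.

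The paper takes $b_n=(n+2)^3$, which is more than needed, and writes down the explicit function $\phi(v)=\sum_{m\geq|v|}1/b_m$. It checks $\phi-P\phi\geq 1/(2\deg)$ pointwise, using $b_{n-1}\geq 2$. Hence $\phi(X_k)+\frac12\sum_{j<k}1/\deg(X_j)$ is a nonnegative supermartingale, and $\sum_v g(x,v)\leq 2\phi(x)\leq 2\phi_0$ for every $x$ at once. That gives a uniform bound with an explicit constant and makes the extension from $o$ to all vertices automatic.

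You instead project onto the level chain and bound the expected number of visits to each level. Your key bound is $\P_{n+1}(\text{hit } n)\leq\sum_{m>n}1/(b_m+1)$. It is valid because, before its first down-step, the chain climbs monotonically and visits each level exactly once. This route is more hands-on and shows directly that $\sum_n 1/b_n<\infty$ is what drives the estimate. The cost is a separate step to reach every vertex; your connectivity argument and your first-step comparison both work.

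Two small fixes:
\begin{enumerate}
\item Your limit argument only bounds $1/(1-q_n)$ for large $n$. Add that $q_n<1$ for each fixed $n$, which follows from $\prod_{m>n}b_m/(b_m+1)>0$.
\item The choice $b_n=n^2$ gives $b_0=0$, which is a one-vertex tree. Use something like $b_n=(n+1)^2$ instead.
\end{enumerate}
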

	
	\begin{proof}
		Let~$T$ be the spherically symmetric rooted tree with root~$\rho$ in which every vertex at depth~$n$ has exactly~$b_n$ children, where~$b_n = (n+2)^3$. Then~$T$ is locally finite, connected, and has unbounded degree. It suffices to show $\sup_x\sum_{v\in V}g(x,v)<\infty$, since then Proposition~\ref{prop:short-clock} gives stabilization whenever $\E[(\sigma-1)^+]<\infty$, which holds for every i.i.d.\ law with $\E[|\sigma|]<\infty$.
		
		Define~$\phi(v) \coloneqq \sum_{m=n}^{\infty} 1/b_m$ for a vertex~$v$ at depth~$n$; write~$\phi_n$ for the common value at depth~$n$. Then~$\phi \geq 0$ and~$\phi_0 = \sum_{m=0}^{\infty} 1/b_m < \infty$. We claim that $\phi(v) - P\phi(v) \geq \frac{1}{2\deg(v)}$,
		for every~$v \in V$, where~$Pf(v) = \frac{1}{\deg(v)}\sum_{w \sim v} f(w)$. For the root, $P\phi(\rho)=\phi_1$ and $\phi(\rho)-P\phi(\rho)=1/b_0=1/\deg(\rho)$.
		For a vertex~$v$ at depth~$n \geq 1$,
		\begin{align*}
			\phi_n - P\phi(v) &= \frac{1}{b_n + 1}(\phi_n - \phi_{n-1}) + \frac{b_n}{b_n + 1}(\phi_n - \phi_{n+1}) \\
			&= \frac{1}{b_n + 1}\Bigl(1 - \frac{1}{b_{n-1}}\Bigr) \geq \frac{1}{2\deg(v)}\,.
		\end{align*}
		using~$\phi_{n-1} - \phi_n = 1/b_{n-1}$,~$\phi_n - \phi_{n+1} = 1/b_n$, and~$b_{n-1} \geq 2$. By optional stopping and~$\phi\geq 0$, monotone convergence gives $\sum_v g(x,v)=\E_x[\sum_{k\geq 0}1/\deg(X_k)]\leq 2\phi(x)\leq 2\phi_0<\infty$.
	\end{proof}

	\subsection{Moment sharpness}
	
	The next lemma shows that the moment condition in Theorem~\ref{thm:stab}(i) cannot be removed: on every graph satisfying the heat kernel, local time, and volume hypotheses~\ref{A1}--\ref{A3} below, a sufficiently heavy tail forces the expected odometer to diverge.
	\begin{lemma}\label{lem:moment-sharpness}
		Let $G=(V,E)$ be an infinite, locally finite, connected graph.  Assume:
		\begin{enumerate}[label=\textup{(A\arabic*)}]
			\item\label{A1} \textup{(Upper heat kernel)}\; $\sup_{x\in V}\P_x(X_n=x)/\deg(x)\leq A\,n^{-\alpha}$ for all $n\geq 1$, for some $\alpha>0$ and $A<\infty$.
			\item\label{A2} \textup{(Local time lower bound)}\; There exist $d_w\geq 2$ and $a>0$ such that
			\[
			\E_o[L_n(v)]/\deg(v)\geq a\,n^{1-\alpha}\,.
			\]
			for every $v\in B(o,n^{1/d_w})$ and all sufficiently large~$n$.
			\item\label{A3} \textup{(Volume lower bound)}\; $|B(o,r)|\geq c_{\mathrm{vol}}\,r^{d_f}$ for all $r\geq 1$, for some $d_f>0$ and $c_{\mathrm{vol}}>0$.
		\end{enumerate}
		Let $(\sigma(v))_{v\in V}$ be i.i.d.\ with $\E[\sigma(v)]\in(-\infty,\infty]$.
		If $\E\bigl[(\sigma(v)^+)^{(d_f+d_w)/(d_w\alpha)}\bigr]=\infty$, then $\E[u_\infty(o;\sigma)]=\infty$.
	\end{lemma}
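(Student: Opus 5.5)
Fix a time horizon $n$ and use the deterministic stopping time $\tau=n$ together with a "one big site" strategy. By Corollary~\ref{cor:RW-infinite}, $u_\infty(o)\geq(\E_o[S_n\mid\sigma])^+=\bigl(\sum_v g_n(o,v)\,\xi(v)\bigr)^+$ for every $n$. Taking $\tau=n$ alone would mix the heavy-tailed contribution with the negative drift of order $n$. The plan is therefore to exploit a single large value $\sigma(v)$ at some $v\in B(o,n^{1/d_w})$ and choose $n$ adaptively, depending on that value.

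Before starting, reduce to a bounded-below scenery. Replacing $\sigma$ by $\max(\sigma,-M)$ can only decrease the odometer's lower bound's competitor, which is the wrong direction; so instead argue directly. On the event that $\sigma(v)=s$ is huge, pick $n$ with $a\,n^{1-\alpha}s\approx$ dominant. By \ref{A1}, $\sum_w g_n(o,w)^2$ and the fluctuation $\sum_{w\neq v}g_n(o,w)(\xi(w)-\E\xi)$ are controlled. The drift is $(\E[\xi])\,A_n(o)\gtrsim -Cn$ (if $\E\xi<0$; if $\E\xi\geq 0$ it only helps). The gain at $v$ is at least $a n^{1-\alpha}(s-1)$ by \ref{A2}. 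Choose $n=n(s)$ so that $a n^{1-\alpha}s\geq 2Cn$, that is, $n\asymp s^{1/\alpha}$. The gain is then of order $n^{1-\alpha}s\asymp s^{1/\alpha}$, and the admissible $v$ range over $B(o,n^{1/d_w})$, with $|B|\gtrsim s^{d_f/(d_w\alpha)}$ by \ref{A3}. Using monotonicity, $u_\infty(o)\geq \sup_n(\ldots)^+$. Bound $\E[u_\infty(o)]$ below by
\[
\sum_{j}\E\bigl[u_\infty(o)\one_{E_j}\bigr],
\]
where $E_j$ is the event that some $v\in B(o,n_j^{1/d_w})$ has $\sigma(v)\in[2^j,2^{j+1})$, with $n_j\asymp 2^{j/\alpha}$, and the remaining fluctuation is not too negative.

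On $E_j$ the payoff with $\tau=n_j$ is at least $c\,2^{j/\alpha}$, provided the centered remainder $\sum_{w\neq v}g_{n_j}(o,w)(\xi(w)-\E\xi)$ is at least $-\tfrac{c}{2}2^{j/\alpha}$. Since $u_\infty\geq0$, a lower bound on the mean suffices, and we only need this with probability bounded below. The remainder is independent of $\sigma(v)$ and has mean zero. A truncation argument, as in Proposition~\ref{prop:supercritical} with Markov's inequality for the tail part and Chebyshev with $\Sigma_n\leq \sup_w g_n\cdot A_n\lesssim n^{1-\alpha}\cdot n$ for the bounded part, shows it is $o(n)$ in probability. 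Hence, on $E_j$ intersected with an independent event of probability at least $1/2$, the payoff exceeds $c\,n_j\asymp 2^{j/\alpha}$. Then
\[
\P(E_j)\gtrsim \min\bigl(1,\,|B(o,n_j^{1/d_w})|\,\P(\sigma\in[2^j,2^{j+1}))\bigr)\gtrsim 2^{jd_f/(d_w\alpha)}\P(\sigma\in[2^j,2^{j+1}))
\]
when the minimum is the second term. If the minimum is $1$ for infinitely many $j$, we are done immediately since $2^{j/\alpha}\to\infty$. Otherwise
\[
\E[u_\infty(o)]\gtrsim\sum_j 2^{j/\alpha}\,2^{jd_f/(d_w\alpha)}\,\P(\sigma\in[2^j,2^{j+1}))\asymp\E\bigl[(\sigma^+)^{(d_f+d_w)/(d_w\alpha)}\bigr]=\infty.
\]

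The main obstacle is making the fluctuation control uniform: the event "remainder is not too negative" must hold with probability bounded below simultaneously with the choice of the random site $v$, and when $\E[\sigma]=+\infty$ the centering breaks. For the latter, truncate $\sigma$ from above at level $K$ on all sites except the selected one; this decreases the payoff (monotonicity of $u_\infty$ in $\sigma$, as in the proof of Theorem~\ref{thm:stationary-phase}), keeps a finite mean, and does not affect the event $E_j$ for $2^j>K$. I would also handle the drift sign by simply bounding $D_n\geq -C n$ when $\E[\sigma]<1$, with $C=|\mu-1|$ and $\deg\geq1$.
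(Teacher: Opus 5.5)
Your strategy matches the paper's: a deterministic horizon $n\asymp s^{1/\alpha}$, and one heavy site $v\in B(o,n^{1/d_w})$ whose gain $\gtrsim n^{1-\alpha}s$ beats the drift $-(1-\mu)^+n$. The background is controlled by truncation plus Markov and Chebyshev with $\Sigma_n\le\sup_w g_n(o,w)\,A_n(o)=o(n^2)$, and candidate sites are counted via (A3). The paper indexes by the payoff level $t$ (horizon $\lfloor\beta t\rfloor$, threshold $Kt^\alpha$) rather than by dyadic bins of $\sigma$. It also disposes of $\E[\sigma^+]=\infty$ at once via $u_\infty(o)\ge(\sigma(o)-1)^+/\deg(o)$, which is simpler than your selective truncation.

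Two steps do not go through as written. First, $\E[u_\infty(o)]\ge\sum_j\E[u_\infty(o)\one_{E_j}]$ needs the $E_j$ to be disjoint, and they are not, since several bins can contain a heavy site at once. What you need is weaker and true. On $E_j$ you have $u_\infty(o)\ge c\,2^{j/\alpha}$, and these levels grow geometrically, so pointwise
$u_\infty(o)\ge c\max_{j:E_j}2^{j/\alpha}\ge c(1-2^{-1/\alpha})\sum_j2^{j/\alpha}\one_{E_j}$.
Alternatively, use $\E[u]=\int_0^\infty\P(u>t)\,dt$, as the paper does.

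Second, and more substantively, you leave unresolved the bound $\P(E_j)\gtrsim\min(1,|B_j|p_j)$, where $B_j\coloneqq B(o,n_j^{1/d_w})$ and $p_j\coloneqq\P(\sigma\in[2^j,2^{j+1}))$. Once $v$ is \emph{selected} as a site carrying a large value, ``the remainder is not too negative'' is no longer an independent event of probability at least $1/2$. The missing idea is to fix $v$ before conditioning on anything:
\begin{itemize}
\item Let $Z_v$ be the centered background with the $v$-term removed, so $Z_v$ is independent of $\sigma(v)$.
\item Set $A_v\coloneqq\{\sigma(v)\in[2^j,2^{j+1})\}\cap\{Z_v\ge -c\,n_j\}$. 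Your truncation bound is uniform in $v$ (deleting one term only helps), so $\P(A_v)\ge p_j/2$.
\item Apply the second-moment method to $X\coloneqq\sum_{v\in B_j}\one_{A_v}$. You have $\E[X]\ge|B_j|p_j/2$. Since $A_v\cap A_w\subseteq\{\sigma(v),\sigma(w)\in[2^j,2^{j+1})\}$, also $\E[X^2]\le|B_j|p_j+|B_j|^2p_j^2$.
\item Hence $\P(X\ge1)\ge\E[X]^2/\E[X^2]\ge\tfrac18\min(|B_j|p_j,1)$.
\end{itemize}
This is exactly how the paper closes its Step~4.

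A smaller point: $\sup_wg_n(o,w)\lesssim n^{1-\alpha}$ is false for $\alpha>1$. For example, on $\Z^d$ with $d\ge3$ one has $g_n(o,o)\ge1/\deg(o)$. What you need is $\sup_wg_n(o,w)\le\sum_{k<n}A(k\vee1)^{-\alpha}=o(n)$, and (A1) gives this via Cauchy--Schwarz.
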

	
	The kernel $\P_x(X_n=x)/\deg(x)$ in~\ref{A1} is the diagonal of the degree-normalized $n$-step transition kernel $\P_x(X_n=y)/\deg(y)$; the Green function~$g$ from~\eqref{eq:green-def} is its sum over~$n$. On~$\Z^d$, hypotheses~\ref{A1}--\ref{A3} hold with $\alpha=d/2$, with $d_w=2$, and with $d_f=d$, giving the exponent $(d_f+d_w)/(d_w\alpha)=1+2/d$. On the Sierpi\'nski gasket, $\alpha=\log 3/\log 5$, with $d_w=\log 5/\log 2$ and $d_f=\log 3/\log 2$, giving the exponent $1+\log 5/\log 3\approx 2.46$. In both cases the exponent matches $1+2/d_s$, which is the stabilization threshold of Proposition~\ref{prop:subcritical}.
	
	\begin{proof}[Proof of Lemma \ref{lem:moment-sharpness}]
		Since $u_\infty(o)\geq(\sigma(o)-1)^+/\deg(o)$, if $\E[\sigma^+]=\infty$, then $\E[u_\infty(o)]=\infty$. We may therefore assume $\mu\coloneqq\E[\sigma(v)] \in (-\infty, \infty)$, which gives $\E[|\sigma|]<\infty$.  Set $p^*\coloneqq(d_f+d_w)/(d_w\alpha)$.  We show $\sum_{t\geq 1}\P(u_\infty(o)>t)=\infty$. Fix $t\geq 1$ and set $N_t\coloneqq \lfloor \beta t\rfloor$ and $m_t\coloneqq K t^{\alpha}$, where $\beta>2$ and $K$ are large constants.  Write $R_t\coloneqq t^{1/d_w}$.  For each $v\in B(o,R_t)$, define
		\[
		Z_v\coloneqq\sum_{w\neq v}g_{N_t}(o,w)\bigl(\sigma(w)-\mu\bigr)\,.
		\]
		By Corollary~\ref{cor:RW-infinite} with $\tau=N_t$ and the decomposition~\eqref{eq:dn-wn-stuff},
		\begin{equation}\label{eq:spike-decomp}
			u_\infty(o)\geq g_{N_t}(o,v)\bigl(\sigma(v)-\mu\bigr)+Z_v+(\mu-1)A_{N_t}(o)\,.
		\end{equation}
		
		\emph{Step~1: Local time bounds.}  Cauchy--Schwarz in~$\ell^2(\deg)$ and hypothesis~\ref{A1} give $\P_x(X_n=y)/\deg(y)\leq A\,n^{-\alpha}$ for $n\geq 1$ (after enlarging~$A$), hence
		\begin{equation}\label{eq:local-time-ub}
			\sup_v\frac{\E_o[L_{N_t}(v)]}{\deg(v)}\leq\sum_{k=0}^{N_t-1}\frac{A}{(k\vee 1)^{\alpha}}\eqqcolon H_\alpha(N_t)\,.
		\end{equation}
		Hypothesis~\ref{A2} with $n=N_t$ gives $g_{N_t}(o,v)\geq c\,t^{1-\alpha}$ for every $v\in B(o,R_t)$ and a constant $c>0$.
		
		\emph{Step~2: Large spike.}  By the bounds in Step~1,
		\[
		g_{N_t}(o,v)\cdot m_t\geq cKt
		\qquad\text{and}\qquad
		g_{N_t}(o,v)\leq H_\alpha(N_t)=o(t)\,.
		\]
		When $\sigma(v)\geq m_t$, the spike contribution in~\eqref{eq:spike-decomp} satisfies
		\[
		g_{N_t}(o,v)\bigl(\sigma(v)-\mu\bigr)\geq cKt-|\mu|\,H_\alpha(N_t)\,.
		\]
		The drift satisfies $(\mu-1)A_{N_t}(o)\geq -(1-\mu)^+\beta t$ (positive drift only helps).  Since $H_\alpha(N_t)=o(t)$, choosing $K$ large enough that $cK>4+(1-\mu)^+\beta+|\mu|$ ensures that the right-hand side of~\eqref{eq:spike-decomp} exceeds $2t$ for all large~$t$, provided $Z_v\geq -t$.  Hence
		\begin{equation}\label{eq:spike-wins}
			\sigma(v)\geq m_t\quad\text{and}\quad Z_v\geq -t\,,
		\end{equation}
		implies $u_\infty(o)>t$.
		
		\emph{Step~3: Background control.}  We show $\P(Z_v\geq -t)\geq 3/4$ for all large~$t$, uniformly in $v\in B(o,R_t)$.  Split $Z_v=Z_v^{\leq}+Z_v^{>}$ according to whether $|\sigma(w)-\mu|\leq M$ or not, where $M$ is chosen so that $\E[|\sigma-\mu|\one_{\{|\sigma-\mu|>M\}}]\leq 1/(16\beta)$.  The tail part satisfies $\E[|Z_v^{>}|]\leq A_{N_t}(o)/(16\beta)\leq t/16$, so Markov's inequality gives $\P(Z_v^{>}<-t/2)\leq 1/8$.  By centering of $Z_v$, $|\E[Z_v^{\leq}]|=|\E[Z_v^{>}]|\leq\E[|Z_v^{>}|]\leq t/16$. The bounded part has $\var(Z_v^{\leq})\leq M^2 H_\alpha(N_t)\cdot\beta t=o(t^2)$, so Chebyshev's inequality gives $\P(|Z_v^{\leq}-\E[Z_v^{\leq}]|>t/4)=o(1)$.  A union bound finally yields $\P(Z_v\geq -t)\geq 3/4$.
		
		\emph{Step~4: Tail lower bound.}  For each $v\in B(o,R_t)$, the event $A_v\coloneqq\{\sigma(v)\geq m_t,\,Z_v\geq -t\}$ implies $u_\infty(o)>t$. Since the $v$-th mass was excluded from the definition of~$Z_v$, the variables $\sigma(v)$ and $Z_v$ are independent, so $\P(A_v)\geq\frac{3}{4}\P(\sigma^+\geq m_t)\eqqcolon\frac{3}{4}p_t$. Set $X_t\coloneqq\sum_{v\in B(o,R_t)}\one_{A_v}$. Then $\E[X_t]\geq\frac{3}{4}|B(o,R_t)|\,p_t$. For the second moment: for $v\neq w$, $A_v\cap A_w\subseteq\{\sigma(v)\geq m_t\}\cap\{\sigma(w)\geq m_t\}$, so by independence of $\sigma(v)$ and $\sigma(w)$, $\P(A_v\cap A_w)\leq\P(\sigma(v)\geq m_t)\,\P(\sigma(w)\geq m_t)=p_t^2$. Therefore $\E[X_t^2]\leq|B(o,R_t)|\,p_t+|B(o,R_t)|^2 p_t^2$, and the Paley--Zygmund inequality $\P(X_t\geq 1)\geq\E[X_t]^2/\E[X_t^2]$ gives
		\begin{equation}\label{eq:tail-lb}
			\P(u_\infty(o)>t)\geq\P(X_t\geq 1)\geq c\min\bigl(|B(o,R_t)|\,p_t,\,1\bigr)\,.
		\end{equation}
		
		\emph{Step~5: Divergence.}  By~\ref{A3}, $|B(o,R_t)|\geq c_{\mathrm{vol}}\,t^{d_f/d_w}$, so it suffices to show
		\[
		\sum_{t\geq 1}t^{d_f/d_w}\P(\sigma^+\geq Kt^{\alpha})=\infty\,.
		\] For $s\in[t,t+1]$ with $t\geq 1$, we have $s^{d_f/d_w}\leq 2^{d_f/d_w}t^{d_f/d_w}$ and $\P(\sigma^+\geq Ks^{\alpha})\leq\P(\sigma^+\geq Kt^{\alpha})$, so
		\[
		\int_t^{t+1}s^{d_f/d_w}\P(\sigma^+\geq Ks^{\alpha})\; ds\leq 2^{d_f/d_w}t^{d_f/d_w}\P(\sigma^+\geq Kt^{\alpha})\,.
		\]
		Summing over $t\geq 1$ shows $\sum_{t\geq 1}t^{d_f/d_w}\P(\sigma^+\geq Kt^{\alpha})\geq c\int_1^{\infty} s^{d_f/d_w}\P(\sigma^+\geq Ks^{\alpha})\; ds$.  The substitution $s=(u/K)^{1/\alpha}$ transforms the integral into $c\int u^{(d_f+d_w)/(d_w\alpha)-1}\P(\sigma^+\geq u)\; du$.  By the tail integral formula, this diverges if and only if $\E[(\sigma^+)^{(d_f+d_w)/(d_w\alpha)}]=\infty$.  Since $(d_f+d_w)/(d_w\alpha)=p^*$, the hypothesis $\E[(\sigma^+)^{p^*}]=\infty$ yields the divergence.
	\end{proof}
	
	\subsection{Role of the i.i.d.\ assumption}
	
	\begin{lemma}[Finite perturbations change stabilization on recurrent graphs]\label{ex:finite-perturbation}
		Let $G=(V,E)$ be an infinite, locally finite, and connected graph, and fix $o\in V$. For $m>0$ consider the configuration $\sigma(v)=1+m\,\one_{\{v=o\}}$.
		Then for every $x\in V$ and $n\geq 0$, we have $u_n(x)=m g_n(x,o)$, and hence $u_\infty(x)=m\,g(x,o)\in[0,\infty]$. In particular, $\sigma$ stabilizes if and only if the random walk on $G$ is transient. If the walk is recurrent, then $\sigma$ explodes.
	\end{lemma}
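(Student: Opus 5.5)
The identity $u_n(x)=m\,g_n(x,o)$ follows from Theorem~\ref{thm:RW}, which gives $u_n(x)=\sup_{\tau\leq n}\E_x[S_\tau]$. The configuration has $\zeta(v)=\xi(v)/\deg(v)=m\,\one_{\{v=o\}}/\deg(o)$. Hence for every stopping time $\tau$,
\[
S_\tau=\frac{m}{\deg(o)}\,L_\tau(o)\geq 0\,.
\]
This is nondecreasing in $\tau$ pathwise, so the supremum over $\tau\leq n$ is attained at $\tau=n$. Therefore
\[
u_n(x)=\frac{m}{\deg(o)}\,\E_x[L_n(o)]=m\,g_n(x,o)
\]
by the definition~\eqref{eq:gn-def}. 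As a cross-check, one can verify directly that $w_n\coloneqq m\,g_n(\cdot,o)$ satisfies the recursion of Lemma~\ref{lem:recursion}. By the Markov property at the first step,
\[
g_{n+1}(x,o)=\frac{\one_{\{x=o\}}}{\deg(o)}+\frac{1}{\deg(x)}\sum_{y\sim x}g_n(y,o)\,,
\]
and the positive part is inactive because everything is nonnegative. Induction from $u_0=w_0=0$ then gives $u_n=w_n$.

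Next let $n\to\infty$. Since $g_n(x,o)\uparrow g(x,o)$ by monotone convergence of $\E_x[L_n(o)]\uparrow\E_x[\sum_{k\geq0}\one_{\{X_k=o\}}]$, and $u_n\uparrow u_\infty$, we get $u_\infty(x)=m\,g(x,o)\in[0,\infty]$.

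For the dichotomy, recall that on a connected graph $g(x,o)<\infty$ for one (equivalently all) $x$ if and only if the walk is transient. By the first-visit decomposition, $g(x,o)=\P_x(T_o<\infty)\,g(o,o)$, and $g(o,o)=1/(\deg(o)\,\P_o(T_o^+=\infty))$. In the transient case $u_\infty$ is finite everywhere, so $\sigma$ stabilizes. In the recurrent case $\P_x(T_o<\infty)=1$ and $g(o,o)=\infty$, so $u_\infty\equiv\infty$ and $\sigma$ explodes.

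No step is a real obstacle. The only point needing care is justifying that the optimal stopping time is $\tau=n$, which holds by pathwise monotonicity of $S_\tau$. Alternatively the recursion check above establishes the identity without using optimality at all.
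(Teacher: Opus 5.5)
Your proposal is correct and follows the paper's proof: both note that $\zeta\geq 0$ is supported on $\{o\}$, so $S_n$ is pathwise nondecreasing and $\tau=n$ is optimal in Theorem~\ref{thm:RW}, giving $u_n(x)=m\,g_n(x,o)$, and both then let $n\to\infty$ and use that $g(x,o)<\infty$ if and only if the walk is transient. Your recursion check via Lemma~\ref{lem:recursion} and your first-visit decomposition justifying the transience dichotomy add detail but do not change the argument.
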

	
	\begin{proof}
		Here $\zeta(v)=(\sigma(v)-1)/\deg(v)$ is nonnegative and supported on $\{o\}$, with $\zeta(o)=m/\deg(o)$.
		Hence along every walk path the payoff $S_n=\sum_{k=0}^{n-1}\zeta(X_k)$ is nondecreasing in $n$.
		Therefore the optimal stopping value is attained by $\tau=n$, so by Theorem~\ref{thm:RW},
		\[
		u_n(x)=\E_x[S_n]=\frac{m}{\deg(o)}\,\E_x[L_n(o)]=m g_n(x,o)\,.
		\]
		Letting $n\to\infty$ gives $u_\infty(x)=m\,g(x,o)$.
		The walk is transient if and only if $g(x,o)<\infty$, and recurrent if and only if $g(x,o)=\infty$.
	\end{proof}
	
	\begin{example}[$0$--$1$ law fails for independent non-i.i.d.\ masses on recurrent graphs]\label{ex:01-fails}
		Assume $G$ is recurrent and fix $o\in V$.
		Let $\sigma(v)\equiv 1$ for $v\neq o$, and let $\sigma(o)$ be independent of the rest with
		$\P(\sigma(o)=1)=1-p$ and $\P(\sigma(o)=1+m)=p$ for some $m>0$ and $p\in(0,1)$.
		Then $\P(\sigma\text{ stabilizes})=1-p\in(0,1)$: by Lemma~\ref{ex:finite-perturbation}, the sandpile stabilizes if and only if $\sigma(o)=1$, which occurs with probability~$1-p$.
	\end{example}
	
	The i.i.d.\ assumption in Theorem~\ref{thm:explosion}(ii) is also essential.
	
	\begin{lemma}[Coboundary stabilization]\label{ex:coboundary}
		Let~$f\colon V\to[0,\infty)$ and set~$\sigma\coloneqq 1-\Delta f$. Then~$u_\infty(x)\leq f(x)$ for every~$x\in V$; in particular, $\sigma$ stabilizes.
	\end{lemma}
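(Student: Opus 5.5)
We will use the random walk representation, Corollary~\ref{cor:RW-infinite}, which gives $u_\infty(x)=\sup_{\tau\text{ bounded}}\E_x[S_\tau]$ for the deterministic configuration $\sigma=1-\Delta f$. Here the per-neighbor excess is
\[
\zeta(v)=\frac{\sigma(v)-1}{\deg(v)}=-\frac{\Delta f(v)}{\deg(v)}=f(v)-Pf(v),
\qquad Pf(v)\coloneqq\frac{1}{\deg(v)}\sum_{w\sim v}f(w).
\]
The payoff is therefore a telescoping sum plus a martingale. Writing $S_n=\sum_{k<n}\bigl(f(X_k)-Pf(X_k)\bigr)$, we have
\[
S_n=f(X_0)-f(X_n)+M_n,\qquad M_n\coloneqq\sum_{k=0}^{n-1}\bigl(f(X_{k+1})-Pf(X_k)\bigr).
\]
The process $M_n$ is a $\P_x$-martingale with $M_0=0$, since $\E_x[f(X_{k+1})\mid\mathcal F_k]=Pf(X_k)$.

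The key steps are as follows. First, for a bounded stopping time $\tau\le n$, note that the walk up to time $n$ lives in the finite ball $B(x,n)$. Hence all quantities $f(X_k)$ and $Pf(X_k)$ for $k\le n$ are bounded, because $f$ is finite-valued and $G$ is locally finite. Optional stopping then applies and gives $\E_x[M_\tau]=0$. Second, this yields
\[
\E_x[S_\tau]=f(x)-\E_x[f(X_\tau)]\le f(x),
\]
using $f\ge0$. Taking the supremum over bounded $\tau$ gives $u_\infty(x)\le f(x)<\infty$ for every $x$, so $\sigma$ stabilizes.

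The only point needing care, and the one we would check explicitly, is integrability for optional stopping. Since $f$ need not be bounded globally, we would argue through boundedness on $B(x,n+1)$ rather than global bounds; no growth condition on $f$ is needed because $\tau$ is bounded. Alternatively, one could bypass martingales and induct on Lemma~\ref{lem:recursion}: if $u_n\le f$, then
\[
u_{n+1}(x)\le\bigl(Pf(x)+f(x)-Pf(x)\bigr)^+=f(x),
\]
with $u_0=0\le f$. This inductive argument is even more direct, and we would present it as the main proof, with the martingale identity as the interpretation.
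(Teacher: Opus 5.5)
Your proposal is correct. The martingale argument is exactly the paper's proof. You write $\zeta=f-Pf$, apply optional stopping to get $\E_x[S_\tau]=f(x)-\E_x[f(X_\tau)]\le f(x)$ for bounded $\tau$, and conclude by Corollary~\ref{cor:RW-infinite}. Your remark that a bounded $\tau$ plus local finiteness gives the needed integrability fills in a detail the paper leaves implicit.

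The induction on Lemma~\ref{lem:recursion}, which you propose as the main proof, is a more elementary route. It is a comparison (supersolution) argument for the Bellman recursion. It uses neither optional stopping nor Theorem~\ref{thm:RW}, only the monotonicity of averaging and of $a\mapsto a^+$. It also shows directly that any $f\ge 0$ with $\zeta\le f-Pf$ (i.e.\ $\sigma+\Delta f\le 1$) dominates every $u_n$.

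The paper's version gives the exact identity $\E_x[S_\tau]=f(x)-\E_x[f(X_\tau)]$, which fits the optimal-stopping framework used throughout. For the inequality $u_\infty\le f$ alone, your induction is the cleaner proof.
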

	
	\begin{proof}
		Since~$\zeta(v)=-\Delta f(v)/\deg(v)$, optional stopping gives~$\E_x[S_\tau]=f(x)-\E_x[f(X_\tau)]\leq f(x)$ for every bounded stopping time~$\tau$, since~$f\geq 0$.  Corollary~\ref{cor:RW-infinite} then gives~$u_\infty(x)\leq f(x)$.
	\end{proof}
	
	\begin{example}[Critical stabilization without i.i.d.]\label{ex:coboundary-Zd}
		On~$\Z^d$, let~$(\eta(x))_{x\in\Z^d}$ be i.i.d.\ $\mathrm{Uniform}[0,1]$ and set~$\sigma\coloneqq 1-\Delta\eta$.
		Then~$\sigma$ is stationary, ergodic, bounded, and $2$-dependent, with~$\E[\sigma(0)]=1$ and~$\var(\sigma(0))>0$.
		By Lemma~\ref{ex:coboundary} with~$f=\eta$, the sandpile stabilizes with~$u_\infty(x)\leq\eta(x)\leq 1$.
	\end{example}
	
	\begin{figure}[ht]
		\centering
		\includegraphics[width=0.48\textwidth]{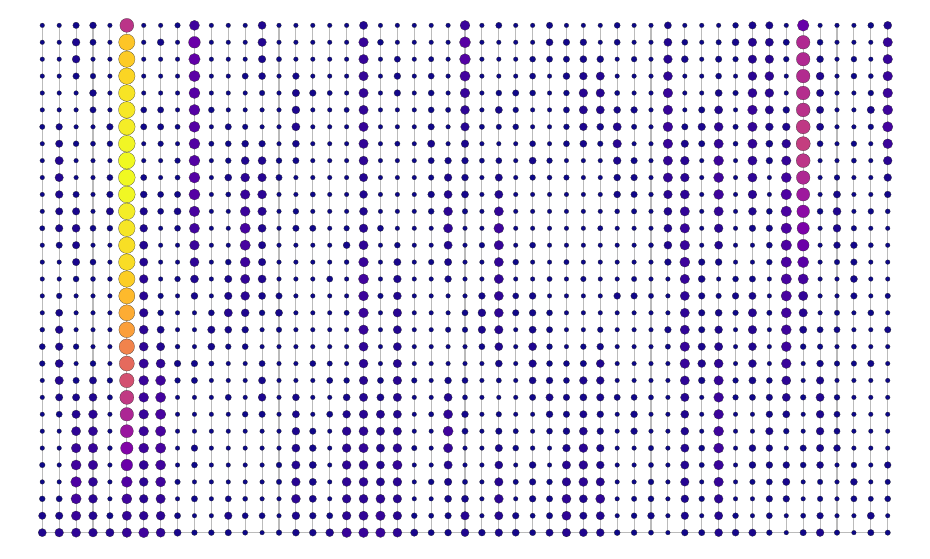}%
		\hfill
		\includegraphics[width=0.48\textwidth]{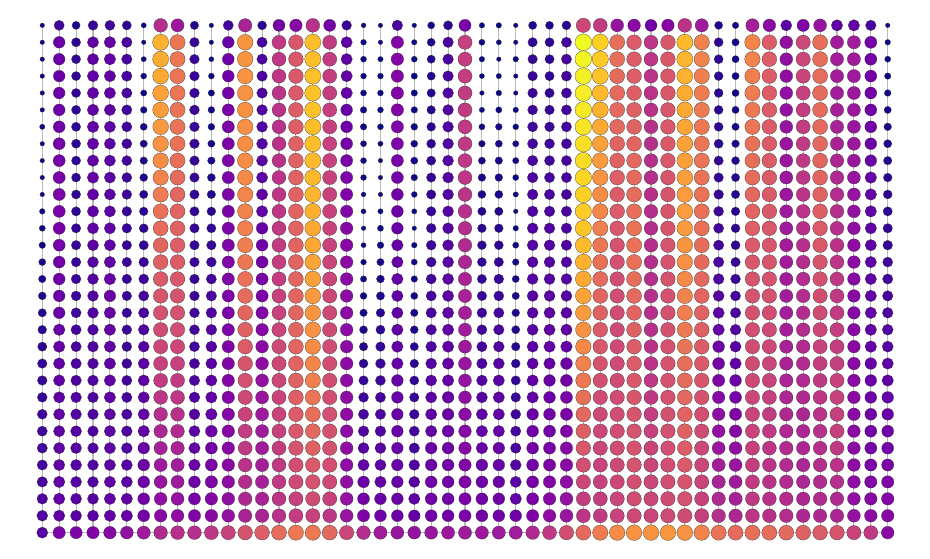}
		\caption{Odometer~$u_\infty(v)$ on the comb lattice (Example~\ref{ex:comb-nonconservation}) with i.i.d.\ $\mathrm{Exp}(\mu)$ initial masses; vertex color and size are proportional to~$u_\infty(v)$. Left: subcritical ($\mu=0.6$); right: critical ($\mu=1.0$). The odometer concentrates along the spine (degree~$4$), which absorbs mass from the teeth (degree~$2$), illustrating the failure of conservation of mass. The comb is not stationary, but Theorems~\ref{thm:explosion} and~\ref{thm:stab} still give explosion for~$\mu \geq 1$ and stabilization for~$\mu < 1$ with sufficient moments.}
		\label{fig:comb}
	\end{figure}
	
	\begin{remark}[Non-conservation of mass on the comb lattice]\label{ex:comb-nonconservation}
		On the comb lattice~$\mathcal{C}$ (vertex set~$\Z^2$, edges~$(x,0)\sim(x\pm 1,0)$ and~$(x,y)\sim(x,y\pm 1)$), spine vertices have degree~$4$ and tooth vertices degree~$2$. For i.i.d.\ masses with~$\E[\sigma]=\mu$ and $a\coloneqq\E[(\sigma-1)^+]>0$, one round of parallel toppling gives $\E[\sigma_1(x,0)]=\mu+a/2>\mu$ and $\E[\sigma_1(x,1)]=\mu-a/4<\mu$, so neither the unweighted nor the degree-weighted mean mass is conserved pointwise. Mass flows from the teeth toward the spine.
	\end{remark}
	
	\subsection{Electrical estimates on tree-of-pipes combs}\label{sec:comb-estimates}
	
	The counterexamples in the next two subsections share a common electrical network ingredient: voltage estimates on a comb subnetwork of a $B$-ary tree whose edges are replaced by pipes.  We collect these estimates here.
	
	The killed Green function $g_C$ from~\eqref{eq:gC-PDE} can be interpreted as the voltage in an electrical network with unit conductances: $g_C(o,v)$ is the potential at~$v$ when unit current flows from~$o$ to the boundary $V\setminus C$ \citep[Proposition~2.1]{LP16}. The identity~\eqref{eq:voltage-identity} then reads as the superposition of Ohm's law over all vertices.
	
	Let $B\geq 2$ be an integer and let $\alpha\in(1/2,1)$.  Assume $B$ is large enough that
	\begin{equation}\label{eq:comb-B-cond}
		B^{\alpha}\geq 4\,,
		\qquad
		2B^{\alpha}\leq B-1\,,
		\qquad
		2B^{1-2\alpha}<1\,,
		\qquad
		\lambda\coloneqq\frac{B^{2\alpha-1}}{4}>1\,.
	\end{equation}
	Such $B$ exists for every $\alpha\in(1/2,1)$: as $B\to\infty$, $B^{1-2\alpha}\to 0$ and $B^{2\alpha-1}\to\infty$ (since $\alpha>1/2$), while $B^{\alpha}/B\to 0$ (since $\alpha<1$).  Define the pipe lengths $L_j\coloneqq\lfloor B^{\alpha j}\rfloor$ for $j\geq 1$. Since $B^{\alpha}\geq 4$, for every $j\geq 1$ one has
	\begin{equation}\label{eq:comb-Lj-bounds}
		\tfrac{1}{2}\,B^{\alpha j}
		\leq L_j
		\leq B^{\alpha j}\,.
	\end{equation}
	Start from the rooted $B$-ary tree of depth~$n$ and replace each edge between generations $j{-}1$ and~$j$ by a path (\textbf{pipe}) of length~$L_j$.  For a terminal word $w\in\{1,\ldots,B\}^n$, the \textbf{comb} $D_{w,n}$ is the union of the trunk from the root to the terminal pipe~$P_w$ and all $B{-}1$ sibling pipes at each trunk branching vertex, with far endpoints excluded and serving as the absorbing set.  The root may additionally have at most one extra boundary edge of unit resistance (this occurs when the comb is embedded in a larger network).
	Write $b_0,b_1,\ldots,b_{n-1}$ for the trunk branching vertices.  When unit current enters at $b_0$ and exits through the absorbing set, write $g\coloneqq g_{D_{w,n}}(b_0,\cdot)$ for the resulting voltage on the comb.  Let $I_k$ denote the current through the $k$-th trunk pipe, let $V_j\coloneqq g(b_j)$, and let $R_j$ denote the effective resistance from $b_j$ to the boundary through the chosen continuation branch, so that $V_j=I_{j+1}\,R_j$.
	
	\begin{proposition}[Comb estimates]\label{prop:comb-estimates}
		Under the setup above, the following hold.
		\begin{enumerate}[label=\textup{(\alph*)}]
			\item \textup{(Continuation resistance)}\; For every $0\leq j\leq n{-}1$, one has $R_j\leq 2\,L_{j+1}$, and hence $V_j\leq 2\,I_{j+1}\,L_{j+1}$.
			\item \textup{(Trunk currents)}\; The trunk currents satisfy $I_1\geq 1/(4B)$ and $I_{j+1}\geq I_j/(2B)$ for every $1\leq j\leq n{-}1$. Consequently, $I_n\geq(4B)^{-n}$.
			\item \textup{(Total voltage mass)}\; There exists $C_{\mathrm{comb}}>0$ depending only on $B$ and~$\alpha$ such that
			\[
			\sum_{u\in D_{w,n}}g(u)\leq C_{\mathrm{comb}}\,I_n\,L_n^2\,.
			\]
			\item \textup{(Spike contribution)}\; Let $a\colon D_{w,n}\to\R$ satisfy $a(u)\geq -b$ for all~$u$, for some finite $b\geq 0$, and set $K\coloneqq 2+2b\,(C_{\mathrm{comb}}+1)$. If a vertex~$v$ in the first half of the terminal pipe~$P_w$ (at distance at least $L_n/2$ from the boundary) satisfies $a(v)\geq K\,L_n-b$, then
			\[
			\sum_{u\in D_{w,n}}g(u)\,a(u)\geq I_n\,L_n^2\,.
			\]
			\item \textup{(Exponential growth)}\; $I_n\,L_n^2\geq\lambda^n/4$.
		\end{enumerate}
	\end{proposition}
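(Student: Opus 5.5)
All five parts are series/parallel resistance computations on the comb $D_{w,n}$, using that $g$ is the voltage of a unit current flow from $b_0$ to the absorbing set (unit conductances). Inside each pipe $g$ is linear, with slope equal to the current through that pipe. We prove the parts in the order (a), (b), (e), (c), (d).

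\emph{Part (a).} From $b_j$, the continuation branch contains the trunk pipe of length $L_{j+1}$ leading to $b_{j+1}$, followed either by the terminal structure or by further branchings. We induct backward from $j=n-1$, where $R_{n-1}=L_n$. At $b_{j+1}$ the continuation consists of the $B-1$ sibling pipes (resistance $L_{j+2}$ each) in parallel with $R_{j+1}$. This parallel resistance is at most $L_{j+2}/(B-1)\le B^{\alpha(j+2)}/(B-1)$. By \eqref{eq:comb-B-cond} and \eqref{eq:comb-Lj-bounds}, $2B^\alpha\le B-1$ gives $B^{\alpha(j+2)}/(B-1)\le B^{\alpha(j+1)}/2\le L_{j+1}$. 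Hence $R_j\le L_{j+1}+L_{j+1}=2L_{j+1}$, and then $V_j=I_{j+1}R_j$ gives the voltage bound.

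\emph{Part (b).} At $b_j$ the incoming current $I_j$ splits among the $B-1$ siblings and the continuation in inverse proportion to their resistances. The siblings all carry equal current, the voltage $V_j$ over resistance $L_{j+1}$. So
\[
\frac{I_{j+1}}{I_j}=\frac{L_{j+1}^{-1}\cdot L_{j+1}/R_j}{(B-1)L_{j+1}^{-1}+R_j^{-1}}=\frac{1}{(B-1)R_j/L_{j+1}+1}.
\]
Since $R_j\le 2L_{j+1}$, the right side is at least $1/(2B)$. At the root we argue the same way, with the possible extra boundary edge of resistance $1$: the root's unit current splits among $B-1$ siblings of resistance $L_1$, the continuation $R_0\le 2L_1$, and possibly the unit edge. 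The unit edge can take at most the fraction $(1/1)/(1/1+1/R_0)$, and since $L_1\ge 4$, the continuation still receives at least $1/(4B)$. Iterating gives $I_n\ge(4B)^{-n}$.

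\emph{Part (e).} Combine (b) with \eqref{eq:comb-Lj-bounds}: $I_nL_n^2\ge(4B)^{-n}\cdot\tfrac14B^{2\alpha n}=\lambda^n/4$.

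\emph{Part (c).} A pipe of length $L$ whose near end is at voltage $V$, with linear decay to $0$, has voltage sum at most $(L+1)V$. Attach to each trunk pipe $k$ and its sibling pipes at level $k$ the voltage $V_{k-1}$. Their total is at most $B(L_k+1)V_{k-1}\le 2B(L_k+1)I_kL_k$, using (a). Now use (b) in reverse, $I_k\le(2B)^{n-k}I_n$, together with $L_k\le 2B^{-\alpha(n-k)}L_n$. This bounds the level-$k$ term by a constant times
\[
I_nL_n^2\,(2B)^{n-k}B^{-2\alpha(n-k)}=I_nL_n^2\,(2B^{1-2\alpha})^{n-k},
\]
a geometric series with ratio $2B^{1-2\alpha}<1$ by \eqref{eq:comb-B-cond}. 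Summing over $k$ gives $C_{\mathrm{comb}}I_nL_n^2$.

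\emph{Part (d).} Split $a=(a+b)-b$. The negative part contributes at least $-b\sum_u g(u)\ge -bC_{\mathrm{comb}}I_nL_n^2$ by (c). For the positive part, all terms $g(u)(a(u)+b)$ are nonnegative, so we may keep only the vertex $v$. On the terminal pipe $g$ decreases linearly from $V_{n-1}$ with slope $I_n$. A vertex at distance at least $L_n/2$ from the boundary therefore has $g(v)\ge I_nL_n/2$. Hence
\[
g(v)\bigl(a(v)+b\bigr)\ge \frac{I_nL_n}{2}\,KL_n=\bigl(1+b(C_{\mathrm{comb}}+1)\bigr)I_nL_n^2 .
\]
Adding the negative-part bound leaves at least $I_nL_n^2$.

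The main obstacle is the bookkeeping in (b) at the root, where the extra boundary edge enters, and making the constant in (c) uniform in $n$. The latter works only because the ratio condition $2B^{1-2\alpha}<1$ makes the geometric series converge.
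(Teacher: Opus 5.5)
Apart from one step, your proposal is the paper's proof:
\begin{itemize}
\item (a) uses the same series--parallel bound $R_j\leq L_{j+1}+L_{j+2}/(B-1)\leq 2L_{j+1}$.
\item The interior step of (b) is the same current division $I_j\leq(2B-1)I_{j+1}$.
\item (c) is the same level-by-level geometric series with ratio $2B^{1-2\alpha}$; your $(L_k+1)$ count conveniently absorbs the branching vertices.
\item (d) keeps only the spike vertex, as the paper does, and (e) is identical.
\end{itemize}

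The step that fails is the root case of (b). Your bound that the unit edge takes at most the fraction $R_0/(1+R_0)$ is nearly vacuous, because $R_0\geq L_1\geq 4$. Chaining it with the $1/(2B-1)$ share of the remaining current only gives
\[
I_1\geq \frac{1}{(1+R_0)(2B-1)}\leq \frac{1}{5(2B-1)}<\frac{1}{4B},
\]
so this route can never produce the claimed constant. The fact you invoke, $L_1\geq 4$, also points the wrong way. A longer first pipe makes the comb more resistive, so the boundary edge draws relatively more current. What you need is an \emph{upper} bound on $R_0$ in terms of $B$, and that comes from the condition $2B^{\alpha}\leq B-1$ in \eqref{eq:comb-B-cond}.

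The fix is to apply Kirchhoff's node law at $b_0$ in one shot. The outgoing currents are $V_0$ through the boundary edge, $V_0/L_1$ through each of the $B-1$ side pipes, and $I_1=V_0/R_0$ through the trunk. Hence
\[
I_1=\frac{1}{1+R_0+(B-1)R_0/L_1}\;\geq\;\frac{1}{1+2L_1+2(B-1)}\;\geq\;\frac{1}{3B-2}\;\geq\;\frac{1}{4B}\,,
\]
using $R_0\leq 2L_1$ from (a) and $2L_1\leq 2B^{\alpha}\leq B-1$. This is exactly the paper's estimate $1\leq 2I_1L_1+2(B-1)I_1+I_1\leq 4BI_1$.

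With this replacement, $I_n\geq(4B)^{-n}$ and part (e) follow as you wrote them. Parts (c) and (d) use only the interior step $I_j\leq 2BI_{j+1}$, so they are unaffected.
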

	
	\begin{proof}
		(a)\; At $j=n{-}1$, the continuation is the terminal pipe, so $R_{n-1}=L_n$.  For $j\leq n{-}2$, the continuation from $b_j$ consists of the level-$(j{+}1)$ trunk pipe (resistance $L_{j+1}$) followed by the parallel combination of the $B{-}1$ side stubs at $b_{j+1}$ (resistance $L_{j+2}$ each) and the further continuation.  By the series and parallel laws for resistors \citep[Section~2.3]{LP16}, $R_j\leq L_{j+1}+L_{j+2}/(B{-}1)$.  By~\eqref{eq:comb-Lj-bounds}, $L_{j+2}\leq 2B^{\alpha}L_{j+1}$, and the condition $2B^{\alpha}\leq B{-}1$ from~\eqref{eq:comb-B-cond} gives $L_{j+2}/(B{-}1)\leq L_{j+1}$, hence $R_j\leq 2\,L_{j+1}$.  The voltage bound follows from $V_j=I_{j+1}\,R_j$; this is Ohm's law applied to the continuation branch.
		
		\medskip
		(b)\; At the root $b_0$, the outgoing currents are at most $(B{-}1)V_0/L_1$ through the $B{-}1$ side branches, $I_1$ through the trunk, and $V_0$ through the extra boundary edge (if present).  Since unit current enters and $V_0\leq 2I_1 L_1$ by part~(a), and since $2L_1\leq 2B^{\alpha}\leq B{-}1$ by~\eqref{eq:comb-B-cond},
		\[
		1\leq 2I_1 L_1 + 2(B{-}1)I_1 + I_1 \leq (B{-}1)I_1 + 2(B{-}1)I_1 + I_1 \leq 4B\,I_1\,,
		\]
		giving $I_1\geq 1/(4B)$.  For $1\leq j<n$, the current $I_j$ at $b_j$ splits between the continuation current $I_{j+1}$ and the $B{-}1$ side currents $V_j/L_{j+1}$ each.  By part~(a), $I_j\leq I_{j+1}+2(B{-}1)I_{j+1}\leq 2B\,I_{j+1}$, hence $I_{j+1}\geq I_j/(2B)$.  Iterating gives $I_n\geq(4B)^{-n}$.
		
		\medskip
		(c)\; At level~$j$, there are at most $B$ pipes of $L_j$ vertices each with voltage at most $V_{j-1}$.  The level-$j$ pipe contribution is at most $B\,L_j\,V_{j-1}\leq 2B\,I_j\,L_j^2$ by part~(a).  By part~(b), $I_j\leq(2B)^{n-j}\,I_n$, and by~\eqref{eq:comb-Lj-bounds}, $L_j^2\leq 4\,B^{-2\alpha(n-j)}\,L_n^2$.  Therefore
		\[
		2B\,I_j\,L_j^2\leq 8B\,(2B^{1-2\alpha})^{n-j}\,I_n\,L_n^2\,.
		\]
		Since $2B^{1-2\alpha}<1$ by~\eqref{eq:comb-B-cond}, the geometric series converges and the total pipe contribution is at most a constant times $I_n\,L_n^2$.  The branching-vertex contribution $\sum_{j=0}^{n-1}V_j\leq 2\sum_{j=1}^{n}I_j\,L_j$ is bounded by the same geometric series (since $L_j\geq 1$ gives $I_j\,L_j\leq I_j\,L_j^2$), and the result follows.
		
		\medskip
		(d)\; The vertex~$v$ lies at distance at least $L_n/2$ from the boundary, so $g(v)\geq I_n\,L_n/2$.  The spike gives $g(v)(KL_n - b)\geq (K/2 - b/(2L_n))\,I_n\,L_n^2$, and the negative background contributes at most $b\,C_{\mathrm{comb}}\,I_n\,L_n^2$ by part~(c).  Since $L_n\geq 1$, the coefficient of $I_n\,L_n^2$ is at least $K/2 - b\,(C_{\mathrm{comb}}+\tfrac{1}{2})\geq 1$.
		
		\medskip
		(e)\; By part~(b) and~\eqref{eq:comb-Lj-bounds}, $I_n\,L_n^2\geq(4B)^{-n}\cdot\tfrac{1}{4}\,B^{2\alpha n}=\tfrac{1}{4}\,\lambda^n$.
	\end{proof}
	
	\subsection{Non-stabilization on transient bounded-degree graphs}\label{sec:transient-nonstab}
	
	The next result shows that the moment condition $p > 3$ in Theorem~\ref{thm:stab}(i) cannot be improved to $p > 2$, even on transient graphs.
	
	\begin{theorem}\label{thm:transient-nonstab}
		For every $p\in(0,3)$, there exists a transient bounded-degree graph~$G$ such that for every $\mu<1$, there is an i.i.d.\ law bounded from below with $\E[\sigma]=\mu$ and $\E[|\sigma-\mu|^p]<\infty$ for which $u_\infty(o)=\infty$ almost surely. In particular, the divisible sandpile does not stabilize.
	\end{theorem}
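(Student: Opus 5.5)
The plan is to use the tree-of-pipes graph from Section~\ref{sec:comb-estimates}, glued into a transient bounded-degree graph, together with the nested-volume representation (Theorem~\ref{thm:nested-vol}) and the comb estimates of Proposition~\ref{prop:comb-estimates}. Given $p<3$, choose $\alpha\in(1/2,1)$ close to $1/2$ and $B$ satisfying~\eqref{eq:comb-B-cond}, with the relation between $\alpha$ and $p$ fixed in the last step. Let $G$ be the infinite $B$-ary tree whose generation-$j$ edges are replaced by pipes of length $L_j$. It has degree at most $B+1$. Transience follows because the effective resistance to infinity is at most $\sum_j L_j B^{-j}\le\sum_j B^{(\alpha-1)j}<\infty$. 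If the bare tree-of-pipes causes trouble in the comb bookkeeping (for instance, the ``extra boundary edge'' at the root), I will allow one extra edge at $o$ as in the setup.

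Fix $\mu<1$ and take the law $\sigma=1-b'+\eta$. Here $\eta\ge0$ has a Pareto-type tail $\P(\eta\ge s)\asymp s^{-\kappa}$ with $\kappa\in(p,3)$, and the constant $b'$ is chosen so that $\E[\sigma]=\mu$. Then $\sigma$ is bounded below, has finite $p$-th moment, and $a(u)\coloneqq\sigma(u)-1\ge-b$ with $b=b'$. By Theorem~\ref{thm:nested-vol}, $u_\infty(o)\ge\sum_{u\in D_{w,n}}g(u)a(u)$ for every comb $D_{w,n}$, since each comb is a finite connected set containing $o=b_0$. By Proposition~\ref{prop:comb-estimates}(d),(e), it therefore suffices that for infinitely many $n$ there exists a terminal word $w$ and a vertex $v$ in the first half of the terminal pipe $P_w$ with $\sigma(v)\ge 1-b+KL_n$. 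Such an event forces $u_\infty(o)\ge I_nL_n^2\ge\lambda^n/4\to\infty$.

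It remains to show that these spikes occur infinitely often almost surely. At depth $n$ there are $B^n$ terminal pipes, each with at least $L_n/2\ge B^{\alpha n}/4$ vertices in its first half, and these vertex sets are disjoint across $n$. The expected number of spikes at level $n$ is therefore of order
\[
B^{n}B^{\alpha n}\,\P(\eta\ge KB^{\alpha n})\asymp B^{n(1+\alpha-\alpha\kappa)}.
\]
This tends to infinity exactly when $\kappa<1+1/\alpha$, and $1+1/\alpha\uparrow3$ as $\alpha\downarrow1/2$. Choosing $\alpha$ with $1+1/\alpha>\kappa>p$ makes the expected count at level $n$ exponentially large. The levels use disjoint vertices, so the events are independent, and the probability of no spike at level $n$ is at most $\exp(-cB^{n(1+\alpha-\alpha\kappa)})$, which is summable. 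Borel--Cantelli then gives a spike at all large levels almost surely, hence $u_\infty(o)=\infty$ almost surely, without even needing Proposition~\ref{prop:01-law}.

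The main obstacle is checking that the comb estimates apply with the background bound $a\ge-b$ uniformly. The concern is that the negative drift $\mu-1$ on the other sites of the comb could outweigh the spike. Part (c) handles this precisely: the total voltage mass is $O(I_nL_n^2)$, the same order as the spike's contribution, so the constant $K=2+2b(C_{\mathrm{comb}}+1)$ absorbs it. I must also confirm that the constant $C_{\mathrm{comb}}$ and the condition $2B^{1-2\alpha}<1$ remain compatible as $\alpha\downarrow1/2$. This is fine because $B$ is chosen after $\alpha$.
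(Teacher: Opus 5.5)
Your proposal is correct and follows the paper's proof essentially verbatim. It uses the same transient tree-of-pipes graph, a Pareto tail with exponent below $1+1/\alpha$, a level-by-level Borel--Cantelli argument for a spike in the first half of some terminal pipe, and Proposition~\ref{prop:comb-estimates}(d),(e) combined with Theorem~\ref{thm:nested-vol}. The one detail to fix is the tail exponent: when $p\le 1$ you must take $\kappa\in(\max(p,1),3)$, as the paper does with $q>\max(p,1)$. Otherwise $\E[\eta]=\infty$ and no constant $b'$ gives $\E[\sigma]=\mu$.
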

	
	Let $p\in(0,3)$. Choose $q$ with $\max(p,1)<q<3$ and then choose $\alpha$ with
	\begin{equation}
		\frac{1}{2}<\alpha<\min\Bigl(1,\frac{1}{q-1}\Bigr)\,;
	\end{equation}
	this interval is nonempty since $q<3$ gives $1/(q-1)>1/2$. Let $B\geq 2$ and pipe lengths $L_j$ be as in Section~\ref{sec:comb-estimates}, with conditions~\eqref{eq:comb-B-cond} and bounds~\eqref{eq:comb-Lj-bounds}.
	
	Let $Y\geq 1$ be a Pareto random variable with $\P(Y\geq t)=t^{-q}$ for $t\geq 1$. Then $\E[Y]=q/(q{-}1)<\infty$ and $\E[Y^p]<\infty$ since $q>p$. Fix $\mu_0 < 1$ and set $\sigma(v)\coloneqq Y_v-\E[Y]+\mu_0$, where the $Y_v$ are i.i.d.\ copies of~$Y$. Then $\E[\sigma]=\mu_0<1$ and $\E[|\sigma-\mu_0|^p]<\infty$; moreover $\sigma\geq 1-\E[Y]+\mu_0>-\infty$. Write
	\begin{equation}\label{eq:tr-ab}
		a(v)\coloneqq\sigma(v)-1=Y_v-b\,,
		\qquad
		b\coloneqq\E[Y]-\mu_0+1\,.
	\end{equation}
	Every vertex satisfies $a(v)\geq -b$.
	
	Start from the rooted $B$-ary tree~$T_B$ (every vertex has exactly $B$ children). Replace each edge between generations $n{-}1$ and $n$ by a path (\textbf{pipe}) of length~$L_n$. Call the resulting graph~$G$, rooted at~$o$. The maximum degree is $B{+}1$ (each non-root branching vertex has one parent pipe and $B$ child pipes; the root has $B$ child pipes; each pipe interior vertex has degree~$2$).
	
	\begin{figure}[ht]
		\centering
		\begin{tikzpicture}[scale=0.85,
			pipe/.style={thick},
			bv/.style={circle,fill,inner sep=1.5pt},
			pv/.style={circle,fill=gray,inner sep=1pt},
			lf/.style={circle,draw,inner sep=1pt}]
			
			\node[bv,label=above:{$o$}] (root) at (0,0) {};
			
			\node[bv] (A) at (-3,-1.5) {};
			\node[bv] (B) at (0,-1.5) {};
			\node[bv] (C) at (3,-1.5) {};
			\draw[pipe] (root) -- node[pv,pos=0.5]{} (A);
			\draw[pipe] (root) -- node[pv,pos=0.5]{} (B);
			\draw[pipe] (root) -- node[pv,pos=0.5]{} (C);
			\node[font=\scriptsize] at (1.8,-0.5) {$L_1$};
			
			\node[bv] (Ba) at (-1,-3.5) {};
			\node[bv] (Bb) at (0,-3.5) {};
			\node[bv] (Bc) at (1,-3.5) {};
			\draw[pipe] (B) -- node[pv,pos=0.3]{}
			node[pv,pos=0.65]{} (Ba);
			\draw[pipe] (B) -- node[pv,pos=0.3]{}
			node[pv,pos=0.65]{} (Bb);
			\draw[pipe] (B) -- node[pv,pos=0.3]{}
			node[pv,pos=0.65]{} (Bc);
			\node[font=\scriptsize] at (1.2,-2.3) {$L_2$};
			
			\node[lf] (Bb1) at (-0.5,-6.0) {};
			\node[lf] (Bb2) at (0,-6.0) {};
			\node[lf] (Bb3) at (0.5,-6.0) {};
			\draw[pipe] (Bb) -- node[pv,pos=0.2]{}
			node[pv,pos=0.4]{} node[pv,pos=0.6]{}
			node[pv,pos=0.8]{} (Bb1);
			\draw[pipe] (Bb) -- node[pv,pos=0.2]{}
			node[pv,pos=0.4]{} node[pv,pos=0.6]{}
			node[pv,pos=0.8]{} (Bb2);
			\draw[pipe] (Bb) -- node[pv,pos=0.2]{}
			node[pv,pos=0.4]{} node[pv,pos=0.6]{}
			node[pv,pos=0.8]{} (Bb3);
			\node[font=\scriptsize] at (0.9,-4.5) {$L_3$};
			
			\node at (-3,-2.8) {$\vdots$};
			\node at (3,-2.8) {$\vdots$};
			\node at (-1,-4.8) {$\vdots$};
			\node at (1,-4.8) {$\vdots$};
			
		\end{tikzpicture}
		\caption{The tree-of-pipes graph~$G$ with $B=3$ (schematic; the proof requires $B$ large enough to satisfy~\eqref{eq:comb-B-cond}).
			Each edge of the rooted $B$-ary tree is replaced
			by a pipe of length~$L_n$ at level~$n$; the pipes
			grow geometrically longer at deeper levels.
			Filled circles are branching vertices; gray
			circles are pipe interior vertices (degree~$2$);
			open circles are leaves.}
		\label{fig:tr-tree}
	\end{figure}
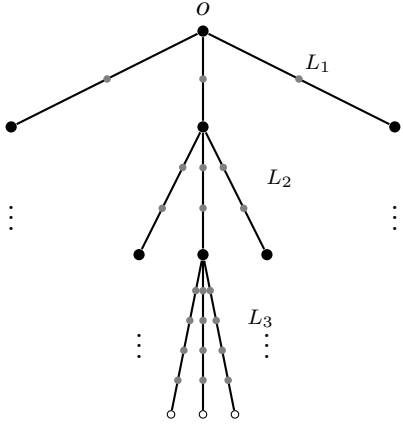
	
	Set $K\coloneqq 2+2b\,(C_{\mathrm{comb}}+1)$ as in Proposition~\ref{prop:comb-estimates}(d). Call a level-$n$ pipe \textbf{good} if some vertex in its first half (at distance $\geq L_n/2$ from the absorbing endpoint) has $Y_v\geq KL_n$.
	
	\begin{lemma}[Good pipes at every level]\label{lem:tr-good}
		Almost surely, for all sufficiently large~$n$, at least one level-$n$ pipe is good.
	\end{lemma}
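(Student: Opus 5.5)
We will show that $\sum_n\P(\text{no level-}n\text{ pipe is good})<\infty$ and then apply Borel--Cantelli. There are $B^n$ pipes at level~$n$, and each has at least $\lceil L_n/2\rceil\geq L_n/2$ vertices in its first half. These vertex sets are pairwise disjoint across pipes, so the events ``pipe $P$ is good'' are independent over the level-$n$ pipes. The number of candidate vertices at level~$n$ is at least $B^nL_n/2$. Each candidate independently satisfies $Y_v\geq KL_n$ with probability $(KL_n)^{-q}$, provided $KL_n\geq 1$, which holds since $K\geq 2$ and $L_n\geq 1$.

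Using $1-x\leq e^{-x}$, we get
\[
\P(\text{no level-}n\text{ pipe is good})\leq\bigl(1-(KL_n)^{-q}\bigr)^{B^nL_n/2}\leq\exp\Bigl(-\tfrac12 K^{-q}\,B^n L_n^{1-q}\Bigr).
\]
Since $q>1$, the factor $L_n^{1-q}$ decreases in $L_n$. We therefore use the upper bound $L_n\leq B^{\alpha n}$ from~\eqref{eq:comb-Lj-bounds}, which gives $B^nL_n^{1-q}\geq B^{n(1-\alpha(q-1))}$.

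The key point is that the choice $\alpha<1/(q-1)$ makes $\gamma\coloneqq 1-\alpha(q-1)>0$. The probability above is then at most $\exp(-cB^{\gamma n})$ with $c=K^{-q}/2>0$, which is summable in~$n$. Borel--Cantelli then shows that almost surely only finitely many levels have no good pipe, which is the claim.

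No step is a real obstacle. The only point needing care is the exponent bookkeeping. One must bound $L_n$ from above, not below, because of the negative power $1-q$, and one must use the first-half count $\geq L_n/2$ rather than the full pipe length. One should also check that ``good'' depends only on the masses in the first half of each pipe, so that the pipes are genuinely disjoint and independent. Here $K$ depends only on $b$ and $C_{\mathrm{comb}}$, so it is a fixed constant independent of~$n$, and it is absorbed into~$c$.
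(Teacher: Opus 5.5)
Your proof is correct and follows the paper's argument: the first halves of the level-$n$ pipes give at least $B^nL_n/2$ independent candidate vertices, the upper bound $L_n\le B^{\alpha n}$ together with $\alpha<1/(q-1)$ yields $\P(\text{no good level-}n\text{ pipe})\le\exp\bigl(-cB^{n(1-\alpha(q-1))}\bigr)$, and Borel--Cantelli finishes. The one difference is that you take the product directly over all candidate vertices, whereas the paper first lower-bounds the per-pipe probability by $cL_n^{1-q}$; your route is slightly shorter because it avoids the ``for all large $n$'' linearization $1-(1-x)^m\geq c\,mx$.
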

	
	\begin{proof}
		A single pipe has at least $L_n/2$ vertices in its first half, so
		\[
		\P(\text{a fixed pipe is good})
		\geq 1-(1-(KL_n)^{-q})^{L_n/2}
		\geq c\,L_n^{1-q}\,,
		\]
		for all large~$n$. The $B^n$ level-$n$ pipes have disjoint vertex sets, hence independent indicators. The expected number of good pipes is at least $c\,B^n L_n^{1-q}\geq c\,B^{n(1-\alpha(q-1))}$. Since $\alpha<1/(q{-}1)$, the exponent $1-\alpha(q{-}1)$ is positive and the expected count tends to infinity. Writing $p_n\coloneqq\P(\text{a fixed pipe is good})$,
		\[
		\P(\text{no good pipe at level }n)
		=(1-p_n)^{B^n}
		\leq e^{-B^n p_n}
		\leq\exp\bigl(-c\,B^{n(1-\alpha(q-1))}\bigr)\,,
		\]
		which is summable since $1-\alpha(q{-}1)>0$. By Borel--Cantelli, good pipes exist at all large levels almost surely.
	\end{proof}
	
	\begin{proof}[Proof of Theorem~\ref{thm:transient-nonstab}]
		The graph~$G$ is transient: the unit flow sending current~$B^{-n}$ through each generation-$n$ pipe has energy $\sum_{n\geq 1}L_n/B^n\leq C\sum B^{(\alpha-1)n}<\infty$ since $\alpha<1$, so Thomson's principle \citep[Section~2.4]{LP16} gives finite effective resistance from~$o$ to infinity. By Lemma~\ref{lem:tr-good}, good pipes exist at all large levels almost surely. For each good level-$n$ pipe indexed by a word~$w$, the comb $D_{w,n}$ from Section~\ref{sec:comb-estimates} satisfies $\sum_{u\in D_{w,n}}g(u)\,a(u)\geq I_n L_n^2$ by Proposition~\ref{prop:comb-estimates}(d), and $I_n L_n^2\geq\lambda^n/4\to\infty$ by part~(e). It follows from Theorem~\ref{thm:nested-vol} that
		\[
		u_\infty(o)
		=\max\biggl\{0,\;\sup_C\sum_{v\in C} g_C(v)\bigl(\sigma(v)-1\bigr)\biggr\}
		=\infty\qquad\text{a.s.}\qedhere
		\]
	\end{proof}
	
	\begin{remark}
		On the stretched binary tree ($B=2$, pipes $L_n=\lfloor b^n\rfloor$ with $1<b<2$), the sandpile stabilizes for $p>1$ by Proposition~\ref{prop:poly-growth} (with the Einstein relation $d_s=2d_f/d_w$). On every bounded-degree graph, stabilization holds for $p>3$. The threshold thus depends on the graph. Whether stabilization holds at $p=3$ on every bounded-degree graph remains open.
	\end{remark}

	\subsection{Non-stabilization on recurrent bounded-degree graphs}\label{sec:recurrent-nonstab}
	
	The following result complements Theorem~\ref{thm:transient-nonstab} by showing that the same sharpness holds on recurrent graphs.
	
	\begin{theorem}\label{thm:recurrent-nonstab}
		For every $p\in(0,3)$ and every $d_f\in(\max(p,2),3)$, there exists a recurrent bounded-degree graph~$G$ with
		\begin{equation}
			|B(o,R)|\leq C\,R^{d_f}
			\qquad\text{for all } R\geq 1\,,
		\end{equation}
		such that for every $\mu<1$, there is an i.i.d.\ law bounded from below with $\E[\sigma]=\mu$ and $\E[|\sigma-\mu|^p]<\infty$ for which $u_\infty(o)=\infty$ almost surely. Hence the divisible sandpile does not stabilize.
		Moreover, $|B(o,r_k)|\geq c\,r_k^{d_f}$ along a subsequence $r_k\to\infty$, so $\limsup_{r\to\infty}\frac{\log|B(o,r)|}{\log r}=d_f$.
	\end{theorem}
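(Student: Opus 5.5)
The plan is to reuse the tree-of-pipes comb machinery of Section~\ref{sec:comb-estimates}, but to embed the combs in a recurrent graph of controlled polynomial growth instead of the full $B$-ary tree. Fix $p<3$ and $d_f\in(\max(p,2),3)$. Choose $q$ with $\max(p,1)<q<d_f$ and $\alpha\in(1/2,1)$ with $\alpha<1/(q-1)$. We will also need the relation between $B$ and $\alpha$ that fixes the growth exponent: a level-$n$ ball contains about $B^n$ pipes of length $L_n\approx B^{\alpha n}$ at radius $R_n\approx L_n$. This gives volume $\approx B^{n(1+\alpha)}=R_n^{1+1/\alpha}$. We therefore take $1/\alpha=d_f-1\in(1,2)$, so that $\alpha\in(1/2,1)$, and we need $\alpha<1/(q-1)$, which holds because $q<d_f$. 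With this choice, however, the pure tree of pipes is transient (Thomson energy $\sum L_n/B^n<\infty$).

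To force recurrence without changing the comb estimates, I would keep the $B$-ary tree of pipes only along a sparse sequence of \emph{blocks}. The graph $G$ is a half-line (spine) $\Z_{\geq0}$ rooted at $o$. At radii $r_k$ growing superexponentially, we attach a finite depth-$n_k$ tree of pipes whose root is the spine vertex $r_k$, and $G$ keeps bounded degree $B+2$. Recurrence follows from Nash-Williams: the cutsets given by single spine edges beyond each finite block give $\sum 1=\infty$. Alternatively, $G$ is a spine with finitely many finite decorations per scale, so it is recurrent.

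For the volume bound, we choose the depth $n_k$ so that the block has diameter $\approx L_{n_k}\approx r_k$ and volume $\approx B^{n_k(1+\alpha)}\approx r_k^{d_f}$. With $r_{k+1}\gg r_k$, every ball $B(o,R)$ contains at most the blocks with $r_k\le R$ plus part of one more. Each partial block at radius $\le R$ from its root has volume $\lesssim R^{1+1/\alpha}=R^{d_f}$ by the pipe computation, and the completed blocks sum geometrically. This gives $|B(o,R)|\le CR^{d_f}$, with equality up to constants at $R=2r_k$, which yields the $\limsup$ statement.

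For explosion we take the same Pareto law $\sigma=Y-\E[Y]+\mu_0$, with $\P(Y\geq t)=t^{-q}$, so that $a\ge -b$. Inside block $k$ there is a comb $D_{w,n}$ for every level $n\le n_k$, and its root carries at most one extra boundary edge (to the spine), which Proposition~\ref{prop:comb-estimates} allows. The argument of Lemma~\ref{lem:tr-good} gives, for each level $n\le n_k$ of block $k$, failure probability $\exp(-cB^{n(1-\alpha(q-1))})$ for the existence of a good pipe. Taking $n=n_k$ and using Borel--Cantelli in $k$, almost surely infinitely many blocks contain a good pipe at their deepest level. We then consider the connected set $C$ formed by the spine segment $[0,r_k]$ together with the comb. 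I expect this to be the main obstacle: Theorem~\ref{thm:nested-vol} uses the Green function with source $o$, not $b_0$. I would handle it by noting that $g_C(o,\cdot)$ restricted to the comb equals $\P_o(\text{hit } r_k\text{ before exit})\cdot g_{\text{comb}}$, up to the way the spine segment is attached. This factor is small, of order $1/r_k$. Instead, I would use the continuation inequality~\eqref{eq:continuation}: the value at the spine vertex $r_k$ is at least $\lambda^{n_k}/4$, and the walk from $o$ reaches $r_k$ by recurrence. The cost along the spine is the negative payoff $\approx b\,\E_o[\text{time to reach }r_k]$, which is of order $r_k^2$ after restricting to a stopping rule that quits if the walk wanders into earlier blocks. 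Alternatively, we can ensure that $\lambda^{n_k}$, which is exponential in $n_k\asymp\log r_k$, i.e.\ $r_k^{c}$ with $c=\log\lambda/(\alpha\log B)$, beats the hitting probability loss. Because $\lambda=B^{2\alpha-1}/4$, the exponent satisfies $c\to(2\alpha-1)/\alpha$. I would first check whether $\lambda^{n_k}\cdot r_k^{-1}$, from the harmonic-measure factor, dominates the spine cost. If it does not, I would modify the construction by attaching the blocks directly at $o$ via short connectors, i.e.\ a star of spines, to keep the source-to-block conductance bounded below while preserving recurrence through superexponential spacing.
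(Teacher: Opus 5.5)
There is a genuine gap in the explosion step, and it is structural. You attach block $k$ at spine distance $r_k\asymp L_{n_k}$, comparable to the block's own radius. The unavoidable cost of reaching the block then swamps the comb gain.

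Your worry about a multiplicative loss of order $1/r_k$ is misplaced. Suppose $C$ contains $[0,r_k]$, the comb, \emph{and all earlier blocks in full}. Then the spine is a cut with no exits upstream, and Kirchhoff's node law sends the whole unit current into the block root. So $g_C(o,\cdot)$ on the comb is exactly the unit-current comb voltage of Proposition~\ref{prop:comb-estimates}.

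The real obstruction is additive. Spine vertex $i$ sits at voltage at least $r_k-i$, so the background term is typically of order $-(1-\mu_0)\,r_k^2$. The comb gain, by contrast, is sublinear in $r_k$:
\begin{itemize}
\item Each side stub at $b_j$ has resistance $L_{j+1}\le R_j$, so it carries current at least $I_{j+1}$. Hence $I_{j+1}\le I_j/B$ and $I_n\le B^{1-n}$.
\item Therefore $I_nL_n^2\le C\,L_n^{(2\alpha-1)/\alpha}=C\,L_n^{3-d_f}$, with $3-d_f<1$. Your guaranteed bound $\lambda^{n_k}=r_k^{\delta}$ has $\delta<3-d_f$.
\item Even the largest spike in the block, of size about $L_{n_k}^{d_f/q}$, yields only $O(L_{n_k}^{2-d_f+d_f/q})=o(r_k^2)$, because $q>1$.
\end{itemize}
Your continuation-inequality route has the same defect, since you price the trip at roughly $b\,\E_o[T_{r_k}]\gtrsim r_k^2$. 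So the comparison you planned to ``first check'' fails, and neither route proves explosion on this graph.

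The missing idea is to decouple the attachment distance from the gadget size. The paper attaches gadget $H(m_k)$ at ray position $s_k=\lceil R_{m_k}^{\rho}\rceil$ with $0<\rho<\delta/(d_f+1)$, polynomially closer to $o$ than the gadget's radius. It chooses $m_k$ inductively so that three conditions hold: $s_k>s_{k-1}+R_{m_{k-1}}$, $\sum_{j<k}N_{m_j}\le s_k^{d_f}$, and $c_{\mathrm{loc}}R_{m_k}^{\delta}\ge(s_k+1)^{d_f+1}k$.

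Take $C_k$ to be the ray segment up to $r_{s_k}$, all earlier gadgets, and the comb. The backbone voltages are then at most $s_k+1$, and earlier gadgets sit at constant voltage by the dead-end maximum principle. So the background costs at most $2b(s_k+1)^{d_f+1}$, which the comb gain $c_{\mathrm{loc}}R_{m_k}^{\delta}$ beats.

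This placement does not spoil $|B(o,R)|\le CR^{d_f}$. A gadget has growth exponent $d_f$ around its root at every scale: $|B_{H(m)}(x,t)\setminus\{x\}|\le C_{\mathrm{ball}}t^{d_f}$ for all $t\le R_m$ (Lemma~\ref{lem:rec-geometry}(c)). Hence placing a huge gadget close to $o$ costs nothing in the growth bound. Earlier gadgets are handled by the volume condition and radial disjointness.

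Your fallback of attaching blocks ``directly at $o$ via short connectors'' points in this direction. However, bounded-length connectors cannot host infinitely many blocks in a bounded-degree graph, so the connector lengths must grow. The quantitative window for $s_k$ is precisely what your proposal is missing: large enough for volume and separation, yet small enough that $(s_k+1)^{d_f+1}\ll R_{m_k}^{\delta}$.

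The remaining ingredients of your plan are fine:
\begin{itemize}
\item the choice $\alpha=1/(d_f-1)$;
\item a Pareto tail with $\max(p,1)<q\le d_f$;
\item Borel--Cantelli for good terminal pipes;
\item recurrence via one-endedness;
\item the volume bookkeeping.
\end{itemize}
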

	
	We use the electrical network interpretation of~$g_C$ from Section~\ref{sec:comb-estimates}.  Two properties of the voltage will be used repeatedly.
	
	\emph{Kirchhoff's node law} (current across a separating cut).  If $U\subset C$ contains~$o$ and no vertex of $\partial C$, then the total current crossing from~$U$ to $C\setminus U$ is~$1$.
	
	\emph{Maximum principle for dead ends} (dead-end subgraphs carry constant voltage).  If $H\subset C$ is finite, connected, attached to $C\setminus H$ at exactly one vertex~$x$, and $H$ contains neither the source~$o$ nor a boundary vertex of~$C$, then $g_C$ is constant on~$H$, equal to $g_C(x)$: the function $g_C$ is harmonic on $H\setminus\{x\}$ with all boundary values equal to $g_C(x)$, so the maximum principle \citep[Section~2.1]{LP16} forces $g_C\equiv g_C(x)$.
	
	Let $p\in(0,3)$ and $d_f\in(\max(p,2),3)$, and set
	\begin{equation}
		\alpha\coloneqq\frac{1}{d_f-1}\,.
	\end{equation}
	Since $d_f\in(2,3)$, one has $\alpha\in(1/2,1)$.  Let $B\geq 2$ and pipe lengths $L_j$ be as in Section~\ref{sec:comb-estimates}, with conditions~\eqref{eq:comb-B-cond} and bounds~\eqref{eq:comb-Lj-bounds}.
	Define~$Y$, $\sigma(v)$, $a(v)$, and~$b$ as in the proof of Theorem~\ref{thm:transient-nonstab}, with~$d_f$ in place of~$q$: the Pareto tail is~$\P(Y\geq t)=t^{-d_f}$, and
	\begin{equation}\label{eq:rec-ab}
		a(v)\coloneqq\sigma(v)-1=Y_v-b\,,
		\qquad
		b\coloneqq\E[Y]-\mu_0+1\,.
	\end{equation}
	Every vertex satisfies $a(v)\geq -b$.
	
	Let $m\geq 1$.  Start from the rooted $B$-ary tree of depth~$m$ and replace each edge between generations $j{-}1$ and~$j$ by a path (a \textbf{pipe}) of length~$L_j$.  Call the resulting rooted graph $H(m)$, with root~$x$.  The distance from the root to every generation-$m$ leaf is
	\begin{equation}
		R_m\coloneqq\sum_{j=1}^m L_j\,.
	\end{equation}
	The number of non-root vertices is
	\begin{equation}
		N_m\coloneqq|H(m)\setminus\{x\}|
		=\sum_{j=1}^m B^j L_j\,,
	\end{equation}
	since each generation-$j$ pipe contributes exactly $L_j$ new vertices.  We set $N_0\coloneqq 0$ and $R_0\coloneqq 0$.
	
	\begin{lemma}[Gadget geometry]\label{lem:rec-geometry}
		Let $C_R\coloneqq 2/(1-B^{-\alpha})$.  There exist constants $c_N, C_N, C_{\mathrm{ball}}>0$ depending only on $B$ and~$\alpha$ such that the following hold.
		\begin{enumerate}
			\item[\textup{(a)}] For every $m\geq 1$, $L_m\leq R_m\leq C_R\,L_m$.
			\item[\textup{(b)}] For every $m\geq 0$, $c_N\,R_m^{d_f}\leq N_m\leq C_N\,R_m^{d_f}$.
			\item[\textup{(c)}] For every $m\geq 1$ and every $1\leq t\leq R_m$, $|B_{H(m)}(x,t)\setminus\{x\}|\leq C_{\mathrm{ball}}\,t^{d_f}$.
		\end{enumerate}
	\end{lemma}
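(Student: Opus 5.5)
The plan is to reduce all three parts to geometric sums using \eqref{eq:comb-Lj-bounds} and the identity $\alpha=1/(d_f-1)$, which gives $B=B^{\alpha(d_f-1)}$. Equivalently, $B^{j}=(B^{\alpha j})^{d_f-1}\asymp L_j^{d_f-1}$.

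For (a), the lower bound $R_m\geq L_m$ is immediate since $L_m$ is one of the summands. For the upper bound, we use $L_j\leq B^{\alpha j}$ and $L_m\geq \tfrac12 B^{\alpha m}$. This gives
\[
R_m\leq \sum_{j\leq m}B^{\alpha j}\leq \frac{B^{\alpha m}}{1-B^{-\alpha}}\leq \frac{2L_m}{1-B^{-\alpha}}=C_R L_m\,.
\]

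For (b), we write $B^jL_j\asymp B^j B^{\alpha j}=B^{\alpha j d_f}$, since $1+\alpha=\alpha d_f$. This is a geometric sequence with ratio $B^{\alpha d_f}>1$, so $N_m\asymp B^{\alpha m d_f}$. Its last term alone gives the lower bound, and the geometric tail gives the upper bound. Combining with (a), which gives $R_m\asymp B^{\alpha m}$, yields $N_m\asymp R_m^{d_f}$. The case $m=0$ is trivial because both sides vanish.

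For (c), fix $1\leq t\leq R_m$ and let $k$ be the largest index with $R_k\leq t$, so $0\leq k<m$ and $R_k\leq t<R_{k+1}$. The ball of radius $t$ consists of two parts. The first is all of $H(k)$ minus the root, which has at most $N_k\leq C_N R_k^{d_f}\leq C_N t^{d_f}$ vertices. The second is the vertices on generation-$(k+1)$ pipes within distance $t-R_k$ of their top endpoints. There are $B^{k+1}$ such pipes, and each contributes at most $\min(t,L_{k+1})$ vertices. Deeper generations are unreachable because $t<R_{k+1}$.

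The main obstacle is bounding this second part by $Ct^{d_f}$ when $t$ is small compared to $L_{k+1}$. In that regime $B^{k+1}t$ could exceed $t^{d_f}$, so we must use $t\geq R_k\geq L_k$. If $k\geq1$, we have $B^{k+1}\asymp L_{k+1}^{d_f-1}\asymp B^{d_f-1}L_k^{d_f-1}\leq C t^{d_f-1}$, hence $B^{k+1}t\leq Ct^{d_f}$. If $k=0$, the count is $B\min(t,L_1)\leq Bt\leq Bt^{d_f}$ since $t\geq1$. Adding the two parts gives (c) with $C_{\mathrm{ball}}$ depending only on $B$ and $\alpha$.
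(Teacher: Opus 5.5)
Your proposal is correct and follows the paper's proof essentially step for step: the bounds $\tfrac12 B^{\alpha j}\le L_j\le B^{\alpha j}$ together with $1+1/\alpha=d_f$ reduce (a) and (b) to geometric sums, and for (c) you split the ball into the complete lower generations (at most $N_k\le C_N t^{d_f}$ vertices) plus at most $t$ vertices on each of the $B^{k+1}$ next-generation pipes, where $t\ge L_k\ge\tfrac12 B^{\alpha k}$ gives $B^{k+1}\le B(2t)^{1/\alpha}$. The only slip is the endpoint $t=R_m$: there your choice of $k$ gives $k=m$ rather than $k<m$, but the ball is then all of $H(m)$ and (b) gives the bound directly (alternatively, take $k\le m-1$ with $R_k\le t\le R_{k+1}$, which your counting handles equally well).
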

	
	\begin{proof}
		All estimates follow from $L_j\asymp B^{\alpha j}$, giving geometric series dominated by the last term.
		
		\emph{Part~\textup{(a)}.}  The lower bound $R_m\geq L_m$ is immediate.  For the upper bound, $R_m\leq\sum_{j=1}^m B^{\alpha j} =B^{\alpha m}\sum_{i=0}^{m-1}B^{-\alpha i} \leq B^{\alpha m}/(1-B^{-\alpha})$.  Since $B^{\alpha m}\leq 2L_m$ by~\eqref{eq:comb-Lj-bounds}, $R_m\leq C_R L_m$.
		
		\emph{Part~\textup{(b)}.}  For $m=0$ the bound is trivial since $N_0=R_0=0$; assume $m\geq 1$.
		The successive terms $B^j L_j$ grow geometrically: by~\eqref{eq:comb-Lj-bounds}, $B^j L_j/(B^{j-1}L_{j-1}) \geq B\cdot B^{\alpha j}/(2B^{\alpha(j-1)}) =B^{1+\alpha}/2$.  Since $B^{1+\alpha}/2>2$ for $B$ large, the last term dominates:
		\begin{equation}\label{eq:rec-Nm-asymp}
			B^m L_m\leq N_m\leq 2\,B^m L_m\,.
		\end{equation}
		By~\eqref{eq:comb-Lj-bounds}, $B^m\asymp L_m^{1/\alpha}$, so $B^m L_m\asymp L_m^{1+1/\alpha}$.  Since $1+1/\alpha=d_f$ and $L_m\asymp R_m$ by part~(a), one has $N_m\asymp R_m^{d_f}$.
		
		\emph{Part~\textup{(c)}.}  Choose~$j$ so that $R_{j-1}<t\leq R_j$.  The ball of radius~$t$ contains all vertices in generations $1,\ldots,j{-}1$ (contributing at most $N_{j-1}$) and at most~$t$ vertices on each of the $B^j$ generation-$j$ pipes:
		\begin{equation}\label{eq:rec-ball-bound}
			|B_{H(m)}(x,t)\setminus\{x\}|
			\leq N_{j-1}+B^j\,t\,.
		\end{equation}
		By part~(b), $N_{j-1}\leq C_N R_{j-1}^{d_f} <C_N t^{d_f}$.  If $j=1$, then $B^j t=Bt\leq B t^{d_f}$ (since $t\geq 1$ and $d_f>1$).  If $j\geq 2$, then $t>R_{j-1}\geq L_{j-1} \geq B^{\alpha(j-1)}/2$, so $B^j\leq B(2t)^{1/\alpha}$.  Therefore $B^j t\leq 2^{1/\alpha}B\,t^{d_f}$.  Combining gives $|B_{H(m)}(x,t)\setminus\{x\}| \leq C_{\mathrm{ball}}\,t^{d_f}$.
	\end{proof}
	
	Let $m\geq 1$ and let $\omega\in\{1,\ldots,B\}^m$ be a terminal word.  Write $b_0=x,b_1,\ldots,b_{m-1}$ for the trunk branching vertices on the root-to-leaf path indexed by~$\omega$.  The \textbf{comb} $D_{m,\omega}$ consists of:
	\begin{itemize}
		\item the trunk pipes from $b_{j-1}$ to $b_j$ for $1\leq j\leq m{-}1$,
		\item at each $b_{j-1}$, the other $B{-}1$ child pipes with their far endpoints \textbf{deleted},
		\item at level~$m$, the chosen terminal pipe and all $B{-}1$ sibling pipes, each with its leaf \textbf{deleted}.
	\end{itemize}
	When the comb is used inside the global graph, the root~$x$ will also have one extra boundary edge of resistance~$1$ going forward along the backbone.  We include this edge in the local analysis below.
	
	Assume one unit of current arrives at~$x$ from the left.  Let $g$ be the resulting voltage, with all exits grounded at~$0$.  For $1\leq j\leq m$, let $I_j$ denote the current through the chosen level-$j$ trunk pipe.  For $0\leq j\leq m{-}1$, write $V_j\coloneqq g(b_j)$.  The effective resistance from $b_j$ to the boundary through the chosen continuation branch is denoted~$R_j$, so that
	\begin{equation}\label{eq:rec-VjIj}
		V_j=I_{j+1}\,R_j\,.
	\end{equation}
	By Proposition~\ref{prop:comb-estimates}(a) and~\eqref{eq:rec-VjIj},
	\begin{equation}
		V_j\leq 2\,I_{j+1}\,L_{j+1}
		\qquad(0\leq j\leq m{-}1)\,.
	\end{equation}
	By Proposition~\ref{prop:comb-estimates}(b), $I_m\geq(4B)^{-m}$.  Let $C_{\mathrm{comb}}$ be the constant from Proposition~\ref{prop:comb-estimates}(c), and set
	\begin{equation}\label{eq:rec-K}
		K\coloneqq2+2b\,(C_{\mathrm{comb}}+1)\,.
	\end{equation}
	By~\eqref{eq:rec-ab}, $a(v)=Y_v-b$, so $Y_v\geq K\,L_m$ implies $a(v)\geq K\,L_m-b$. Proposition~\ref{prop:comb-estimates}(d) then gives $\sum_{u\in D_{m,\omega}}g(u)\,a(u)\geq I_m\,L_m^2$.
	
	\begin{corollary}[Exponential growth of the local contribution]\label{cor:rec-loc}
		Define $\delta\coloneqq\frac{\log\lambda}{\alpha\log B}>0$ and $\lambda=\frac{B^{2\alpha-1}}{4}$. There exists $c_{\mathrm{loc}}>0$ such that whenever a vertex~$v$ in the first half of the terminal pipe satisfies $Y_v\geq KL_m$, then
		\[
		\sum_{u\in D_{m,\omega}}
		g(u)\,a(u)
		\geq c_{\mathrm{loc}}\,R_m^{\delta}\,.
		\]
	\end{corollary}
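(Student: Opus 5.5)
The plan is to combine the already-established spike bound with parts (e) and (a) of Proposition~\ref{prop:comb-estimates}, and then convert powers of~$\lambda$ into powers of~$R_m$ using the geometric comparison $L_m\asymp B^{\alpha m}$. By the discussion preceding the corollary, a vertex $v$ in the first half of the terminal pipe with $Y_v\geq KL_m$ satisfies $a(v)\geq KL_m-b$. Since every vertex has $a(u)\geq -b$, Proposition~\ref{prop:comb-estimates}(d) applies to the comb $D_{m,\omega}$. It gives
\[
\sum_{u\in D_{m,\omega}} g(u)\,a(u)\geq I_m\,L_m^2\geq \tfrac14\,\lambda^m ,
\]
where the second inequality is part~(e). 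This inequality holds whether or not the extra boundary edge at $x$ is present, because part~(b), and hence part~(e), was proved allowing at most one such edge.

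It remains to bound $\lambda^m$ below by a power of $R_m$. By Lemma~\ref{lem:rec-geometry}(a) and~\eqref{eq:comb-Lj-bounds}, $R_m\leq C_R L_m\leq C_R B^{\alpha m}$. Raising this to the power $\delta=\log\lambda/(\alpha\log B)$ gives
\[
R_m^{\delta}\leq C_R^{\delta} B^{\alpha m\delta}=C_R^{\delta}\,\lambda^m .
\]
Hence $\lambda^m\geq C_R^{-\delta}R_m^{\delta}$. We may therefore take $c_{\mathrm{loc}}\coloneqq C_R^{-\delta}/4$, which depends only on $B$ and $\alpha$.

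The positivity $\delta>0$ follows from $\lambda>1$, which is the last condition in~\eqref{eq:comb-B-cond}, together with $B\geq 2$ and $\alpha>0$.

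There is no real obstacle here. The only point to check is that Proposition~\ref{prop:comb-estimates} applies verbatim to $D_{m,\omega}$ as defined here, with $n=m$ and $w=\omega$. Both combs are built from the same trunk, the same $B-1$ sibling pipes with far endpoints deleted, the same terminal pipe, and an optional unit-resistance edge at the root. So the hypotheses of parts~(d) and~(e) match directly.
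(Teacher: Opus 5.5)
Your proposal is correct and matches the paper's proof essentially line by line. You apply Proposition~\ref{prop:comb-estimates}(d) and (e) to get $\sum_u g(u)a(u)\geq I_mL_m^2\geq\lambda^m/4$, then use $R_m\leq C_RB^{\alpha m}$ and $B^{\alpha\delta}=\lambda$ to obtain $c_{\mathrm{loc}}=C_R^{-\delta}/4$; your side checks ($\delta>0$ from $\lambda>1$, and the optional unit-resistance root edge being covered by part~(b)) are accurate.
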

	
	\begin{proof}
		By Proposition~\ref{prop:comb-estimates}(e), $I_m L_m^2\geq\lambda^m/4$.  By Lemma~\ref{lem:rec-geometry}(a), $R_m\leq C_R B^{\alpha m}$, hence $R_m^{\delta}\leq C_R^{\delta}\,B^{\alpha\delta m} =C_R^{\delta}\,\lambda^m$ (since $B^{\alpha\delta}=\lambda$).  Therefore $I_m L_m^2\geq C_R^{-\delta}\,R_m^{\delta}/4$.
	\end{proof}
	
	We now construct the global graph.  Choose
	\begin{equation}\label{eq:rec-rho}
		0<\rho<\frac{\delta}{d_f+1}\,.
	\end{equation}
	Start from the ray $r_0{=}o,\,r_1,\,r_2,\,\ldots\,$ and attach copies of $H(m_k)$ at selected positions, as follows.
	The depths $m_k$ are chosen inductively so large that, with
	\begin{equation}\label{eq:rec-sk}
		s_k\coloneqq\bigl\lceil R_{m_k}^{\,\rho}\bigr\rceil\,,
	\end{equation}
	all of the following hold for $k\geq 2$:
	\begin{align}
		s_k&>s_{k-1}+R_{m_{k-1}}\,,
		\label{eq:rec-cond-sep}\\
		\sum_{j<k}N_{m_j}&\leq s_k^{\,d_f}\,,
		\label{eq:rec-cond-vol}\\
		c_{\mathrm{loc}}\,R_{m_k}^{\,\delta}
		&\geq \bigl(s_k{+}1\bigr)^{d_f+1} k\,.\label{eq:rec-cond-div}
	\end{align}
	These conditions are compatible and do not depend on the choice of~$\mu_0$.  Indeed, when choosing $m_k$, the quantities $s_{k-1}$, $R_{m_{k-1}}$, and $N_{m_j}$ for $j<k$ are already fixed.  Since $R_{m_k}\to\infty$ as $m_k\to\infty$: condition~\eqref{eq:rec-cond-sep} holds because $s_k\asymp R_{m_k}^{\rho}\to\infty$; condition~\eqref{eq:rec-cond-vol} holds because $s_k^{d_f}\asymp R_{m_k}^{\rho d_f} \to\infty$; condition~\eqref{eq:rec-cond-div} holds because $\delta>\rho(d_f{+}1)$ (by~\eqref{eq:rec-rho}) ensures that $R_{m_k}^{\delta}$ dominates $s_k^{d_f+1} k$. For each $k\geq 1$, attach the root of $H(m_k)$ to the ray vertex $r_{s_k}$.  Call the resulting graph~$G$ (see Figure~\ref{fig:rec-graph}).
	
	\begin{figure}[ht]
		\centering
		\begin{tikzpicture}[scale=0.75,
			ray/.style={very thick},
			pipe/.style={thick},
			bv/.style={circle,fill,inner sep=1.5pt},
			pv/.style={circle,fill=gray,inner sep=1pt},
			lf/.style={circle,draw,inner sep=1pt}]
			
			\node[bv,label=above:{$o$}] (r0) at (0,0) {};
			\foreach \i in {1,...,14} {
				\node[pv] (r\i) at (\i*0.7,0) {};
			}
			\node (rdots) at (10.5,0) {$\cdots$};
			\draw[ray] (r0) -- (r1) -- (r2) -- (r3) -- (r4)
			-- (r5) -- (r6) -- (r7) -- (r8) -- (r9);
			\draw[ray] (r9) -- (10.2,0);
			\draw[ray] (10.8,0) -- (r11) -- (r12) -- (r13)
			-- (r14);
			
			\node[bv,label=above:{$r_{s_1}$}] at (r2) {};
			\foreach \j/\ang in {1/-130, 2/-90, 3/-50} {
				\node[lf] (g1\j) at ($(r2)+(\ang:1.0)$) {};
				\draw[pipe] (r2) -- node[pv,pos=0.5]{} (g1\j);
			}
			\node[font=\scriptsize] at ($(r2)+(0,-1.5)$)
			{$H(m_1)$};
			
			\node[bv,label=above:{$r_{s_2}$}] at (r6) {};
			\foreach \j/\ang in {1/-140, 2/-90, 3/-40} {
				\node[bv] (g2b\j) at ($(r6)+(\ang:1.2)$) {};
				\draw[pipe] (r6) --
				node[pv,pos=0.4]{} node[pv,pos=0.7]{} (g2b\j);
			}
			\foreach \k/\kang in {1/-130, 2/-90, 3/-50} {
				\node[lf] (g2l\k) at
				($(g2b2)+(\kang:0.9)$) {};
				\draw[pipe] (g2b2) --
				node[pv,pos=0.33]{} node[pv,pos=0.67]{} (g2l\k);
			}
			\node[font=\tiny] at ($(g2b1)+(0,-0.6)$)
			{$\vdots$};
			\node[font=\tiny] at ($(g2b3)+(0,-0.6)$)
			{$\vdots$};
			\node[font=\scriptsize] at ($(r6)+(0,-3.0)$)
			{$H(m_2)$};
			
			\node[bv,label=above:{$r_{s_3}$}] at (r13) {};
			\draw[pipe] (r13) -- +(-1.5,-1.0)
			node[bv] (g3a) {};
			\draw[pipe] (r13) -- +(0,-1.2)
			node[bv] (g3b) {};
			\draw[pipe] (r13) -- +(1.5,-1.0)
			node[bv] (g3c) {};
			\foreach \nd in {g3a,g3b,g3c} {
				\draw[pipe] (\nd) -- +(-0.5,-0.8)
				node[bv] (x) {};
				\draw[pipe] (\nd) -- +(0,-0.9)
				node[bv] (y) {};
				\draw[pipe] (\nd) -- +(0.5,-0.8)
				node[bv] (z) {};
			}
			\node[font=\tiny] at ($(g3b)+(0,-1.5)$)
			{$\vdots$};
			\node[font=\scriptsize] at ($(r13)+(0,-3.5)$)
			{$H(m_3)$};
			
			\draw[decorate,
			decoration={brace,amplitude=4pt,raise=2pt}]
			(r2) -- (r6)
			node[midway,above=7pt,font=\scriptsize]
			{$s_2-s_1$};
			
		\end{tikzpicture}
		\caption{The graph~$G$: a ray (thick horizontal
			line) with tree-of-pipes gadgets $H(m_k)$ attached
			at widely spaced positions $s_1<s_2<s_3<\cdots$.
			Each gadget is a $B$-ary tree whose edges are
			replaced by pipes of increasing lengths.
			Later gadgets are larger; the radial intervals
			$[s_k,\,s_k+R_{m_k}]$ are disjoint.}
		\label{fig:rec-graph}
	\end{figure}
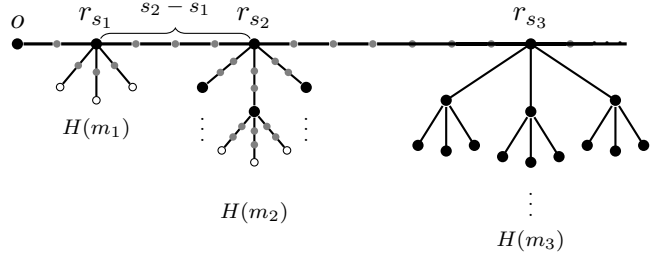
	
	The maximum degree of $G$ is $B{+}2$.  The graph~$G$ is recurrent: it is one-ended (the only edge leaving $\{r_0,\ldots,r_n\}$ and its gadgets is~$r_n r_{n+1}$), so every unit flow from~$o$ to infinity traverses every backbone edge and has infinite energy, and Thomson's principle \citep[Section~2.4]{LP16} gives recurrence.
	
	\begin{proposition}[Volume growth]
		\label{prop:rec-growth}
		There exists $C_G>0$ such that $|B(o,r)|\leq C_G\,r^{d_f}$ for every $r\geq 1$.  Moreover, along $r_k\coloneqq s_k+R_{m_k}$, one has $|B(o,r_k)|\geq c\,r_k^{d_f}$ for some $c>0$.
	\end{proposition}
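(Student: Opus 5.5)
The upper bound comes from decomposing the ball around $o$ by which gadgets it meets. The lower bound is immediate from the fact that the ball $B(o,r_k)$ contains the whole gadget $H(m_k)$.

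\emph{Upper bound.} Fix $r\geq 1$. The ball $B(o,r)$ consists of the ray vertices $r_0,\dots,r_r$ (at most $r+1\leq 2r$ vertices) together with, for each $k$ with $s_k\leq r$, the set $B_{H(m_k)}(x_k,r-s_k)\setminus\{x_k\}$ of the gadget attached at $r_{s_k}$. This uses that each gadget is attached to the ray only through its root, so distances from $o$ into $H(m_k)$ equal $s_k$ plus the intrinsic distance from the root. Let $k^*$ be the largest index with $s_{k^*}\leq r$. All gadgets with $k<k^*$ contribute at most $\sum_{j<k^*}N_{m_j}\leq s_{k^*}^{d_f}\leq r^{d_f}$ by \eqref{eq:rec-cond-vol}; for $k^*=1$ this sum is empty. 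The last gadget contributes at most $C_{\mathrm{ball}}\,t^{d_f}$ with $t\coloneqq\min(r-s_{k^*},R_{m_{k^*}})$, using Lemma~\ref{lem:rec-geometry}(c) when $t\geq 1$ and the trivial bound $0$ when $t<1$. For $t=R_{m_{k^*}}$ the whole gadget is counted, and Lemma~\ref{lem:rec-geometry}(b) gives $N_{m_{k^*}}\leq C_N R_{m_{k^*}}^{d_f}$. In every case the last gadget contributes at most $\max(C_{\mathrm{ball}},C_N)\,r^{d_f}$, since $t\leq r$. Summing the three contributions, and using $d_f>1$ so that $2r\leq 2r^{d_f}$, gives $|B(o,r)|\leq C_G r^{d_f}$ with $C_G\coloneqq 3+C_{\mathrm{ball}}+C_N$.

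\emph{Lower bound.} Take $r_k=s_k+R_{m_k}$. Then $B(o,r_k)$ contains all of $H(m_k)$, since every gadget vertex lies within distance $R_{m_k}$ of its root. Hence $|B(o,r_k)|\geq N_{m_k}\geq c_N R_{m_k}^{d_f}$ by Lemma~\ref{lem:rec-geometry}(b). It remains to compare $R_{m_k}$ with $r_k$. From \eqref{eq:rec-sk} we have $s_k=\lceil R_{m_k}^{\rho}\rceil\leq R_{m_k}^{\rho}+1\leq 2R_{m_k}$, because $\rho<1$ (indeed $\delta$ is small relative to $d_f+1$) and $R_{m_k}\geq 1$. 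Therefore $r_k\leq 3R_{m_k}$, and so $|B(o,r_k)|\geq c_N3^{-d_f}r_k^{d_f}$. Since $r_k\to\infty$, combining this with the upper bound gives $\limsup_r \log|B(o,r)|/\log r=d_f$.

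\emph{Main obstacle.} The only delicate point is the bookkeeping in the upper bound. One must confirm that a single ball meets at most one gadget that is only partially covered. This is guaranteed because gadget $k$ is rooted at $s_k$, and the separation condition \eqref{eq:rec-cond-sep} implies $s_k>s_{k-1}+R_{m_{k-1}}$. Consequently every gadget with index below $k^*$ is fully contained in the ball and is handled by the volume condition \eqref{eq:rec-cond-vol}, which is stated in terms of $s_{k^*}\leq r$.
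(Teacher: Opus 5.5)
Your proposal is correct and follows essentially the same route as the paper: the ray and earlier gadgets are bounded via \eqref{eq:rec-cond-vol}, the last gadget via Lemma~\ref{lem:rec-geometry}(b),(c), and the lower bound comes from $H(m_k)\subseteq B(o,r_k)$ together with $s_k\lesssim R_{m_k}$. Your choice $t=\min(r-s_{k^*},R_{m_{k^*}})$ merges the paper's two cases into one. Because you bound the earlier gadgets by their full size, the radial disjointness you flag as the main obstacle is not actually needed for your upper bound.
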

	
	\begin{proof}
		By~\eqref{eq:rec-cond-sep}, the gadgets are radially disjoint: $s_k+R_{m_k}<s_{k+1}$ for every $k\geq 1$.  Therefore a ball around~$o$ can partially intersect at most one gadget.
		
		\emph{Upper bound.}  Fix $r\geq 1$ and let $k$ be the largest index with $s_k\leq r$ (set $k=0$ if $r<s_1$).  If $k=0$, then $|B(o,r)|\leq r{+}1\leq 2r^{d_f}$.  Assume $k\geq 1$.  If $s_k\leq r<s_k+R_{m_k}$, then $B(o,r)$ contains the ray segment (at most $r{+}1$ vertices), all earlier gadgets $H(m_j)$ for $j<k$, and a ball of radius $r{-}s_k$ inside $H(m_k)$.  By~\eqref{eq:rec-cond-vol}, $\sum_{j<k}N_{m_j}\leq s_k^{d_f}\leq r^{d_f}$.  By Lemma~\ref{lem:rec-geometry}(c), the partial gadget contributes at most $C_{\mathrm{ball}}\,r^{d_f}$.  In total, $|B(o,r)|\leq C\,r^{d_f}$.  If $r\geq s_k+R_{m_k}$, then no later gadget is reached (by radial disjointness), so $|B(o,r)|\leq r{+}1+\sum_{j\leq k}N_{m_j}$.  By~\eqref{eq:rec-cond-vol} and Lemma~\ref{lem:rec-geometry}(b), since $R_{m_k}\leq r$
		\[
		\sum_{j\leq k}N_{m_j} \leq s_k^{d_f}+N_{m_k} \leq r^{d_f}+C_N R_{m_k}^{d_f} \leq C\,r^{d_f}\,.
		\]
		
		\emph{Lower bound.}  At $r_k=s_k+R_{m_k}$, the entire gadget $H(m_k)$ is contained in $B(o,r_k)$, so $|B(o,r_k)|\geq N_{m_k}\geq c_N R_{m_k}^{d_f}$.  Since $s_k=\lceil R_{m_k}^{\rho}\rceil =o(R_{m_k})$ (because $\rho<1$), one has $r_k\asymp R_{m_k}$, which gives $|B(o,r_k)|\geq c\,r_k^{d_f}$.
	\end{proof}
	
	Recall the constant $K$ from~\eqref{eq:rec-K}.  For each $k\geq 1$, let $\mathcal{A}_k$ be the set of vertices in the first half of every terminal pipe of $H(m_k)$ (at distance $\geq L_{m_k}/2$ from the absorbing endpoint).  Since $L_{m_k}\geq 4$ (by $B^{\alpha}\geq 4$) and there are $B^{m_k}$ terminal pipes,
	\begin{equation}\label{eq:rec-Ak}
		|\mathcal{A}_k|\geq\tfrac{1}{4}\,
		B^{m_k}\,L_{m_k}\,.
	\end{equation}
	Define the event
	\begin{equation}
		E_k\coloneqq\bigl\{\text{there exists } v\in\mathcal{A}_k
		\text{ with } Y_v\geq K L_{m_k}\bigr\}\,.
	\end{equation}
	
	\begin{lemma}[Good events]\label{lem:rec-good}
		There exists $\eta>0$ such that $\P(E_k)\geq\eta$ for every $k\geq 1$.  The events $E_1,E_2,\ldots$ are independent.  In particular, infinitely many $E_k$ occur almost surely.
	\end{lemma}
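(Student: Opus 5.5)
The argument is a direct probability computation followed by the second Borel--Cantelli lemma. We bound $\P(E_k)$ from below using the Pareto tail, the size bound~\eqref{eq:rec-Ak}, and the identity $1+1/\alpha=d_f$. Independence comes from the disjointness of the gadgets.

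\emph{Lower bound on $\P(E_k)$.} The variables $(Y_v)_{v\in\mathcal{A}_k}$ are i.i.d.\ with $\P(Y\geq t)=t^{-d_f}$ for $t\geq 1$, and $KL_{m_k}\geq 1$. Hence
\[
\P(E_k^c)=\bigl(1-(KL_{m_k})^{-d_f}\bigr)^{|\mathcal{A}_k|}\leq\exp\bigl(-|\mathcal{A}_k|\,K^{-d_f}L_{m_k}^{-d_f}\bigr)\,.
\]
By~\eqref{eq:rec-Ak}, the exponent is at least $\tfrac14 K^{-d_f}B^{m_k}L_{m_k}^{1-d_f}$.

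By~\eqref{eq:comb-Lj-bounds}, $L_{m_k}\leq B^{\alpha m_k}$, so $B^{m_k}\geq L_{m_k}^{1/\alpha}$. Since $\alpha=1/(d_f-1)$, we have $1/\alpha+1-d_f=0$. Therefore
\[
B^{m_k}L_{m_k}^{1-d_f}\geq L_{m_k}^{1/\alpha+1-d_f}=1\,.
\]
This gives $\P(E_k)\geq 1-\exp(-\tfrac14 K^{-d_f})\eqqcolon\eta>0$, uniformly in $k$. This computation is the only real step, and it depends entirely on the choice $\alpha=1/(d_f-1)$. That choice makes the expected number of exceedances per gadget of order one, so no growth condition on $m_k$ is needed here.

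\emph{Independence.} The gadgets $H(m_k)$ are attached at distinct ray vertices and are vertex-disjoint copies, so the sets $\mathcal{A}_k$ are pairwise disjoint. Each $E_k$ is measurable with respect to $(Y_v)_{v\in\mathcal{A}_k}$. Since the $Y_v$ are i.i.d.\ over all of $V$, the events $E_k$ are independent.

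\emph{Conclusion.} We have $\sum_k\P(E_k)\geq\sum_k\eta=\infty$, and the $E_k$ are independent. The second Borel--Cantelli lemma then gives that infinitely many $E_k$ occur almost surely.
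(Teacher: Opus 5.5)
Your proposal is correct and follows essentially the same route as the paper. Both use the Pareto tail together with \eqref{eq:rec-Ak} and the identity $\alpha(d_f-1)=1$ to obtain the uniform bound $\P(E_k)\geq 1-e^{-1/(4K^{d_f})}$, then get independence from the disjointness of the gadgets and conclude by the second Borel--Cantelli lemma.
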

	
	\begin{proof}
		Set $p_k\coloneqq(K\,L_{m_k})^{-d_f}$.  By~\eqref{eq:rec-Ak},
		\[
		|\mathcal{A}_k|\,p_k
		\geq\tfrac{1}{4}\,K^{-d_f}\,
		B^{m_k}\,L_{m_k}^{1-d_f}\,.
		\]
		Since $\alpha(d_f{-}1)=1$ and $L_{m_k}\leq B^{\alpha m_k}$, one has $L_{m_k}^{1-d_f}\geq B^{-\alpha m_k(d_f-1)} =B^{-m_k}$, so $B^{m_k}L_{m_k}^{1-d_f}\geq 1$.  Therefore $|\mathcal{A}_k|\,p_k\geq 1/(4K^{d_f})$, and
		\[
		\P(E_k)
		\geq 1-(1-p_k)^{|\mathcal{A}_k|}
		\geq 1-e^{-1/(4K^{d_f})}
		=:\eta>0\,.
		\]
		Independence holds because different gadgets have disjoint vertex sets, and the second Borel--Cantelli lemma applies.
	\end{proof}
	
	\begin{proof}[Proof of Theorem~\ref{thm:recurrent-nonstab}]
		Fix $k$ such that $E_k$ occurs, and choose a terminal word~$\omega$ such that some vertex in the first half of the terminal pipe satisfies $Y_v\geq K\,L_{m_k}$.  Define the finite connected set
		\[
		C_k\coloneqq\{r_0,\ldots,r_{s_k}\}
		\;\cup\;\bigcup_{j<k}H(m_j)
		\;\cup\;D_{m_k,\omega}\,.
		\]
		Write $g_k\coloneqq g_{C_k}$.  By Kirchhoff's node law, unit current traverses every backbone edge, giving $g_k(r_i)\leq s_k{+}1$ for every $0\leq i\leq s_k$.  By the maximum principle for dead ends, $g_k$ is constant on each earlier gadget $H(m_j)$ with $j<k$, equal to $g_k(r_{s_j})\leq s_k{+}1$.  Since $a\geq -b$, the backbone and earlier gadgets contribute at least $-2b\,(s_k{+}1)^{d_f+1}$, using condition~\eqref{eq:rec-cond-vol} and $d_f>2$.  By Corollary~\ref{cor:rec-loc}, the target comb contributes at least $c_{\mathrm{loc}}\,R_{m_k}^{\,\delta}$.  By condition~\eqref{eq:rec-cond-div}, the positive term dominates for large $k$:
		\[
		\sum_{u\in C_k}g_k(u)\,a(u)\geq (k-2b)\,(s_k{+}1)^{d_f+1}\,.
		\]
		By Lemma~\ref{lem:rec-good}, infinitely many $E_k$ occur almost surely, so
		\[
		u_\infty(o)
		=\max\biggl\{0,\;\sup_C\sum_{v\in C} g_C(v)\bigl(\sigma(v)-1\bigr)\biggr\}
		=\infty\qquad\text{a.s.}\qedhere
		\]
	\end{proof}
	
	\section{Open questions}
	
	\subsubsection*{Critical explosion without finite variance or symmetry}
	
	Does explosion hold at $\mu=1$ on bounded-degree graphs when $\E[\sigma^2]=\infty$? 
	The argument in~\citet{div-sand-crit} implies that this holds on graphs that are recurrent and satisfy conservation of mass. When $\sigma-1$ is symmetric, \citet*{MR3834853} proved explosion on vertex-transitive graphs (both recurrent and transient) under the additional assumption that $\sigma$ lies in the domain of attraction of an $\alpha$-stable law for $\alpha\in[1,2)$. Our Proposition~\ref{prop:convexity-reduction} reduces the symmetric case to the finite-variance case via a convexity argument, yielding critical explosion under symmetry on all bounded-degree graphs with no moment hypothesis beyond~$\E[\sigma]=1$. The general case without symmetry remains open.
	
	\subsubsection*{Endpoint moment conditions}
	
	Theorem~\ref{thm:stab}(i) gives stabilization for $p>3$, and our counterexamples give explosion for every $p<3$. Does stabilization hold at the endpoint $p=3$ on every bounded-degree graph? Similarly, the refined threshold $p>d_f$ in Theorem~\ref{thm:stab}(ii) and the threshold $p>1+2/d$ on~$\Z^d$ from Proposition~\ref{prop:subcritical} leave their respective endpoints open. On~$\Z^d$, Lemma~\ref{lem:moment-sharpness} shows that $\E[(\sigma^+)^{1+2/d}]=\infty$ forces $\E[u_\infty(0)]=\infty$, but this does not rule out $u_\infty(0)<\infty$ almost surely, which occurs by conservation of mass for~$\mu < 1$. 
	
	\subsubsection*{Acknowledgments}
	Thanks to Yuwen Wang for discussions at an early stage of this project.
	The research of E.\ Sava-Huss was funded by the Austrian Science Fund (FWF) 10.55776/\allowbreak{}PAT3123425 and research of Y. Peres was supported by National Science Foundation of China grant RFIS-W2531011.

	\bibliographystyle{plainnat}
	\bibliography{references}

\end{document}